\newcommand{\lt}{\left}
\newcommand{\rt}{\right}
\newcommand{\bpm}{\begin{pmatrix}}
\newcommand{\epm}{\end{pmatrix}}
\newcommand{\bsm}{\lt(\begin{smallmatrix}}
\newcommand{\esm}{\end{smallmatrix}\rt)}
\newcommand{\beq}{\begin{equation}}
\newcommand{\eeq}{\end{equation}}
\newcommand{\bmat}{\begin{matrix*}}
\newcommand{\emat}{\end{matrix*}}
\renewcommand{\d}{d} 
\newcommand{\Z}{\ensuremath{\mathbb{Z}}}
\newcommand{\R}{\ensuremath{\mathbb{R}}}
\newcommand{\g}{\ensuremath{\mathfrak{g}}}
\renewcommand{\a}{\mathfrak{a}}
\renewcommand{\b}{\mathfrak{b}}
\let\turc\c
\renewcommand{\c}{\mathfrak{c}}
\newcommand{\vep}{\varepsilon}
\DeclareMathOperator{\SL}{SL}
\renewcommand{\th}{\textsuperscript{th}}
\newcommand{\inv}{^{-1}}
\newcommand{\hf}{\frac{1}{2}}
\newcommand{\qtr}{\frac{1}{4}}
\renewcommand{\Re}{\operatorname{Re}}
\renewcommand{\Im}{\operatorname{Im}}
\newcommand{\sums}{\sideset{}{^*}{\sum}}
\newcommand{\mattletter}{k_0'}
\newcommand{\mehmetletter}{\mathcal{I}}
\newcommand{\taco}{h}
\newcommand{\exponent}{\delta}
\newcommand{\kloostermanmodulus}{\gamma}
\newcommand{\mr}{\ensuremath{\mathbb R}}
\newcommand{\mymod}{\ensuremath{\negthickspace \negmedspace \pmod}}
\newcommand{\shortmod}{\ensuremath{\negthickspace \negthickspace \negthickspace \pmod}}
\newcommand{\half}{\ensuremath{ \frac{1}{2}}}
\newcommand{\intR}{\int_{-\infty}^{\infty}}
\newcommand{\sumstar}{\sideset{}{^*}\sum}
\newcommand{\sumprime}{\sideset{}{'}\sum}
\DeclareMathOperator{\flooroot}{flrt}
\theoremstyle{plain}            
        \newtheorem{mytheo}{Theorem} [section]
        \newtheorem{myprop}[mytheo]{Proposition}
        \newtheorem{mycoro}[mytheo]{Corollary}
     \newtheorem{mylemma}[mytheo]{Lemma}
        \newtheorem{mydefi}[mytheo]{Definition}
\numberwithin{equation}{section}
\numberwithin{figure}{section}
\title{The Fifth Moment of modular $L$-functions 
}
\author{Eren Mehmet K{\i}ral, Matthew P. Young}
\address{Department of Mathematics \\
	  Texas A\&M University \\
	  College Station \\
	  TX 77843-3368 \\
		U.S.A.}
\email{ekiral@tamu.edu}		
\email{myoung@math.tamu.edu}
\thanks{This material is based upon work of M.Y. supported by the National Science Foundation under agreement No. DMS-1401008.  Any opinions, findings and conclusions or recommendations expressed in this material are those of the authors and do not necessarily reflect the views of the National Science Foundation. \\
 The first author was partially supported by an AMS-Simons travel grant.}
\begin{document}

\begin{abstract}
We prove a sharp bound on the fifth moment of modular $L$-functions of fixed small weight, and large prime level.
\end{abstract}

\maketitle

\section{Introduction}
Let $q$ be a prime and $\kappa \in \{4,6,8,10,14\}$. Let $H_\kappa(q)$ be 
    the set of weight $\kappa$ Hecke eigenforms on $\Gamma_0(q)$.
    For any $f \in H_\kappa(q)$ (note that any such $f$ is automatically a newform), let $\lambda_f(n)$ denote its
    $n$\th\ Hecke eigenvalue.  
 Our main result is  the following theorem: 
\begin{mytheo}
\label{thm:mainthm}
 We have
 \begin{equation}
  \sum_{f \in H_{\kappa}(q)} L(1/2, f)^5 \ll q^{1 + \theta + \varepsilon},
 \end{equation}
as $q\to \infty$ among primes.
Here $\theta$ is the best-known progress towards the Ramanujan-Petersson conjecture.
\end{mytheo}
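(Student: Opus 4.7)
The plan is to combine an approximate functional equation for $L(1/2, f)^5$ with the Petersson trace formula. One writes
\[
L(1/2,f)^5 \approx 2 \sum_{n \ll q^{5/2+\varepsilon}} \frac{d_5(n)\, \lambda_f(n)}{\sqrt{n}}\, V\!\left(\frac{n}{q^{5/2}}\right),
\]
where $V$ is a smooth cutoff determined by the gamma factors. Attaching the standard harmonic weights $\omega_f = \Gamma(\kappa-1) / ((4\pi)^{\kappa-1} \|f\|^2)$, one applies Petersson's formula, producing a diagonal term and an off-diagonal involving Kloosterman sums $S(1,n; cq)$ with Bessel weights $J_{\kappa-1}(4\pi\sqrt{n}/(cq))$. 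Removing the harmonic weights at the end costs a factor of order $q(\log q)^{O(1)}$, so the target becomes showing that the harmonic fifth moment is $\ll q^{\theta+\varepsilon}$.

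The diagonal term contributes $O(1)$ to the harmonic moment (equivalently $O(q^{1+\varepsilon})$ to the unweighted sum), since it is essentially $d_5(1)=1$ multiplied by a rapidly decaying tail. The heavy lifting is in the off-diagonal. I would begin by splitting $S(1,n; cq)$ via twisted multiplicativity into a Kloosterman sum modulo $c$ and one modulo $q$, evaluating the $q$-part in closed form through additive character manipulations. Analysis of the Bessel weight effectively restricts the range of $c$ to $c \ll q^{1/4+\varepsilon}$, beyond which the Bessel function decays rapidly.

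The long summation in $n$ (of length $\asymp q^{5/2}$) is then dualized by Poisson summation modulo $cq$, producing dual variables of much shorter length together with additional Kloosterman sums in the dualized variable. To exploit the resulting structure, one naturally applies the Kuznetsov trace formula in reverse, converting the remaining sum of Kloosterman sums over $c$ into a spectral sum over Maass cusp forms and Eisenstein series of level dividing $q$. The harmonic moment is thus transformed into a weighted spectral second moment of an auxiliary family of degree-two or degree-four $L$-functions.

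The principal obstacle will be the clean control of this final spectral expression. In particular, the exponent $\theta$ in the theorem enters at the step where Hecke eigenvalues of Maass cusp forms must be bounded pointwise via $|\lambda_\phi(n)| \ll n^\theta$, an estimate that is unavoidable with current technology and which would disappear entirely under the Ramanujan--Petersson conjecture ($\theta = 0$). Alongside this, one must carefully manage the exceptional spectrum, keep Eisenstein contributions under control, ensure that Poisson dualization does not incur logarithmic losses in length, and combine the dyadic contributions across the range of $c$ without squandering the factor of $q^\varepsilon$. Bringing all of these ingredients together in a balanced way is where the technical difficulty of the argument lies.
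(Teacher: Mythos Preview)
Your outline captures the overall shape (approximate functional equation, Petersson, Poisson, Kuznetsov, $\theta$ from Hecke bounds on Maass forms), but it misses the specific moves that make the argument work, and as written it would stall.

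First, the paper does \emph{not} treat $L(1/2,f)^5$ as a single Dirichlet series with $d_5$ coefficients. It splits $L^5 = L^2 \cdot L^3$, giving a bilinear sum in $m \ll q$ (with $\tau_2(m)$) and $n \ll q^{3/2}$ (with $\tau_3(n)$), and Kloosterman sums $S(m,n;c)$ with $c \equiv 0 \pmod q$. This $2+3$ split is not cosmetic: it creates a short variable $m_1$ (from opening $\tau_2(m) = \sum_{m_1 m_2 = m} 1$ with $m_1 \le m_2 \ll q^{1/2}$) that later becomes the level in the Kuznetsov formula. Your scheme with $S(1,n;cq)$ has no such auxiliary variable, and there is no evident way to reach a tractable spectral problem from it.

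Second, and more seriously, your ``Poisson in $n$ modulo $cq$'' step is exactly the move the paper identifies as going the wrong way: a direct Voronoi/Poisson on the long variable produces a dual sum \emph{longer} than the original. The paper's workaround is a specific chain: Poisson in $m_2$ modulo $c$ (accepting a small loss), then the additive reciprocity $e(m_1 n \overline{k}/c) = e(-m_1 n \overline{c}/k) e(m_1 n/ck)$ to swap moduli, and only \emph{then} Poisson in each $n_i$ modulo the now-small modulus $k$. This produces Kloosterman sums $S(p_1 c_0 \overline{\delta_2}, p_2' p_3'; k_0'')$ attached to $\Gamma_0(\delta_2 \delta_5)$ with $\delta_2 \asymp m_1 \ll q^{1/2}$, and Kuznetsov is applied there---not at level $q$. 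The twist by $\lambda_j(q)$ (whence $\theta$) emerges because $q \mid c_0$ sits inside the first entry of the Kloosterman sum. None of this structure is present in your proposal.

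Finally, the paper devotes substantial effort to the ``fake main terms'' (the $p_i = 0$ contributions after the triple Poisson), which a priori are only $O(q^{1/4+\varepsilon})$; bounding them by $O(q^\varepsilon)$ requires re-assembling the dyadic partitions and exploiting the freedom to choose $G_i(1/2)=0$ in the approximate functional equations. Your outline does not anticipate this difficulty at all.
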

The currently best-known value $\theta = 7/64$  is given by the work of Kim and Sarnak \cite{KimSarnak}.
 The central value $L(1/2,f)$ is nonnegative 
 by \cite[Corollary 2]{kohnen1985fourier} and \cite{Waldspurger}, 
 so upon dropping all but one term, we deduce:
\begin{mycoro}
\label{coro:subconvexity}
 For any $\varepsilon > 0$, we have
 \begin{equation}
  L(1/2, f) \ll_\varepsilon q^{\frac{1 + \theta}{5} + \varepsilon}.
 \end{equation}
\end{mycoro}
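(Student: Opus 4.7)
The plan is to derive Corollary \ref{coro:subconvexity} as an immediate consequence of Theorem \ref{thm:mainthm} by a positivity-and-dropping-terms argument. Two ingredients are required: (i) the fifth moment bound $\sum_{f \in H_\kappa(q)} L(1/2,f)^5 \ll q^{1+\theta+\varepsilon}$ supplied by the main theorem, and (ii) the nonnegativity of each individual central value $L(1/2,f)$. For (ii) one invokes the Waldspurger--Kohnen--Zagier theory: via the Shimura correspondence, $L(1/2,f)$ is a positive multiple of the square of a Fourier coefficient of a half-integral weight Hecke eigenform in Kohnen's plus-space associated to $f$, hence is nonnegative. This is precisely the content of the cited results of Kohnen and of Waldspurger.

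Granted (i) and (ii), the deduction is purely formal. Fix $f_0 \in H_\kappa(q)$. Since $L(1/2,f)^5 \geq 0$ for every $f$, we may discard all terms except $f = f_0$ to obtain
\begin{equation*}
L(1/2,f_0)^5 \;\leq\; \sum_{f \in H_\kappa(q)} L(1/2,f)^5 \;\ll_\varepsilon\; q^{1+\theta+\varepsilon}.
\end{equation*}
Taking fifth roots and absorbing the factor of $1/5$ into $\varepsilon$ yields the desired bound $L(1/2,f_0) \ll_\varepsilon q^{(1+\theta)/5 + \varepsilon}$. Since $f_0$ was arbitrary, this completes the proof.

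There is no genuine obstacle to overcome in the corollary itself; all the difficulty is concentrated in Theorem \ref{thm:mainthm}. It is worth emphasizing, however, that the positivity of $L(1/2,f)$ is indispensable to this route to subconvexity: without it one would have to control a fifth moment of $|L(1/2,f)|$, which loses the algebraic structure (notably, the multiplicativity of the Dirichlet coefficients of $L(s,f)^5$ via Hecke relations and the resulting clean approximate functional equation expansion) that makes the proof of Theorem \ref{thm:mainthm} tractable. The corollary also shows why the fifth moment is the natural target here: with the exponent $(1+\theta)/5 = 71/320$ (for $\theta = 7/64$) one lies strictly below the convexity exponent $1/4 = 80/320$, whereas an analogous bound for the fourth moment would only recover convexity.
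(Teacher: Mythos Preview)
Your argument is correct and matches the paper's own deduction: the paper notes that $L(1/2,f)\geq 0$ by Kohnen and Waldspurger, and then states that dropping all but one term in the fifth moment bound of Theorem \ref{thm:mainthm} yields the corollary. Your additional commentary on why nonnegativity is essential and why the fifth moment beats convexity is accurate but not part of the paper's proof proper.
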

Previously, Duke, Friendlander, and Iwaniec \cite{DFISubconvexity2} bounded the amplified fourth moment in this family, and 
Kowalski, Michel, and VanderKam \cite{KMVdK} asymptotically evaluated a mollified fourth moment.  Recently, Balkanova and Frolenkov \cite{BalkanovaFrolenkov} improved the error term in these fourth moment problems, and thereby deduced the so-far best-known subconvexity result of $L(1/2,f) \ll q^{\frac{27 - 30\theta}{112 - 128\theta}}$. 
Corollary \ref{coro:subconvexity} improves this further.  Our method of proof takes a different course from these works, and we never solve a shifted convolution problem.

This paper has some common features with the cubic moment studied by Conrey and Iwaniec \cite{CI}.  Their work also bounds a moment that is one larger than what one may consider the ``barrier'' to subconvexity.  That is, for the family of $L$-functions they consider, an upper bound on the second moment that is Lindel\"of-on-average leads back precisely to the convexity bound on an individual $L$-value. Similarly, the fourth moment is the ``barrier'' in the family of Theorem \ref{thm:mainthm}.  In a sense, going one full moment beyond the barrier is a way of amplifying with the $L$-function itself.
As far as the authors are aware, prior to Theorem \ref{thm:mainthm}, the only known instances of a sharp upper bound on a moment that is one larger than the barrier moment are the cubic moment and its generalizations \cite{CI} \cite{IvicCubic}
\cite{PetrowCubic} \cite{YoungCubic} \cite{PetrowYoung}.

In Section \ref{section:sketch}, we give a simplified sketch of the argument.  The main overall difficulty in the problem is that we require a significant amount of cancellation in multivariable sums with divisor functions and Kloosterman sums.  The main thrust of the argument is to apply summation formulas that shorten the lengths of summation, eventually obtaining a sum of Kloosterman sums.  To this, we apply the Bruggeman-Kuznetsov formula, which leads to a fourth moment of Hecke-Maass $L$-functions twisted by $\lambda_j(q)$, with an additional average over the level.  
This is another incarnation of a Kuznetsov/Motohashi-type formula
where a moment problem in  one family of $L$-functions is related to another moment in a ``dual'' family (see
\cite[Section 1.1.3]{MichelVenkatesh}).
Along the way, we encounter many ``fake'' main terms, which turn out to be surprisingly difficult to estimate.  A straightforward bound on these would only lead to $O(q^{5/4+\varepsilon})$ in Theorem \ref{thm:mainthm}, which would be trivial. 
We expect that all the ``fake'' main terms calculated in this paper should essentially cancel, but doing so is a daunting prospect.  Instead, we show that with an appropriate choice of weight functions in the approximate functional equations, then all the fake main terms are bounded consistently with Theorem \ref{thm:mainthm}.
The amplified/mollified fourth moment (cf. \cite{DFISubconvexity2} \cite{KMVdK}) also required a difficult analysis of the main terms, which arose from solving the shifted convolution problem.  As such, it is not clear how to compare the main term calculations here with \cite{DFISubconvexity2} \cite{KMVdK}.  The article \cite[Section 1.2]{BlomerHarcosMichel2007BoundsForModularLFunctionsInLevel} has a more through discussion of the main term analysis with the shifted convolution approach.

One of the practical difficulties in applying the Bruggeman-Kuznetsov formula in applications is that one needs to recognize the particular shape of sum of Kloosterman sums one encounters (with coprimality and congruence conditions, etc.) as one associated to a group $\Gamma$, pair of cusps $\a, \b$, and nebentypus $\chi$.
To this end, in \cite{DoubleCosetPaper}  we have identified all the Kloosterman sums belonging to the congruence subgroup $\Gamma_0(N)$ and at general Atkin-Lehner cusps (i.e., those cusps equivalent to $\infty$ under an Atkin-Lehner involution) with general Dirichlet characters.  It turns out that there is a ``correct'' choice of scaling matrix to use when computing the Fourier coefficients and Kloosterman sums, a choice that ensures the multiplicativity of Fourier coefficients at the Atkin-Lehner cusps.  In Section \ref{section:Kuznetsov}, we record the special cases of these Kloosterman sums that are required in this work.

Another practical difficulty is estimating oscillatory integral transforms with weight functions depending on multiple variables, with numerous parameters.  It is desirable to perform stationary phase 
analysis
on a single variable at a time, yet still keep track of the behavior of the remaining variables in a succinct way.  
  We have codified some properties of a family of weight functions that allows us to do this efficiently.  The key stationary phase result, which is a modest generalization of work in \cite{BKY},
   is stated as Lemma \ref{lemma:statphase} below, with a proof appearing in \cite{InertFunctionPaper}.

In the spectral analysis of a sum of Kloosterman sums, it is necessary to treat the Maass forms, holomorphic forms, and continuous spectrum.  In our situation, the Maass forms and holomorphic forms are rather similar, and lead to a twisted fourth moment of $GL_2$ cuspidal $L$-functions.  The continuous spectrum is similar in many respects, but a key difference is that the ``dual" family of $L$-functions is essentially a sum of a product of eight Dirichlet $L$-functions at shifted arguments.  One naturally wishes to 
treat the continuous spectrum on the same footing as the discrete spectrum,
which requires shifting some contour integrals past the poles of the Dirichlet $L$-functions (which occur only when the character is principal).  There is potentially a large loss in savings from these poles on the $1$-line compared to the contribution from the $1/2$-line.  
Luckily, it turns out that there is some extra savings in the residues of the Dirichlet series (essentially, from considering only the principal characters) that balances against this loss.
This savings ultimately arises from a careful calculation of the Fourier expansion of the Eisenstein series on $\Gamma_0(N)$ with arbitrary $N$, attached to an arbitrary cusp, expanded around any Atkin-Lehner cusp.  This calculation occurs in \cite{DoubleCosetPaper}.

An astute reader may note that $\kappa =2$ is not covered by Theorem \ref{thm:mainthm}.  In fact, there is only a single instance where 
our proof
 requires that $\kappa >2$, namely in the study of the continuous part of the spectrum at \eqref{eq:ScBoundNonoscillatoryCase2}.  Perhaps with further analysis one might incorporate the weight $2$ case, by a more careful analysis of the residues of the Dirichlet $L$-functions. 
The restriction that $q$ is a prime and that $\kappa \leq 14$, $\kappa \neq 12$, means that the cuspforms $f \in H_\kappa(\Gamma_0(q))$ are automatically newforms. 
It is reasonable to expect that using a more general Petersson formula for newforms (e.g., see \cite{PetrowYoung}) could relax these assumptions, but the arithmetical complexity would be increased.


\section{High-level sketch}
\label{section:sketch}
Here we include an outline of the major steps used in the proof, intended for an expert audience.
By approximate functional equations and the Petersson formula, we arrive at
\begin{equation}
\label{eq:fifthmomentS}
 \mathcal{S} := \sum_{m \ll q} \sum_{n \ll q^{3/2}} \sum_{c \equiv 0 \shortmod{q}} \frac{\tau(m) \tau_3(n)}{c \sqrt{mn}} S(m,n;c) J_{\kappa-1} \Big(\frac{4 \pi \sqrt{mn}}{c}\Big),
\end{equation}
and we wish to show $\mathcal{S} \ll q^{\theta+\varepsilon}$.  The hardest case to consider is $m \asymp q$, $n \asymp q^{3/2}$, and $c \asymp \sqrt{mn} \asymp q^{5/4}$, in which case $J_{\kappa-1}(x) \approx 1$.  In practice, one needs to treat the two ranges of the Bessel function (i.e., $x \ll 1$ and $x \gg 1$) differently.  
In this sketch, we focus on the transition region of the $J$-Bessel function where $x \asymp 1$.

The Weil bound applied to the Kloosterman sum shows $\mathcal{S} \ll q^{7/8+\varepsilon}$, far from $q^{\theta+ \varepsilon}$.

The immediate problem with \eqref{eq:fifthmomentS} is that Voronoi summation applied to $m$ or $n$ leads to a dual sum that is longer than before.  The conventional wisdom is that this is a bad move.  However, there do not seem to be any other moves available, so it may be necessary to take a loss in the first step.  We may attempt to minimize the loss here by opening $\tau(m) = \sum_{m_1 m_2 =m} 1$, supposing $m_1 \leq m_2$ by symmetry, and applying Poisson summation in $m_2$ modulo $c$.  This leads to
\begin{equation*}
 \mathcal{S} \approx \sum_{m_1 \ll q^{1/2}}
 \sum_{\substack{c \equiv 0 \shortmod{q} \\ c \asymp q^{5/4}}}
 \sum_{k \ll q^{3/4}} \sum_{n \asymp q^{3/2}} 
 \frac{\tau_3(n)}{\sqrt{m_1 k nc}} e\Big(\frac{m_1 n \overline{k}}{c}\Big).
 \end{equation*}
Note that the trivial bound now gives $\mathcal{S} \ll q$, so we lost a factor $q^{1/8}$ from going the ``wrong way'' in Poisson.  However, now we may gain from the structure of the arithmetical part by applying the well-known reciprocity formula
\begin{equation*}
 e\Big(\frac{m_1 n \overline{k}}{c}\Big) = 
 e\Big(-\frac{m_1 n \overline{c}}{k}\Big) e\Big(\frac{m_1 n}{ck}\Big).
\end{equation*}
This effectively switches the roles of $c$ and $k$, at the expense of introducing the potentially-oscillatory factor $e_{ck}(m_1 n)$  into the weight function.  However, when all variables are near their maximial sizes, then this factor is not oscillatory, so we shall ignore it in this sketch.  

{\bf Side remark.}  If one applies Voronoi to the sum over $m$ (which is more in line with the previous works on the amplified/mollified fourth moment), then one encounters a shifted divisor sum of the form $\sum_{m-n= h} \tau_2(m) \tau_3(n)$.  Such sums have been considered by various authors, with the most advanced results being the recent work of
B. Topa\turc{c}o\u{g}ullar{\i} \cite{Topacogullari}.

One way to proceed next would be to convert the additive character into Dirichlet characters (modulo $k$), which has a nice benefit of separating the variables, a key step in \cite{CI}.  This would lead to a fifth moment of Dirichlet $L$-functions twisted by Gauss sums, with an averaging over the modulus.  One may check that Lindel\"{o}f applied to these $L$-functions 
only gives 
$\mathcal{S} \ll q^{1/4+\varepsilon}$ which in a sense gets back to the convexity bound.  

Now it is beneficial to apply Voronoi summation in $n$ modulo $k$ (one may view this as opening $\tau_3(n) = \sum_{n_1 n_2 n_3 = n} 1$, and applying Poisson in each $n_i$).  This leads to
\begin{equation}
\label{eq:fifthmomentKloosterman}
 \mathcal{S} \approx 
 \sum_{m_1 \ll q^{1/2}}
 \sum_{\substack{c \equiv 0 \shortmod{q} \\ c \asymp q^{5/4}}}
 \sum_{k \ll q^{3/4}} \sum_{p \ll q^{3/4}} 
 \frac{\tau_3(p)}{k\sqrt{m_1 p c}} S(p, c \overline{m_1}, k).
\end{equation}
The trivial bound now is $q^{5/8}$, consistent with saving $q^{1/8}$ in each of the $n_i$ variables, just as we lost $q^{1/8}$ by Poisson in the initial $m_2$ variable.  One could also apply Poisson in $m_1$ to save another $q^{1/8}$, but then the arithmetical sum becomes a hyper-Kloosterman sum, which increases the difficulty of the problem (N. Pitt has studied this problem \cite{Pitt}, but it seems very hard to obtain enough cancellation using this approach).  Here we have a Kloosterman sum to which we may apply the Bruggeman-Kuznetsov formula of level $m_1$.  Using this, we obtain
\begin{equation}
\label{eq:SApproxSketch}
 \mathcal{S} \approx 
 \sum_{m_1 \ll q^{1/2}} 
 \sum_{\substack{t_j \ll q^{\epsilon} \\ \text{level $m_1$}} } 
 \sum_{\substack{c \equiv 0 \shortmod{q} \\ c \asymp q^{5/4}}}
\sum_{p_1,p_2,p_3 \ll q^{1/4}} 
 \frac{\nu_j(p_1c)\nu_j(p_2p_3)}{\sqrt{ p_1p_2p_3 c}}.
\end{equation}
We can essentially write this as
\begin{equation}
\label{eq:FourthMomentTwisted}
 \mathcal{S} \approx  \sum_{m_1 \ll q^{1/2}} 
 \sum_{\substack{t_j \ll q^{\epsilon} \\ \text{level $m_1$}} }  \nu_j(1)^2 \frac{\lambda_j(q)}{\sqrt{q}} L(1/2, u_j)^4.
\end{equation}
Here the scaling on the spectral data is that $\sum_{t_j \ll T} \nu_j(1)^2 \ll T^2 m_1^{\varepsilon}$.  Thus we have converted to a twisted fourth moment of Maass form $L$-functions, and one can see how $q^{\theta+\varepsilon}$ emerges by bounding $|\lambda_j(q)| \ll q^{\theta+\varepsilon}$, and using a Lindel\"{o}f-on-average bound for the spectral fourth moment (which in turn is ``easy'', following from the spectral large sieve inequality).  

The above discussion implicitly assumed that the $p_i$ are nonzero.  The zero frequencies (where some or all $p_i = 0$) turn
 out to be the ``fake'' main terms alluded to in the introduction.  
 
 To handle these, we compute the weight function explicitly, and evaluate the sums over $k, m_1$, and $c$ as zeta quotients. We later bound the integral by moving lines of integration, and apparent poles of the integrand are cancelled by a choice of the weight function in the approximate functional equation.
To elaborate on this point, consider an overly-simplified model with a sum of the form $S =\sum_{n \geq 1} \frac{1}{\sqrt{n}} V(\frac{n}{\sqrt{q}})$, where $V(x) = \frac{1}{2 \pi i} \int_{(1)} \frac{G(s)}{s} x^{-s} ds$, and $G(s)$ is analytic satisfying $G(0) = 1$, with rapid decay in the imaginary direction.  The trivial bound applied to $S$ gives $S = O(q^{1/4})$, using that $V(x) \ll (1+x)^{-100}$.  Alternatively, we have $S = \frac{1}{2 \pi i} \int_{(1)} \zeta(1/2 + s) q^{s/2} \frac{G(s)}{s} ds$, which by shifting contours to the line $\text{Re}(s) = \varepsilon >0$ gives $S = G(1/2) q^{1/4}  + O(q^{\varepsilon})$.  If $G(1/2) = 0$ (which one is free to assume in the context of the approximate functional equation), then in fact one has an improved bound of $S = O(q^{\varepsilon})$.  This  is the principal idea behind the estimation of the fake main terms. The main difficulty in practice is that one has a much more complicated sum with multiple variables and weight functions that arise as integral transforms, and it requires 
significant
work to recognize instances of this basic idea.  One should also observe that the above method of estimating $S$ is highly reliant on the specific form of the weight function $V$; if it were multiplied by a compactly-supported bump function (say one part of a dyadic partition of unity), then one could not deduce $S=O(q^{\varepsilon})$ anymore.  Since we shall apply dyadic partitions of unity in the forthcoming treatment, for the purposes of estimating these fake main terms, it is crucial to re-assemble the partitions. 

The role of the $m_1$-variable within the proof has some curious features.  In the sketch above up through \eqref{eq:FourthMomentTwisted}, the
 $m_1$ variable was hardly used.
Precisely, we never applied a summation formula nor obtained any cancellation from this variable.  Nor did we use any reciprocity involving $m_1$ to lower a modulus.  However, non-trivial estimations involving $m_1$ do appear in other parts of the proof.  In the evaluation of one type of fake main term  in Section \ref{section:TwopiAreZero}, we evaluate the $m_1$-sum similarly to the discussion in the previous paragraph;  the lack of pole at $s=1/2$ amounts to square-root cancellation in this variable. 
The other location is in estimating the continuous spectrum analog of \eqref{eq:SApproxSketch} which so far was neglected in this sketch.  One may show that the continuous spectrum analog of \eqref{eq:FourthMomentTwisted} is $O(q^{\varepsilon})$ using that the number of cusps on $\Gamma_0(m_1)$ is at most $O(m_1^{1/2+\varepsilon})$.  
However, on average over $m_1$, the number of cusps is $O(m_1^{\varepsilon})$, which leads to a bound that saves an additional factor $q^{1/4}$.  In a sense, this discussion indicates that the continuous spectrum is smaller in measure \emph{in the level aspect} than the discrete spectrum, on average over the level.

\section{Kloosterman Sums and Bruggeman-Kuznetsov  Formula}
\label{section:Kuznetsov}

\subsection{Cusps, scaling matrices, and Kloosterman sums}

	We mostly follow the notation of \cite{IwaniecSpectralBook}. 
	Let $N$ be a positive integer and $\Gamma = \Gamma_0(N)$.
            Let $\a$ be a cusp and 
                $\Gamma_\a = \{ \gamma \in \Gamma : \gamma \a = \a\}$
            be the stabilizer of the cusp $\a$ in $\Gamma$. 
            A matrix $\sigma_\a \in \SL_2(\R)$ satisfying      
            \begin{equation} \label{eq:scalingMatrixProperties}
                \sigma_\a \infty = \a, \quad \text{ and } \quad \sigma_\a\inv \Gamma_\a \sigma_\a = \left\{ \pm \left(\begin{smallmatrix} 1 & n\\ 0&1 \end{smallmatrix} \right) : n \in \Z\right\},
            \end{equation}
            is called a scaling matrix for the cusp $\a$. 
            
%
        
        \begin{mydefi}
             Let $f$ be a Maass form for the group $\Gamma$. The Fourier coefficients of $f$ at a cusp $\a$, denoted
             $\rho_{\a f}(n)$, are defined by 
             \begin{equation}
              \label{eq:FourierExpansionMaassForm}
              f(\sigma_\a z) = \sum_{n \neq 0} 
              \rho_{\a f}(n)
              e(nx)    W_{0,i t_j}(4 \pi |n| y),
             \end{equation}
             where $W_{0,i t_j}$ is the Whittaker function defined by
             \begin{equation*}
              W_{0,i t_j}(4 \pi y) = 2 \sqrt{y} K_{i t_j} (2 \pi y).
             \end{equation*}

        \end{mydefi}
         
The Fourier coefficients $\rho_{\a f}(n)$ depend on the choice of scaling matrix $\sigma_{\a}$, and it may be more accurate to denote them $\rho_{\sigma_{\a}, f}(n)$.

	\begin{mydefi}
	\label{def:KloostermanSum}
            For $\a$ and $\b$ cusps for $\Gamma$, we define the Kloosterman sum associated to $\a,\b$  with modulus $c$ as
            \begin{equation}\label{eq:kloostermanDefinition}
		S_{\a \b}  (m,n;c) =
		\sum_{\gamma = \bsm a & b\\ c &d \esm \in \Gamma_\infty \backslash \sigma_\a\inv \Gamma \sigma_\b / \Gamma_\infty }
		e\left(\frac{am + dn }{c} \right).
            \end{equation}
        \end{mydefi}

 

	\begin{mydefi} The set of allowed moduli is
            \begin{equation}\label{eq:allowedKloostermanModuli}
                \mathcal{C}_{\a \b} = \left\{\kloostermanmodulus > 0: \bsm * &*\\ \kloostermanmodulus &* \esm \in \sigma_\a\inv \Gamma \sigma_\b \right\}.
            \end{equation}
        \end{mydefi}
        Notice that if $\kloostermanmodulus \notin \mathcal{C}_{\a \b}$ the Kloosterman sum of modulus $\kloostermanmodulus$ is an empty sum. 

\subsection{Atkin-Lehner cusps}\label{subsec:AtkinLehnerScaling}
        Assume that $N = rs$ with $(r,s) = 1$. We  call a cusp of the form $\mathfrak{a} = 1/r$ (with $(r,s) = 1$) an \emph{Atkin-Lehner cusp}. The Atkin-Lehner cusps are precisely those that are equivalent to $\infty$ under an Atkin-Lehner operator, justifying their name.

A newform is an eigenfunction of all the Hecke operators, as well as all the Atkin-Lehner operators.     
It turns out that one may choose a scaling matrix $\sigma_{1/r}$ for the Atkin-Lehner cusp $1/r$ to be an Atkin-Lehner operator (see \cite[Section 2.2]{DoubleCosetPaper}).  
Therefore, for such a choice of scaling matrix, we have
\begin{equation}
\label{eq:AtkinLehnerEigenfunction}
 \rho_{\frac{1}{r} f}(n) = \eta_s(f) \rho_{\infty f}(n),
\end{equation}
where $f$ is a newform and $\eta_s(f) = \pm 1$ is the eigenvalue of the Atkin-Lehner operator $W_s$. 

        \begin{myprop} \label{prop:KloostermanSumsAtInfinityAndAtkinLehnerCusp}
             Let $N = rs$ with $(r,s) = 1$, and choose $\sigma_{1/r}$ as above. 
             Then the set of allowed moduli for the pair 
             of cusps $\infty, \tfrac{1}{r}$ is 
             \begin{equation} \label{eq:infinity1rAllowedModuli}
                  \mathcal{C}_{\infty, 1/r} = \left\{\kloostermanmodulus = c \sqrt{s} > 0 : c \equiv 0 \mymod{r}, \thinspace (c,s) = 1\right\},
             \end{equation}
             and for such $\kloostermanmodulus = c\sqrt{s} \in \mathcal{C}_{\infty,1/r}$, the Kloosterman sum to modulus $\kloostermanmodulus$ is given by
             \begin{equation}
             \label{eq:Sinfinity1/rKloosterman}
                  S_{\infty,1/r}  (m,n;c\sqrt{s}) = S(\overline{s} m,n;c),
             \end{equation}
            where the $S$ on the right denotes an ordinary Kloosterman sum.  Consequently,
\begin{equation}
\label{eq:KloostermanSumsExampleCase}
 \sum_{\substack{(c,s) = 1 \\ c \equiv 0 \shortmod{r}}}  S(\overline{s} m, n;c) f(c) = \sum_{\kloostermanmodulus \in \mathcal{C}_{\infty, 1/r}} S_{\infty,1/r}(m,n;\gamma) f\Big(\frac{\gamma}{ \sqrt{s}}\Big),
\end{equation}
where $f$ is any function so that the sums converge.
        \end{myprop}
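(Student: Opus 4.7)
The plan is to prove both claims by a direct computation with the explicit Atkin-Lehner scaling matrix recalled in Section \ref{subsec:AtkinLehnerScaling}. Take
\[
\sigma_{1/r} = \frac{1}{\sqrt{s}} \begin{pmatrix} s & y \\ rs & sw \end{pmatrix}, \quad sw - ry = 1,
\]
which has determinant one, sends $\infty$ to $1/r$, and is the $SL_2(\mathbb{R})$-normalization of the Atkin-Lehner matrix $W_s$. A general element of $\Gamma_0(rs)$ has the form $\left(\begin{smallmatrix} a & b \\ rsc_0 & d \end{smallmatrix}\right)$ with $ad - brsc_0 = 1$, and hence $\gcd(d, rs) = 1$.

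For \eqref{eq:infinity1rAllowedModuli}, I would multiply this element on the right by $\sigma_{1/r}$ and read off the lower-left entry as $r\sqrt{s}(sc_0 + d)$. Setting $m := sc_0 + d$, one has $\gcd(m,s)=\gcd(d,s)=1$, so the lower-left entry has the form $c\sqrt{s}$ with $r \mid c$ and $(c,s)=1$. For the converse, given $c = rm$ with $(m,s)=1$, pick $c_0 \neq 0$ with $\gcd(c_0,m)=1$ and $m-sc_0$ coprime to $r$; both conditions can be arranged by the Chinese remainder theorem combined with Dirichlet's theorem on primes in arithmetic progressions. Then $d = m - sc_0$ satisfies $\gcd(d, rsc_0)=1$, and $a,b$ may be chosen to complete a matrix in $\Gamma_0(rs)$.

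For \eqref{eq:Sinfinity1/rKloosterman}, the upper-left and lower-right entries of the same product are $\sqrt{s}(a+br)$ and $\sqrt{s}(rc_0 y + dw)$, so the generalized Kloosterman exponent is $(Am + Dn)/c$ with $A := a+br$, $D := rc_0 y + dw$, and $c = rm$. The crucial identity is
\[
sAD = (a+br)\bigl(rsc_0 y + swd\bigr) = (a+br)\bigl(d + r(sc_0+d)y\bigr) \equiv ad + brd \equiv 1 \pmod{c},
\]
using first $sw - ry = 1$ to rewrite $swd = d + rdy$, and then $ad - brsc_0 = 1$ together with $rsc_0 + rd = c$. Hence $AD \equiv \overline{s} \pmod{c}$, so writing $\alpha := sA \bmod c$ one obtains $\alpha \in (\mathbb{Z}/c)^*$, $\alpha D \equiv 1 \pmod c$, and $A \equiv \overline{s}\alpha \pmod c$. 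Since left/right multiplication by $\Gamma_\infty$ translates $A$ and $D$ independently by integer multiples of $c$, the double coset $\Gamma_\infty \backslash \sigma_\infty^{-1}\Gamma\sigma_{1/r}/\Gamma_\infty$ with fixed lower-left entry $c\sqrt{s}$ is parametrized bijectively by $\alpha \in (\mathbb{Z}/c)^*$, and summing yields exactly $S(\overline{s}\,m, n; c)$. Equation \eqref{eq:KloostermanSumsExampleCase} then follows by reindexing the left-hand sum via the bijection $c \leftrightarrow \gamma = c\sqrt{s}$ between $\{c > 0 : r \mid c,\ (c,s)=1\}$ and $\mathcal{C}_{\infty, 1/r}$.

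The main obstacle is the congruence $sAD \equiv 1 \pmod c$, where the Atkin-Lehner normalization $sw - ry = 1$ must conspire with $ad - brsc_0 = 1$ to collapse the four-term expansion into $1$ modulo $c$; once this identity is in hand the remainder is bookkeeping of the double-coset parametrization. All of this is a specialization of the general computations of \cite{DoubleCosetPaper}.
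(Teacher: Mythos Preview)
Your proof is correct and follows essentially the same route as the references the paper cites (Motohashi \cite[Section 14]{motohashi2007riemann} and \cite{DoubleCosetPaper}); the paper itself gives no self-contained argument. The key identity $sAD \equiv 1 \pmod{c}$ is verified correctly: your manipulation using $sw - ry = 1$ to write $swd = d + rdy$, followed by $ad - brsc_0 = 1$ and $r(sc_0 + d) = c$, collapses the four-term expansion to $1 + bc + ycA$.

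Two minor remarks. First, Dirichlet's theorem is unnecessary for the converse in \eqref{eq:infinity1rAllowedModuli}: for each prime $p \mid mr$ there is at most one forbidden residue class for $c_0$, so CRT alone produces a nonzero $c_0$ with $(c_0,m)=1$ and $(m - sc_0, r)=1$. Second, you assert but do not prove surjectivity of the map from double cosets with fixed lower-left $c\sqrt{s}$ onto $(\mathbb{Z}/c)^*$. This is indeed routine: given $A_0 \in (\mathbb{Z}/c)^*$, set $D_0 \equiv \overline{sA_0} \pmod{c}$ and $B' = (sA_0 D_0 - 1)/c \in \mathbb{Z}$, and check directly that
\[
\begin{pmatrix} \sqrt{s}A_0 & B'/\sqrt{s} \\ c\sqrt{s} & \sqrt{s}D_0 \end{pmatrix} \sigma_{1/r}^{-1}
= \begin{pmatrix} A_0 sw - B'r & B' - A_0 y \\ rs(mw - D_0) & sD_0 - cy \end{pmatrix}
\]
has integer entries, determinant $1$, and lower-left divisible by $rs$, hence lies in $\Gamma_0(rs)$.
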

For this computation, see   \cite[Section 14]{motohashi2007riemann}, in particular (14.8).  Note that \eqref{eq:Sinfinity1/rKloosterman} differs from a formula in
         \cite[p.58]{IwaniecClassicalBook} by an additive character, which is due to a different choice of the scaling matrix. See also \cite{DoubleCosetPaper} for generalization with different cusps and characters.

\subsection{Bruggeman-Kuznetsov formula} \label{subsec:BruggemanKuznetsov}
	We record the spectral expansion of a sum of Kloosterman sums in a spectral basis of the space $L^2(\Gamma_0(N))$. Let $\{u_j\}$ be a basis of cusp forms. Assume that $u_j$ is an eigenfunction of the Laplace-Beltrami operator with eigenvalue $\qtr + t_j^2$. Call $t_j$ the spectral parameter of $u_j$.  
	Define $\rho_{\a j}(n) = \rho_{u_j}(\sigma_{\a}, n)$ as in \eqref{eq:FourierExpansionMaassForm}; our choice of $\sigma_{\a}$, in practice, will be an Atkin-Lehner operator.

 	Likewise, write the Fourier expansion of the Eisenstein series as 
	\begin{equation}\label{eq:EisensteinFourierExpansion}
		E_\c(\sigma_\a z,u ) 
		= \delta_{\a\c} y^{u} + \rho_{\a\c}(0,u) y^{1 - u} 
			+ \sum_{n \neq 0} \rho_{\a\c}(n,u)
			e(nx) W_{0,u-\frac12}(4 \pi |n| y).
	\end{equation}	 
	Consulting \cite[Theorem 3.4]{IwaniecSpectralBook}, we have
	\begin{equation}
	\label{eq:rhodef}
		\rho_{\a\c}(n,u)
		= \begin{cases}
		\phi_{\a\c}(n,u) \frac{\pi^u}{\Gamma(u)} |n|^{u - 1} ,  & \text{ if } n \neq 0
		\\ 
		 \delta_{\a\c}y^u + \phi_{\a\c}(u) y^{1-u}, & \text{ if } n = 0,
		 \end{cases}
	\end{equation}
	where
	\begin{equation}\label{eq:phiDefinition}
		\phi_{\a\c}(n,u) 
		= 
		\sum_{\substack{(\gamma,\delta) \text{ such that} \\ \rho =  \bsm *&*\\\gamma&\delta \esm \in  \Gamma_\infty \backslash \sigma_\c\inv \Gamma \sigma_\a /\Gamma_\infty  }}  
		\frac{1}{\gamma^{2u}} e\left(\frac{n\delta}{\gamma}\right)
		= \sum_{\gamma \in \mathcal{C}_{\c \a}} \frac{S_{\c  \a}(0,n;\gamma)}{\gamma^{2u}} 
		,
	\end{equation}
	and $\phi_{\a\c}(u) = \phi_{\a\c}(0,u)$.  
Note that our ordering of the cusps in the notation $\rho_{\a \c}, \phi_{\a \c}$ is reversed from that of \cite{IwaniecSpectralBook}, and also that \cite[(3.22)]{IwaniecSpectralBook} should have $\mathcal{S}_{\a \c}(n,0;c)$ in place of $\mathcal{S}_{\a \c}(0,n;c)$ to be consistent with \cite[(2.23)]{IwaniecSpectralBook}. 
	We give an explicit computation of $\phi_{\a \c}(n,u)$ with Proposition \ref{prop:EisensteinFourierCoefficientFormulaDirichletCharacters} below.

	For aesthetic purposes, define as in \cite[(8.5), (8.6)]{IwaniecSpectralBook} 
	\begin{equation} \label{eq:nuDefinition}
		\nu_{\a j}(n) = \left(\frac{4\pi |n| }{\cosh(\pi t_j) }\right)^\hf \rho_{\a j} (n), \qquad
		\nu_{\a\c}(n,u) = 
		\left(\frac{4\pi |n| }{\cos(\pi (u-\frac12)) }\right)^\hf
		\rho_{\a\c}  (n,u). 
	\end{equation}
	
	Let $g\in H_k(N)$, that is, let $g$ be a holomorphic level $N$ weight $k$ modular cusp form.  Define the Fourier expansion of $g$ at a cusp $\a$ by
	\[
		g \vert_{\sigma_{\a}} (z) = \sum_{n=1}^\infty \rho_{\a g}(n) n^{\frac{k-1}{2}} e(nz).
	\]
Also define 
\begin{equation}
 \nu_{\mathfrak{a} g}(n) = \Big(\frac{\pi^{-k}\Gamma(k)}{4^{k-1}}\Big)^{1/2} \rho_{\mathfrak{a} g}(n),
\end{equation}
similarly to \cite[(9.42)]{IwaniecSpectralBook}, but note that $m^{\frac{k-1}{2}}$ was already extracted  in the definition of $\rho_{\a g}(m)$.

	With the notation as above, define for nonzero $m$ and $n$,
	\begin{equation}\label{eq:GeneralSumOfKloostermanSum}
		\mathcal{K} = \sum_{\kloostermanmodulus \in \mathcal{C}_{\a,\b} } S_{\a\b}  (m,n;\kloostermanmodulus) \phi(\kloostermanmodulus).
	\end{equation}
	We then quote the  literature for a spectral formula for this sum. 
	Many authors state the Bruggeman-Kuznetsov formula with a weight function of the form $\kloostermanmodulus^{-1} F(\frac{4 \pi \sqrt{mn}}{\kloostermanmodulus})$ in place of $\phi(\kloostermanmodulus)$, which amounts to the substitution $F(t) = \frac{4\pi \sqrt{mn}}{t} \phi(\frac{4 \pi \sqrt{mn}}{t})$.
	\begin{mytheo}[\cite{IwaniecSpectralBook} Chapter 9] \label{thm:KuznetsovTraceFormula}
		 Let $\mathcal{K}$ be as in \eqref{eq:GeneralSumOfKloostermanSum}.  Assuming $\phi$ is smooth with compact support on $(0, \infty)$, 
		 we have 
		\[
			\mathcal{K} = \mathcal{K}_d + \mathcal{K}_c + \mathcal{K}_h.
		\]
		Here $\mathcal{K}_h = 0$ if $mn<0$ and otherwise,
		\begin{equation}\label{eq:holomorphicKuznetsovSpectrum}
			\mathcal{K}_h = \sum_{k >0, \text{ even}} \phi_h(k)  i^k   
			\sum_{g\in H_k(N) }  \nu_{\a g}(m) \overline{\nu_{\b g}(n)}.
		\end{equation}
		The discrete spectrum contribution is given as 
		\begin{equation}\label{eq:discreteKuznetsovSpectrum}
			\mathcal{K}_d = \sum_{t_j} \phi_\pm(t_j) \nu_{\a j}(m) \overline{\nu_{\b j}(n)},
		\end{equation}
		where the summation is over the spectral parameters $t_j$ of a chosen orthonormal basis of cusp forms $\{u_j\}_j$. The continuous spectrum contribution is
	\begin{equation}\label{eq:continuousKuznetsovSpectrum}
			\mathcal{K}_c = \sum_{\mathcal{\c} } \frac{1}{4\pi}
			\int_{-\infty}^\infty \phi_\pm(t) \nu_{\a\c}(m, \tfrac12  + i t )\overline{\nu_{\b\c}(n, \tfrac12 + it)} \d t,
		\end{equation}
		where the choice $\phi_+$ versus $\phi_-$ depends on whether $mn>0$ or $mn<0$. 

		Here the integral transform for $\phi_h$ is given as 
		\begin{equation} \label{eq:holomorphicIntegralTransform}
			\phi_h(k) =  \int_0^\infty J_{k-1}(x) \frac{4\pi \sqrt{mn}}{x} 
			\phi\left(\frac{4\pi \sqrt{mn}}{x} \right) \frac{\d x}{x}
			=  \left(J_{k-1} * (x \cdot \phi)\right) (4\pi \sqrt{mn}).
		\end{equation}
With
		\[
			B_{2it}^+(x) = \frac{i}{2\sinh(\pi t)} \left(J_{2it}(x) - J_{-2it}(x) \right),
		\]
		then
		\begin{equation}\label{eq:plusIntegralTransform}
			\phi_+ (t) 
			=  \int_0^\infty B^+_{2it}(x) \frac{4\pi \sqrt{mn}}{x} 
					\phi\left(\frac{4\pi \sqrt{mn}}{x}\right) \frac{\d x}{x}
			=  \left(B^+_{2it} * (x \cdot \phi) \right) (4\pi \sqrt{mn}).
		\end{equation}
		Similarly, with
		\[
			B_{2it}^-(x) = \frac{2}{\pi} \cosh(\pi t) K_{2it}(x),
		\]
		we have
		\begin{equation}\label{eq:minusIntegralTransform}
			\phi_-(t)
			= \int_0^\infty B_{2it}^-(x) \frac{4\pi \sqrt{|mn|}}{x}  
				\phi\left(\frac{4\pi \sqrt{|mn|}}{x}\right) \frac{\d x}{x}
			= \left(B_{2it}^- * ( x \cdot \phi)\right)(4\pi \sqrt{|mn|}).
		\end{equation}
	\end{mytheo}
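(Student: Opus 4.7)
The plan is to follow the classical Bruggeman-Kuznetsov argument, which proceeds by computing the inner product of two Poincaré series at the cusps $\a$ and $\b$ in two distinct ways. First, I would introduce a family of Poincaré series
\[
P_{\a, m}(z; \psi) = \sum_{\gamma \in \Gamma_\a \backslash \Gamma} \psi\bigl(\Im(\sigma_\a^{-1}\gamma z)\bigr)\, e\bigl(m \Re(\sigma_\a^{-1}\gamma z)\bigr),
\]
for a test function $\psi$ whose Mellin transform is rapidly decaying so absolute convergence is assured, and similarly $P_{\b,n}(z;\tilde\psi)$. Unfolding $\langle P_{\a,m}, P_{\b,n}\rangle$ against the $\Gamma_\a$-invariance and using the double-coset parameterization $\Gamma_\a \backslash \Gamma / \Gamma_\b \leftrightarrow \Gamma_\infty \backslash \sigma_\a^{-1} \Gamma \sigma_\b / \Gamma_\infty$ converts the inner product into precisely the geometric side $\sum_{\gamma \in \mathcal{C}_{\a\b}} S_{\a\b}(m,n;\gamma) \, \phi(\gamma)$, where $\phi$ is the resulting integral transform of $\psi \tilde{\psi}$. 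A Mellin inversion relates this convolution-type kernel back to $\phi(\gamma) = \gamma^{-1} F(4\pi\sqrt{|mn|}/\gamma)$ for a suitable $F$, yielding the normalization in \eqref{eq:holomorphicIntegralTransform}--\eqref{eq:minusIntegralTransform}.

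For the spectral side, I would apply Parseval in $L^2(\Gamma \backslash \H)$ to the same inner product, using the full spectral decomposition: the Maass cusp forms $\{u_j\}$, the continuous Eisenstein part $\{E_\c(\cdot, \tfrac12 + it)\}$, and, for the holomorphic contribution, the spaces $H_k(N)$ for even $k \geq 2$ (the holomorphic piece is extracted by projecting the weight-$0$ Poincaré series onto the anti-holomorphic square-integrable part, or equivalently by exploiting the identity $J_{2it}(x)-J_{-2it}(x)$ that separates the $\pm mn$ parity in the Whittaker expansion). Computing each inner product $\langle P_{\a,m}, u_j\rangle$ by unfolding yields the Fourier coefficient $\rho_{\a j}(m)$ multiplied by an integral against $W_{0,it_j}$; the analogous computation for the dual series gives $\overline{\rho_{\b j}(n)}$, and the remaining Bessel integrals collapse to $\phi_\pm(t_j)$, $\phi_h(k)$ as stated. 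The $\nu$-normalization in \eqref{eq:nuDefinition} is introduced precisely to absorb the gamma factors coming from the Whittaker integral.

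The main obstacle is the careful separation of the holomorphic spectrum from the Maass/continuous contribution for $mn>0$: since the Poincaré series has weight zero, the holomorphic forms do not appear as honest eigenfunctions, and one must instead exploit the fact that for $mn > 0$ the $J_{-2it}$ branch resonates with the poles of the relevant Gamma factors at half-integers $t = i(k-1)/2$, producing residues that match $\phi_h(k) i^k \nu_{\a g}(m)\overline{\nu_{\b g}(n)}$. A second technical point is justifying the contour movements: for $mn>0$ one combines $J_{2it}$ with $J_{-2it}$ to form $B_{2it}^+$, while for $mn<0$ the integrand collapses to a $K$-Bessel transform $B_{2it}^-$; rigorous justification requires estimates on the Fourier coefficients (e.g.\ $\sum_{|t_j|\leq T} |\nu_{\a j}(n)|^2 \ll (T^2 + n^{1/2+\varepsilon})(nN)^\varepsilon$ from the spectral large sieve) together with rapid decay of $\phi$. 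Finally, the choice of Atkin-Lehner scaling matrices from Section \ref{subsec:AtkinLehnerScaling} plays no role in the proof itself but enters when one wishes to identify the resulting $\rho_{\a j}(m)$ with Hecke eigenvalues via \eqref{eq:AtkinLehnerEigenfunction}.
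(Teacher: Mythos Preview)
The paper does not prove this theorem: it is stated as a quotation from Iwaniec's book \cite{IwaniecSpectralBook}, Chapter 9, and followed only by remarks noting the corrections of Blomer--Harcos--Michel and the reversal of $m,n$ relative to Iwaniec's conventions. So there is no ``paper's own proof'' to compare against; your sketch is already far more than what appears here.

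That said, your outline is a reasonable summary of the standard Poincar\'e-series derivation. One point worth sharpening: your description of how the holomorphic spectrum enters (``the $J_{-2it}$ branch resonates with the poles \dots\ at $t = i(k-1)/2$, producing residues'') is not the mechanism used in Iwaniec's treatment. There, one starts from the Neumann expansion $\frac{2}{\pi}\sum_{\ell \geq 1} (2\ell-1) J_{2\ell-1}(x)J_{2\ell-1}(y) = \delta(x-y)$ (in the relevant sense) together with the Petersson formula, and the holomorphic piece $\mathcal{K}_h$ arises directly from the Petersson side rather than as a residue of the Maass kernel. The residue picture you describe is closer to approaches via the resolvent or the Kontorovich--Lebedev inversion, which are valid but require more care to make rigorous at the exceptional and holomorphic points. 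If you intend to cite Iwaniec for the full argument, it would be cleaner to align your sketch with the Neumann-series route.
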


	{\bf Remarks.} Here we have implemented some corrections of \cite{IwaniecSpectralBook} noted by Blomer, Harcos, and Michel \cite{BlomerHarcosMichel2007BoundsForModularLFunctionsInLevel}.
	Moreover, the right hand side slightly differs from the formulas in \cite{IwaniecSpectralBook} in that the roles of $m$ and $n$ are reversed, consistent with the remark following Definition \ref{def:KloostermanSum}. 
	
	It is important to emphasize that the same scaling matrices must occur in both the definition of the Kloosterman sum and in the definition of the Fourier coefficients.
	
	We occasionally use the above integral representations, but predominantly prefer  Mellin-type integrals, and we next state those formulas.
	The integral transforms $\phi_h$, $\phi_+$ and $\phi_-$ are realized as convolutions on the group $(\R^+, \frac{\d x}{x} )$ and therefore their Mellin transforms can be easily computed.

	\begin{myprop}\label{prop:MellinOfKuznetsovBessel}
		The integral transforms $\phi_h$ and $\phi_\pm$ have the alternative formulas
		\begin{equation}\label{eq:holomorphicIntegralTransformInMellin}
			\phi_h(k) 
			=  \frac{1}{2\pi i} \int_{(1)} \frac{2^{s-1 }\Gamma\left(\frac{s + k-1}{2}\right)}{\Gamma\left(\frac{k+1-s}{2}\right)} 
				\widetilde{\phi}(s + 1) (4\pi \sqrt{mn})^{-s} \d s,  
		\end{equation}
		and
\begin{equation}
\label{eq:plusminusIntegralTransformInMellin}
\phi_{\pm}(t) =  \frac{1}{2 \pi i} \int_{(2)} h_{\pm}(s,t) \widetilde{\phi}(s +1) (4\pi \sqrt{mn})^{-s} \d s,
\end{equation}
where
\begin{equation*}
\label{eq:hplusminusDefinition}
h_{\pm}(s,t) = \begin{cases}
 \frac{1}{\pi} 2^{s-1 }\cos(\pi s/2) \Gamma(\tfrac s2  + it )\Gamma(\tfrac s2 -it), \qquad &\pm = + \\
 \frac{1}{\pi} 2^{s-1 } \cosh(\pi t) \Gamma(\tfrac{s}{2} + it) \Gamma(\tfrac{s}{2} - it), \qquad &\pm = -.
\end{cases}
\end{equation*}	
	\end{myprop}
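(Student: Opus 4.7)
The key observation, already signaled in the excerpt, is that each of $\phi_h$, $\phi_+$, $\phi_-$ is a multiplicative convolution on $(\mathbb{R}^+, dx/x)$ of a Bessel-type kernel against $x \cdot \phi(x)$, evaluated at $4\pi\sqrt{|mn|}$. Under the Mellin transform, multiplicative convolution becomes multiplication, so the plan is: (i) compute the Mellin transforms of the three kernels $J_{k-1}$, $B_{2it}^+$, $B_{2it}^-$; (ii) multiply by the Mellin transform of $x \cdot \phi(x)$, which with the convention $\widetilde{\phi}(s) = \int_0^\infty \phi(x) x^{s-1}\,dx$ is simply $\widetilde{\phi}(s+1)$; and (iii) apply the inverse Mellin transform at $x = 4\pi\sqrt{|mn|}$. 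Since $\phi$ has compact support in $(0,\infty)$, $\widetilde{\phi}$ is entire with rapid decay on vertical lines, so there are no convergence difficulties for the inverse Mellin integrals once the contour lies in the fundamental strip of the kernel's Mellin transform.

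For the holomorphic piece, I would invoke the classical Mellin--Bessel formula
\[
\int_0^\infty J_{k-1}(x) x^{s-1}\,\d x = \frac{2^{s-1}\Gamma\bigl(\tfrac{s+k-1}{2}\bigr)}{\Gamma\bigl(\tfrac{k+1-s}{2}\bigr)},
\]
valid in the strip $1-k < \Re(s) < 3/2$, and $\Re(s) = 1$ is inside this strip for $k \geq 2$; this gives \eqref{eq:holomorphicIntegralTransformInMellin} immediately. For $\phi_-$, the formula $\int_0^\infty K_{2it}(x) x^{s-1}\,\d x = 2^{s-2}\Gamma(\tfrac{s}{2}+it)\Gamma(\tfrac{s}{2}-it)$ (absolutely convergent for $\Re(s) > 2|\Im(t)|$, so certainly on $\Re(s) = 2$ for $t$ on or near $\mathbb{R}$) multiplied by the prefactor $\tfrac{2}{\pi}\cosh(\pi t)$ yields $h_-(s,t)$ on the nose.

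The only mildly delicate case is $\phi_+$. Using $\int_0^\infty J_{2it}(x) x^{s-1}\,\d x = 2^{s-1}\Gamma(\tfrac{s}{2}+it)/\Gamma(1-\tfrac{s}{2}+it)$ and the analogous formula with $t \mapsto -t$, the Mellin transform of $B_{2it}^+$ equals
\[
\frac{i \cdot 2^{s-1}}{2\sinh(\pi t)}\!\left[\frac{\Gamma(\tfrac{s}{2}+it)}{\Gamma(1-\tfrac{s}{2}+it)} - \frac{\Gamma(\tfrac{s}{2}-it)}{\Gamma(1-\tfrac{s}{2}-it)}\right].
\]
I would then clear denominators using the reflection identity $\Gamma(z)\Gamma(1-z) = \pi/\sin(\pi z)$, rewriting each bracketed term as $\pi^{-1}\Gamma(\tfrac{s}{2}+it)\Gamma(\tfrac{s}{2}-it)\sin(\pi(\tfrac{s}{2} \mp it))$. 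Their difference, by the sum-to-product formula $\sin A - \sin B = 2\cos(\tfrac{A+B}{2})\sin(\tfrac{A-B}{2})$ applied with $A = \pi(\tfrac{s}{2}-it)$ and $B = \pi(\tfrac{s}{2}+it)$, produces a factor $-2i\cos(\pi s/2)\sinh(\pi t)$; the $\sinh(\pi t)$ cancels the one in the denominator, the $i$'s cancel, and the result is exactly $h_+(s,t) = \pi^{-1} 2^{s-1}\cos(\pi s/2)\Gamma(\tfrac{s}{2}+it)\Gamma(\tfrac{s}{2}-it)$. The main, and essentially only, obstacle in the proof is keeping the reflection-formula bookkeeping straight in this last computation; everything else is a direct application of the Mellin convolution theorem to the integral representations \eqref{eq:holomorphicIntegralTransform}, \eqref{eq:plusIntegralTransform}, and \eqref{eq:minusIntegralTransform}.
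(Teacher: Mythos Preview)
Your proposal is correct and follows essentially the same approach as the paper: Mellin inversion applied to the convolution representations, using the standard Mellin transforms of $J_{k-1}$ and $K_{2it}$, and for $\phi_+$ the reflection formula together with a sine addition identity. In fact, you supply more detail in the $\phi_+$ computation than the paper does, which merely says the plus case ``follows from using \eqref{eq:JBesselMellinTransform}, the reflection formula for the gamma function, and the addition formulas for $\sin$.''
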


	\begin{proof}
		By Mellin inversion, we have
		\begin{equation}
		\label{eq:phihofkformula}
			 \phi_h(k) 
			= \frac{1}{2\pi i } \int_{(1)} \mathcal{M}(J_{k-1} * (x \cdot \phi),s) (4\pi \sqrt{mn})^{-s} \d s .
		\end{equation}
		The  
		 Mellin transform satisfies the property
		$
			\mathcal{M}(f * g, s) = \mathcal{M}(f, s) \mathcal{M}(g,s).
		$
		The Mellin transform of the $J$-Bessel function (see \cite[6.8 (1)]{ErdelyiTablesVol1}) is given as 
		\begin{equation} \label{eq:JBesselMellinTransform}
			\int_0^\infty J_\nu(x) x^s \frac{\d x}{x} = \frac{2^{s-1}\Gamma(\frac{s + \nu }{2})}{\Gamma(\frac{\nu  -s }{2} + 1)}.
		\end{equation}
		Also note that $\widetilde{x \phi}(s) = \widetilde{\phi}(s+ 1)$. Therefore \eqref{eq:phihofkformula} can be recast as \eqref{eq:holomorphicIntegralTransformInMellin}, as desired.

		For $\phi_-$ we have by \cite{ErdelyiTablesVol1} \S6.8 (26) that 
		\begin{equation}\label{eq:KBesselMellinTransform}
			\int_0^\infty K_{2it}(x) x^{s} \frac{\d x}{x} = 2^{s-2} \Gamma(\tfrac{s}{2} + it) \Gamma(\tfrac{s}{2} - it),
		\end{equation}
		and therefore we obtain the minus case of \eqref{eq:plusminusIntegralTransformInMellin}.

The plus case follows from using \eqref{eq:JBesselMellinTransform}, the reflection formula for the gamma function, and the addition formulas for $\sin$. 
	\end{proof}

\subsection{Spectral large sieve} \label{subsec:spectralLargeSieve}
On $\Gamma_0(N)$, we normalize the Petersson inner product by
\begin{equation}
 \langle g_1, g_2 \rangle  = \int_{\Gamma_0(N) \backslash \mathbb{H}} g_1(z) \overline{g_2}(z) y^{\kappa} \frac{dx dy}{y^2}.
\end{equation}
Quoting from \cite{BlomerHarcosMichel2007BoundsForModularLFunctionsInLevel}, 
we have if $u_j$ ($g$, respectively) is a $L^2$-normalized cuspidal Hecke-Maass (holomorphic, resp.) newform of level $N$ with trivial nebentypus, then 
\begin{equation}
\label{eq:nuNormalizationMaass}
 |\nu_{\infty  j}(1)|^2 
 =   N^{-1} (N(1+ |t_j|))^{o(1)},
\quad \text{and} \quad
|\nu_{\infty g}(1)|^2
= N^{-1} (Nk)^{o(1)}.
\end{equation}
With the normalization \eqref{eq:nuDefinition}, and assuming $\a$ is an Atkin-Lehner cusp, the spectral large sieve inequalities give 
\begin{equation*}
 \sum_{|t_j| \leq T} \Big| \sum_{m \leq M} a_m \nu_{\mathfrak{a} j}(m) \Big|^2 \ll \Big(T^2 + \frac{M}{N}\Big)(MNT)^{\varepsilon} \sum_{m \leq M} |a_m|^2,
\end{equation*}
and
\begin{equation}\label{eq:spectralLargeSieveContinuousPart}
\sum_{\mathfrak{c} } \int_{|t| \leq T} \Big| \sum_{m \leq M} a_m \nu_{\mathfrak{a} \mathfrak{c}}(m,\tfrac12 + it) \Big|^2 \d t \ll \Big(T^2 + \frac{M}{N}\Big) (MNT)^{\varepsilon} \sum_{m \leq M} |a_m|^2,
\end{equation}
and
\begin{equation*}
 \sum_{k \leq T}  \sum_{g \in H_k(N)} \Big|\sum_{m \leq M} a_m \nu_{\mathfrak{a} g}(m)\Big|^2 \ll \Big(T^2 + \frac{M}{N}\Big)(MNT)^{\varepsilon} \sum_{m \leq M} |a_m|^2.
\end{equation*}

\subsection{Newforms and oldforms} \label{subsec:oldForms}
Atkin and Lehner showed the orthogonal decomposition
\begin{equation*}
 S_{\kappa}(N) = \bigoplus_{LM = N} \bigoplus_{f \in H_{\kappa}^*(M)} S_{\kappa}(L;f),
\end{equation*}
where $S_{\kappa}(L;f)$ is the span of forms $f_{\vert \ell}$, with $\ell  \mid  L$, where 
\begin{equation}
\label{eq:fslashelldef}
f_{\vert \ell}(z) = \ell^{{\kappa}/2} f(\ell z).
\end{equation}
Their proof works with virtually no changes to cover the case of Maass forms (which have weight $0$, in our context).  For the rest of this section, we focus on the Maass case, but with a general weight ${\kappa}$ (in order to most easily translate the results to the holomorphic case).  

The formula \eqref{eq:fslashelldef} means that (let us agree to drop the subscript $\infty$ when working with the Fourier expansion at $\infty$)
\begin{equation}
\label{eq:fvertellFourierCoefficientFormula}
 \nu_{f \vert_{\ell}}(n) = \ell^{1/2} \nu_{f}(n/\ell).
\end{equation}

Blomer and Mili\'{c}evi\'{c} have shown in \cite[Section 6]{BlomerMilicevic2015SecondMoment} that there exists a basis of $S_{\kappa}(L;f)$ of the following type.  Let $f^*$ denote a newform of level $M | N$, 
$L^2$-normalized as a \emph{level $N$} form, which implies 
$|\nu_{\infty f^*}(1)|^2 = N^{-1} (N(1+|t_j|))^{o(1)}$.
Then there exists an orthonormal basis for $S_{\kappa}(L;f)$ of the form $g_m = \sum_{\ell| L} c_{\ell, m} f^* \vert_{\ell}$, where $c_{\ell,m} \ll N^{\varepsilon}$.  
For an Atkin-Lehner cusp $\a$, we have $|\nu_{\mathfrak{a}  f^*}(1)|= |\nu_{\infty  f^*}(1)|$, by \eqref{eq:AtkinLehnerEigenfunction}  .

We need the following information on the Fourier coefficients of $f^* \vert_{\ell}$ at Atkin-Lehner cusps:
\begin{mylemma}
\label{lemma:FourierExpansionDifferentCusps}
Suppose $\mathfrak{a}$ is an Atkin-Lehner cusp of $\Gamma_0(N)$, and $f^*$ is a newform of level $M$ with $LM=N$.  Then the set of lists of Fourier coefficients 
\{$(\nu_{\mathfrak{a}  f^*\vert_{\ell}}(n))_{n \in \mathbb{N}}$ : $\ell | L$\}
is, up to signs, the same as the set of lists of Fourier coefficients 
\{$(\nu_{\mathfrak{\infty}  f^*\vert_{\ell}}(n))_{n \in \mathbb{N}}$ : $\ell | L$\}.
\end{mylemma}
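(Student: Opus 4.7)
The plan is to show that the Atkin-Lehner operator $W_s^{(N)}$ at level $N = rs$ (which is our chosen scaling matrix $\sigma_{1/r}$) permutes the oldform basis $\{f^*\vert_\ell : \ell \mid L\}$ of $S_\kappa(L; f^*)$ up to signs. By the definition of $\sigma_{1/r}$, the Fourier coefficients at $\mathfrak{a} = 1/r$ of $f^*\vert_\ell$ equal the Fourier coefficients at $\infty$ of $(f^*\vert_\ell)\vert W_s^{(N)}$, so it suffices to produce a bijection $\iota$ on divisors of $L$ with
\[
(f^*\vert_\ell)\vert W_s^{(N)} = \pm f^*\vert_{\iota(\ell)}.
\]

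First I would exploit the coprimality $(r, s) = 1$ together with $N = rs = LM$ to obtain coprime factorizations $L = L_r L_s$, $M = M_r M_s$, and $\ell = \ell_r \ell_s$, where the subscript-$r$ parts divide $r$ and the subscript-$s$ parts divide $s$. Since $M_s \parallel M$, the Atkin-Lehner operator $W_{M_s}^{(M)}$ is defined at the newform level, and the $f^*$-analog of \eqref{eq:AtkinLehnerEigenfunction} gives $f^*\vert W_{M_s}^{(M)} = \eta_{M_s}(f^*) \cdot f^*$. The candidate bijection is the involution $\iota(\ell) = \ell_r L_s/\ell_s$ on divisors of $L$.

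The central computation is a matrix identity of the form
\[
A_\ell \cdot W_s^{(N)} \;=\; \ell_s \cdot \gamma \cdot W_{M_s}^{(M)} \cdot A_{\iota(\ell)},
\]
for some $\gamma \in \Gamma_0(M)$, where $A_n = \bsm n & 0 \\ 0 & 1 \esm$; the scalar factor $\ell_s$ is forced by comparing determinants on both sides. Applying the weight-$\kappa$ slash action to $f^*$, and using that scalar matrices act trivially (since $\det(cI) = c^2$ cancels the conformal factor) and that $f^*\vert \Gamma_0(M) = f^*$, yields
\[
(f^*\vert_\ell)\vert W_s^{(N)} = \eta_{M_s}(f^*) \cdot f^*\vert_{\iota(\ell)}.
\]
Reading off Fourier coefficients at $\infty$ on both sides then delivers the lemma.

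The main obstacle is producing $\gamma$ explicitly: one has freedom to choose the entries of $W_s^{(N)}$ and $W_{M_s}^{(M)}$ within their respective $\Gamma_0(N)$- and $\Gamma_0(M)$-cosets, and one must verify that with suitable choices the product $A_\ell W_s^{(N)} A_{\iota(\ell)}^{-1} (W_{M_s}^{(M)})^{-1}/\ell_s$ lands in $\Gamma_0(M)$. This reduces to a prime-by-prime check of divisibility and congruences, using the defining Bezout identities $sad - rbc = 1$ at level $N$ and $M_s a' d' - M_r b' c' = 1$ at level $M$, together with the coprime factorizations of $\ell$ and $s$. Closely related matrix manipulations are carried out in \cite{DoubleCosetPaper}, which could be cited to shortcut the argument.
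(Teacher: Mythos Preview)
Your proposal is correct and follows the standard Atkin--Lehner-theoretic approach: one shows that slashing with $W_s^{(N)}$ permutes the set $\{f^*\vert_\ell : \ell \mid L\}$ up to the sign $\eta_{M_s}(f^*)$, via a matrix identity relating $A_\ell W_s^{(N)}$ to $W_{M_s}^{(M)} A_{\iota(\ell)}$ modulo $\Gamma_0(M)$ and scalars. The paper itself does not give a proof but simply cites \cite[Lemma~2.5]{DoubleCosetPaper}, whose argument is of exactly this type; your sketch is essentially a self-contained version of that computation.
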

See \cite[Lemma 2.5]{DoubleCosetPaper} for a proof.

It is crucial for our later purposes to bound the Hecke eigenvalues of newforms at primes dividing the level.
Let $f^*$ be a newform (Maass or holomorphic) of level $M$ as above. 
If $p|M$, then 
\begin{equation}\label{eq:smallHecke}
|\lambda_{f^*}(p)| = p^{-1/2},
\end{equation}
for which see \cite[Theorem 3 (iii)]{WinnieLiNewforms} or \cite[Theorem 3 (iii)]{Atkin-Lehner} (the proofs carry over to Maass forms with virtually no changes).

%


\section{Inert functions and oscillatory integrals}\label{section:InertFunctions}
  \subsection{Basic Definition}

    We begin with a class of functions defined by derivative bounds.
    \begin{mydefi}\label{inert}
A family $\{w_T\}_{T\in \mathcal{F}}$ of smooth  
functions supported on a product of dyadic intervals in $\R_{>0}^d$ is called $X$-inert if for each $j=(j_1,\ldots,j_d) \in \Z_{\geq 0}^d$ we have 
\begin{equation}\label{eq:inert}
C(j_1,\ldots,j_d):= \sup_{T \in \mathcal{F}} \sup_{(x_1, \ldots, x_d) \in \R_{>0}^d} X_T^{-j_1- \cdots -j_d}\left| x_{1}^{j_1} \cdots x_{d}^{j_d} w_T^{(j_1,\ldots,j_d)}(x_1,\ldots,x_d) \right| < \infty.
\end{equation}
\end{mydefi}

    In our desired applications, our family of inert functions $\{ w_T \}_{T \in \mathcal{F}}$ 
    will be indexed by tuples $T$ of the form $T = (M_1, M_2, N_1, N_2, N_3, C, a, \dots)$, as well as some other 
    parameters that arise as dual variables after Poisson summation. Each of these parameters is polynomially bounded in $q$.  Furthermore, the relevant values of $X$ will 
    be $c(\varepsilon) q^{\varepsilon}$ for some constant $c(\varepsilon)$.

In addition, the weight functions encountered in this paper will typically be represented in the form
\begin{equation}
 P(T) e^{i\phi(x_1, \dots, x_d)} w_T(x_1, \dots, x_d),
\end{equation}
where $P(T)$ is some simple function depending on the tuple $T$ only, $\phi(x_1, \dots, x_d)$ is the phase, and $w_T$ is an inert function.  We wish to understand how such a function behaves under Fourier and Mellin transformations.  In Section \ref{section:InertFourier}, we analyze the Fourier and Mellin transforms in case $\phi = 0$, and in Section \ref{subsec:stationaryPhase} we discuss the stationary phase analysis of the Fourier transform in the presence of a phase $\phi$.

%

  \subsection{Fourier and Mellin transforms}
    \label{section:InertFourier}
    Inert functions behave regularly under the Fourier transform.  Suppose that $w_T(x_1, \dots, x_d)$ is $X$-inert, and let
    \begin{equation*}
        \widehat{w_T}(t_1, x_2, \dots, x_d) = \intR w_T(x_1, \dots, x_d) e(-x_1 t_1) d x_1
    \end{equation*}
    denote its Fourier transform in the $x_1$-variable.  Suppose that the support of $w_T$ is such that $x_i \asymp X_i$ 
    for each $i$.  Now $\widehat{w_T}$ is not compactly-supported in $t_1$, so it will not be inert.  However, if we let 
    $W_{T, Y_1}(t_1, x_2, \dots x_d) = w_{Y_1}(t_1) \widehat{w_T}(t_1, x_2, \dots x_d)$ where $\{w_{Y_1} : Y_1 > 0 \}$ is 
    a $1$-inert family, supported on $t_1 \asymp Y_1$ (or $-t_1 \asymp Y_1$) then $X_1^{-1} W_{T, Y_1}$ forms an $X$-inert 
    family.  Moreover, we have by repeated integration by parts, that 
    \begin{equation*}
         W_{T,Y_1}(t_1, x_2, \dots, x_d) \ll X_1  \Big(1 + \frac{|t_1| X_1 }{X}\Big)^{-A} \asymp  \Big(1 + \frac{Y_1 X_1}{X}\Big)^{-A},
    \end{equation*}
    so that in practice we may restrict our attention to $Y_1 \ll \frac{X q^{\varepsilon}}{X_1}$.  See \cite{InertFunctionPaper} for more details.

A similar integration by parts argument also treats the Mellin transform, and we record the result as follows:
\begin{mylemma}
\label{lemma:MellinTransformInertSection}
	Let $w_T(x_1,x_2,\ldots, x_d)$ be a family of $X$-inert functions such that $x_1$ is supported in the dyadic interval $[X_1 , 2X_1]$.
	Let
	\[
		\widetilde{w}_T(s, x_2, \ldots, x_d) = \int_0^\infty w_T(x, x_2, \ldots , x_d) x^s \frac{\d x}{x}.
	\]
	Then we have $\widetilde{w}_T(s,x_2,\ldots, x_d) = X_1^s W_T(s,x_2, \ldots x_n)$ where $W_T(s, \cdot)$ 
	is a family of $X$-inert functions in all the remaining $x_i$, which is entire in $s$ and has rapid decay for $|\Im(s)| \gg X^{1+\varepsilon}$.
\end{mylemma}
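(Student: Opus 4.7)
The plan is to extract the scale $X_1^s$ by a linear change of variables and then prove both entirety and rapid decay in $s$ by repeated integration by parts. Specifically, substitute $x_1 = X_1 u$ in the defining integral to obtain
\[
\widetilde{w}_T(s, x_2, \ldots, x_d) = X_1^s \int_0^\infty v_T(u, x_2, \ldots, x_d)\, u^s \frac{\d u}{u},
\]
where $v_T(u, x_2, \ldots, x_d) := w_T(X_1 u, x_2, \ldots, x_d)$ is smooth and supported in $u \in [1, 2]$. Since $\partial_u = X_1 \partial_{x_1}$ and $x_1 \asymp X_1$ on the support, the $X$-inertness of $w_T$ translates into $|\partial_u^j v_T| \ll X^j$ uniformly, with the original inert bounds in the $x_i$ ($i \geq 2$) passing through unchanged. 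I will then denote by $W_T(s, x_2, \ldots, x_d)$ the remaining $u$-integral, so that $\widetilde{w}_T = X_1^s W_T$ as in the statement.

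Entirety of $W_T$ in $s$ is immediate: because $v_T$ is smooth with compact support in $u \in (0,\infty)$, the Mellin integral converges absolutely and uniformly on compact subsets of $\mathbb{C}$, hence defines an entire function. For the rapid decay when $|\Im(s)| \gg X^{1+\varepsilon}$, I would integrate by parts $k$ times in $u$, using $\frac{\d}{\d u}(u^s/s) = u^{s-1}$; the boundary terms vanish by compact support, giving
\[
W_T(s, x_2, \ldots, x_d) = \frac{(-1)^k}{s(s+1)\cdots(s+k-1)} \int_0^\infty \partial_u^k v_T(u, x_2, \ldots, x_d)\, u^{s+k-1}\, \d u.
\]
Combined with $|\partial_u^k v_T| \ll X^k$ and $u \asymp 1$ on the support, this yields $W_T \ll (X/|s|)^k$, which beats any fixed polynomial rate once $k$ is chosen large (e.g., $k = \lceil A/\varepsilon \rceil$ for the decay rate $|s|^{-A}$ in the range $|\Im(s)| \gg X^{1+\varepsilon}$).

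To verify that $W_T(s, \cdot)$ is itself an $X$-inert family in $x_2, \ldots, x_d$, I would differentiate under the integral sign. Since $\partial_u$ and $\partial_{x_i}$ commute, the same integration-by-parts argument shows that the weighted partial derivatives $x_2^{j_2}\cdots x_d^{j_d}\, \partial_{x_2}^{j_2}\cdots \partial_{x_d}^{j_d} W_T$ are controlled by $X^{j_2+\cdots+j_d}$ times the uniform $u$-integration constant, with the latter bounded uniformly in $s$ on vertical strips $|\Re(s)| \leq A$. I do not anticipate any real obstacle: the content of the lemma is essentially that the Mellin transform respects the inert-function formalism, and its proof is a routine packaging of the change of variables $x_1 \mapsto X_1 u$ with repeated integration by parts in $u$.
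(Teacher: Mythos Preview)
Your proposal is correct and follows exactly the approach the paper indicates: the paper does not give a detailed proof but simply says ``A similar integration by parts argument also treats the Mellin transform,'' referring back to the Fourier transform discussion in Section~\ref{section:InertFourier}. Your change of variables $x_1 = X_1 u$ followed by repeated integration by parts in $u$ is precisely this argument made explicit, and the verification of inertness in the remaining variables by differentiating under the integral is routine as you note.
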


\subsection{Stationary phase} \label{subsec:stationaryPhase}
Next we synthesize some results from \cite{BKY} and \cite{InertFunctionPaper}.
\begin{mylemma}
 \label{lemma:exponentialintegral}
 \label{lemma:statphase}
 Suppose that $w = w_T(t,t_2, \dots, t_d)$ is a family of $X$-inert functions supported on $t \asymp Z$, $t_i \asymp X_i$ for $i=2,\dots, d$.  Suppose that on the support of $w_T$, $\phi$ satisfies
\begin{equation}
\label{eq:phiderivatives}
 \frac{\partial^{a_1 + a_2 + \dots + a_d}}{\partial t^{a_1} \dots \partial t_d^{a_d}} \phi(t , t_2, \dots, t_d) \ll \frac{Y}{Z^{a_1}} \frac{1}{X_2^{a_2} \dots X_d^{a_d}},
 \end{equation}
i.e., $Y\inv \phi $ is $1$-inert. Suppose that $Y/X^2 \gg q^\delta$ for some $\delta>0$. Let
\begin{equation*}
 I = \intR w_T(t, t_2, \dots, t_d) e^{i \phi(t,t_2, \dots, t_d)} dt.
\end{equation*}
\begin{enumerate}
 \item If $|\frac{\partial}{\partial t} \phi(t,t_2,\dots,t_d)| \gg \frac{Y}{Z}$ for all $t$ in the support of $w_T$, then $I \ll_A q^{-A}$, for $A >0$ arbitrarily large.
\item If $\frac{\partial^2}{\partial t^2} \phi(t, t_2, \dots, t_d) \gg \frac{Y}{Z^2}$ for all $t, t_2, \dots, t_d$ in the support of $w$, and there exists $t_0 \in \mr$ such that $\phi'(t_0) = 0$ (here, $\phi'$ denotes the derivative with respect to $t$, and note $t_0$ is necessarily unique), then 
\begin{equation}
\label{eq:IasymptoticMainThm}
I =  \frac{Z}{\sqrt{Y}} e^{i \phi(t_0, t_2, \dots, t_d)} W_T(t_2, \dots, t_d) + O(q^{-A}),
\end{equation}
for some $X$-inert family of functions $W_T$.
 \end{enumerate}
\end{mylemma}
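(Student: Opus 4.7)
My plan is to handle the two parts by the classical methods of nonstationary and stationary phase, respectively, in both cases with careful bookkeeping to preserve the $X$-inert class. For part (1), I will apply repeated integration by parts exploiting the lower bound on $\phi_t$. For part (2), I will localize to a neighborhood of the stationary point, change variables to put the phase into Morse normal form, and expand the resulting oscillatory integral asymptotically.

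For part (1), write $e^{i\phi}=(i\phi_t)^{-1}\partial_t e^{i\phi}$ and integrate by parts. The identity $(w_T/\phi_t)' = w_T^{(1)}/\phi_t - w_T\phi_{tt}/\phi_t^2$, together with $|\phi_t| \gg Y/Z$, $|\phi_{tt}| \ll Y/Z^2$, and $|\partial_t^k w_T| \ll (X/Z)^k$, shows that each iteration shrinks the amplitude by a factor $O(X^2/Y) \ll q^{-\delta}$ while keeping the new amplitude inside the $X$-inert class. After $k \geq A/\delta$ iterations, the integrand is pointwise $O(q^{-A})$ on a set of length $Z = q^{O(1)}$, yielding $I \ll q^{-A}$.

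For part (2), the stationary point $t_0 = t_0(t_2, \ldots, t_d)$ is a smooth function of the auxiliary parameters by the implicit function theorem applied to $\phi_t = 0$, with derivatives controlled by \eqref{eq:phiderivatives}. I introduce a smooth cutoff isolating the region $|t-t_0| \leq (Z/\sqrt{Y/X^2})\,q^\varepsilon$; outside this region a dyadic decomposition combined with part (1) on each annulus yields contribution $O(q^{-A})$, since the Taylor expansion of $\phi_t$ around $t_0$ forces $|\phi_t|$ to be bounded below by what part (1) requires. Inside the stationary region I change variables $u=(t-t_0)\sqrt{\phi_{tt}(t_0)/2}$ and factor $e^{i\phi} = e^{i\phi(t_0)}\,e^{iu^2}\,e^{i\psi(u)}$, where the Morse remainder $\psi$ is of size $(X^2/Y)^{1/2} \ll q^{-\delta/2}$ on the support. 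Expanding $e^{i\psi}$ as a power series, truncating after $O(A/\delta)$ terms, and integrating term by term against the resulting (still inert) amplitude reduces matters to Fresnel integrals $\int u^j e^{iu^2}\,du$; the leading $j=0$ term $\sqrt{\pi}\,e^{i\pi/4}$, multiplied by the Jacobian $\sqrt{2/\phi_{tt}(t_0)} \asymp Z/\sqrt{Y}$, produces the asserted prefactor, while every other term carries an additional factor $\ll q^{-\delta/2}$ that either is absorbed into $W_T$ or is part of the $O(q^{-A})$ remainder.

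The main obstacle is the verification that the resulting amplitude $W_T(t_2, \ldots, t_d)$ is genuinely $X$-inert in all auxiliary parameters. This reduces to differentiating the full expansion in each $t_j$ and checking that the derivatives inflate by at most the allowed factor $(X/X_j)^{|\alpha_j|}$. The implicit function theorem gives $\partial_{t_j} t_0 = -\phi_{t t_j}(t_0)/\phi_{tt}(t_0)$, and by \eqref{eq:phiderivatives} this has size $Z/X_j$ with inert-type higher derivatives; a Fa\`a di Bruno bookkeeping then propagates the same bound through $\phi(t_0)$, $\phi_{tt}(t_0)^{-1/2}$, and each coefficient of the Taylor expansion of $\psi$. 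Keeping track of how many derivatives in $t_j$ are absorbed at each stage, and showing that no spurious factors of $X/X_j$ beyond the allowed power appear, is the delicate combinatorial heart of the argument.
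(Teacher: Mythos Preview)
Your outline is correct and follows the standard route: repeated integration by parts for (1), and Morse-lemma localization plus Fresnel expansion for (2), with the real work being the inertness bookkeeping in the auxiliary variables via the implicit function theorem and Fa\`a di Bruno. One small slip: each integration by parts in (1) gains a factor $O(X/Y)$ rather than $O(X^2/Y)$ (the term $w'/\phi_t$ has size $(X/Z)(Z/Y)=X/Y$), but since $X/Y \leq X^2/Y \ll q^{-\delta}$ whenever $X\gg 1$, which holds in the paper's setting, this does not affect the conclusion.

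As for comparison with the paper: the paper does not actually prove this lemma. It states that part (1) follows from \cite[Lemma 8.1]{BKY}, that the one-variable case of part (2) is \cite[Proposition 8.2]{BKY}, and that the multi-variable extension needed here is carried out in the companion paper \cite{InertFunctionPaper}. So you have supplied a genuine proof sketch where the paper only gives citations. Your approach is exactly the classical one and is presumably what is executed in those references; in particular the delicate point you flag --- that the Fa\`a di Bruno bookkeeping for $\partial_{t_j}t_0$, $\phi(t_0)$, $\phi_{tt}(t_0)^{-1/2}$, and the Morse remainder preserves $X$-inertness in $t_2,\dots,t_d$ --- is precisely the content that \cite{InertFunctionPaper} was written to record.
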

Part (1) above follows from Lemma 8.1 of \cite{BKY}.  The one-variable version of (2) above is contained in Proposition 8.2 of \cite{BKY}, which was improved to many variables in \cite{InertFunctionPaper}.

\subsection{A convention}
\label{section:InertConvention}
We often re-normalize a family of inert functions.  For a simple example to illustrate this, say $w_T(x)$ is $X$-inert,   supported on $x \asymp N$.  We can write $x^{-1/2} w_T(x) = N^{-1/2} W_T(x)$, where $W_T(x) = (x/N)^{-1/2} w_T(x)$.  Then $W_T$ forms a new $X$-inert family with a different list of constants $C(j)$.  When doing this too many times it becomes difficult to find notation for all the new functions that arise, so we may on occasion replace $W_T$ by $w_T$, which is supposed to represent a generic inert function.

Another useful convention is, when focusing only a particular variable (say  $n$), we may write  $w_N(n,\cdot)$  where the $\cdot$ is a placeholder for the remaining variables.  Writing all the variables is  unwieldly, and the notion of inertness keeps track of the important behavior of the weight function with respect to the remaining variables.

We will also say that a family of inert functions $\left\{w_T(x_1,\ldots, x_d)\right\}$ such that each variable $x_i$ is supported in $[X_i, 2X_i]$ is  \emph{very small} to mean a quantity which is of size $O_A((X_1\dots  X_dq)^{-A})$ for every $A>0$, and uniformly in the family $T\in \mathcal{F}$.
More generally, we will use this terminology ``very small'' for more general quantities, not just inert functions.  In practice, we will largely ignore very small error terms.

\section{Preliminaries } \label{sec:preliminaries}

\subsection{Petersson Trace Formula}
   The Petersson trace formula reads
    \[
        \sum_{f \in H_{\kappa}(q)} w_f \lambda_f(n) \lambda_f(m) 
        = \delta_{n=m} 
        + 2\pi i^{-\kappa} \sum_{c \equiv 0 \shortmod{q}} 
        \frac{S(m,n;c)}{c} J_{\kappa-1} \Big(\frac{4\pi \sqrt{mn}}{c}\Big),
    \]  
    where $w_f =     q^{-1+o(1)}$ are the Petersson weights.
    Define
    \[
        \mathcal{M} = \mathcal{M}(q) = \sum_{f \in H_{\kappa}(q)} w_f L(\tfrac12, f)^5.
    \]
    Our main result, Theorem \ref{thm:mainthm}, is equivalent to
    \begin{equation}\label{eq:goalAlpha}
        \mathcal{M} \ll_{\kappa,\vep} q^{\theta + \vep}.
    \end{equation}

\subsection{The approximate functional equations.}
Let $\kappa$ be a positive even integer, $q$ a prime, and $f$ a Hecke cusp form of weight $\kappa$ and level $q$.
Put 
\begin{equation*}
	\gamma(s,\kappa) = \pi^{-s} \Gamma\left( \frac{s + \frac{\kappa-1}{2}}{2}\right) \Gamma\left(\frac{s + \frac{\kappa+1}{2}}{2}\right).
\end{equation*}
Let $G_i$ $(i = 1,2)$ be an even entire function decaying rapidly in vertical strips such that $G_i(0) = 1$. 
Define
\[
	V_1(x) = \frac{1}{2\pi i} \int_{(1)} \frac{G_1(u)}{u} \frac{\gamma( \tfrac12+u, \kappa)}{\gamma(\tfrac12, \kappa)} x^{-u} \d u,  \quad V_2(x) = \frac{1}{2\pi i} \int_{(1)} \frac{G_2(u)}{u} \frac{\gamma(\tfrac12 +u, \kappa)^2}{\gamma(\tfrac12, \kappa)^2} x^{-u} \d u.
\]
If $x \gg q^{\vep}$ then by shifting the contour of integration arbitrarily far to the right, we obtain that $V_i(x) \ll_{\kappa, A} (xq)^{-A}$.  Here  and throughout, we view $\kappa$ as fixed, and $q$ as becoming large.
For later use, it will be important to assume $G_i(1/2)=0$.

\begin{myprop}
\label{prop:AFELsquared}
With notation as above, we have
\[
	L(\tfrac12,f)^2 = 2 \sum_{m=1}^{\infty} \frac{\lambda_f(m) \tau_2(m)}{\sqrt{m} } V\left(\frac mq\right),
\]
where $\tau_2(m)$ is the (two-fold) divisor function, and
\[
	V(x) = \sum_{(e,q) = 1}^\infty V_{2}(e^2 x)/e = \frac{1}{2\pi i} \int_{(1)} \widetilde{V_{2}}(u) \zeta_q(1 + 2u) x^{-u} \d u,
\]
where $\zeta_q(s) = (1- q^{-s}) \zeta(s)$ is the Riemann zeta function with the $q\th$ Euler factor missing.
\end{myprop}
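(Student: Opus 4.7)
The plan is to apply the standard approximate functional equation technique, so the proof should be short. First, I would introduce the completed $L$-function $\Lambda(s,f) = q^{s/2}\gamma(s,\kappa)L(s,f)$, which is entire and satisfies the self-dual functional equation $\Lambda(s,f) = \varepsilon_f \Lambda(1-s,f)$ with $\varepsilon_f = \pm 1$. Squaring removes the root-number sign, so $\Lambda(s,f)^2 = \Lambda(1-s,f)^2$. I would then consider
\[
I = \frac{1}{2\pi i}\int_{(1)} \Lambda(\tfrac12 + u, f)^2 \frac{G_2(u)}{u}\,du,
\]
shift the contour to $\Re u = -1$ to pick up the residue $\Lambda(\tfrac12,f)^2$ at the simple pole $u=0$ (using $G_2(0)=1$), and then in the shifted integral substitute $u \mapsto -u$, exploiting the functional equation together with $G_2(-u) = G_2(u)$ to recognize the shifted integral as $-I$. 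This yields the symmetrized identity $\Lambda(\tfrac12, f)^2 = 2I$.

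Next, I would divide by $q^{1/2}\gamma(\tfrac12,\kappa)^2$ and unpack $\Lambda(\tfrac12 + u, f)^2 = q^{1/2+u}\gamma(\tfrac12+u,\kappa)^2 L(\tfrac12+u,f)^2$ to obtain
\[
L(\tfrac12, f)^2 = \frac{2}{2\pi i}\int_{(1)} L(\tfrac12+u,f)^2 \frac{\gamma(\tfrac12+u,\kappa)^2}{\gamma(\tfrac12,\kappa)^2}\, q^u \,\frac{G_2(u)}{u}\,du.
\]
To pass to a Dirichlet series, I would use the factorization $L(s,f)^2 = \zeta_q(2s) \sum_m \tau_2(m)\lambda_f(m)\, m^{-s}$. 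At primes $p \neq q$ this is a consequence of the Hecke multiplicativity relation $\lambda_f(a)\lambda_f(b) = \sum_{d \mid (a,b)} \lambda_f(ab/d^2)$; at the level prime $q$, one instead uses the simpler relation $\lambda_f(q^k) = \lambda_f(q)^k$, which gives $\sum_{k\geq 0}(k+1)\lambda_f(q)^k q^{-ks} = (1-\lambda_f(q)q^{-s})^{-2} = L_q(s,f)^2$ directly, with no auxiliary $(1-q^{-2s})^{-1}$ factor. This is precisely why the $q$-th Euler factor of $\zeta$ is omitted from $\zeta_q$.

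Substituting this expansion and interchanging sum and integral (absolute convergence on the line $\Re u = 1$ justifies this), I would arrive at
\[
L(\tfrac12, f)^2 = 2 \sum_{m=1}^\infty \frac{\tau_2(m)\lambda_f(m)}{\sqrt{m}} \cdot \frac{1}{2\pi i}\int_{(1)} \widetilde{V_2}(u)\, \zeta_q(1+2u)\, (m/q)^{-u}\, du,
\]
which is exactly $V(m/q)$ in the contour-integral form upon recognizing $\widetilde{V_2}(u) = \frac{G_2(u)}{u}\frac{\gamma(\tfrac12+u,\kappa)^2}{\gamma(\tfrac12,\kappa)^2}$ as the Mellin transform of $V_2$. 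The equivalent series form $V(x) = \sum_{(e,q)=1} V_2(e^2 x)/e$ then follows by expanding $\zeta_q(1+2u) = \sum_{(e,q)=1} e^{-1-2u}$ and interchanging summation with Mellin inversion. There is no substantive obstacle; the only place any care is needed is the local computation at the level prime $q$, which is routine (and consistent with the Hecke eigenvalue bound \eqref{eq:smallHecke}).
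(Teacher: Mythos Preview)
Your proposal is correct and follows essentially the same approach as the paper. The paper derives the Dirichlet series identity $L(s,f)^2 = \zeta_q(2s)\sum_m \tau_2(m)\lambda_f(m)m^{-s}$ via the Hecke relation (exactly as you do, with the $(e,q)=1$ restriction arising from the level prime) and then appeals to the standard approximate functional equation machinery (citing \cite[Theorem~5.3]{iwaniec2004analytic}); you have simply written out that contour-shift argument explicitly.
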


\begin{proof}
By the Hecke relation, we have
\begin{equation}
\label{eq:LsquaredWithHecke}
	L^2(s,f) 
	= \sum_{m_1,m_2 = 1}^\infty \sum_{\substack{e | (m_1,m_2)\\ (e,q) =1 }} \frac{\lambda_f(m_1m_2/e^2)}{(m_1m_2)^{s}} 
	= \sum_{(e,q) =1}^\infty \frac{1}{e^{2s}} \sum_{m=1}^\infty \frac{\tau_2(m) \lambda_f(m) }{m^s}.
\end{equation}
Then from the functional equation $L^2(s,f) \gamma(s,\kappa)^2  q^s =: \Lambda(s,f)^2 = \Lambda(1-s,f)^2$ we get the formula, as in \cite[Theorem 5.3]{iwaniec2004analytic}.
\end{proof}

\begin{myprop}
\label{prop:AFELcubed}
	Let $\vep_f$ be the sign of the functional equation for $L(s,f)$. Then 
	\begin{equation}
	\label{eq:LcubedAFE}
		L(\tfrac12,f)^3 = (1 + \vep_f)^3 \sum_{\substack{a=1\\ (a,q) =1}}^\infty \frac{\mu(a)}{a^{3/2}}
		\sum_{n=1}^\infty \frac{\lambda_f(na) }{\sqrt{n} } \tau_3(n, F_{a, \sqrt{q}}),
	\end{equation}
	where 
	\begin{equation}
	\label{eq:tau3Def}
		\tau_3(n,F_{a, \sqrt{q}}) = \sum_{n_1n_2n_3 = n} F_a\left(\frac{n_1}{\sqrt{q}},\frac{n_2}{\sqrt{q}},\frac{n_3}{\sqrt{q}}\right),
	\end{equation}
	and
	\begin{multline}
	\label{eq:FaDef}
		F_a(x_1,x_2,x_3) = \sum_{\substack{e_1,e_2,e_3\\ (e_1e_2e_3,q)=1}}
		\frac{1}{e_1e_2e_3} V_{1}(a x_1 e_1e_2) V_{1}(ax_2 e_1e_3) V_{1}(ax_3 e_2e_3) \\
		=  \iiint\limits_{(1) (1) (1)} \prod_{i=1}^3 \frac{\gamma(\tfrac12 + u_i,\kappa) G(u_i) }{(ax_i)^{ u_i} \gamma(\tfrac12,\kappa) u_i} \zeta_q(1 + u_1 + u_2) \zeta_q(1 + u_1 + u_3) \zeta_q(1 + u_2 + u_3)  \frac{\d u_1 \d u_2 \d u_3}{(2\pi i)^3} .
\end{multline}
\end{myprop}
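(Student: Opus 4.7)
The plan is to parallel the proof of Proposition \ref{prop:AFELsquared}, but in a three-variable setting. Starting from the linear approximate functional equation
\begin{equation*}
L(1/2, f) = (1 + \varepsilon_f) \sum_{m \geq 1} \frac{\lambda_f(m)}{\sqrt{m}} V_1(m/\sqrt{q}),
\end{equation*}
which follows from $\Lambda(s, f) = \varepsilon_f \Lambda(1-s, f)$ by the standard contour-shift argument of \cite[Theorem 5.3]{iwaniec2004analytic}, I would cube both sides and reorganize the resulting triple sum. Writing each $V_1$ via its defining Mellin integral and interchanging summation with integration on the line $\Re(u_i) = 1$ (justified by absolute convergence) collapses the three $m_i$-sums into $\prod_i L(1/2+u_i, f)$, reducing the claim to a triple contour-integral identity.

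The central ingredient is the three-variable multiplicative identity
\begin{equation*}
\prod_{i=1}^{3} L(s_i, f) = \prod_{1 \leq i < j \leq 3}\zeta_q(s_i+s_j) \sum_{(a,q)=1} \frac{\mu(a)}{a^{s_1+s_2+s_3}} \sum_{n_1, n_2, n_3 \geq 1} \frac{\lambda_f(a n_1 n_2 n_3)}{n_1^{s_1} n_2^{s_2} n_3^{s_3}},
\end{equation*}
valid in the region of absolute convergence. This is the three-variable analogue of the identity \eqref{eq:LsquaredWithHecke}, and I would verify it prime-by-prime. At a prime $p \nmid q$, using $\alpha_p \beta_p = 1$, both sides become rational functions in $p^{-s_1}, p^{-s_2}, p^{-s_3}$, and the identity follows by matching coefficients via the Hecke recurrence $\lambda_f(p^{k+1}) = \lambda_f(p)\lambda_f(p^k) - \lambda_f(p^{k-1})$. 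At $p = q$, the $\zeta_q$-factors are trivial and the coprimality condition $(a, q) = 1$ restricts the $\mu$-sum to $a = 1$; the identity then reduces to an easy check using $\lambda_f(q^k) = \lambda_f(q)^k$, which holds because $L_q(s, f)$ is a degree-one Euler factor for newforms of prime level.

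Applying the identity to the triple contour integral, the three $\zeta_q(1 + u_i + u_j)$ factors combine with the factors $G_1(u_i)/u_i$, the gamma ratio, and the monomials $(a n_i/\sqrt{q})^{-u_i}$ to reconstitute precisely $F_a(n_1/\sqrt{q}, n_2/\sqrt{q}, n_3/\sqrt{q})$ as given by the integral representation in \eqref{eq:FaDef}. Grouping $n = n_1 n_2 n_3$ converts the inner triple sum into $\sum_n \lambda_f(a n) n^{-1/2} \tau_3(n, F_{a, \sqrt{q}})$, which yields the stated formula. The main obstacle is the Euler-product verification of the multiplicative identity at primes $p \nmid q$; although routine, the coefficient matching must be performed carefully to ensure symmetry across the three $s_i$. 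The equivalence of the two representations of $F_a$ in \eqref{eq:FaDef} is a separate but straightforward check: writing each $V_1$ in the $e_i$-sum via Mellin and summing the three geometric series $\sum_{(e, q) = 1} e^{-(1 + u_i + u_j)} = \zeta_q(1 + u_i + u_j)$ produces the triple $\zeta_q$-product in the integral form.
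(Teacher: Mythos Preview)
Your proposal is correct, and the multiplicative identity you state is indeed valid (the verification at primes $p\nmid q$ reduces to the symmetric-function identity $1 - e_2 + e_1 e_3 - e_3^2 = \prod_{i<j}(1-X_iX_j)$ with $X_i = p^{-s_i}$, while the case $p=q$ is immediate from the degree-one Euler factor). Your route, however, differs organizationally from the paper's. The paper never writes down the three-variable Dirichlet-series identity for $\prod_i L(s_i,f)$; instead it cubes the approximate functional equation and applies the \emph{pairwise} Hecke relation twice, first to $\lambda_f(n_1)\lambda_f(n_2)$ (introducing $e_1$), then to $\lambda_f(n_1n_2)\lambda_f(n_3)$ (introducing $e_2$). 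The resulting divisibility condition $e_2 \mid (n_1 n_2, n_3)$ is unpacked by factoring $e_2 = f_1 f_2$ and using M\"obius inversion over a new variable $a$ to detect a residual coprimality constraint; after relabeling, this lands directly on the $e_1,e_2,e_3$-sum representation of $F_a$ in \eqref{eq:FaDef}. Your approach passes to Mellin transforms at the outset and lands instead on the integral representation of $F_a$, with the equivalence of the two forms of $F_a$ left as a separate (easy) check. The content is the same---your Euler-product identity is exactly the combined effect of the two Hecke relations plus the M\"obius step---but your packaging is cleaner and more symmetric in the three variables from the start, at the cost of the local computation you flag as the main obstacle. The paper's route is more elementary in that it only uses the standard two-variable Hecke relation, but it pays for this with some ad hoc divisor combinatorics.
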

\noindent {\bf Remark.}  One may easily check that 
\begin{equation}\label{eq:FaInert}
 x_1^{j_1} x_2^{j_2} x_3^{j_3} 
 \frac{\partial^{j_1+j_2+j_3}}{\partial x_1^{j_1}  \partial x_2^{j_2}
 \partial x_3^{j_3}}
 F_{a}(x_1, x_2, x_3) \ll_{j_1, j_2, j_3, A, \varepsilon} \prod_{i=1}^{3} (ax_i)^{-\varepsilon} (1 + ax_i)^{-A}.
\end{equation}
In the terminology introduced later in Section \ref{section:InertFunctions}, the property \eqref{eq:FaInert} means that $F_a$ satisfies the same derivative bounds as an $X$-inert function with $X \ll q^\vep$, in the region $x_i \gg q^{-1/2}$, for all $i$. Similar derivative bounds hold for $V(x)$.
\begin{proof}
Using the approximate functional equation for each $L(1/2, f)$, and the Hecke relations, we obtain
	\begin{multline*}
		L(\tfrac12,f)^3 
		=  \sum_{\substack{e_1, e_2\\ (e_1e_2,q) =1}} \frac{(1 + \vep_f)^3}{e_1} \sum_{\substack{n_1,n_2,n_3\\ e_2 | (n_1n_2,n_3)}} \frac{\lambda_f\left(\frac{n_1n_2n_3}{e_2^2}\right)}{\sqrt{n_1n_2n_3}} 
		V_{1}\left(\frac{n_1e_1}{\sqrt{q}} \right) 
		V_{1}\left(\frac{n_2e_1}{\sqrt{q}}\right) 
		V_{1}\left(\frac{n_3}{\sqrt{q}}\right)\\
		=   \sum_{\substack{e_1, e_2\\ (e_1e_2,q) =1}} \frac{(1 + \vep_f)^3}{e_1 e_2} 
		\sum_{f_1f_2 = e_2}   \sum_{\substack{n_1,n_2,n_3\\ (n_1, f_2) =1}} \frac{\lambda_f(n_1n_2n_3)}{\sqrt{n_1n_2n_3}} V_{1}\left(\frac{n_1e_1f_1}{\sqrt{q}} \right) V_{1}\left(\frac{n_2e_1f_2}{\sqrt{q}}\right) V_{1}\left(\frac{n_3f_1f_2}{\sqrt{q}}\right)  .
	\end{multline*}
	Using M\"obius inversion to detect the coprimality condition with $\sum_{a|(n_1, f_2)} \mu(a)$,  re-ordering the summations, and renaming the summation variables gives
 the more symmetric form 
	\begin{multline*}
		L(\tfrac12,f)^3 = (1+\epsilon_f)^3 \sum_{\substack{a = 1\\ (a,q) = 1}}^{\infty } \frac{\mu(a)}{a^{3/2}} \sum_{\substack{e_1,e_2,e_3\\ (e_1 e_2 e_3,q)=1}}\frac{1}{e_1 e_2 e_3}
		\\
		\times
		\sum_{n_1,n_2,n_3} \frac{\lambda_f(an_1n_2n_3)}{\sqrt{n_1n_2n_3}} V_{1}\left(\frac{an_1e_1 e_2 }{\sqrt{q}} \right) V_{1}\left(\frac{an_2e_1 e_3}{\sqrt{q}}\right) V_{1}\left(\frac{an_3 e_2 e_3}{\sqrt{q}}\right).
	\end{multline*}
This is seen to be equivalent to \eqref{eq:LcubedAFE}.
\end{proof}

Now apply Propositions \ref{prop:AFELsquared} and \ref{prop:AFELcubed} to $\mathcal{M}$.  There is a nice simplification in Proposition \ref{prop:AFELcubed}, whereby we may replace $1+\epsilon_f$ by $2$, because if $\epsilon_f = -1$, then $L(1/2,f)^2 = 0$ anyway.
    Applying the Petersson trace formula then yields
    \[
       \tfrac{1}{16} \mathcal{M} = \mathcal{D} + 2 \pi i^{-\kappa} \mathcal{S},
    \]
    where $\mathcal{D}$ is the diagonal term,
    and 
    \begin{equation}
    \label{eq:Sdef}
	\mathcal{S} = \sum_{(a,q)=1}^\infty \frac{\mu(a)}{a^{3/2}} 
	\sum_{c\equiv 0 \shortmod{q}}
	\sum_{n,m}\frac{\tau_2(m) \tau_3(n,F_{a,\sqrt{q}}) S(m,na;c)}{c\sqrt{mn}  } J_{\kappa-1} \left(\frac{4\pi \sqrt{mna}}{c}\right) V\left(\frac{m}{q} \right).
\end{equation}

    It is easy to bound the diagonal term.
    \begin{mylemma}
        We have 
        \[
            \mathcal{D} \ll_{\vep } q^\vep.
        \]
    \end{mylemma}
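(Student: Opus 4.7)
The first step is to write $\mathcal{D}$ explicitly. Since $L(\tfrac12, f) = 0$ whenever $\vep_f = -1$, the product $L(\tfrac12,f)^2 \cdot L(\tfrac12,f)^3$ is unchanged if we replace $(1+\vep_f)^3$ by $8$ in Proposition \ref{prop:AFELcubed} (any modification is annihilated by the factor $L(\tfrac12,f)^2$). Multiplying the approximate functional equations of Propositions \ref{prop:AFELsquared} and \ref{prop:AFELcubed} together, applying the Petersson formula to $\sum_f w_f \lambda_f(m)\lambda_f(na)$, and extracting the $\delta_{m=na}$ piece yields
\begin{equation*}
  \mathcal{D} = \sum_{(a,q)=1} \frac{\mu(a)}{a^2} \sum_{n \geq 1} \frac{\tau_2(na)\, \tau_3(n, F_{a, \sqrt q})}{n} V\Big(\frac{na}{q}\Big).
\end{equation*}

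Next I would invoke rapid decay of the weights to truncate the sum. From the Mellin-Barnes representation $V(x) = \frac{1}{2\pi i}\int_{(1)} \widetilde V_2(u)\, \zeta_q(1 + 2u)\, x^{-u}\, du$, shifting the contour arbitrarily far to the right gives $V(x) \ll_A x^{-A}$ for $x \gg q^\vep$, while shifting left past the simple pole of $\zeta_q(1+2u)$ at $u=0$ shows $V(x) \ll 1$ for all $x$. Hence up to a negligible error we may restrict to $na \leq q^{1+\vep}$. The remark following Proposition \ref{prop:AFELcubed} yields $|F_a| \ll 1$, so $|\tau_3(n, F_{a,\sqrt q})| \leq \tau_3(n) \ll n^\vep$. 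Combined with $\tau_2(na) \ll (na)^\vep \ll q^\vep$, the triangle inequality gives
\begin{equation*}
  |\mathcal{D}| \ll q^\vep \sum_{a \leq q^{1+\vep}} \frac{1}{a^2} \sum_{n \leq q^{1+\vep}/a} \frac{1}{n} \ll q^\vep,
\end{equation*}
which is the desired bound.

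There is no serious obstacle here: the diagonal bound is a routine calculation with trivial estimates, because no summation formula, Voronoi inversion, or reciprocity step has yet been applied to introduce oscillation or enlarge the effective range of summation. All the real analytic work will lie in the treatment of $\mathcal{S}$ from \eqref{eq:Sdef}, as outlined in Section \ref{section:sketch}.
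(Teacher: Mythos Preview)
Your proof is correct and follows essentially the same approach as the paper, which simply appeals to the rapid decay of $V_1$ and $V_2$; you have supplied the routine details the paper omits. Two cosmetic points: the bounds $V(x)\ll 1$ and $|F_a|\ll 1$ should really be $\ll q^{\vep}$ in the relevant range (there is a double pole at $u=0$ for $V$, and \eqref{eq:FaInert} carries a factor $\prod_i (ax_i)^{-\vep}$ which is $\ll q^{\vep}$ when $n_i\geq 1$), but this is harmlessly absorbed into the final $q^{\vep}$.
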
 
    
    This follows easily from the fact that the functions $V_1(y)$ and $V_2(y)$ decay rapidly as $y \to \infty$, and  using the bound $J_{\kappa - 1}(x) \ll x$ for $\kappa \geq 2$. 

    Proving Theorem \ref{thm:mainthm} is reduced to showing $\mathcal{S} \ll q^{\theta+\varepsilon}$.
We will return to $\mathcal{S}$ in Section \ref{section:dyadic}.  Meanwhile, Sections \ref{section:Kuznetsov} and \ref{section:InertFunctions}, which are self-contained, develop some material necessary for our manipulations of $\mathcal{S}$.

%

\section{First Poisson summation}
\label{section:dyadic}
Now we return to the analysis of $\mathcal{S}$ from \eqref{eq:Sdef}.
We open up the divisor functions using the formulas $\sum_m \tau_2(m) f(m) = \sum_{m_1,  m_2} f(m_1 m_2)$, and  the definition of $\tau_3(n,F_{a, \sqrt{q}})$ (cf. \eqref{eq:tau3Def}).

\subsection{Dyadic partition of unity}
Throughout this paper we will apply dyadic decompositions of important variables.
 Call a number $N$ \emph{dyadic} if 
    $N = 2^{k/2}$, for some integer $k$. 
A dyadic partition of unity is a partition of unity of the form $ \sum_{k \in \mathbb{Z}} \omega(2^{-k/2} x) \equiv 1$, for $x > 0$, where $\omega$ is a fixed smooth function with support on the dyadic interval $[1,2]$.  The family $\omega_N(x) = \omega(x/N)$ forms 
    a $1$-inert family of functions. 
Applying this to $\mathcal{S}$, we have    
\begin{equation}
    \label{eq:initialDyadic}
        \mathcal{S}  = \sum_{M_1, M_2, N_1, N_2, N_3,C \text{ dyadic}} \mathcal{S}_{M_1,M_2, N_1, N_2, N_3,C},
    \end{equation}  
where the dyadic numbers are restricted to be $\geq 2^{-1/2}$, and where
    \begin{multline}
    \label{eq:SAfterDyadicDecomposition}
	\mathcal{S}_{M_1, M_2, N_1, N_2, N_3, C}= \sum_{(a,q)=1}^\infty \frac{\mu(a)}{a^{3/2}} 
	\sum_{\substack{c \equiv 0 \shortmod{q} \\ c \asymp C}} \frac{1}{c}
	\sum_{n_1, n_2, n_3,m_1, m_2 }\frac{  S(m_1 m_2 ,n_1 n_2 n_3 a,c)}{\sqrt{m_1 m_2 n_1 n_2 n_3} } 
\\
	\times	
	J_{\kappa-1} \left(\frac{4\pi \sqrt{m_1 m_2 n_1 n_2 n_3a}}{c}\right) 
	 V\left(\frac{m_1 m_2}{q} \right) 
	 F_a\Big(\frac{n_1}{\sqrt{q}}, \frac{n_2}{\sqrt{q}}, \frac{n_3}{\sqrt{q}}\Big)
	w_T(m_1, m_2, n_1, n_2, n_3, c)
	. 
\end{multline}
    The letter $T$ here and throughout stands for the tuple of dyadic parameters, and we may use $\mathcal{S}_T$ as shorthand for the left hand side of \eqref{eq:SAfterDyadicDecomposition}.  For the main thrust of the argument, the precise form of $w_T$ is not important.  However, when calculating certain potential main terms, we have found it important to re-sum over the partition, in which case one should remember that $w_T$ may be expressed as
    \begin{equation*}
    w_T(m_1, m_2, n_1, n_2, n_3, c) = \omega\Big(\frac{m_1}{M_1}\Big) \dots \omega\Big(\frac{c}{C}\Big).
    \end{equation*}
  Let $M = M_1 M_2$, and $N = N_1 N_2 N_3$.    

    \begin{mylemma} \label{lem:capitalBounds}
        Unless
        \begin{equation}
        \label{eq:NMBound}
            M\ll q^{1+\varepsilon}
            \qquad \text{ and } \qquad 
            N_i \ll \frac{q^{1/2 + \vep}}{a },
        \end{equation}
       for all $i=1,2,3$, then we have
        \[
            \mathcal{S}_{T} \ll_A q^{-A},        
        \]
        for $A>0$ arbitrarily large. 
        Moreover, if $C > q^{3}$, we have
        \begin{equation*}
         \mathcal{S}_{T} \ll q^{\varepsilon}.
        \end{equation*}
    \end{mylemma}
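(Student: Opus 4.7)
The plan is in two parts, both exploiting rapid decay of the approximate-functional-equation weights combined with trivial size estimates. For the first claim, I will argue that the factors $V(m_1m_2/q)$ and $F_a(n_1/\sqrt q, n_2/\sqrt q, n_3/\sqrt q)$ already enforce the stated constraints. The discussion preceding Proposition \ref{prop:AFELsquared} gives $V_2(x) \ll_{\kappa,A}(xq)^{-A}$ for $x \gg q^{\varepsilon}$, by pushing the contour of integration arbitrarily far to the right and invoking the rapid decay of the entire function $G_2$; the identity $V(x) = \sum_{(e,q)=1} V_2(e^2x)/e$ inherits this rapid decay. Consequently $V(m_1m_2/q) \ll_A q^{-A}$ whenever $M_1 M_2 \gg q^{1+\varepsilon}$, and combining with the trivial bounds $|S(\cdot,\cdot;c)| \ll c^{1/2+\varepsilon}$ (Weil), $|J_{\kappa-1}| \ll 1$, $|F_a|\ll 1$, and a crude count of the integer variables, this forces dyadic tuples with $M \gg q^{1+\varepsilon}$ to contribute only $O_A(q^{-A})$. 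An identical argument invoking the $(1+an_i/\sqrt q)^{-A}$ factor in \eqref{eq:FaInert} rules out $aN_i \gg q^{1/2+\varepsilon}$ for any $i$.

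For the second claim I may assume these constraints, so $MN \ll q^{5/2+\varepsilon}/a^3$ and hence
\[
\frac{4\pi\sqrt{mna}}{c} \ll \frac{q^{5/4+\varepsilon}}{aC} \ll \frac{q^{-7/4+\varepsilon}}{a}
\]
once $C > q^{3}$, in particular small and certainly bounded. In this regime I would apply the small-argument estimate $J_{\kappa-1}(x) \ll x^{\kappa-1}$, available because $\kappa \geq 4$, and combine it with Weil's bound, the count $\#\{c \asymp C : q\mid c\} \asymp C/q$, and crude size estimates on the $m_i,n_i$ summations; the $a$-sum extends only over $a \ll q^{1/2+\varepsilon}/\min_i N_i$ thanks again to \eqref{eq:FaInert}. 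A routine bookkeeping then yields a bound of shape
\[
\mathcal{S}_T \ll \frac{(MN)^{\kappa/2}}{q\,C^{\kappa-3/2-\varepsilon}} \sum_{a} a^{(\kappa-4)/2} \ll q^{-3\kappa/2+3+\varepsilon},
\]
which is $\ll q^{\varepsilon}$ with considerable room for $\kappa\geq 4$.

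I expect no conceptual obstacle in this argument. The only mildly delicate point is the $a$-summation: when $\kappa>4$ the factor $a^{(\kappa-1)/2}$ generated by the Bessel bound is not absorbed by $a^{-3/2}$ on its own, but the support restriction $a \ll q^{1/2+\varepsilon}$ inherited from the $F_a$-decay cuts the sum off comfortably, and every other step is either a textbook estimate or an invocation of rapid decay already recorded in the paper.
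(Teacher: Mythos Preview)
Your proposal is correct and follows the same strategy as the paper's proof: rapid decay of $V$ and $F_a$ for the first claim, and the Weil bound together with a Bessel-function estimate for the second. The only difference is that you invoke the sharper small-argument bound $J_{\kappa-1}(x)\ll x^{\kappa-1}$ whereas the paper uses the simpler uniform bound $J_{\kappa-1}(x)\ll x$ (valid for $\kappa\ge 2$), which already suffices and avoids the bookkeeping with the $a$-sum.
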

  We will henceforth assume \eqref{eq:NMBound} (which implies $N \ll a^{-3} q^{3/2+\vep}$), and
        \begin{equation}\label{eq:CBound}
        C \leq q^{3}.
        \end{equation}

      \begin{proof}
       The bounds \eqref{eq:NMBound} follow from the rapid decay of the weight functions in the approximate functional equations.  The bound for $C > q^3$ holds using the Weil bound for Kloosterman sums, and $J_{\kappa-1}(x) \ll x$.
       %
     \end{proof}
    
By symmetry (Dirichlet's hyperbola method), we shall assume
    \begin{equation}
        M_1 \leq M_2.
    \end{equation}
Note
$M_1 \ll q^{1/2+\varepsilon}$.

\subsection{Poisson summation}
  Applying Poisson summation in $m_2$ modulo $c$, we obtain
\begin{equation}
\label{eq:SafterFirstPoisson}
	\mathcal{S}_T =   \sum_{(a,q) = 1} \frac{\mu(a)}{a^{3/2}} 
	\sum_{m_1,n_1, n_2, n_3} \frac{1}{\sqrt{m_1 n_1 n_2 n_3}}  \sum_{c \equiv 0 \shortmod{q} }  \frac{1}{c^2} \sum_{k \in \mathbb{Z}} H(k) I(k),
\end{equation}
where (with shorthand $n= n_1 n_2 n_3$)
\begin{equation}
	H(k) = H(k, m_1, na ;c) = \sum_{x \shortmod c}  S(m_1 x, na;c) e\left(\frac{kx}{c}\right),
\end{equation}
and 
\begin{equation}
\label{eq:Idef}
	I(k) =  I(m_1,k,n_1,n_2,n_3,a,c) = \int_{0}^\infty
	e\Big(\frac{-k t}{c}\Big) J_{\kappa-1}\left(\frac{4\pi \sqrt{m_1na t}}{c}\right)  w_{M_2}(t, \cdot ) \frac{\d t}{\sqrt{t}}.
\end{equation}
For the notation $w_{M_2}(t,\cdot)$, recall the convention described in Section \ref{section:InertConvention}. Furthermore note that the definition of the inert function $w_T$ has been altered to include the function $V$ and $F_a$ from the second line of \eqref{eq:SAfterDyadicDecomposition}. 

We now apply a dyadic partition of unity to the $k$-sum, and let $\omega(k/K)$ be a generic such piece.  To ease the notation, we simply add $K$ to the long tuple of parameters already appearing in \eqref{eq:SAfterDyadicDecomposition}; we are already writing $T$ as shorthand for this long tuple, and we shall continue this practice.  Let $I_K = I_{(M_1,M_2,N_1,N_2,N_3,a, C, K)} = \omega(k/K) I$. Then for $k > 0$, $I = \sum_{K\text{ dyadic}} I_K$. By re-defining the inert function in \eqref{eq:Idef} to incorporate $\omega(k/K)$, we may also view $I_K$ as an instance of \eqref{eq:Idef}.

{\bf Remark.} We may without loss of generality assume that $k > 0$. The negative values of $k$ give rise to terms that are complex conjugates of their positive counterparts. Secondly, $H(0) = 0$ unless $c | m_1$, and those terms only contribute $O(q^{-A})$ since $m_1 \ll q^{1/2+\varepsilon}$ and $q|c$.

\subsection{The arithmetic function}

    We now compute the arithmetic sum $H(k)$. Immediately from the definition, we obtain
    \begin{equation}\label{eq:HfirstReduction}
        \frac{1}{c} H(k) 
        = \frac{1}{c} \sums_{u \shortmod{c}} \sum_{x \shortmod{c}} 
        e\lt(\frac{(m_1u + k) x + na\overline{u} }{c}\rt) 
        = \sums_{u \shortmod{c} } \delta_{m_1u \equiv -k \shortmod{c}} 
        e\Big(\frac{na\overline{u}}{c}\Big).
    \end{equation}
    
    One would like to simply substitute $u \equiv -k\overline{m_1}\pmod{c}$,  
    however this is not possible because it is not guaranteed that $m_1$ (or $k$) are coprime to $c$. For this reason, we employ a factorization of $c$ and the Chinese remainder theorem as follows.
    
    Write
    \begin{equation}\label{eq:ckDecomposition}
        c = c_0 c_2, \quad \text{ and } \quad k = k_0 k_1, 
    \end{equation}    
    where the factorizations may be written locally, using the notation $\nu_p(n) = d$ for $p^d || n$, as
    \begin{align*}
     c_0 = 
     \prod_{ \nu_p(c) > \nu_p(k)} p^{\nu_p(c)}, 
     \qquad 
     c_2 = \prod_{1 \leq \nu_p(c) \leq \nu_p(k)} p^{\nu_p(c)}, \\
     k_0 = 
     \prod_{ \nu_p(k) \geq \nu_p(c)} p^{\nu_p(k)}, 
     \qquad 
     k_1 = \prod_{1 \leq \nu_p(k) < \nu_p(c)} p^{\nu_p(k)}.
    \end{align*}
Alternatively,  using the notation $n^* = \prod_{p|n} p$ we have
    \begin{equation}\label{eq:ckOneTwoConditions}
    (c_0, k_0) = 1, \qquad c_2 | k_0, \qquad k_1 k_1^* | c_0 		,
    \end{equation}
    and these conditions characterize $c_0, c_2, k_0, k_1$.  Note that $(c_2, k_1) = 1$ automatically from the other conditions, indeed we also have $(c_2, c_0) = (k_1, k_0) = 1$.

    The congruence condition $m_1u \equiv - k \pmod c$ in \eqref{eq:HfirstReduction} is solvable with $(u,c) = 1$ if and only if $(m_1, c) = (k, c)$.  The conditions \eqref{eq:ckDecomposition}, \eqref{eq:ckOneTwoConditions} give $(k,c) = k _1 c_2$, and so we impose the condition $k_1 c_2 = (m_1, c_0 c_2) = (m_1, \frac{c_0}{k_1} k_1 c_2)$.
    Thus we define
    \begin{equation}\label{eq:m1PrimeDefinition}
        m_1 = k_1 c_2 m_1',
    \end{equation}
  where the new variable $m_1'$ is only subject to the restriction
        $$\Big(m_1', \frac{c_0}{k_1}\Big) = 1 \Leftrightarrow (m_1', c_0) =1,$$
        where we have used that $c_0/k_1$ shares the same prime factors as $c_0$.

\textbf{Remark.}  In $\mathcal{S}$, we have $q | c$.  If $q | k_1 c_2$, this means $q | m_1$, but we have $m_1  \ll  q^{1/2 + \varepsilon}$, so the condition $q|c$ may be freely replaced with $q|c_0$, and we may assume
\begin{equation}
\label{eq:qandk1areCoprime}
(q,k_1) = 1.
\end{equation}

    \begin{myprop} \label{prop:Hreduction}
        Given the notation above, 
        \begin{equation}\label{eq:Hfinalized}
           \frac{1}{c}  H(k, m_1, an;c) = e\Big(- \frac{nam_1'\overline{k_0}}{c_0}\Big) S(na,0;c_2) k_1 \delta_{k_1 | na}\delta_{(m_1', c_0) =1} \delta_{c,k},
        \end{equation}
        where $\overline{k_0}$ indicates multiplicative inverse modulo $c_0$ and where 
       $\delta_{c,k} = 1$ if \eqref{eq:ckDecomposition} and \eqref{eq:ckOneTwoConditions} hold, and $\delta_{c,k} = 0$ otherwise.
    \end{myprop}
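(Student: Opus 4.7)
The plan is to attack the character sum in \eqref{eq:HfirstReduction} prime by prime via the coprime factorization $c = c_0 c_2$ from \eqref{eq:ckDecomposition}--\eqref{eq:ckOneTwoConditions} and the Chinese remainder theorem. A first observation is that for the congruence $m_1 u \equiv -k \pmod{c}$ to have a solution with $(u,c)=1$ one must have $(m_1, c) = (k, c)$, and a prime-by-prime computation using the definitions gives $(k,c) = k_1 c_2$; hence $k_1 c_2 \mid m_1$, which justifies the notation $m_1 = k_1 c_2 m_1'$ of \eqref{eq:m1PrimeDefinition}. The remaining solvability criterion is then precisely $(m_1', c_0) = 1$, as one verifies by examining each prime of $c_0$ separately, using $(k_0, c_0) = 1$ and $k_1 \mid c_0$, both consequences of \eqref{eq:ckOneTwoConditions}. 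This accounts for the indicator factors in \eqref{eq:Hfinalized}.

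Under the CRT isomorphism $\mathbb{Z}/c \cong \mathbb{Z}/c_0 \times \mathbb{Z}/c_2$, the character splits as $e(na\overline{u}/c) = e(na \overline{u_0}\, \overline{c_2}/c_0)\, e(na \overline{u_2}\, \overline{c_0}/c_2)$, where $\overline{c_2}$ denotes the inverse of $c_2$ modulo $c_0$ and similarly for $\overline{c_0}$. The sum in \eqref{eq:HfirstReduction} accordingly factors as $\tfrac{1}{c} H(k) = \Sigma_0 \cdot \Sigma_2$. For $\Sigma_2$, since $c_2 \mid m_1$ and $c_2 \mid k_0 \mid k$ the congruence modulo $c_2$ is automatic, and after substituting $v = \overline{u_2}$ one recognizes the Ramanujan-type sum
\[
\Sigma_2 = \sideset{}{^*}\sum_{v \shortmod{c_2}} e\Big( \frac{na \overline{c_0}\, v}{c_2}\Big) = S(na, 0; c_2).
\]
For $\Sigma_0$, the substitution $v = \overline{u_0}$ turns the congruence into $k_1 c_2 m_1' \equiv -k_0 k_1 v \pmod{c_0}$; dividing through by $k_1$ (valid because $k_1 \mid c_0$) and inverting $k_0$ gives $v \equiv v_0 := -\overline{k_0}\, c_2 m_1' \pmod{c_0/k_1}$. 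Since $c_0/k_1$ carries the same prime divisors as $c_0$, each of the $k_1$ lifts $v = v_0 + s(c_0/k_1)$, $s = 0, 1, \ldots, k_1 - 1$, automatically satisfies $(v, c_0) = 1$. Inserting these into the exponential sum,
\[
\Sigma_0 = e\Big(\frac{na \overline{c_2}\, v_0}{c_0}\Big) \sum_{s=0}^{k_1-1} e\Big(\frac{na \overline{c_2}\, s}{k_1}\Big) = k_1\, \delta_{k_1 \mid na}\, e\Big( \frac{-na m_1' \overline{k_0}}{c_0}\Big),
\]
where the geometric sum collapses via $(c_2, k_1) = 1$, and on the support $k_1 \mid na$ the leading exponential simplifies by writing $na = k_1 m$, giving $na \overline{c_2}\, v_0/c_0 = m \overline{c_2}\, v_0/(c_0/k_1) \equiv -m \overline{k_0}\, m_1' / (c_0/k_1) \equiv -na m_1' \overline{k_0}/c_0 \pmod 1$. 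Multiplying $\Sigma_0 \cdot \Sigma_2$ yields \eqref{eq:Hfinalized}.

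The main technical point is in handling $\Sigma_0$: the congruence only pins down $v$ modulo $c_0/k_1$, yet the exponential lives modulo $c_0$, and the two are reconciled precisely by the support condition $k_1 \mid na$ that emerges from the geometric sum. The prime-by-prime verification of the divisibilities $k_1 \mid c_0$, $(k_0, c_0) = 1$, $(c_2, k_1) = 1$, and the fact that $c_0/k_1$ retains all primes of $c_0$ (all consequences of \eqref{eq:ckOneTwoConditions}) is the most delicate part; once unpacked, the rest of the argument is a routine manipulation of character sums.
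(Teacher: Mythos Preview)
Your proof is correct and follows essentially the same route as the paper's: CRT with respect to $c = c_0 c_2$, recognition of the Ramanujan sum on the $c_2$-factor, reduction of the $c_0$-congruence to a determination modulo $c_0/k_1$, and summation over the $k_1$ lifts to produce the factor $k_1\,\delta_{k_1\mid na}$. The only cosmetic difference is that the paper solves for $\overline{u}$ modulo $c_0/k_1$ and then re-substitutes, whereas you pass to $v=\overline{u_0}$ at the outset; the resulting exponential manipulations are equivalent.
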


    \begin{proof}

        First, we note that $m_1u \equiv - k \pmod{c}$ (that is, $m_1'k_1c_2 u\equiv -k_0 k_1 \pmod{c_0 c_2}$) 
        is equivalent to 
        $
            m_1' u \equiv -  \frac{k_0}{c_2} \pmod{c_0/k_1 }.
       $
        In other words,
        \begin{equation}
        \label{eq:ucongruence}
            \overline{u} \equiv -m_1' \overline{(k_0/c_2)} \pmod{c_0/k_1}.
        \end{equation}
        Here $\overline{(k_0/c_2)}$ can be taken to be the multiplicative inverse modulo $c_0$, since 
        every prime that divides $c_0$ also divides $c_0/k_1$ (via \eqref{eq:ckOneTwoConditions}) 
     
     Now we apply the Chinese remainder theorem to the pair $c_0$ and $c_2$, giving
        \begin{align*}
           \frac{1}{c} H(k, m_1, an;c) 
            =& \sums_{u \shortmod{c} } \delta_{\overline{u} \equiv -m_1' \overline{(k_0/c_2)} \shortmod{\frac{c_0}{k_1}}} e\Big(\frac{na\overline{u}(c_0\overline{c_0} + c_2\overline{c_2}) }{c_0c_2}\Big)\\ 
            =& \sums_{u \shortmod{c_2} } e\Big(\frac{na\overline{u c_0}}{c_2}\Big) 
            \sums_{u \shortmod{c_0}} \delta_{\overline{u} \equiv -m_1' \overline{(k_0/c_2)} \shortmod{\frac{c_0}{k_1}}} e\Big(\frac{na\overline{u}\overline{c_2}}{c_0}\Big). 
        \end{align*}
        The sum modulo $c_2$ is a Ramanujan sum, and for the sum modulo $c_0$ we replace $u$ by $\overline{u}$, giving
        \begin{equation*}
        \frac{1}{c} H(k,m_1,an;c) = S(na,0;c_2) \sums_{u \shortmod{c_0}} \delta_{u \equiv -m_1' \overline{(k_0/c_2)} \shortmod{\frac{c_0}{k_1}}} e\Big(\frac{  na u \overline{c_2}}{c_0}\Big).
        \end{equation*}

        The congruence  restriction  
        on $u$ modulo $c_0$
        may be expressed as
        \begin{equation}\label{eq:congruenceCondition}
            u \equiv - m_1' \overline{(k_0/c_2)} + v \frac{c_0}{k_1} \mymod{c_0}, \quad \text{ with }  \quad v \mymod{k_1}. 
        \end{equation}
        Here $v$ runs over \emph{all} residue classes modulo $k_1$, because as long as $u$ is coprime to $c_0/k_1$
        it is also coprime to $c_0$.
        Thus
        \begin{equation*}
         \begin{split}
            \frac{1}{c} H(k, m_1, an;c)  =& S(na,0;c_2) \sum_{v \shortmod{k_1} } e\Big(\frac{na (-m_1' \overline{(k_0/c_2)} +v\frac{c_0}{k_1}) \overline{c_2}}{c_0}\Big)\\
             =& S(na,0;c_2) e\Big(- \frac{nam_1' \overline{k_0 }}{c_0} \Big) k_1 \delta_{k_1 | na}. \qedhere
         \end{split} 
         \end{equation*}
    \end{proof}

Inserting the conclusion of Proposition \ref{prop:Hreduction} into \eqref{eq:SafterFirstPoisson}, and imposing \eqref{eq:qandk1areCoprime}, we get
\begin{multline}
\label{eq:SafterFirstPoissonEvaluated}
\mathcal{S}_T =   \sum_{(a,q) = 1} \frac{\mu(a)}{a^{3/2}} \sum_{m_1',n_1, n_2, n_3} \frac{1}{\sqrt{ m_1' n_1 n_2 n_3}} \sum_{\substack{(c_0,m_1') = 1\\ c_0\equiv 0 \shortmod{q}}} 
\sum_{(k_0,c_0) = 1} \sum_{\substack{k_1 | \frac{c_0}{k_1^*} \\ (k_1, q) = 1} }  k_1^{1/2} \delta_{k_1 | na} 
\\
\sum_{c_2 | k_0}  \frac{1}{c_0 c_2^{3/2}}
e\Big(- \frac{n_1 n_2 n_3 am_1'\overline{k_0}}{c_0}\Big) S(n_1 n_2 n_3 a,0;c_2) I(m_1'k_1c_2 , k_0k_1,  n_1, n_2, n_3, a,  c_0c_2),
\end{multline}
plus a small error term.

  \subsection{Analysis of integral transform}    
The asymptotic behavior of $I_K$ depends on if $\frac{\sqrt{aMN}}{C} \gg q^{\varepsilon}$ or not, since this dictates  whether the Bessel function is oscillatory or not.
 
   \begin{mylemma}[Pre-Transition]
   \label{lemma:IKpreTransition}
 Let $ I_K(k)$ be defined via \eqref{eq:Idef}.  If
 \begin{equation}
 \label{eq:PreTransition}
  \frac{\sqrt{aM  N}}{C} \ll q^{\varepsilon}, 
 \end{equation}
then
\begin{equation}
\label{eq:IKasymptoticPreTransition}
M_2^{1/2} I_K(k) = \Big(\frac{\sqrt{aMN}}{C}\Big)^{\kappa -1} M_2 w_T(\cdot) ,
\end{equation}
where $w_T(\cdot)$ is an $X$-inert function with $X \ll q^{\varepsilon'}$.  Furthermore,  $I_K$ is very small unless
\begin{equation}
 \frac{K M_2}{C} \ll q^{\varepsilon}.
\end{equation}
\end{mylemma}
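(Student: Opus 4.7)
The strategy is to Taylor-expand the Bessel function around zero, extract the leading power, and re-interpret the resulting integral as a Fourier transform of an inert function whose frequency is $kM_2/c$. The pre-transition hypothesis \eqref{eq:PreTransition} ensures that on the support of $w_T$, the Bessel argument $x = 4\pi\sqrt{m_1 n_1 n_2 n_3 a t}/c$ satisfies $x \ll q^{\varepsilon}$, where $n = n_1n_2n_3$. Writing the standard series
\begin{equation*}
 J_{\kappa-1}(x) = \left(\frac{x}{2}\right)^{\kappa-1} \Phi(x^2), \qquad \Phi(y) = \sum_{\ell \geq 0} \frac{(-y/4)^\ell}{\ell!\,\Gamma(\ell+\kappa)},
\end{equation*}
we see that $\Phi$ is entire, and on the region $x \ll q^{\varepsilon}$ all its derivatives are bounded. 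Moreover, because every parameter-weighted derivative $m_1\partial_{m_1}$, $n_i \partial_{n_i}$, $a\partial_a$, $c\partial_c$ applied to $x^2$ produces something of the same size $x^2 \ll q^{\varepsilon}$, the composition $\Phi(x^2)$ is a $q^{\varepsilon}$-inert function of all the parameters $(m_1, n_1, n_2, n_3, a, c, t)$ on the support.

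Substituting this into the definition of $I_K(k)$ and making the change of variable $t = M_2 u$, so that $dt/\sqrt{t} = \sqrt{M_2}\,du/\sqrt{u}$, gives
\begin{equation*}
 I_K(k) = M_2^{1/2} \left(\frac{2\pi\sqrt{m_1 n a M_2}}{c}\right)^{\kappa-1} \int_0^{\infty} e\!\left(-\frac{k M_2 u}{c}\right) \Psi_T(u, \cdot)\, du,
\end{equation*}
where $\Psi_T(u,\cdot) = u^{(\kappa-2)/2} \Phi(x^2) w_T(M_2 u, \cdot)\, \omega(k/K)$ is supported on $u \asymp 1$ and, by the discussion above together with the inertness of $w_T$ and $\omega$, forms a $q^{\varepsilon}$-inert family in $(u, m_1, n_i, a, c, k)$. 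Using $M = M_1 M_2$ and $N = N_1 N_2 N_3$, the amplitude is $\asymp (\sqrt{aMN}/C)^{\kappa-1}$, so it remains to analyze the $u$-integral as a Fourier transform in $u$ evaluated at frequency $kM_2/c$.

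If $KM_2/C \gg q^{\varepsilon}$, then repeated integration by parts in $u$ (using the inert bounds $u^j \partial_u^j \Psi_T \ll q^{\varepsilon j}$) shows the integral is $O_A(q^{-A})$, giving the second assertion. Otherwise $kM_2/c \ll q^{\varepsilon}$, and by the general principle reviewed in Section \ref{section:InertFourier}, the Fourier transform of a $q^{\varepsilon}$-inert function evaluated at a frequency $\ll q^{\varepsilon}$ is itself a $q^{\varepsilon}$-inert function of the remaining parameters (including $k$, which we think of as ranging over a dyadic piece $k \asymp K$). Absorbing this Fourier transform into a new inert function $w_T(\cdot)$, we obtain
\begin{equation*}
 M_2^{1/2} I_K(k) = \left(\frac{\sqrt{aMN}}{C}\right)^{\kappa-1} M_2\, w_T(\cdot),
\end{equation*}
which is the claim. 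The most delicate step is the verification that $\Phi(x^2)$ contributes a genuinely $q^{\varepsilon}$-inert factor in all the original parameters, not merely in $u$; this rests on the pre-transition bound, which forces $x^2$ and all its parameter-weighted derivatives to remain of size $\ll q^{\varepsilon}$ on the support, so that the series defining $\Phi$ converges uniformly and can be differentiated term-by-term with controlled cost.
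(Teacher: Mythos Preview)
Your proof is correct and follows essentially the same approach as the paper: both extract the leading power $x^{\kappa-1}$ from the Bessel function (the paper writes $J_{\kappa-1}(x)=x^{\kappa-1}W(x)$ with $W$ satisfying inert-type derivative bounds, while you make this explicit via the power series $\Phi$), absorb the remaining factor into the inert weight, and then invoke the Fourier-transform discussion of Section~\ref{section:InertFourier} to obtain both the asymptotic and the ``very small'' cutoff. Your version is more detailed about the change of variable $t=M_2u$ and the inertness verification in all parameters, but this is exactly the content of the paper's appeal to the conventions in Section~\ref{section:InertConvention}.
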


\begin{mylemma}[Post-Transition]
\label{lemma:IKpostTransition}
If 
 \begin{equation}
 \label{eq:PostTransition}
  \frac{\sqrt{aM  N}}{C} \gg q^{\varepsilon},
 \end{equation}
then
\begin{equation}
\label{eq:IKpostTransition}
M_2^{1/2} I_K(k) = \frac{C M_2}{(aMN)^{1/2} } e\Big(\frac{m_1 n a}{ck}\Big) w_T(\cdot) + O((kq)^{-A}),
\end{equation}
where $w_T(\cdot)$ is an $X$-inert function with $X \ll q^{\varepsilon'}$.  Furthermore, $I_K$ is very small unless
\begin{equation}
\label{eq:Ksize}
 K \asymp \frac{(aMN)^{1/2}}{M_2}.
\end{equation}
\end{mylemma}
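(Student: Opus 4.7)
The plan is to extract the oscillation from the Bessel function using its Hankel asymptotic. Under \eqref{eq:PostTransition}, the argument $x := 4\pi\sqrt{m_1 n a t}/c$ (with $n := n_1 n_2 n_3$) of the Bessel function in \eqref{eq:Idef} satisfies $x \asymp \sqrt{aMN}/C \gg q^{\varepsilon}$ for $t \asymp M_2$. Writing
\[
J_{\kappa-1}(x) = x^{-1/2}\bigl(e^{ix} U_+(x) + e^{-ix} U_-(x)\bigr), \qquad x \gg 1,
\]
with $U_\pm$ smooth and $1$-inert in $x$, I split $I_K(k) = I_K^+(k) + I_K^-(k)$, where $I_K^\pm$ is an oscillatory integral with phase
\[
\phi_\pm(t) = -\frac{2\pi k t}{c} \pm \frac{4\pi\sqrt{m_1 n a t}}{c}
\]
and amplitude assembled from $U_\pm(x)$, $w_{M_2}(t,\cdot)$, $\omega(k/K)$, and the prefactor $x^{-1/2} t^{-1/2}$. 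With $Y := \sqrt{aMN}/C$ and $Z := M_2$, the amplitude is inert and $Y^{-1}\phi_\pm$ will be $1$-inert jointly in $t$ and the auxiliary parameters, placing us in the setting of Lemma \ref{lemma:statphase}.

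The minus-sign piece has $\phi_-'(t) = -2\pi k/c - 2\pi\sqrt{m_1 n a}/(c\sqrt t)$, so $|\phi_-'(t)| \gg Y/Z$ for all $k > 0$ and $t \asymp M_2$; the hypothesis $Y/X^2 \gg q^\delta$ follows from \eqref{eq:PostTransition} upon choosing the inertness parameter $X \ll q^{\varepsilon'}$ with $\varepsilon' < \varepsilon/2$, so Lemma \ref{lemma:statphase}(1) shows $I_K^-(k)$ is very small. The plus-sign piece has the unique critical point $t_0 = m_1 n a/k^2$. Requiring $t_0 \asymp M_2$ forces $k \asymp \sqrt{aMN}/M_2$, which is exactly \eqref{eq:Ksize}. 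If $K$ lies outside this dyadic window, then on the support of $w_{M_2}$ one of the two summands of $\phi_+'$ dominates the other, so $|\phi_+'(t)| \gg Y/Z$ and Lemma \ref{lemma:statphase}(1) again yields a very small contribution.

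In the remaining range $K \asymp \sqrt{aMN}/M_2$, a direct computation gives
\[
\phi_+(t_0) = 2\pi\,\frac{m_1 n a}{kc}, \qquad \phi_+''(t) = -\pi\,\frac{\sqrt{m_1 n a}}{c\, t^{3/2}} \asymp -\frac{Y}{Z^2},
\]
uniformly on the support, and one checks that $Y^{-1}\phi_+$ satisfies the $1$-inertness derivative bounds \eqref{eq:phiderivatives} in $t$ and jointly in the auxiliary parameters $m_1, n_i, a, c, k$. Lemma \ref{lemma:statphase}(2) then contributes the factor $(Z/\sqrt Y)\, e^{i\phi_+(t_0)} = (M_2 \sqrt C/(aMN)^{1/4})\, e(m_1 n a/(ck))$ times a new inert weight. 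Combining this with the amplitude $x^{-1/2} t_0^{-1/2} \asymp \sqrt C/((aMN)^{1/4} M_2^{1/2})$ inherited from the Bessel asymptotic and the $dt/\sqrt t$ measure produces
\[
I_K^+(k) = \frac{C M_2^{1/2}}{(aMN)^{1/2}}\, e\!\left(\frac{m_1 n a}{ck}\right) w_T(\cdot) + O\bigl((kq)^{-A}\bigr),
\]
and multiplying through by $M_2^{1/2}$ delivers \eqref{eq:IKpostTransition}.

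The principal technical point is to verify that the $w_T(\cdot)$ assembled at the end is genuinely $X$-inert with $X \ll q^{\varepsilon'}$ jointly in all the remaining parameters after restriction to the dyadic ranges. Because $t_0 = m_1 n a/k^2$ is a rational function of those parameters, substituting $t = t_0$ preserves the inertness of $U_+$, $w_{M_2}(t,\cdot)$, and $\omega(k/K)$; the inert postfactor produced by Lemma \ref{lemma:statphase}(2) is delivered by the multivariable stationary phase statement from \cite{InertFunctionPaper}, which was formulated precisely to track dependences on all parameters simultaneously.
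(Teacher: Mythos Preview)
Your proof is correct and follows essentially the same approach as the paper: expand $J_{\kappa-1}$ via its large-argument asymptotic, discard the minus-sign branch by Lemma~\ref{lemma:statphase}(1), locate the stationary point $t_0 = m_1 n a/k^2$ for the plus-sign branch, and apply Lemma~\ref{lemma:statphase}(2). Your bookkeeping of the $2\pi$ factors and the joint inertness in the auxiliary parameters is slightly more explicit than the paper's, but there is no substantive difference.
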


\begin{proof}[Proof of Lemma \ref{lemma:IKpreTransition}]
 Suppose that \eqref{eq:PreTransition} holds.  Then the Bessel function is not oscillatory, and $J_{\kappa-1}(x) = x^{\kappa-1} W(x)$ where $x^j \frac{d^j}{dx^j}W(x) \ll X^{j} $, with $X \ll q^{\varepsilon}$ (note that if $1 \ll x \ll q^{\varepsilon}$, it is still valid to factor out $x^{\kappa -1}$ though there is a small loss of efficiency 
 by doing so).  This is the same derivative bound as an $X$-inert function, so it may be absorbed into the inert function $w_T$.
 Then by the discussion in Sections \ref{section:InertFourier}  and \ref{section:InertConvention}, we have
 \begin{equation}
 M_2^{1/2} I_K(k) = \Big(\frac{\sqrt{aMN}}{C}\Big)^{\kappa -1} M_2 w_T(\cdot), 
 \end{equation}
 and that $I_K(k) \ll (kq)^{-A}$ if $K \gg \frac{C}{M_2} q^{\varepsilon}$.  Here $w_T$ is $X$-inert with $X \ll q^{\varepsilon'}$.
 \end{proof}
 
 \begin{proof}[Proof of Lemma \ref{lemma:IKpostTransition}]
 Now suppose that \eqref{eq:PostTransition} holds.  Then we use that for $x \gg 1$, we have
 $$J_{\kappa - 1}(x) = \sum_{\pm} x^{-1/2} e^{\pm ix} W_{\pm}(x),$$ where  $W_{\pm}$ satisfies the same derivative bounds as a $1$-inert function.  Thus
 \begin{equation*}
\sqrt{M_2} I_K(k) = \sum_{\pm} \frac{C^{1/2}}{(aMN)^{1/4} }  \intR  w_{M_2}(t, \cdot) 
e\Big(\frac{-kt}{c}\Big) e\Big(\frac{\pm 2 \sqrt{t m_1 n a}}{c}\Big) dt,
 \end{equation*}
 where $w_T(t)$ is $q^{\varepsilon}$-inert (in all previously-declared variables), and supported on $t \asymp M_2$.

 Since $k>0$, if the $\pm$ sign is $-$, then Lemma \ref{lemma:exponentialintegral} part 1 shows that the integral is very small.  Therefore, we focus on the case where the $\pm$ sign is $+$, in which case we obtain an oscillatory integral with phase
\begin{equation*}
 \phi(t) = -\frac{kt}{c} + \frac{2 \sqrt{t m_1 n a}}{c}.
\end{equation*}
We have
\begin{equation*}
 \phi'(t) = -\frac{k}{c} + \frac{\sqrt{m_1 na}}{c \sqrt{t}}, 
 \qquad 
 \phi''(t) = -\frac{\sqrt{m_1 n a}}{2 c t^{3/2}}.
\end{equation*}
There is a unique point $t_0$ where $\phi'(t_0) = 0$, namely
\begin{equation*}
 t_0 = \frac{m_1 n a}{k^2}.
\end{equation*}
If it is not the case that $t_0 \asymp M_2$ (with large but absolute implied constants), then we have $|\phi'(t)| \gg \frac{\sqrt{aM N}}{c M_1}$ throughout the support of the weight function, and Lemma \ref{lemma:exponentialintegral} part 1 again shows the integral is small.  If $t_0 \asymp M_2$, then the location of $t_0$ is compatible with the support of $\phi$, and stationary phase (Lemma \ref{lemma:exponentialintegral}) shows that 
\begin{equation*}
 \intR w_{T}(t) e\Big(\frac{-kt}{c}\Big) e\Big(\frac{ 2 \sqrt{t m_1 n a}}{c}\Big) dt = \frac{C^{1/2} M_2}{(aMN)^{1/4}} e\Big(\frac{m_1 n a}{ck}\Big) w_{T}\Big(\frac{m_1 n a}{k^2}, \cdot \Big) + O((kq)^{-A}),
\end{equation*}
where $w_T$ on the right hand side is $q^{\varepsilon}$-inert, and supported on $m_1 n a /k^2 \asymp M_2$.
 \end{proof}   
    
\section{Reciprocity and other arithmetical manipulations}  
\label{section:reciprocity}
Next we reorder the summation $\mathcal{S}_T$ in \eqref{eq:SafterFirstPoissonEvaluated}. We bring the sum over $n = n_1 n_2 n_3$ to the inside, and open up the Ramanujan sum $S(na,0;c_2) = \sum_{d | (na,c_2) } d \mu(c_2/d)$.
This gives
\begin{equation}
\label{eq:SintoS'}
\mathcal{S}_T =   \sum_{(a,q) = 1} \frac{\mu(a)}{a^{3/2}} 
\sum_{c_2} \frac{1}{c_2^{3/2}} 
\sum_{d|c_2} d \mu(c_2/d)
\sum_{(k_1,q) = 1   }  k_1^{1/2}
\sum_{m_1'} \frac{1}{\sqrt{m_1'}} 
\mathcal{S}' + O(q^{-A}),
\end{equation}
where
\begin{multline*}
 \mathcal{S}' = 
 \sum_{\substack{(c_0,m_1') = 1\\ c_0\equiv 0 \shortmod{q k_1 k_1^*} }} 
 \frac{1}{c_0}
\sum_{\substack{(k_0,c_0) = 1 \\ k_0 \equiv 0 \shortmod{c_2}}}   
\\
\times
\sum_{\substack{n_1 n_2 n_3 a \equiv 0 \shortmod{k_1} \\ n_1 n_2 n_3a \equiv 0 \shortmod{d}}}
\frac{e\Big(- \frac{n_1 n_2 n_3am_1'\overline{k_0}}{c_0}\Big) }{\sqrt{ n_1 n_2 n_3}}
I_K(m_1'k_1c_2 , k_0k_1,  n_1, n_2, n_3,a,  c_0c_2).
\end{multline*}
We shall not obtain any significant cancellation in the outer summation variables appearing in $\mathcal{S}$ (except for a ``fake'' main term, in Section \ref{section:TwopiAreZero}) 
, but substantial cancellation is required in $c_0, k_0$, and the $n_i$.

Note that since $d|c_2$, $c_2 | k_0$, $k_1 | c_0$, and $(c_0, k_0) = 1$, we have that $(d, k_1) = 1$.  Then the congruences in the sum over $n = n_1 n_2 n_3$ are equivalent to $an \equiv 0 \pmod{dk_1}$, which in turn is equivalent to $n \equiv 0 \pmod{\delta_1}$, where
\begin{equation}
 \delta_1 = \frac{k_1 d}{(a,k_1 d)}.
\end{equation}
Note that $(\delta_1,q) = 1$, and $(k_0, q) = 1$.

Since $(c_0 , k_0 ) = 1$, we have the reciprocity formula 
\begin{equation*}
-\frac{\overline{k_0 }}{c_0 } \equiv \frac{\overline{c_0 }}{k_0} - \frac{1}{c_0 k_0 } \pmod 1.
\end{equation*}
Define
\begin{equation}
\label{eq:Jdef}
\begin{split}
 J(n_1,n_2,n_3, a, m_1', c_0, k_0, c_2, k_1) &= e\left(-\frac{n  a m_1' }{c_0 k_0 }\right)  I_K(m_1'k_1c_2 , k_0k_1, n_1,n_2,n_3,a , c_0c_2)
 \\
&  = e\left(-\frac{n  a m_1  }{c k }\right) I_K(m_1  , k ,n_1, n_2, n_3, a, c ), 
 \end{split}
 \end{equation}
where in the second line above we have expressed $J$ in terms of the earlier variable names \eqref{eq:ckDecomposition}, \eqref{eq:m1PrimeDefinition}.  This is sometimes convenient for tracking the sizes of certain quantities, for example, $\frac{n am_1'}{c_0 k_0} \asymp \frac{NaM_1}{CK}$.

Our next goal is to apply Poisson summation in the $n$-variables, and to do that we need some preparatory moves.  First, consider a formal sum of the form
\begin{equation}
\label{eq:tau3sum}
	\sum_{\substack{m_1,m_2,m_3 \geq 1\\ m_1m_2m_3 \equiv 0 \shortmod{r}}} J(m_1,m_2,m_3).
\end{equation}
The product $m_1m_2m_3$ runs over integers of the form $mr$ with $m\geq 1$. Now define
\[
	r_1 = (m_1,r), \quad m_1 = m_1' r_1, 
\]
so $(m_1', \frac{r}{r_1}) = 1$.  
Then we have $m_1' m_2 m_3 = \frac{r}{r_1}m$.  Continuing this process, define $r_2 = (m_2, \frac{r}{r_1})$, $m_2 = m_2' r_2$, so we have $m_1' m_2' m_3 = \frac{r}{r_1 r_2} n$ with $(m_2', \frac{r}{r_1 r_2}) = 1$.  Finally, let $r_3 = (m_3, \frac{r}{r_1 r_2})$, and set $m_3= m_3' r_3$, whence $(m_3', \frac{r}{r_1 r_2 r_3}) = 1$.  Now we have $m_1' m_2' m_3' = \frac{r}{r_1 r_2 r_3} m$, and the coprimality conditions mean that 
$(m_1'm_2' m_3', \frac{r}{r_1 r_2 r_3}) = 1$, so $r_1 r_2 r_3 = r$.

 Therefore, translating this discussion into formulas, we have that \eqref{eq:tau3sum} equals
\begin{equation*}
 \sum_{r_1 r_2 r_3 = r} \sum_{(m_1', r_2 r_3) = 1} \sum_{(m_2', r_3) = 1} \sum_{m_3'} J(r_1m_1',r_2m_2', r_3m_3').
\end{equation*}
Using M\"obius inversion, we have that \eqref{eq:tau3sum} equals
\begin{equation}
\label{eq:tau3sumLater}
 \sum_{r_1 r_2 r_3 = r} 
 \sum_{e_1 | r_2 r_3} \mu(e_1) \sum_{e_2 | r_3} \mu(e_2) 
 \sum_{n_1, n_2, n_3 \geq 1 } J(r_1 e_1 n_1 , r_2 e_2 n_2 , r_3 n_3 ).
\end{equation}

Applying this formula to $\mathcal{S}'$, we obtain
\begin{equation}
\label{eq:S''andS'}
 \mathcal{S}' = 
 \sum_{r_1 r_2 r_3 = \delta_1} 
 \sum_{e_1 | r_2 r_3} \sum_{e_2 | r_3} \mu(e_1) \mu(e_2)
 \mathcal{S}'',
\end{equation}
where
\begin{multline*}
\mathcal{S}'' = 
 \sum_{\substack{(c_0, m_1') = 1 \\ c_0 \equiv 0 \shortmod{qk_1 k_1^*}}} 
 \frac{1}{c_0}
\sum_{\substack{( k_0,c_0) = 1 \\ k_0 \equiv 0 \shortmod{c_2} }}   
 \sum_{n_1, n_2, n_3 \geq 1} \frac{ e\Big(\frac{e_1 e_2 \delta_1 am_1' n_1 n_2 n_3
 \overline{c_0}}{k_0 }\Big) }{\sqrt{ \delta_1  e_1 e_2 n_1 n_2 n_3}} 
 \\
 \times
 J( r_1e_1n_1, r_2e_2n_2,r_3n_3,  a, m_1', c_0, k_0 , c_2,k_1).
\end{multline*}
We remark that in doing so we changed variables by
\begin{equation}
\label{eq:niChangesOfVariables}
 n_1 \rightarrow r_1  e_1 n_1, \qquad n_2 \rightarrow  r_2 e_2 n_2 \qquad  n_3 \rightarrow r_3 n_3 .
\end{equation}
With the earlier definition of $n$ as $n_1n_2n_3$, then \eqref{eq:niChangesOfVariables} is equivalent to
	$n \rightarrow e_1e_2\delta_1 n$.

Next define $g_0 = (e_1 e_2 \delta_1 a m_1', k_0)$, and write 
\begin{equation*}
k_0 = g_0 k_0', \qquad \text{and} \qquad \delta_2= \frac{e_1 e_2 \delta_1 am_1'}{g_0}.
\end{equation*}
There are some implicit conditions on the variables that we wish to record explicitly.
Note that since $(k_1, k_0) = 1$, and $d|c_2 | k_0$, we may write $g_0$ as
\begin{equation*}
 \label{eq:g0def}
 g_0 = (e_1 e_2 \frac{ad}{(a,d)} m_1', k_0) = d (e_1 e_2 \frac{a}{(a,d)} m_1', \frac{k_0}{d}),
\end{equation*}
and in particular $d | g_0$, a property that will be important in Section \ref{section:ContinuousSpectrumBound}. Also note that since none of the factors of $\delta_2$ are divisible by $q$ (since $q$ is prime, $(a,q) = 1$, and the original $m_1$ and $n_i$-variables are $\ll q^{1/2+\varepsilon}$), we have
\begin{equation}\label{eq:delta2Coprime}
(\delta_2,q) = 1.
\end{equation}
We may also observe that $(g_0, qk_1) = 1$
since $g_0 | k_0$, $(k_0, c_0) = 1$, and $c_0 \equiv 0 \pmod{qk_1 k_1^*}$.  From $k_1 | a \delta_1$, we also conclude that
\begin{equation}
\label{eq:k1dividesdelta2}
k_1 | \delta_2.
\end{equation}

Therefore,
\begin{equation}
\label{eq:S''intoS'''}
 \mathcal{S}'' = 
 \sum_{\substack{g_0 | e_1 e_2 \delta_1 am_1' \\ g_0\equiv 0 \shortmod{d}}} \mathcal{S}''',
\end{equation}
where
\begin{multline}
\label{eq:S'''prePoisson}
 \mathcal{S}''' = 
 \sum_{\substack{(c_0,g_0 m_1') = 1\\ c_0\equiv 0 \shortmod{q k_1 k_1^*} }} 
 \frac{1}{c_0}
\sum_{\substack{( k_0',\delta_2 c_0) = 1 \\ k_0' \equiv 0 \shortmod{\frac{c_2}{(g_0, c_2)}} }}   
\sum_{\substack{n_1, n_2, n_3  \geq 1 }}
\frac{e\Big(\frac{  \delta_2   n_1 n_2 n_3
 \overline{c_0}}{k_0' }\Big) }{\sqrt{ \delta_1  e_1 e_2 n_1 n_2 n_3 }} 
 \\
 \times
  J(r_1e_1n_2, r_2 e_2 n_2,r_3 n_3 , a, m_1', c_0, g_0 k_0' , c_2,k_1)
.
\end{multline}


\section{Triple Poisson}
\subsection{Poisson summation formula}
Our next step is to apply Poisson summation in $n_1, n_2, n_3$ modulo $k_0'$ to \eqref{eq:S'''prePoisson}, to which end we state the following general version.
\begin{myprop} 
\label{thm:TriplePoisson}
	Let $J$ be any smooth and compactly-supported function on $(0,\infty)^3$, and suppose $(\alpha ,k) = 1$.  
Then
\begin{equation*}
\sum_{n_1,n_2,n_3\geq 1} e\left(\frac{n_1n_2n_3 \alpha}{k}\right)  J(n_1,n_2,n_3) = 
\frac{1}{k ^3} \sum_{p_1,p_2,p_3 \in \mathbb{Z}} A(p_1,p_2,p_3;\alpha;k ) B(p_1,p_2,p_3;k ),
\end{equation*}	
	where
	\begin{equation}
	\label{eq:Adef}
		A(p_1,p_2,p_3; \alpha; k) = \sum_{x_1,x_2,x_3 \shortmod{k} } e\left(\frac{x_1x_2x_3  \alpha - x_1p_1 - x_2p_2 - x_3 p_3}{k}\right),
	\end{equation}
	and
	\begin{equation*}
		B(p_1,p_2,p_3;k) 
	=\int\limits_{0}^\infty \int\limits_{0}^\infty \int\limits_{0}^\infty  J(t_1,t_2,t_3)  e\left(\frac{p_1t_1}{k} +\frac{p_2t_2}{k} + \frac{p_3t_3}{k}\right) \d t_1\d t_2 \d t_3.
	\end{equation*}
	An evaluation for $A$ is given in Lemma \ref{lem:Pnonzero} below (see also Lemma \ref{lem:Pzero}). 
\end{myprop}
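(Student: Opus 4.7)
The plan is a direct application of a three-dimensional Poisson summation formula, exploiting the fact that the phase $e(n_1 n_2 n_3 \alpha/k)$ is invariant modulo $1$ under independent translations $n_i \mapsto n_i + k$ of each variable. Since $J$ is compactly supported in $(0,\infty)^3$, the sum $\sum_{n_i \geq 1}$ may be extended to $\sum_{n_i \in \mathbb{Z}}$ at no cost, and all interchanges of summation and integration that follow are unconditionally justified.

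I would then partition each $n_i$-sum according to residue classes modulo $k$, writing $n_i = x_i + k m_i$ with $x_i$ ranging over residues modulo $k$ and $m_i \in \mathbb{Z}$. The key algebraic observation is that
\[
(x_1 + km_1)(x_2 + km_2)(x_3 + km_3) \equiv x_1 x_2 x_3 \pmod{k},
\]
so $e(n_1 n_2 n_3 \alpha/k) = e(x_1 x_2 x_3 \alpha/k)$, and the inner triple sum over $(m_1, m_2, m_3) \in \mathbb{Z}^3$ is free of any arithmetic dependence on the $x_i$. Thus the left-hand side becomes
\[
\sum_{x_1, x_2, x_3 \shortmod{k}} e\!\left(\frac{x_1 x_2 x_3 \alpha}{k}\right) \sum_{m_1, m_2, m_3 \in \mathbb{Z}} J(x_1 + km_1, x_2 + km_2, x_3 + km_3).
\]

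Next I would apply the one-variable Poisson summation formula $\sum_{m \in \mathbb{Z}} f(x + km) = k^{-1} \sum_{p \in \mathbb{Z}} \widehat{f}(-p/k)\, e(-px/k)$ iteratively in each $m_i$ coordinate (using Fubini to separate the three Fourier transforms). The resulting triple Fourier integral in $(t_1, t_2, t_3)$ is precisely $B(p_1, p_2, p_3; k)$, and the three Poisson steps together produce the prefactor $k^{-3}$. Recombining with the phase $e(x_1 x_2 x_3 \alpha/k)$ and summing over the residue classes $x_1, x_2, x_3 \shortmod{k}$ yields the factor $A(p_1, p_2, p_3; \alpha; k)$, completing the identity. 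There is no real analytic obstacle: compact support makes everything absolutely convergent, and the only point requiring a small amount of care is matching the Fourier transform sign convention to the definition of $B$ given with the $+$ sign in the exponential (this is what dictates the $-x_i p_i$ sign in the definition of $A$). Note that the coprimality assumption $(\alpha, k) = 1$ plays no role in the identity itself; it is only relevant for the subsequent explicit evaluation of $A$ alluded to in Lemmas \ref{lem:Pnonzero} and \ref{lem:Pzero}.
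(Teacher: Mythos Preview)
Your proof is correct and is exactly the standard argument one would give here; the paper in fact states this proposition without proof, treating it as a routine instance of Poisson summation modulo $k$ in each variable. Your observation that the hypothesis $(\alpha,k)=1$ is irrelevant to the identity itself (and only enters in the later evaluation of $A$) is also accurate.
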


In view of \eqref{eq:S'''prePoisson}, we need to evaluate
	$A(p_1,p_2,p_3;\delta_2 \overline{c_0};k_0')$,
and analyze
\begin{multline}
\label{eq:Bdef}
	B(p_1,p_2,p_3;k_0') 
	=\int_{(\mathbb{R}^+)^3} J(r_1e_1t_1, r_2e_2t_2, r_3t_3, a, m_1', c_0,g_0 k_0', c_2, k_1)\\
	e\left(\frac{p_1t_1}{k_0'} +\frac{p_2t_2}{k_0'} + \frac{p_3t_3}{k_0'}\right) \frac{\d t_1\d t_2 \d t_3}{\sqrt{e_1 e_2 \delta_1 t_1t_2t_3}},
\end{multline}
where $r_1 r_2 r_3 = \delta_1$ (see \eqref{eq:S''andS'}).

\subsection{The evaluation of $A$}

\begin{mylemma} \label{lem:Pnonzero}
	 Suppose $(\alpha, k) = 1$. We have
	\[
		A(p_1,p_2,p_3;\alpha;k) = k \sum_{f | (p_{2},p_{3},k)}f S\Big(p_1\overline{\alpha },\frac{ p_2p_3}{f^2}; \frac{k}{f}\Big).
	\]
\end{mylemma}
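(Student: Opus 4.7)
The plan is to evaluate the sum one variable at a time, introducing an auxiliary variable to detect the multiplicative congruence that arises. The final answer will come out as a sum of Kloosterman sums, as claimed.

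First, I would perform the $x_1$-sum in \eqref{eq:Adef}. Since $x_1$ appears linearly in the exponential,
\[
\sum_{x_1 \shortmod{k}} e\left(\frac{x_1(x_2 x_3 \alpha - p_1)}{k}\right) = k \cdot \mathbf{1}[x_2 x_3 \alpha \equiv p_1 \shortmod{k}].
\]
Using $(\alpha, k) = 1$ and writing $n := p_1 \overline{\alpha}$ (inverse mod $k$), this leaves
\[
A(p_1, p_2, p_3; \alpha; k) = k \sum_{\substack{x_2, x_3 \shortmod k \\ x_2 x_3 \equiv n \shortmod k}} e\left(\frac{-x_2 p_2 - x_3 p_3}{k}\right).
\]

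The main step is to evaluate the inner double sum. Rather than splitting by the gcd of $x_3$ with $k$ (which produces a correct but awkward formula indexed by $d \mid (n, p_2, k)$), I would detect the constraint $x_2 x_3 \equiv n \shortmod{k}$ by introducing an auxiliary additive character:
\[
\mathbf{1}[x_2 x_3 \equiv n \shortmod k] = \frac{1}{k} \sum_{y \shortmod k} e\left(\frac{y(x_2 x_3 - n)}{k}\right).
\]
Substituting this and switching the order of summation yields
\[
A/k = \frac{1}{k} \sum_{y \shortmod k} e\left(\frac{-yn}{k}\right) \sum_{x_2, x_3 \shortmod k} e\left(\frac{y x_2 x_3 - x_2 p_2 - x_3 p_3}{k}\right).
\]
Since $x_2$ again appears linearly, the $x_2$-sum contributes $k \cdot \mathbf{1}[y x_3 \equiv p_2 \shortmod k]$, so
\[
A/k = \sum_{y \shortmod k} e\left(\frac{-yn}{k}\right) \sum_{\substack{x_3 \shortmod k \\ y x_3 \equiv p_2 \shortmod k}} e\left(\frac{-x_3 p_3}{k}\right).
\]

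Now I would split the outer $y$-sum according to $g = (y, k)$, writing $y = g y'$ with $(y', k/g) = 1$ and $y' \shortmod{k/g}$. The congruence $y x_3 \equiv p_2 \shortmod k$ requires $g \mid p_2$, and when this holds the solutions are $x_3 \equiv (p_2/g)\overline{y'} \shortmod{k/g}$, i.e.\ $g$ residues mod $k$ of the form $(p_2/g)\overline{y'} + j(k/g)$ with $j = 0, \ldots, g-1$. Summing over $j$ produces the factor $g \cdot \mathbf{1}[g \mid p_3]$, and the remaining phase is
\[
e\!\left(\frac{-(p_2/g)(p_3/g)\overline{y'}}{k/g}\right).
\]
Combining, writing $f := g$, and recognizing the $y'$-sum as the standard Kloosterman sum, yields
\[
A/k = \sum_{f \mid (p_2, p_3, k)} f \sum_{\substack{y' \shortmod{k/f} \\ (y', k/f) = 1}} e\!\left(\frac{-y' n - \overline{y'}(p_2 p_3/f^2)}{k/f}\right) = \sum_{f \mid (p_2, p_3, k)} f \, S\!\left(n, \frac{p_2 p_3}{f^2}; \frac{k}{f}\right),
\]
using the invariance $S(a, b; c) = S(-a, -b; c)$. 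This is the desired formula.

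The only nontrivial point is bookkeeping: one must carefully track which divisibility conditions ($g \mid p_2$ and $g \mid p_3$) arise naturally from the two nested linear-exponential sums and confirm they combine into the single condition $f \mid (p_2, p_3, k)$ in the final answer. Note also that one could alternatively split by $(x_3, k)$ directly in the original double sum, yielding a formally different (but equivalent) expression $\sum_{d \mid (n, p_2, k)} d\, S(p_3, np_2/d^2; k/d)$; the symmetric form in the statement is the cleaner one and is the version needed for later applications.
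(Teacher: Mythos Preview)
Your proof is correct and reaches the same formula, but it takes a slightly longer path than the paper's argument. After your first step (summing $x_1$ to obtain the bilinear constraint $x_2 x_3 \equiv n \pmod k$), you reintroduce a Fourier variable $y$ and then sum $x_2$, which simply trades one bilinear constraint for another ($y x_3 \equiv p_2 \pmod k$). From that point your gcd-splitting on $y$ is identical in structure to what the paper does directly: the paper sums $x_3$ first, obtaining $x_1 x_2 \alpha \equiv p_3 \pmod k$, and then immediately splits by $f=(x_1,k)$, solves for $x_2 \pmod{k/f}$, and sums the $f$ liftings to detect $f\mid p_2$. So the auxiliary-variable step is an unnecessary detour; you could have gcd-split $x_2$ or $x_3$ right after summing $x_1$.

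One small advantage of your ordering is that, having summed $x_1$ first, the quantity $n=p_1\overline{\alpha}$ lands directly in the first slot of the Kloosterman sum; the paper's ordering initially produces $S(p_1,\overline{\alpha}\,p_2p_3/f^2;k/f)$ and then silently uses $y\mapsto \alpha y$ to move $\overline{\alpha}$ across. Your remark about the alternative expression $\sum_{d\mid(n,p_2,k)} d\,S(p_3,np_2/d^2;k/d)$ is also correct and reflects the underlying symmetry of $A$ in $p_1,p_2,p_3$.
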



\begin{proof}
By first evaluating the sum over $x_3$, we derive
	\begin{equation*}
		A(p_1 , p_2 , p_3 ;\alpha;k) = k \sum_{\substack{x_1,x_2 \shortmod{k}\\ x_1x_2 \alpha \equiv p_3 \shortmod{k } }} e\left(\frac{x_1 p_1 + x_2p_2 }{k}\right). 
	\end{equation*}
	At this point we decompose the sum by letting $(x_1, k)= f$ with $f | k$.  Say $x_1 = f y$ with $y$ running over reduced residue classes modulo $k/f$.  
	Note that necessarily $f|(p_3, k)$, and that $x_2 \equiv \overline{\alpha y} \frac{p_3}{f} \pmod{k/f}$.   Therefore, we may write $x_2 = \overline{\alpha y} \frac{p_3}{f} + v \frac{k}{f}$ where $v$ runs modulo $f$.    Hence,
	\begin{equation*}
	A(p_1, p_2, p_3;\alpha;k) = k \sum_{f|(p_3,k)} \thinspace \sumstar_{y \shortmod{k/f}} e\Big(\frac{y p_1}{k/f}\Big) 
	e\Big(\frac{\overline{\alpha y} \frac{p_3}{f} p_2}{k}\Big) 
	\sum_{v \shortmod{f}} 
	e\Big(\frac{p_2 v}{f}\Big).
	\end{equation*}
The sum over $v$ detects $f|p_2$, and 	so the formula follows.
\end{proof}

\subsection{Asymptotics of $B$}
 
 Let us begin by unraveling the definition of $B$.  First we recall its definition from \eqref{eq:Bdef}, \eqref{eq:Jdef}, and \eqref{eq:Idef}.  
 Let us also pull out a factor $N^{-1/2}$ coming from $(e_1 e_2 \delta_1 t_1 t_2 t_3)^{-1/2}$.  Recall that $I_K(k)$ has a built-in inert function.  One may 
 change this inert function appropriately to obtain that $B$ takes a simplified form  
\begin{multline*}
	N^{1/2} B(p_1,p_2,p_3;k_0') 
	=\int_{(\mathbb{R}^+)^3} 
	e\Big(\frac{-e_1 e_2 \delta_1 t_1 t_2 t_3 am_1'}{c_0 k_0}\Big)
	 I^*(m_1, k, r_1e_1t_1, r_2e_2t_2, r_3t_3 a, c) 
	\\
	e\left(\frac{p_1t_1}{k_0'} +\frac{p_2t_2}{k_0'} + \frac{p_3t_3}{k_0'}\right) \d t_1\d t_2 \d t_3,
\end{multline*}
where $I^*$ has the same properties as $I$ given in Lemmas \ref{lemma:IKpreTransition} and \ref{lemma:IKpostTransition} (since all that changed is the definition of the inert function).
Note that the support of the inert function is such that $t_i \asymp N_i'$, say, where
\begin{equation*}
 N_1' = \frac{N_1}{e_1 r_1}, \qquad N_2' = \frac{N_2}{e_2 r_2}, \qquad N_3' = \frac{N_3}{r_3}.
\end{equation*}
Define $N' = N_1' N_2' N_3'$.
In the analytic aspects, it is usually most convenient to work  with the original variable names (we may perform the substitutions later, after analyzing the integral transform).   Let
\begin{equation*}
 \taco = e_1 e_2 r_1 r_2 r_3 = e_1 e_2 \delta_1,
\end{equation*}
and note that
\begin{equation*}
 N' \taco = N.
\end{equation*}

The terms with some $p_i = 0$ will be treated in Section \ref{section:ZeroTerms}, using a more elementary approach than the method used in the analysis of the nonzero terms with $p_1 p_2 p_3 \neq 0$.  For the nonzero terms, we apply dyadic partitions of unity to each $p_i$ variable, both for the positive and negative values separately.  
Let $B_P$ be the same as $B$ but multiplied by one function from this partition of unity with $\pm p_i \asymp P_i$, $i=1,2,3$; we suppress the signs in the notation of $B_P$.
As a convention, we may incorporate the case $p_i = 0$ by setting $P_i = 0$.

\begin{mylemma}[Post-Transition]
\label{lemma:BpostTransition}
Suppose \eqref{eq:PostTransition}  holds.  Then 
\begin{equation}
\label{eq:BasymptoticPostTransition}
M_2^{1/2} N^{1/2} B_P(p_1, p_2, p_3;k_0') = \frac{C }{(aMN)^{1/2}} M_2 N' w_T(\cdot), 
\end{equation}
where $w_T$ is $q^{\varepsilon}$-inert, $N' = N_1' N_2' N_3'$, and where $B_P(p_1, p_2, p_3) $
is very small 
unless
\begin{equation}
\label{eq:PiboundPostTransition}
 P_i \ll \frac{k_0'}{N_i'} q^{\varepsilon}   \qquad \text{ and } \qquad  K \asymp \frac{(aMN)^\hf}{M_2}. 
\end{equation}
\end{mylemma}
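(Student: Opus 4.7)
The plan is to substitute the post-transition asymptotic of $I_K$ from Lemma \ref{lemma:IKpostTransition} into the integral defining $B_P$, observe that the resulting stationary-phase oscillation cancels exactly with the reciprocity-induced phase already present in the integrand, and then analyze what remains as a triple Fourier transform of an inert function.

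Concretely, I would first rewrite the integral with $I^*$ replaced by the formula
\[
M_2^{1/2} I^*(\cdot) = \frac{CM_2}{(aMN)^{1/2}} \, e\!\left(\frac{m_1 n a}{ck}\right) w_T(\cdot) + O((kq)^{-A}),
\]
valid under \eqref{eq:PostTransition}. Unpacking the variable names via $n = r_1 e_1 t_1 \cdot r_2 e_2 t_2 \cdot r_3 t_3 = e_1 e_2 \delta_1 t_1 t_2 t_3$, $m_1 = k_1 c_2 m_1'$, $c = c_0 c_2$, $k = k_0 k_1$, the phase $e(m_1 n a/(ck))$ becomes $e(e_1 e_2 \delta_1 t_1 t_2 t_3 a m_1'/(c_0 k_0))$, which exactly cancels the reciprocity phase $e(-e_1 e_2 \delta_1 t_1 t_2 t_3 a m_1'/(c_0 k_0))$ already present in $B_P$ (produced in Section \ref{section:reciprocity}, cf. \eqref{eq:Jdef}). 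The $K$-localization \eqref{eq:Ksize} of Lemma \ref{lemma:IKpostTransition} yields the second half of \eqref{eq:PiboundPostTransition} directly, since $k \asymp K$ on the dyadic support.

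After this cancellation, up to a very small error we have
\[
M_2^{1/2} N^{1/2} B_P \;\approx\; \frac{C M_2}{(aMN)^{1/2}} \int_{(\mathbb{R}^+)^3} w_T(t_1,t_2,t_3,\cdot)\, e\!\left(\frac{p_1 t_1 + p_2 t_2 + p_3 t_3}{k_0'}\right) dt_1\, dt_2\, dt_3,
\]
where $w_T$ is $q^{\varepsilon}$-inert and supported on $t_i \asymp N_i'$. Rescaling $t_i = N_i' u_i$ produces a factor $N_1' N_2' N_3' = N'$ and turns the expression into a triple Fourier transform in the dimensionless variables $u_i \asymp 1$ at dual frequencies $\xi_i = P_i N_i'/k_0'$. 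The Fourier analysis of inert functions from Section \ref{section:InertFourier} (repeated integration by parts in each $u_i$) then shows the integral is very small unless $|\xi_i| \ll q^{\varepsilon}$, i.e.\ $P_i \ll k_0' q^{\varepsilon}/N_i'$, matching the first half of \eqref{eq:PiboundPostTransition}. In the surviving range, the Fourier-transformed kernel is again $q^{\varepsilon'}$-inert in all remaining parameters, and reassembling gives exactly \eqref{eq:BasymptoticPostTransition}.

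The main subtlety in carrying this out is bookkeeping rather than analysis: the inert function in Lemma \ref{lemma:IKpostTransition} depends on $t_1, t_2, t_3$ only through the combination $t_0 = m_1 n a/k^2$, i.e.\ through the product $t_1 t_2 t_3$, so one should check that this still defines a $q^{\varepsilon'}$-inert function jointly in $(t_1, t_2, t_3)$ on the product support $t_i \asymp N_i'$, which is clear from applying the chain rule to the derivative bounds. The error term $O((kq)^{-A})$ integrates trivially against the compactly supported weight and remains very small. Thus the only genuinely delicate step is verifying the phase cancellation through the various renamings, which we have arranged above.
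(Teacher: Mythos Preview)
Your proposal is correct and follows essentially the same approach as the paper: substitute the post-transition formula for $I_K$, observe that the stationary-phase factor $e(m_1 n a/(ck))$ cancels the reciprocity phase in $J$, and then treat the remaining integral as a Fourier transform of a $q^{\varepsilon}$-inert function supported on $t_i \asymp N_i'$, invoking Section~\ref{section:InertFourier}. Your write-up is in fact more detailed than the paper's two-sentence proof, and your remark about the inert function depending on $t_1 t_2 t_3$ only through the product (via $t_0 = m_1 n a/k^2$) is a valid bookkeeping point that the paper leaves implicit.
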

\begin{proof}[Proof of Lemma \ref{lemma:BpostTransition}]
The main observation is that the exponential factor appearing in \eqref{eq:IKpostTransition} cancels the exponential factor in the definition of $J$ in \eqref{eq:Jdef}.  
Therefore, $B$ is a Fourier transform of a $q^{\varepsilon}$-inert function supported on $t_i \asymp N_i'$, and hence by the discussion in Section \ref{section:InertFourier},
\eqref{eq:BasymptoticPostTransition} follows.
\end{proof}

 \begin{mylemma}[Pre-Transition, non-oscillatory]
\label{lemma:CasePreNonOsc}
Suppose \eqref{eq:PreTransition} holds.  If in addition 
\begin{equation}
\label{eq:CasePreNonOsc}
\frac{NaM_1}{CK} \ll q^{\varepsilon},
\end{equation}
then 
\begin{equation}
\label{eq:BasymptoticCasePreNonOsc}
M_2^{1/2} N^{1/2} B_P(p_1, p_2, p_3;\mattletter) = \Big(\frac{\sqrt{aMN}}{C}\Big)^{\kappa -1} M_2 N' w_T(\cdot), 
\end{equation}
where the inert function is $X$-inert with $X \ll q^{\varepsilon}$, and where $w_T$ is very small unless
\begin{equation*}
 \frac{N_i' P_i}{\mattletter} \ll q^{\varepsilon}, \quad i=1,2,3, \qquad \text{and} \qquad \frac{KM_2}{C} \ll q^{\varepsilon}.
\end{equation*}
\end{mylemma}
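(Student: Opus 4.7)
The plan is to combine the pre-transition asymptotic of $I^*$ from Lemma \ref{lemma:IKpreTransition} with an analysis showing that, under \eqref{eq:CasePreNonOsc}, the exponential factor appearing in $J$ (see \eqref{eq:Jdef}) is non-oscillatory and may be absorbed into the inert weight. Starting from the reduced form of $N^{1/2} B_P$ displayed just after \eqref{eq:Bdef}, I would multiply by $M_2^{1/2}$ and substitute \eqref{eq:IKasymptoticPreTransition}, pulling the constant prefactor $\left(\frac{\sqrt{aMN}}{C}\right)^{\kappa-1} M_2$ outside. The remaining integrand is then a product of (i) a $q^{\varepsilon}$-inert function of $(t_1, t_2, t_3)$ and the external parameters, supported on $t_i \asymp N_i'$, (ii) the factor $\sqrt{N_1' N_2' N_3'/(t_1 t_2 t_3)}$ extracted from $1/\sqrt{e_1 e_2 \delta_1 t_1 t_2 t_3}$ (this contributes a $1$-inert factor in $t_1,t_2,t_3$), (iii) the exponential $e(-h t_1 t_2 t_3 a m_1'/(c_0 k_0))$ with $h = e_1 e_2 \delta_1$, and (iv) the Fourier mode $e((p_1 t_1 + p_2 t_2 + p_3 t_3)/k_0')$.

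The key step is to verify that the phase $\psi := h t_1 t_2 t_3 a m_1'/(c_0 k_0)$ is itself $q^{\varepsilon}$-inert. At $t_i \asymp N_i'$ its magnitude equals $NaM_1/(CK)$, which by hypothesis \eqref{eq:CasePreNonOsc} is $\ll q^{\varepsilon}$. Since $\psi$ is linear in each of $t_1, t_2, t_3$ separately, every rescaled mixed partial $\prod t_i^{j_i} \partial_{t_i}^{j_i} \psi$ collapses to either $\psi$ itself (for mixed partials spreading across all three variables with multiplicities one) or to zero, and similar scaling holds for partials in the external parameters $m_1', c_0, k_0, a$ because $\psi$ is homogeneous of degree $\pm 1$ in each. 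Hence $e^{i\psi}$ is $q^{\varepsilon'}$-inert jointly in all variables and may be absorbed into the inert function of (i), together with (ii), producing a new $q^{\varepsilon'}$-inert weight $w_T^{(2)}(t_1, t_2, t_3, \cdot)$ with the same support in $t_i$.

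After this absorption the integral becomes
\[
\iiint w_T^{(2)}(t_1, t_2, t_3, \cdot)\, e\!\left(\tfrac{p_1 t_1 + p_2 t_2 + p_3 t_3}{k_0'}\right) dt_1\, dt_2\, dt_3,
\]
which factorizes as three nested one-dimensional Fourier transforms. Applying the analysis of Section \ref{section:InertFourier} in the $t_1$ variable first yields a $q^{\varepsilon}$-inert function of the remaining variables of size $N_1'$, which is very small unless $N_1' P_1/k_0' \ll q^{\varepsilon}$; iterating in $t_2$ and then $t_3$ produces the additional factors $N_2'$ and $N_3'$ together with the analogous support conditions on $P_2$ and $P_3$. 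Multiplying everything out gives the prefactor $N' = N_1' N_2' N_3'$ and an inert function $w_T(\cdot)$ of the surviving arguments, which is precisely \eqref{eq:BasymptoticCasePreNonOsc}. The remaining support condition $KM_2/C \ll q^{\varepsilon}$ is inherited directly from Lemma \ref{lemma:IKpreTransition}.

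The main potential obstacle is the joint-inertness bookkeeping in the second paragraph: one must confirm that the absorbed product $w_T^{(1)} \cdot \sqrt{N'/(t_1 t_2 t_3)} \cdot e^{i\psi}$ is $q^{\varepsilon}$-inert in \emph{all} parameters simultaneously (including $a, m_1', c_0, k_0$, the dyadic tuple $T$, and any additional parameters the family carries), because the subsequent iterated application of Section \ref{section:InertFourier} preserves joint inertness only if we begin with it. Once this verification is in place, the three Fourier steps are mechanical.
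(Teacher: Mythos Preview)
Your proposal is correct and follows essentially the same approach as the paper's proof: absorb the exponential factor $e(-h t_1 t_2 t_3 a m_1'/(c_0 k_0))$ into the inert weight using \eqref{eq:CasePreNonOsc}, then treat the remaining integral as a Fourier transform of a $q^{\varepsilon}$-inert function via Section~\ref{section:InertFourier}. The paper's proof is terser, simply noting that the phase has size $\asymp NaM_1/(CK) \ll q^{\varepsilon}$ and invoking the inert-function machinery without the explicit joint-inertness bookkeeping you spell out; your more detailed verification is a faithful elaboration of the same idea.
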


\begin{proof}
In this case, the exponential factor in the definition of $B_P$ is essentially not oscillatory, because of the condition \eqref{eq:CasePreNonOsc}.  For this, it is again helpful to remember that
\begin{equation*}
\frac{e_1 e_2 \delta_1 t_1 t_2 t_3 am_1'}{c_0 k_0}  \asymp \frac{N aM_1}{CK}.
\end{equation*}
Since \eqref{eq:CasePreNonOsc} holds, the exponential factor may be included into the definition of the inert weight function, which is $X$-inert with $X \ll q^{\varepsilon}$.  As in the case of Lemma \ref{lemma:BpostTransition}, we again obtain a Fourier transform of an $X$-inert function, and hence we obtain the claimed estimates.
\end{proof}

We record for later use that under the conditions of Lemmas \ref{lemma:BpostTransition} and \ref{lemma:CasePreNonOsc}, we have
\begin{equation}
\label{eq:Pbound}
P_1 P_2 P_3 \ll q^{\varepsilon} \frac{h}{N} \Big(\frac{k}{k_1 g_0}\Big)^3 \asymp  q^{\varepsilon} \frac{K^3}{N} \frac{h}{(k_1 g_0)^3}.
\end{equation}

\begin{mylemma}[Pre-Transition, oscillatory]
\label{lemma:CasePreOsc}
Suppose \eqref{eq:PreTransition} holds.  If in addition
\begin{equation}
\label{eq:CasePreOsc}
\frac{NaM_1}{CK} \gg q^{\varepsilon},
\end{equation}
then with $w_T(\cdot)$ denoting an $X$-inert function with $X \ll q^{\varepsilon}$, we have
\begin{multline}
\label{eq:BasymptoticCasePreosc}
M_2^{1/2} N^{1/2}
B_P(p_1, p_2, p_3;k_0')  = 
O(q^{-A} \prod_{i=1}^{3} (1 + |p_i|)^{-A})
\\
+
\Big(\frac{\sqrt{aMN}}{C}\Big)^{\kappa -1} M_2 N' \Big(\frac{CK}{a M_1 N}  \Big)^{3/2} e\Big(\frac{2 (p_1 p_2 p_3 ck)^{1/2}}{(a m_1 \taco \mattletter^3)^{1/2} } \Big) w_T(\cdot).
\end{multline}
 Moreover, 
$B_P(p_1, p_2, p_3;\mattletter)$ is very small unless each $p_i > 0$ and 
\begin{equation}
\label{eq:PiSizeOscillatory}
P_i \asymp \frac{N a M_1 \mattletter}{C K N_i'}, \quad i=1,2,3   \qquad\text{and} \qquad \frac{KM_2}{C} \ll q^\vep.
\end{equation}
\end{mylemma}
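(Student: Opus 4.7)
The proof follows the template of Lemma \ref{lemma:CasePreNonOsc}, except that under \eqref{eq:CasePreOsc} the factor $e(-\taco t_1 t_2 t_3 a m_1'/(c_0 k_0))$ is genuinely oscillatory of phase size $\asymp NaM_1/(CK) \gg q^\vep$, so it cannot be absorbed into the inert weight; instead, a three-variable stationary phase analysis is required. Applying Lemma \ref{lemma:IKpreTransition} we may write $M_2^{1/2} I^* = (\sqrt{aMN}/C)^{\kappa-1} M_2\, w_T(t_1, t_2, t_3, \cdot)$, where $w_T$ is $q^\vep$-inert and supported on $t_i \asymp N_i'$; the condition $KM_2/C \ll q^\vep$ is inherited directly from there. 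Pulling this factor out of \eqref{eq:Bdef}, it remains to analyze
\[
\mathcal{I} = \int_{(\R^+)^3} w_T(t_1,t_2,t_3,\cdot)\, e^{i\phi}\, dt_1 \, dt_2 \, dt_3,
\]
with phase
\[
\phi(t_1,t_2,t_3) = \frac{2\pi}{\mattletter}\Bigl(-\frac{\delta_2 t_1 t_2 t_3}{c_0} + p_1 t_1 + p_2 t_2 + p_3 t_3\Bigr).
\]

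First I would rule out the non-stationary ranges. The partial derivative $\partial_{t_i}\phi = \frac{2\pi}{\mattletter}(p_i - \delta_2 t_1 t_2 t_3/(c_0 t_i))$ has its second term of size $\asymp \delta_2 N_j' N_k'/c_0 \asymp NaM_1\mattletter/(CKN_i')$ on the support. If $P_i$ differs (in log scale) from this value, or has the wrong sign, then $|\partial_{t_i}\phi| \gg \max(P_i, NaM_1/(CKN_i'))/\mattletter$; repeated integration by parts in $t_i$ via Lemma \ref{lemma:statphase}(1) then produces the error $O(q^{-A}(1+|p_i|)^{-A})$, forcing \eqref{eq:PiSizeOscillatory}. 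Since $\delta_2, c_0 > 0$ and the $t_i$ are positive on the support, the critical-point equations $\delta_2 t_j^0 t_k^0 = c_0 p_i$ further force each $p_i > 0$.

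In the stationary range, the critical-point system has the unique solution $t_i^0 = \sqrt{c_0 p_j p_k/(\delta_2 p_i)}$. Using that the common value $V := p_i t_i^0 = \delta_2 t_1^0 t_2^0 t_3^0/c_0 = \sqrt{c_0 p_1 p_2 p_3/\delta_2}$ is symmetric in $i$, the stationary phase value evaluates as
\[
\phi(t_1^0,t_2^0,t_3^0) = \frac{2\pi}{\mattletter}(-V + 3V) = \frac{4\pi V}{\mattletter} = \frac{4\pi\sqrt{ck\, p_1 p_2 p_3}}{\sqrt{\taco\, a\, m_1\, \mattletter^3}},
\]
on substituting $c_0/\delta_2 = ck/(\mattletter\,\taco\, a m_1)$ (which follows from $c = c_0 c_2$, $k = g_0\mattletter k_1$, $m_1 = m_1' k_1 c_2$, $\delta_2 = \taco a m_1'/g_0$). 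This matches the phase appearing in \eqref{eq:BasymptoticCasePreosc}. Three-variable stationary phase then contributes the factor $\prod_i(N_i'/\sqrt{Y}) = N'(CK/(NaM_1))^{3/2}$, with $Y \asymp NaM_1/(CK)$, times an inert amplitude in the remaining parameters; combined with the Lemma \ref{lemma:IKpreTransition} prefactor this yields precisely \eqref{eq:BasymptoticCasePreosc}.

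The main obstacle is justifying the multivariable stationary phase step. Because $\phi$ is multilinear in $(t_1,t_2,t_3)$, the diagonal second derivatives $\partial_{t_i}^2\phi$ vanish identically, so one cannot iterate the one-variable Lemma \ref{lemma:statphase}(2) naively — each one-variable application produces no curvature in that variable. Instead, one checks that the full $3\times 3$ Hessian at the critical point has determinant equal, up to constants, to $(\delta_2/(c_0 \mattletter))^3 \cdot 2\, t_1^0 t_2^0 t_3^0 \neq 0$, and invokes the multivariable version of stationary phase proved in \cite{InertFunctionPaper}. This is exactly what both produces the $|\det H|^{-1/2}$ savings of size $(CK/(NaM_1))^{3/2}$ and validates the $q^\vep$-inertness of the output weight $w_T$ in all remaining parameters.
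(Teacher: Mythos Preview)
Your proposal is correct and follows essentially the same route as the paper: reduce via Lemma~\ref{lemma:IKpreTransition} to a triple oscillatory integral with multilinear phase, then invoke the stationary phase machinery from \cite{InertFunctionPaper}. The paper's own proof is terse and simply cites \cite{InertFunctionPaper} as containing this exact integral as a worked example; you have reconstructed the details (critical point, phase value, Hessian non-degeneracy) that are presumably in that companion paper. Your observation that the diagonal Hessian entries vanish identically, so that one-variable iteration of Lemma~\ref{lemma:statphase}(2) is inapplicable and one genuinely needs the multivariable version with $\det H \neq 0$, is exactly the point.
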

Observe the identity
\begin{equation*}
 \frac{2 (p_1 p_2 p_3 ck)^{1/2}}{(a m_1 \taco \mattletter^{3})^{1/2} } = \frac{2 (p_1 p_2 p_3 c_0)^{1/2}}{k_0' (\frac{ahm_1'}{g_0})^{1/2}} .
\end{equation*}

\begin{proof}
In this case, the phase arising from reciprocity is  oscillatory, and is not cancelled by a corresponding phase from the kernel function $I_K$.
By \eqref{eq:IKasymptoticPreTransition} and \eqref{eq:Jdef}, we have
 \begin{multline*}
M_2^{1/2} N^{1/2} B_P = \\
 \Big(\frac{\sqrt{aMN}}{C}\Big)^{\kappa -1} M_2 \int_{\mathbb{R}^3}    w_T(t_1, t_2, t_3, \cdot) e\Big(\frac{-t_1 t_2 t_3 a m_1 \taco}{ck} \Big) e\Big(\frac{t_1 p_1 + t_2 p_2 + t_3 p_3}{\mattletter} \Big) dt_1 dt_2 dt_3.
\end{multline*}
The behavior of this oscillatory integral is derived as an example in \cite{InertFunctionPaper}.
\end{proof}
 
 \subsection{Mellin transform of $B$}
 \label{section:MellinTransformOfB}
 For many of our later purposes, we prefer to work with the Mellin transform of $B_P$ instead of $B_P$ itself.  Of course, $B_P$ depends on a number of variables, and  what is meant here is the Mellin transform \emph{in terms of $k_0'$}.
Define
\begin{equation}
\label{eq:BtildeDef}
 \widetilde{B_P}(s) := \int_0^{\infty} B_P(p_1, p_2, p_3; x) x^s \frac{dx}{x},
\end{equation}
which is the Mellin transform of $B_P$ in $k_0'$.  Recalling $k = g_0 k_1 k_0'$, note that 
\begin{equation*}
 x \asymp \frac{K}{g_0 k_1}.
\end{equation*}

Let us combine the results from Lemmas \ref{lemma:BpostTransition} and \ref{lemma:CasePreNonOsc}.  In these two cases, we have
\begin{equation}
\label{eq:BasymptoticNonOsc}
M_2^{1/2} N^{1/2}  B_P(p_1, p_2, p_3;k_0') = \Big(\frac{\sqrt{aMN}}{C}\Big)^{\exponent} M_2 N' w_T(\cdot),
\end{equation}
where $\exponent = -1 $ in Lemma \ref{lemma:BpostTransition}, and $\exponent = \kappa - 1$ in Lemma \ref{lemma:CasePreNonOsc}.  In both cases, $p_i$ are supported on $|p_i| \asymp P_i \ll \frac{k_0'}{N_i'} q^{\varepsilon}$, but there are different constraints on the parameters.  In any event, in terms of $k_0'$, $B_P$ is $X$-inert, so we group these two cases together under the heading of ``Non-oscillatory''.
Lemma
 \ref{lemma:MellinTransformInertSection} then leads to the following.
\begin{mylemma}[Non-Oscillatory]
\label{lemma:BmellinNonOscillatory}
 Suppose the conditions of Lemma \ref{lemma:BpostTransition} or Lemma \ref{lemma:CasePreNonOsc} hold,  and put $\exponent = -1$ or $\exponent = \kappa - 1$ in the respective cases.   Then
 \begin{equation*}
 M_2^{1/2} N^{1/2} \widetilde{B_P}(s) = \Big(\frac{\sqrt{aMN}}{C}\Big)^{\exponent} M_2 N' \Big(\frac{K}{g_0 k_1}\Big)^s w_T(s,\cdot),
 \end{equation*}
where $w_T$ is $q^{\varepsilon}$-inert in all the variables except for $s$, and entire in terms of $s$.  Moreover, $w_T(\cdot;\sigma + it)$ is very small unless $|t| \ll_{\sigma} q^{\varepsilon}$.
\end{mylemma}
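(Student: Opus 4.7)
The plan is to apply the general Mellin transform result for inert functions, namely Lemma~\ref{lemma:MellinTransformInertSection}, to the expression for $B_P$ already derived in Lemmas~\ref{lemma:BpostTransition} and~\ref{lemma:CasePreNonOsc}, as consolidated in~\eqref{eq:BasymptoticNonOsc}. The two cases enter uniformly through the single parameter $\exponent \in \{-1, \kappa-1\}$, so I would treat them together from the outset.

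First I would record the input: under either set of hypotheses, equation~\eqref{eq:BasymptoticNonOsc} gives
\[
 M_2^{1/2} N^{1/2}  B_P(p_1, p_2, p_3;k_0') = \Big(\frac{\sqrt{aMN}}{C}\Big)^{\exponent} M_2 N' w_T(k_0',\cdot),
\]
where the remaining factor is $q^\varepsilon$-inert in $k_0'$ (as well as all the other variables on which $w_T$ depends). The prefactor $(\sqrt{aMN}/C)^{\exponent} M_2 N'$ does not involve $k_0'$ and so pulls straight through any integral in $k_0'$.

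Next I would identify the dyadic support of $k_0'$ relative to the other parameters. Because $k = g_0 k_1 k_0'$ with $k \asymp K$, we have $k_0' \asymp K/(g_0 k_1)$, so the inert family $w_T(\,\cdot\,, \cdot)$ is supported in $k_0'$ on the dyadic interval $[X_1, 2X_1]$ (up to a harmless rescaling of $w_T$) with $X_1 = K/(g_0 k_1)$. Applying Lemma~\ref{lemma:MellinTransformInertSection} in the variable $k_0'$ then gives
\[
 \widetilde{w_T}(s,\cdot) = \Big(\frac{K}{g_0 k_1}\Big)^s W_T(s,\cdot),
\]
where $W_T(s,\cdot)$ is $q^\varepsilon$-inert in the remaining variables, entire in $s$, and enjoys rapid decay for $|\Im s| \gg q^{\varepsilon(1+\varepsilon)}$, which is the required ``very small'' condition on vertical lines. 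Combining this with the pulled-out prefactor yields the claimed formula for $M_2^{1/2} N^{1/2} \widetilde{B_P}(s)$.

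There is no genuine obstacle here; the content of the lemma is a direct repackaging of Lemma~\ref{lemma:MellinTransformInertSection} once one observes that $B_P$ has already been placed into inert form in~\eqref{eq:BasymptoticNonOsc}. The only mild point of care is noting that the various constraints on the parameters ($P_i$, $K$, etc.) from Lemmas~\ref{lemma:BpostTransition} and~\ref{lemma:CasePreNonOsc} are preserved under the Mellin transform, since they affect only the other coordinates of $w_T$, whose inertness is unaffected by integration in $k_0'$.
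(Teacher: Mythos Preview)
Your proposal is correct and matches the paper's approach exactly: the paper simply states that Lemma~\ref{lemma:MellinTransformInertSection} ``then leads to'' Lemma~\ref{lemma:BmellinNonOscillatory}, without giving a separate proof. Your write-up spells out precisely this application---pulling the $k_0'$-independent prefactor through the integral and invoking Lemma~\ref{lemma:MellinTransformInertSection} with $X_1 = K/(g_0 k_1)$---which is all that is needed.
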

In the case that $B_P$ is oscillatory, it turns out to be easier to use the Bessel integral representation in the Bruggeman-Kuznetsov formula, and so we may avoid the Mellin transform analysis of $B_P$.  See the introductory paragraphs of Section \ref{section:WfunctionOscillatoryCase} for more explanation.

\section{Application of Bruggeman-Kuznetsov}
Write $\mathcal{T}_{P}$ for the terms from $\mathcal{S}'''$ with $B$ replaced by $B_P$ (in particular, $p_1 p_2 p_3 \neq 0$).  Therefore,
\begin{multline*}
\mathcal{T}_{P} = \sum_{\substack{(c_0,g_0 m_1') = 1\\ c_0\equiv 0 \shortmod{q k_1 k_1^*} }} 
 \frac{1}{c_0}
\sum_{\substack{( k_0',\delta_2 c_0) = 1 \\ k_0' \equiv 0 \shortmod{\frac{c_2}{(g_0, c_2)}} }}   \frac{1}{k_0'^3}
\sum_{\substack{p_1, p_2, p_3  \neq 0 }}
A(p_1, p_2, p_3;\delta_2 \overline{c_0};k_0')
B_P(p_1, p_2, p_3; k_0').
\end{multline*}
Applying Lemma \ref{lem:Pnonzero}, and moving the sum over $k_0'$ to the inside, we obtain
\begin{multline*}
\mathcal{T}_P = \sum_{\substack{(c_0,g_0 m_1') = 1\\ c_0\equiv 0 \shortmod{q k_1 k_1^*} }} 
 \frac{1}{c_0}
\sum_{\substack{p_1, p_2, p_3  \neq 0 }}
\sum_{f|(p_2, p_3)} 
\sum_{\substack{( k_0',\delta_2 c_0) = 1 \\ k_0' \equiv 0 \shortmod{\frac{c_2}{(g_0, c_2)}} \\  k_0' \equiv 0 \shortmod{f} }}   
\\
\frac{f}{k_0'^2}
S\Big(p_1 c_0 \overline{\delta_2}, \frac{p_2 p_3}{f^2};\frac{k_0'}{f}\Big)
B_P(p_1, p_2, p_3; k_0').
\end{multline*}
We absorb $k_0'^{-2} \asymp \frac{(g_0 k_1)^2}{K^2}$ into the inert function which changes the definition of $B_P$ (call the new function $B_{P,*}$), but not any of the analytic properties it satisfies (cf. Section \ref{section:InertConvention}), giving
\begin{multline*}
\mathcal{T}_P = \frac{(g_0 k_1)^2}{K^2} \sum_{\substack{(c_0,g_0 m_1') = 1\\ c_0\equiv 0 \shortmod{q k_1 k_1^*} }} 
 \frac{1}{c_0}
\sum_{\substack{p_1, p_2, p_3  \neq 0 }}
\sum_{f|(p_2, p_3)} f
\sum_{\substack{( k_0',\delta_2 c_0) = 1 \\ k_0' \equiv 0 \shortmod{\frac{c_2}{(g_0, c_2)}} \\  k_0' \equiv 0 \shortmod{f} }} 
\\
S\Big(p_1 c_0 \overline{\delta_2}, \frac{p_2 p_3}{f^2};\frac{k_0'}{f}\Big)
B_{P,*}(p_1, p_2, p_3; k_0').
\end{multline*}

 Let $k_0' = f k_0''$, so that the inner sum over $k_0'$ becomes
 \begin{equation*}
  \delta_{(f,\delta_2 c_0) = 1} \sum_{\substack{(k_0'',\delta_2 c_0) = 1 \\ k_0'' \equiv 0 \shortmod{\delta_3} }} 
S\Big(p_1 c_0 \overline{\delta_2}, \frac{p_2 p_3}{f^2};k_0''\Big)
B_{P,*}(p_1, p_2, p_3, f k_0''),
 \end{equation*}
where we have defined
\begin{equation}
\label{eq:delta3def}
 \delta_3 = \frac{ \frac{c_2}{(g_0, c_2)} }{(f, \frac{c_2}{(g_0, c_2)})}.
\end{equation}

Finally, to ease a later summation over $c_0$, we detect the condition $(k_0'', c_0) = 1$ with M\"{o}bius inversion, say over the variable $\delta_4$.  Then we reverse the orders of summation, and define
\begin{equation}
\label{eq:delta4anddelta5definitions}
 c_0 = \delta_4 c_0', \qquad \delta_5 = [\delta_3, \delta_4].
\end{equation}
We record that the summation conditions in the sum over $k_0''$ are empty unless
\begin{equation*}
 (\delta_2, \delta_5) = 1 \Leftrightarrow (\delta_2, \delta_3 \delta_4) = 1.
\end{equation*}
For later use, we also record that
\begin{equation}
\label{eq:delta4iscoprimetok1}
 (\delta_4, k_1) = 1,
\end{equation}
since $k_1 | \delta_2$, and $(\delta_2, \delta_4) = 1$. 
Using this, and moving the sum over $f$ to the outside, with the definitions
\begin{equation*}
 p_2 = f p_2', \qquad p_3 = f p_3',
\end{equation*}
we obtain
\begin{equation}
\label{eq:Tformula}
\mathcal{T}_P = \frac{(g_0 k_1)^2}{K^2}
\sum_{\delta_3 | \frac{c_2}{(g_0, c_2)}}
\sum_{(\delta_4, \delta_2 g_0 m_1' ) = 1} \frac{\mu(\delta_4)}{\delta_4}
\sum_{\substack{(f,\delta_2 \delta_4) = 1 \\ \eqref{eq:delta3def} \text{ is true}}} f
\sum_{\substack{(c_0',fg_0 m_1') = 1\\ \delta_4 c_0'\equiv 0 \shortmod{q k_1 k_1^*}  }} 
 \frac{1}{c_0'}
\sum_{\substack{p_1, p_2', p_3'  \neq 0 }}
\mathcal{K},
\end{equation}
where
\begin{equation}
 \mathcal{K} = \sum_{\substack{( k_0'',\delta_2) = 1 \\ k_0'' \equiv 0 \shortmod{\delta_5}  
}} 
S(p_1 \delta_4 c_0' \overline{\delta_2},p_2' p_3';k_0'')
B_{P,*}(p_1, f p_2'  , f p_3' ; f k_0'').
\end{equation}

Consulting Proposition \ref{prop:KloostermanSumsAtInfinityAndAtkinLehnerCusp}, we may now realize the Kloosterman sum in question as one belonging to the group $\Gamma = \Gamma_0(\delta_2\delta_5)$ with the pair of cusps $\infty, \frac{1}{\delta_5}$ (note that these are Atkin-Lehner cusps, since $(\delta_2, \delta_5) = 1$).  Hence
\begin{equation*}
\mathcal{K} = \sum_{k_0''\sqrt{\delta_2} \in \mathcal{C}_{\infty, \frac{1}{\delta_5}} } 
S_{\infty, \frac{1}{\delta_5}}^{} (p_1 \delta_4 c_0', p_2' p_3';k_0''\sqrt{\delta_2})
B_{P,*}(p_1, fp_2', fp_3'; f k_0'').
\end{equation*}
According to Theorem \ref{thm:KuznetsovTraceFormula}, write $\mathcal{K} = \mathcal{K}_d + \mathcal{K}_c + \mathcal{K}_h$, and accordingly write
$\mathcal{T}_{P} = \mathcal{T}_d + \mathcal{T}_c + \mathcal{T}_h$.  
We furthermore decompose $\mathcal{K} = \sum_{\epsilon_1, \epsilon_2, \epsilon_3 \in \{-1, 1\}} \mathcal{K}_{\epsilon_1, \epsilon_2, \epsilon_3}$, where the meaning is $\epsilon_i p_i \geq 1$ for $i=1,2,3$, and likewise decompose $\mathcal{K}_d$, etc.  To help ease the notation, let $\mathcal{K}^+$ denote the terms with $p_i \geq 1$ for all $i$, and $\mathcal{K}^{-}$ denote the terms with $p_1 \leq -1$ and $p_2, p_3 \geq 1$.   
For instance,
\begin{equation}
\label{eq:KformulaDiscreteSpectrum}
 \mathcal{K}_d^{\pm} = \sum_{t_j \text{ level }  \delta_2 \delta_5} \nu_{\infty,j}(p_1 \delta_4 c_0') \overline{\nu}_{\frac{1}{\delta_5},j}(p_2' p_3') W_{\pm}(p_1, fp_2', fp_3'; t_j),
\end{equation}
where
\begin{equation*}
 W_{\pm}(p_1, fp_2', fp_3'; t_j) = \int_{(2\theta + \varepsilon)}  h_{\pm}(s,t_j) 
 \Big(4 \pi \sqrt{\delta_4 c_0' |p_1| p_2' p_3'}\Big)^{-s} \widetilde{\widetilde{B}}_{P,*}(p_1, f p_2', fp_3'; s+1) ds,
\end{equation*}
(recall $h_{\pm}$ was defined by \eqref{eq:hplusminusDefinition}),
and where 
\begin{equation*}
 \widetilde{\widetilde{B}}_{P,*}(p_1, fp_2', fp_3'; s+1)
 := \int_0^{\infty} B_{P,*}\Big(p_1, fp_2', fp_3'; \frac{f y}{  \sqrt{\delta_2}}\Big) y^{s+1} \frac{dy}{y}.
\end{equation*}
Here the ``double tilde'' notation for $B$ is meant to indicate the Mellin transform of $B$ with respect to $\gamma = k_0'' \sqrt{\delta_2}$ (where $\gamma \in \mathcal{C}_{\a \b}$ as in \eqref{eq:GeneralSumOfKloostermanSum}), because we have already reserved the meaning of $\widetilde{B}$ for the Mellin transform in the $k_0'$-variable (as in Section \ref{section:MellinTransformOfB}).  
 The relationship between these two transforms is given by
 \begin{equation*}
  \widetilde{\widetilde{B}}_{P,*}(s+1) = \Big(\frac{  \sqrt{\delta_2}}{f}\Big)^{s+1} \widetilde{B}_{P,*}(s+1).
 \end{equation*}
Simplifying, we obtain
\begin{equation}
\label{eq:WformulaWithBtildeUnspecified}
 W_{\pm}(p_1, p_2, p_3;t_j) = \frac{ \sqrt{\delta_2}}{f} \int_{(2\theta+\varepsilon)}   h_{\pm}(s,t_j) 
 \Big(  \frac{ \sqrt{\delta_2} }{\sqrt{\delta_4 c_0' |p_1| p_2 p_3}}\Big)^{s} \widetilde{B}_{P,*}(s+1) ds.
\end{equation}
The holomorphic case is similar, but with a different integral kernel than $h_{\pm}(s,t_j)$.

We may also prefer to use the Bessel integral representation for $W$, which we do in case $B_{P,*}$ is oscillatory.  For instance, we have
\begin{equation*}
 W_h(p_1, fp_2', fp_3'; \ell) =  \int_0^{\infty} J_{\ell-1}\Big(\frac{4\pi \sqrt{p_1 \delta_4 c_0' p_2' p_3'} }{y} \Big) 
 B_{P,*}\Big(p_1, fp_2', fp_3'; \frac{fy}{ \sqrt{\delta_2}}\Big) \d y.
\end{equation*}
Changing variables, we obtain
\begin{equation}
\label{eq:WhFunctionBesselFormula}
 W_h(p_1, fp_2', fp_3'; \ell) = \frac{ \sqrt{\delta_2}}{f}  \int_0^{\infty} J_{\ell-1}\Big(\frac{4\pi  \sqrt{f^2 p_1 p_2' p_3' \delta_4 c_0'   } }{ \sqrt{\delta_2} y} \Big) 
 B_{P,*}(p_1, fp_2', fp_3'; y) \d y.
\end{equation}
Note that, in terms of older variables names, we have
\begin{equation*}
 \frac{f^2 p_1 p_2' p_3' \delta_4 c_0'}{\delta_2} = \frac{p_1 p_2 p_3 c_0}{(ahm_1')/g_0}.
\end{equation*}
Similarly, in the $+$ Maass case, we have
\begin{equation}
\label{eq:WplusFunctionBesselFormula}
 W_{+}(p_1, fp_2', fp_3';t_j) = \frac{ \sqrt{\delta_2}}{f}  \int_0^{\infty} B_{2it_j}^{+}\Big(\frac{4\pi  \sqrt{f^2 p_1 p_2' p_3' \delta_4 c_0'   } }{ \sqrt{\delta_2} y} \Big) 
 B_{P,*}(p_1, fp_2', fp_3'; y) \d y.
\end{equation}

 \section{Asymptotics of $W$}
Here we analyze the various $W$-functions appearing in the Bruggeman-Kuznetsov formula.

\subsection{Non-oscillatory cases}
First suppose the 
conditions of Lemma \ref{lemma:BpostTransition} or Lemma \ref{lemma:CasePreNonOsc} hold, so that Lemma \ref{lemma:BmellinNonOscillatory} gives the behavior of $\widetilde{B}$.  Continuing from \eqref{eq:WformulaWithBtildeUnspecified}, we have
\begin{equation}
\label{eq:WformulaWithBtildeSpecifiedNonOscillatoryCase}
 W_{\pm}(p_1, p_2, p_3; t_j) = \frac{\Big(\frac{\sqrt{aMN}}{C}\Big)^{\exponent} M_2 N' K}{M_2^{1/2} N^{1/2}} \frac{ \sqrt{\delta_2}}{f g_0 k_1} \int_{(2\theta+\varepsilon)}   h_{\pm}(s,t_j) 
 \Big(  \frac{ \sqrt{\delta_2} K }{g_0 k_1 \sqrt{\delta_4 c_0' |p_1| p_2 p_3}}\Big)^{s} w_T(s, \cdot) ds.
\end{equation}
Here $w_T$ is $q^{\varepsilon}$-inert in all variables except $s$.  It is entire in $s$, with rapid decay for $|\text{Im}(s)| \gg q^{\varepsilon}$.

As shorthand, let
\begin{equation}
\label{eq:Ydef}
 Y = \frac{g_0 k_1 \sqrt{C P_1 P_2 P_3}}{ \sqrt{\delta_2 c_2} K} \asymp \Big(\frac{ \sqrt{\delta_2} K }{g_0 k_1 \sqrt{\delta_4 c_0' |p_1| p_2 p_3}} \Big)^{-1}. 
\end{equation}

Our goal now is to show
\begin{mylemma}[Non-Oscillatory]
\label{lemma:spectraltrunctationNonOscillatory}
Suppose the conditions of Lemma \ref{lemma:BpostTransition} or Lemma \ref{lemma:CasePreNonOsc} hold.  If $|t_j| \gg (1+Y) q^{\varepsilon}$, then $W_{\pm}$ is very small.  Similarly, if $k \gg (1+Y) q^{\varepsilon}$, then $W_h$ is very small.
\end{mylemma}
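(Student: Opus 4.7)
The plan is to establish the very smallness of $W_\pm$ and $W_h$ by shifting the Mellin contour in \eqref{eq:WformulaWithBtildeSpecifiedNonOscillatoryCase} far to the left and invoking Stirling's asymptotics for the kernels $h_\pm$ and $h_h$. The central observation is that because $w_T(s,\cdot)$ is entire and rapidly decaying in $|\Im(s)|$ once $|\Im(s)| \gg q^{\varepsilon}$, the integral is effectively supported on the strip $|\Im(s)| \leq q^{\varepsilon}$, and there the kernels admit clean Stirling estimates once $|t_j|$ (respectively $k$) exceeds $q^\varepsilon$.

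First I would estimate the kernels on the strip $|\Im(s)| \leq q^{\varepsilon}$. The hypothesis $|t_j| \gg (1+Y)q^{\varepsilon}$ forces $|t_j| \gg q^\varepsilon$, so Stirling's formula yields
\begin{equation*}
\bigl|\Gamma(s/2 \pm i t_j)\bigr| \ll (1+|t_j|)^{\sigma/2 - 1/2}\, e^{-\pi |t_j|/2},
\end{equation*}
from which $|h_-(s,t_j)| \ll (1+|t_j|)^{\sigma - 1}$ (the $\cosh(\pi t_j)$ cancels the gamma exponential) and $|h_+(s,t_j)| \ll (1+|t_j|)^{\sigma - 1}\, e^{-\pi |t_j|}$ (with only a bounded $\cos(\pi s/2)$ factor in place of $\cosh$). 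For the holomorphic kernel, the analogous Stirling computation gives $|h_h(s,k)| \ll (1+k)^{\sigma-1}$ uniformly on the strip $|\Im(s)| \leq q^\varepsilon$ provided $k \gg q^\varepsilon$.

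Next I would shift the contour from $\sigma = 2\theta + \varepsilon$ down to $\sigma = -A$ for $A$ arbitrarily large. The poles of $\Gamma(s/2 \pm it_j)$ lie at $s = -2n \mp 2it_j$ with $n \geq 0$, all of which have $|\Im(s)| = 2|t_j| \gg q^{\varepsilon}$, so the residues of the integrand at these poles involve $w_T(s,\cdot)$ evaluated where it has already decayed rapidly; they therefore contribute only a very small error. Equivalently, one may first truncate the original integral to $|\Im(s)| \leq q^{\varepsilon}$ (with a very small tail) and then shift the contour within this strip, where no poles occur at all. The holomorphic analog is cleaner: the poles of $\Gamma((s+k-1)/2)$ lie at $s = 1 - k - 2n$, far to the left for $k$ large, while $1/\Gamma((k+1-s)/2)$ contributes zeros rather than poles on the right.

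Combining the Stirling bound with the rapid decay of $w_T$ in $\tau$ and integrating along $\sigma = -A$ then yields
\begin{equation*}
|W_-(p_1,p_2,p_3;t_j)| \ll |\mathcal{P}|\, q^{\varepsilon}\, (1+|t_j|)^{-1} \Bigl( \frac{Y}{1+|t_j|} \Bigr)^{A},
\end{equation*}
where $\mathcal{P}$ denotes the polynomial-in-$q$ prefactor of \eqref{eq:WformulaWithBtildeSpecifiedNonOscillatoryCase}. Under the hypothesis, $Y/(1+|t_j|) \ll q^{-\varepsilon}$, and since $A$ is arbitrary this is very small in the sense of Section \ref{section:InertConvention}. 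For $W_+$ there is an additional $e^{-\pi |t_j|}$ making the bound even stronger, and for $W_h$ the identical scheme applies with $k$ in place of $|t_j|$. The main subtle point I expect is the rigorous justification of the contour shift past the gamma poles at $s = -2n \mp 2it_j$, but this should be routine given the rapid decay of $w_T$ off the real axis.
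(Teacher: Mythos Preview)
Your proposal is correct and follows essentially the same approach as the paper: truncate to $|\Im(s)| \ll q^{\varepsilon}$ via the rapid decay of $w_T$, apply Stirling to bound $h_{\pm}(s,t_j) \ll (1+|t_j|)^{\sigma-1}$ in that strip, then shift the contour far to the left and handle the gamma poles at $s/2 \pm it_j = 0,-1,\dots$ using that $w_T$ is already very small at those imaginary heights. Your additional observation that $h_+$ carries an extra $e^{-\pi|t_j|}$ is accurate, though the paper simply uses the uniform bound $\ll 1$ for the exponential factor in both $\pm$ cases, since the weaker estimate already suffices.
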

\begin{proof}
 If $s = \sigma + it$, and $|t| \gg (|t_j| q)^{\varepsilon}$, then by the rapid decay of $w_T$, we conclude that this part of the integral is bounded in a satisfactory manner.  In the complementary region, we then have from Stirling that
 \begin{equation*}
  h_{\pm}(\sigma + it, t_j)  \ll_\sigma  q^{\varepsilon} (q^{\varepsilon} + |t_j|)^{\sigma-1}.
 \end{equation*}
Side remark: The exponential factor implicitly appearing in Stirling's bound on $h_{\pm}(s,t_j)$ is $\ll 1$, and one cannot do better than this in general, because in one of the two cases of $\pm$ sign, the exponential factor is exactly $1$.

Now if $|t_j| \gg (1+Y) q^{\varepsilon}$, we shift the contour far to the left and bound it trivially.  In doing so, one encounters poles at $\frac{s}{2} \pm i t_j = 0, -1, -2, \dots$.  However, these all have large imaginary part and $w_T$ is very small here, so these residues are bounded in a satisfactory manner.  The integral on the new line is very small since $|t_j|/Y \gg q^{\varepsilon}$.

Next consider $W_h(k)$. 
The analysis is similar, except one replaces $h_{\pm}(s,t_j)$ by
\begin{equation*}
h(s,k) := 2^{s-1}  \frac{\Gamma(\frac{s+ k-1}{2})}{\Gamma(\frac{-s+ k+1}{2})}.
\end{equation*}
Stirling's formula gives, for $\sigma \ll \sqrt{|k+it|}$, that
\begin{equation*}
 |h(\sigma + it, k)| \ll (\max(k, |t|))^{\sigma-1}.
\end{equation*}
As before, if $k \gg (1+ Y)q^{\varepsilon}$, then we may move the contour far to the left (some large constant not growing with $q$).  Then we get that $W_h$ is small, by the exact same type of reasoning as in the Maass case.
\end{proof}

Now we reap the reward of the language of inert functions.  Since $w_T$ is inert in all variables, we may apply the Mellin inversion formula together with Lemma \ref{lemma:MellinTransformInertSection}, giving 
\begin{multline}
\label{eq:WMellinformulaNonOscillatoryCase}
 W_{\pm}(p_1, p_2, p_3; t_j) = \Big(\frac{\sqrt{aMN}}{C}\Big)^{\kappa - 1}
(M_2 N)^{1/2} K  
  \frac{ \sqrt{\delta_2}}{hf g_0 k_1} 
 \int_{(2\theta+\varepsilon)} h_{\pm}(s,t_j) 
 \\
 \times
   \int
 \Big(  \frac{ \sqrt{\delta_2} K }{g_0 k_1 \sqrt{\delta_4 c_0' |p_1| p_2 p_3}}\Big)^{s} \widetilde{w_T}(s,{\bf u}, \cdot) 
 \Big(\frac{P_1}{|p_1|}\Big)^{u_1}  
 \Big(\frac{P_2}{p_2}\Big)^{u_2}  
  \Big(\frac{P_3}{p_3}\Big)^{u_3}
   \Big(\frac{C}{\delta_4 c_0' c_2}\Big)^{u_4} 
  d{\bf u} ds,
\end{multline}
plus a small error term. 
Here $\widetilde{w_T}$ is very small except if the imaginary parts of all the variables are $\ll q^{\varepsilon}$.

\subsection{Oscillatory Case}
\label{section:WfunctionOscillatoryCase}
Now we consider $W_{\pm}$ and $W_h$ when $B$ is given by Lemma \ref{lemma:CasePreOsc}.
The first significant point is that $W_{-}$ is small, because this corresponds to the case where $p_1 p_2 p_3 < 0$, which means some $p_i < 0$, in which case $B$ is small.  Indeed, $B$ is small unless $p_i > 0$ for all $i$, and so the only relevant functions are $W_{+}$ and $W_{h}$.

It is inconvenient to use \eqref{eq:WformulaWithBtildeUnspecified} in the oscillatory case.  The problem is that the oscillatory nature of $B$ means that we may no longer restrict $|\text{Im}(s)|$ to be $O(q^{\varepsilon})$, which in turn has an effect on the behavior of $h_+(s,r)$ and $h(s,k)$.  Namely, it is no longer true that $h_+(s,r)$ and $h(s,k)$ satisfy analogous asymptotic formulas (due to the use of Stirling with $ir$ large vs. $k$ large), and so it appears difficult to unify these two cases. 
In addition, one is forced to confront some tricky oscillatory integrals.
 To sidestep these problems entirely, we shall use the Bessel integral formula for $W$ instead.  The oscillatory behavior of $B$ is actually beneficial and causes $W$ to be essentially inert (in both the Maass and holomorphic cases).

Let us begin with $W_h$.  We have
\begin{equation*}
 W_h(p_1, p_2, p_3; \ell) = 
 \Big(\frac{\sqrt{aMN}}{C}\Big)^{\exponent} \sqrt{M_2 N} \Big(\frac{CK}{a M_1 N}  \Big)^{3/2}
 \frac{ \sqrt{\delta_2}}{f h} Z,
\end{equation*}
plus a small error term,
where $Z$ is shorthand for
\begin{equation}
 Z = \int_0^{\infty} 
 J_{\ell-1}\Big(\frac{4\pi  \sqrt{p_1  p_2 p_3\delta_4 c_0'} }
 				{\sqrt{\delta_2} y} \Big) 
 e\Big(\frac{2  \sqrt{p_1  p_2 p_3 \delta_4 c_0'} }{ \sqrt{\delta_2} y} \Big)  w_T(y, \cdot) 
 dy.
\end{equation}
Here we recall that $w_T$ has support on $y \asymp \frac{K}{g_0 k_1}$.
The fact that the phases match is pleasant.

Recall the integral representation
\begin{equation}
\label{eq:JBesselIntegralRepresentation}
 J_{\ell-1}(x) = \sum_{\pm } c_{\ell,\pm}
 \int_{0 }^{\pi/2} 
 \cos((\ell-1) \theta) 
 e^{ \pm ix \cos(\theta )} d \theta, \qquad c_{\ell,\pm} = \frac{e^{\mp i (\ell -1)\frac{\pi}{2}}}{ \pi}.
\end{equation}
This gives
\begin{equation*}
 Z = \sum_{\pm} \int_0^{\pi/2} c_{\ell, \pm} \cos((\ell-1) \theta) 
 \int_0^{\infty} e\Big(\frac{z(1\pm \cos \theta)}{y}\Big) w_T(y, \cdot) dy d \theta,
\end{equation*}
with
\begin{equation*}
 z = \frac{2  \sqrt{p_1  p_2 p_3 \delta_4 c_0'} }{ \sqrt{\delta_2} }.
\end{equation*}
Changing variables $y = \frac{K}{g_0 k_1 x}$ now gives $x \asymp 1$, and the inner integral is a Fourier transform of an inert function.  Hence
\begin{equation*}
 Z = \frac{K}{g_0 k_1} 
 \sum_{\pm} \int_0^{\pi/2} c_{\ell,\pm} \cos((\ell-1) \theta)
 \widehat{w_T}\Big(\frac{z g_0 k_1}{K} (1 \pm \cos \theta)\Big) d \theta,
\end{equation*}
where we have re-defined $w_T$ (see Section \ref{section:InertConvention}).
Using $\delta_4 c_0' = c_0 = \frac{c}{c_2}$, $\delta_2 = \frac{h am_1'}{g_0}$, $m_1' = \frac{m_1}{k_1 c_2}$, \eqref{eq:PiSizeOscillatory}, and $k_0' = \frac{k}{k_1 g_0}$, we 
check the size of
\begin{equation*}
 \frac{zg_0 k_1}{K} \asymp \frac{\sqrt{P_1 P_2 P_3 C k_1^3 g_0^3}}{K\sqrt{ha M_1}} \asymp \frac{Na M_1}{CK },
\end{equation*}
which is $\gg q^{\varepsilon}$ because we are operating under the conditions of Lemma \ref{lemma:CasePreOsc}.  

Now we observe that the integrand is very small unless
\begin{equation*}
\frac{Na M_1}{CK}  |1\pm \cos \theta| \ll q^{\varepsilon}.
\end{equation*}
Hence, the sign must be $-$, and we must have
\begin{equation*}
 \theta \ll \Big(\frac{CK}{N aM_1}\Big)^{ 1/2} q^{\varepsilon},
\end{equation*}
(which is $O(q^{-\delta})$, for some $\delta > 0$).  
This means that by using a Taylor expansion, we may develop the $\widehat{w_T}$ part into an asymptotic expansion with leading term given by the substitution $1-\cos \theta \rightarrow \theta^2/2$.  Therefore,
\begin{equation*}
 Z = \frac{K}{g_0 k_1} \int_{-\infty}^{\infty} \cos((\ell-1)\theta) 
 (\widehat{w_T}\Big(\frac{z g_0 k_1}{K} \theta^2\Big) + \dots ) d\theta,
\end{equation*}
where we were able to extend the integral to $+\infty$ since $\widehat{w_T}$ is small otherwise, and also extend to $-\infty$ by symmetry (we have also re-defined the inert function to absorb constants).

As another shorthand, let
\begin{equation*}
 Q = \frac{z g_0 k_1}{K} \asymp \frac{NaM_1}{CK}.
\end{equation*}
Then $Z$ takes the form
\begin{equation*}
 Z = \frac{K}{g_0 k_1 \sqrt{Q}} \intR \exp\Big(i \frac{(\ell-1)}{\sqrt{Q}} \theta\Big) \widehat{w_T}(\theta^2) d \theta + \dots.
\end{equation*}
If we let $g(\theta) = \widehat{w_T}(\theta^2)$, then $g^{(j)}(\theta) \ll_{j,A} X^j (1 + \theta)^{-A}$, for arbitrary $j,A$, where $X \ll q^{\varepsilon}$.  Therefore, this is another Fourier transform of a function with controlled derivatives, and so
by the discussion in Section \ref{section:InertFourier},
 it takes the form
\begin{equation*}
 \frac{K}{g_0 k_1 \sqrt{Q}}  G\Big(\frac{\ell-1}{\sqrt{Q}}, \cdot \Big), 
\end{equation*}
plus a very small error term, 
where $G$ would be $q^{\varepsilon}$-inert (in $\ell$) if it had dyadic support.  It is $q^{\varepsilon}$-inert in all the other variables, however.

Re-grouping, we have that
\begin{equation*}
 W_h  =  \Big(\frac{\sqrt{aMN}}{C}\Big)^{\kappa -1 } \sqrt{M_2 N} \Big(\frac{CK}{a M_1 N}  \Big)^{2} K
 \frac{ \sqrt{\delta_2}}{f g_0 k_1 h} G\Big(\frac{\ell-1}{\sqrt{Q}}, \cdot \Big),
\end{equation*}
plus a very small error term, 
where $G$ is very small unless
\begin{equation*}
 \ell \ll \Big(\frac{M_1 a N}{CK}\Big)^{1/2} q^{\varepsilon}.
\end{equation*}
Then we may take the Mellin transform in $p_1, p_2, p_3, c_0$, giving
\begin{multline}
\label{eq:WhOscillatoryCase}
 W_h(p_1, p_2, p_3; c_0'; \ell) = 
\Big(\frac{\sqrt{aMN}}{C}\Big)^{\kappa - 1}  \sqrt{M_2 N} \Big(\frac{CK}{a M_1 N}  \Big)^{2} K
 \frac{ \sqrt{\delta_2}}{f g_0 k_1 h}
 \\
 \int_{(\sigma)} \widetilde{w_T}({\bf u}, \ell, \cdot)
 \Big(\frac{P_1}{|p_1|}\Big)^{u_1}  
 \Big(\frac{P_2}{p_2}\Big)^{u_2}  
  \Big(\frac{P_3}{p_3}\Big)^{u_3}
   \Big(\frac{C}{\delta_4 c_0' c_2}\Big)^{u_4} d {\bf u},
\end{multline}
plus a small error term.

Now we turn to $W_{+}$.  Since the details are similar to the previous case, the exposition is brief.
We follow through the steps used for $W_h$, where the alteration in the first step is replacing $J_{\ell-1}$ by $B_{2ir}^{+}(x)$.  In place of \eqref{eq:JBesselIntegralRepresentation}, we have instead
\begin{equation*}
 \frac{J_{2ir}(x) - J_{-2ir}(x)}{\sinh( \pi r)} = \frac{2}{\pi i}\intR \cos(x \cosh v) e\Big(\frac{rv}{\pi}\Big) dv.
\end{equation*}
We shall use this for real values of $r$.  Although this integral  does not converge absolutely, we have
\begin{equation}
\label{eq:JBesselIntegralRepTailBound}
\Big| \int_{|v| \geq V} \cos(x \cosh v) e\Big(\frac{rv}{\pi}\Big) dv \Big| \ll \frac{1+ |r|}{x \sinh{V}},
\end{equation}
from integration by parts. 

Forming the analog of $Z$ from $W_h$, and keeping the same definition of $z$, we have (absorbing the absolute constant into the inert function)
\begin{equation*}
Z =  \intR e\Big(\frac{rv}{\pi}\Big) \intR e\Big(\frac{z}{y}\Big) \cos\Big(2\pi \frac{z}{y} \cosh(v)\Big) w_T(y,\cdot) dy dv,
\end{equation*}
using \eqref{eq:JBesselIntegralRepTailBound} to reverse the orders of integration.
Next write $\cos(u) = \frac12 e^{iu} + \frac12 e^{-iu}$; the part with $e^{iu}$ is very small as in the $W_h$ case.  From this point on, the analysis is nearly identical to that of $W_h$, and the conclusion is that $W_{+}$ is very small unless
\begin{equation*}
|t_j| \ll \Big(\frac{M_1 a N}{CK}\Big)^{1/2} q^{\varepsilon},
\end{equation*}
and $W_{+}$ satisfies a formula identical to that in \eqref{eq:WhOscillatoryCase}.

In the exceptional eigenvalue case where $ir \in \mathbb{R}$, then the final shape of the formula for $W_{+}$ is the same as \eqref{eq:WhOscillatoryCase}, but the above arguments would need modification since $e(rv/\pi)$ is no longer bounded.  There is a more direct route, however.  We have the asymptotic expansion (see \cite[(8.451.1)]{GR})
\begin{equation*}
 \frac{J_{2ir}(x) - J_{-2ir}(x)}{\sinh( \pi r)} \sim 
 \sum_{\pm} e^{\pm ix} \sum_{k} \frac{P_{\pm}(r,k)}{x^{\frac12 + k}} 
 ,
\end{equation*}
where $P_{\pm}(r,k)$ is a polynomial in $r$ and $k$.  
This is certainly valid for $r = O(1)$, and $x \gg 1$ (in the present context, $x \gg q^{\varepsilon}$).  With this, it is easy to estimate $Z$ directly, showing that it is of the form $\frac{K}{g_0 k_1 \sqrt{Q}}$ times an $X$-inert function, plus a small error term.  Therefore, applying Mellin inversion in the appropriate variables, we obtain an expression of the same form as \eqref{eq:WhOscillatoryCase}.

\section{Regrouping after Bruggeman-Kuznetsov}
\subsection{Non-oscillatory, Maass cases}
\label{section:TdNonOscillatory}
Here we consider the contribution to $\mathcal{T}_d$ from the parameters where $B$ is non-oscillatory.
By \eqref{eq:Tformula}, \eqref{eq:KformulaDiscreteSpectrum}, and 
\eqref{eq:WMellinformulaNonOscillatoryCase}, we obtain
\begin{multline}
\label{eq:Tdformula}
 \mathcal{T}_d^{\pm} = 
 \frac{g_0 k_1}{K}
 \sum_{\delta_3 | \frac{c_2}{(g_0, c_2)}}
\sum_{(\delta_4, \delta_2 g_0 m_1' ) = 1} \frac{\mu(\delta_4)}{\delta_4}
\sum_{\substack{(f,\delta_2 \delta_4 ) = 1 \\ \eqref{eq:delta3def} \text{ is true}}}  
\thinspace
\sum_{\substack{(c_0',fg_0 m_1') = 1\\ \delta_4 c_0'\equiv 0 \shortmod{q k_1 k_1^*} }} 
 \frac{1}{c_0'}
\sum_{\substack{\pm p_1, p_2', p_3'  \geq 1 }} 
\\
\sum_{t_j \text{ level }  \delta_2 \delta_5} \nu_{\infty,j}(p_1 \delta_4 c_0') \overline{\nu}_{\frac{1}{\delta_5},j}(p_2' p_3') 
\Big(\frac{\sqrt{aMN}}{C}\Big)^{\exponent}
(M_2 N)^{1/2}    \frac{\sqrt{\delta_2} }{h}
 \int_{(2\theta+\varepsilon)} h_{\pm}(s,t_j) 
 \\
   \int_{(\sigma)}
 \Big(  \frac{ \sqrt{\delta_2} K }{f g_0 k_1 \sqrt{\delta_4 c_0' |p_1| p_2' p_3'}}\Big)^{s} \widetilde{w_T}(s,{\bf u}, \cdot)
 \Big(\frac{P_1}{|p_1|}\Big)^{u_1}  
 \Big(\frac{P_2}{fp_2'}\Big)^{u_2}  
  \Big(\frac{P_3}{fp_3'}\Big)^{u_3}
   \Big(\frac{C}{\delta_4 c_0' c_2}\Big)^{u_4} 
  d{\bf u} ds,
\end{multline}
plus a very small error term.
In the above expression, we could take $\text{Re}(s) > 2 \theta$ without crossing any poles coming from exceptional Laplace eigenvalues (recall \eqref{eq:hplusminusDefinition} for the definition of $h_{\pm}$).
By Lemma \ref{lemma:spectraltrunctationNonOscillatory}, we may truncate at $|t_j| \ll (1+Y) q^{\varepsilon}$ with a small error term.  Now we move the sums over $p_1, p_2', p_3'$, and $c_0$ to the inside, change variables $u_i \rightarrow u_i - \frac{s}{2}$, and bound everything at that point with absolute values.  In this way, we obtain
\begin{multline}
\label{eq:TdExpressionSpectralMoment}
 \mathcal{T}_d^{\pm} \ll q^{\varepsilon}
 \frac{g_0 k_1 c_2}{KC} \Big(\frac{\sqrt{aMN}}{C}\Big)^{\exponent} 
 (M_2 N)^{1/2}      
 \frac{\sqrt{\delta_2} }{h} 
\sum_{\delta_3 | \frac{c_2}{(g_0, c_2)}}
 \sum_{(\delta_4, \delta_2 g_0 m_1' ) = 1}   
\sum_{\substack{(f,\delta_2 \delta_4 ) = 1 \\ \eqref{eq:delta3def} \text{ is true}}}  
\sum_{\substack{t_j \text{ level }  \delta_2 \delta_5 \\ |t_j| \ll (1+Y)q^{\varepsilon}}}  
\\
\frac{1}{1+|t_j|}
 \int_{(2\theta+\varepsilon)} 
 \int_{(\sigma)}
 \Big(  \frac{  t_j}{Y}\Big)^{2\theta+\varepsilon} |\widetilde{w_T}(s, {\bf u} - \tfrac{s}{2}, \cdot)|
 \Big| P_1^{u_1}  
 \Big(\frac{P_2}{f }\Big)^{u_2}  
  \Big(\frac{P_3}{f }\Big)^{u_3}
   \Big(\frac{C}{\delta_4  c_2}\Big)^{u_4} \Big|
   |Z_j({\bf u})|
  d{\bf u} ds,
\end{multline}
plus a very small error term,
where
\begin{equation*}
 Z_j({\bf u}) = \sum_{\substack{(c_0',fg_0 m_1') = 1\\  \delta_4 c_0' \equiv 0 \shortmod{qk_1 k_1^*}}} 
\sum_{\substack{p_1, p_2', p_3'  \geq 1 }}  \frac{ \nu_{\infty,j}(p_1 \delta_4 c_0') \overline{\nu}_{\frac{1}{\delta_5},j}(p_2' p_3') }{p_1^{u_1} p_2'^{u_2} p_3'^{u_3} c_0'^{u_4}}.
\end{equation*}
Our plan is to relate $Z_j({\bf u})$ to $L$-functions, and use a large sieve inequality to bound it on average over $t_j$.  
\subsection{Non-oscillatory, Holomorphic cases}
These cases are nearly identical to those in Section \ref{section:TdNonOscillatory}, but the bounds will turn out to be even better due to the applicability of Deligne's bound.  The key point is that for $k \ll (1+Y)q^{\varepsilon}$, we may claim the bound
\begin{equation*}
 |h(s,k)| \ll k^{\sigma-1},
\end{equation*}
which is entirely analogous to $|h(s,t_j)| \ll t_j^{\sigma-1}$.  We omit the details for brevity.

\subsection{Oscillatory, Maass cases}
As in Section \ref{section:TdNonOscillatory}, we use \eqref{eq:Tformula} and \eqref{eq:KformulaDiscreteSpectrum}, but instead of \eqref{eq:WMellinformulaNonOscillatoryCase} we use a variant on \eqref{eq:WhOscillatoryCase}.   Also recall that only the $+$ sign enters the picture in the oscillatory case.
Thus we obtain
\begin{multline}
\label{eq:TdSpectralBoundOscillatoryCase}
 \mathcal{T}_d^{+} \ll 
 \frac{g_0 k_1 c_2 }{KC} \Big(\frac{\sqrt{aMN}}{C}\Big)^{\exponent} (M_2 N)^{1/2}   \frac{ \sqrt{\delta_2}  }{h}
\sum_{\delta_3 | \frac{c_2}{(g_0, c_2)}}
 \sum_{(\delta_4, \delta_2 g_0 m_1' ) = 1}   
\sum_{\substack{(f,\delta_2 \delta_4 c_0') = 1 \\ \eqref{eq:delta3def} \text{ is true}}}  
\\
\sum_{\substack{t_j \text{ level }  \delta_2 \delta_5 \\ |t_j| \ll Y'q^{\varepsilon}}} \Big(\frac{CK}{a M_1 N}\Big)^2 
 \int_{(\sigma)}
 |\widetilde{w_T}(t_j, {\bf u}, \cdot)|
 \Big| P_1^{u_1}  
 \Big(\frac{P_2}{f }\Big)^{u_2}  
  \Big(\frac{P_3}{f }\Big)^{u_3}
   \Big(\frac{C}{\delta_4  c_2}\Big)^{u_4} \Big|
   |Z_j({\bf u})|
  d{\bf u},
\end{multline}
where
\begin{equation*}
 Y' = \Big(\frac{M_1 aN}{CK}\Big)^{1/2} q^{\varepsilon}.
\end{equation*}

\subsection{Oscillatory, Holomorphic cases}
These are similar to (but easier than) the Oscillatory, Maass cases, and so we omit them.

\subsection{Continuous spectrum}\label{subsec:Cts}
First consider the non-oscillatory cases.  Then analogously to \eqref{eq:Tdformula}, we have
\begin{multline}
\label{eq:TcFormulaSpectralPreAbsoluteValues}
\mathcal{T}_{c}^{\pm} = 
 \frac{ g_0 k_1 }{K }
 \sum_{\delta_3 | \frac{c_2}{(g_0, c_2)}}
\sum_{(\delta_4, \delta_2 g_0 m_1' ) = 1} \frac{\mu(\delta_4)}{\delta_4}
\sum_{\substack{(f,\delta_2 \delta_4 ) = 1 \\ \eqref{eq:delta3def} \text{ is true}}}  
\sum_{\substack{(c_0',fg_0 m_1') = 1\\ \delta_4 c_0'\equiv 0 \shortmod{q k_1 k_1^*} }} 
 \frac{1}{c_0'}
\sum_{\substack{\pm p_1, p_2', p_3'  \geq 1}}
\\
\sum_{\mathfrak{c}} \int_{t }  
\nu_{\infty, \mathfrak{c}}(p_1 \delta_4 c_0',\tfrac12 + it) 
\overline{\nu}_{\frac{1}{\delta_5}, \mathfrak{c}}(p_2' p_3',\tfrac12 +it)
\Big(\frac{\sqrt{aMN}}{C}\Big)^{\exponent} 
(M_2 N)^{1/2} \frac{ \sqrt{\delta_2}}{h}  
 \int_{(\varepsilon)} h_{\pm}(s,t ) 
 \\
   \int_{(1+\varepsilon)}
 \Big(  \frac{ \sqrt{\delta_2} K }{f g_0 k_1 \sqrt{\delta_4 c_0' |p_1| p_2' p_3'}}\Big)^{s} \widetilde{w_T}(s, {\bf u}, \cdot)
 \Big(\frac{P_1}{|p_1|}\Big)^{u_1}  
 \Big(\frac{P_2}{fp_2'}\Big)^{u_2}  
  \Big(\frac{P_3}{fp_3'}\Big)^{u_3}
   \Big(\frac{C}{\delta_4 c_0' c_2}\Big)^{u_4}
  d{\bf u} ds dt.
\end{multline}
Now we move the sums to the inside, getting
\begin{multline}
\label{eq:TcExpressionSpectralMoment}
 \mathcal{T}_c^{\pm} \ll q^{\varepsilon}
 \frac{g_0 k_1 c_2}{KC} \Big(\frac{\sqrt{aMN}}{C}\Big)^{\exponent} 
 (M_2 N)^{1/2} \frac{\sqrt{\delta_2}}{h}
\sum_{\delta_3 | \frac{c_2}{(g_0, c_2)}}
 \sum_{(\delta_4, \delta_2 g_0 m_1' ) = 1}   
\sum_{\substack{(f,\delta_2 \delta_4 ) = 1 \\ \eqref{eq:delta3def} \text{ is true}}}  
\int_{\substack{ |t| \ll (1+Y)q^{\varepsilon}}}  
\\
\frac{1}{1+|t|}
 \int_{(\varepsilon)} 
 \int_{(1+\varepsilon)}
 |\widetilde{w_T}(s, {\bf u} -\tfrac{s}{2}, \cdot)|
 \Big| P_1^{u_1}  
 \Big(\frac{P_2}{f }\Big)^{u_2}  
  \Big(\frac{P_3}{f }\Big)^{u_3}
   \Big(\frac{C}{\delta_4  c_2}\Big)^{u_4} \Big|
 \sum_{\mathfrak{c}}  |Z_{\mathfrak{c},t}({\bf u})|
 d{\bf u} ds,
\end{multline}
where
\begin{equation}\label{eq:ZctDef}
Z_{\mathfrak{c},t}({\bf u})
= 
\sum_{\substack{(c_0',fg_0 m_1') = 1\\  \delta_4 c_0' \equiv 0 \shortmod{qk_1 k_1^*}}} 
\sum_{\substack{p_1, p_2', p_3'  \geq 1 }}  
\frac{ 
\nu_{\infty,\mathfrak{c}}(p_1 \delta_4 c_0',\tfrac12 +it) 
\overline{\nu}_{\frac{1}{\delta_5},\mathfrak{c}}(p_2' p_3', \tfrac12 + it) }
{p_1^{u_1} p_2'^{u_2} p_3'^{u_3} c_0'^{u_4}}.
\end{equation}

The {\bf oscillatory case} is similar, leading to 
\begin{multline}
\label{eq:TcSpectralBoundOscillatoryCase}
 \mathcal{T}_c^{+} \ll 
 \frac{g_0 k_1 c_2}{KC} \Big(\frac{\sqrt{aMN}}{C}\Big)^{\kappa - 1} (M_2 N)^{1/2}    \frac{\sqrt{\delta_2} }{h}
\sum_{\delta_3 | \frac{c_2}{(g_0, c_2)}}
 \sum_{(\delta_4, \delta_2 g_0 m_1' ) = 1}   
\sum_{\substack{(f,\delta_2 \delta_4 c_0') = 1 \\ \eqref{eq:delta3def} \text{ is true}}}  
\int_{\substack{ |t| \ll Y'q^{\varepsilon}}} 
\\
\Big(\frac{CK}{a M_1 N}\Big)^2 
 \int_{(1+\varepsilon)} 
 |\widetilde{w_T}(t,{\bf u }, \cdot)|
 \Big| P_1^{u_1}  
 \Big(\frac{P_2}{f }\Big)^{u_2}  
  \Big(\frac{P_3}{f }\Big)^{u_3}
   \Big(\frac{C}{\delta_4  c_2}\Big)^{u_4} \Big|
  \sum_{\mathfrak{c}} |Z_{\mathfrak{c},t}({\bf u})|
  d{\bf u} dt.
\end{multline}

\subsection{Claiming bounds on $Z_j$, and estimating $\mathcal{T}$}
\label{section:EstimatingT}
In Section \ref{section:DirichletSeries}, we will show the following
\begin{mylemma}
\label{lemma:spectralmomentbound}
The function $Z_j({\bf u})$ has analytic continuation to $\text{Re}({\bf u}) \geq \sigma >  1/2$.  In this region 
it satisfies the bound
 \begin{equation}
 \label{eq:ZjuBound}
 \sum_{\substack{t_j \text{ level } \delta_2 \delta_5 \\ |t_j| \leq T}} |Z_j({\bf u})| \ll_{\sigma,\varepsilon} q^{\theta-\frac12} 
 \frac{(\delta_4, q )^{1/2}}{(k_1 k_1^*)^{\frac12} \delta_4^{1/2}} 
 T^{2 +\varepsilon}
 q^{\varepsilon} \operatorname{Poly}(|\bf{u}|),
\end{equation}
where $\operatorname{Poly}(|\bf{u}|)$ is some fixed polynomial in the absolute values of the coordinates of ${\bf u}$.
\end{mylemma}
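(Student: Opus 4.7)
The plan is to express $Z_j({\bf u})$ as a product of $L$-functions attached to $u_j$ (or to its underlying newform) at shifted arguments, and then to apply a fourth-moment estimate via the spectral large sieve of Section \ref{subsec:spectralLargeSieve}. First, I would observe that the two Fourier coefficients $\nu_{\infty,j}(p_1\delta_4 c_0')$ and $\overline{\nu}_{1/\delta_5,j}(p_2' p_3')$ depend on disjoint summation variables, so $Z_j$ factors as $Z_j({\bf u}) = Z_{1,j}(u_1, u_4)\, Z_{2,j}(u_2, u_3)$, reducing the problem to the analysis of two bilinear Dirichlet series.

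Next I would apply the newform/oldform machinery of Section \ref{subsec:oldForms}. Writing $u_j$ in the basis of \cite{BlomerMilicevic2015SecondMoment} as $\sum_{\ell \mid L} c_{\ell, j} f^* \vert_{\ell}$ for a newform $f^*$ of level $M$ with $LM = \delta_2\delta_5$, and using Lemma \ref{lemma:FourierExpansionDifferentCusps} together with \eqref{eq:AtkinLehnerEigenfunction} to replace the Fourier coefficients at $1/\delta_5$ by those at $\infty$ (up to signs), one extracts a factor of $|\nu_{\infty, f^*}(1)|^2 \asymp (\delta_2\delta_5)^{-1+o(1)}$ via \eqref{eq:nuNormalizationMaass}. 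Applying Hecke multiplicativity to the sums in $p_1, c_0'$ and $p_2', p_3'$ (handling coprimality with $\delta_4$, the level $M$, and the prime $q$ via finite Euler products) then expresses $Z_{1,j}$ and $Z_{2,j}$ as products of $L$-values of $f^*$ at the relevant shifts, up to correction Euler factors at primes dividing the level.

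The arithmetic savings enter in two places. The divisibility condition $\delta_4 c_0' \equiv 0 \pmod{qk_1 k_1^*}$, combined with $(k_1, q) = 1$, forces $c_0'$ to lie in an arithmetic progression of modulus $qk_1 k_1^*/(qk_1 k_1^*, \delta_4)$; after the $c_0'$-sum is unfolded into an $L$-function, this produces the factor $(k_1 k_1^*)^{-1/2}\delta_4^{-1/2}(\delta_4, q)^{1/2}$. The power $q^{\theta-1/2}$ arises by combining the $(\delta_2\delta_5)^{-1}$ normalization from $|\nu_{\infty, f^*}(1)|^2$ (one such factor of $q^{-1/2}$ always being available since either $q \mid \delta_2$ or $q$ appears in the level through the congruence on $c_0'$) with a bound on the Hecke eigenvalue at $q$: either $|\lambda_j(q)| \ll q^\theta$ by Ramanujan--Petersson when $q \nmid M$, or $|\lambda_{f^*}(q)| = q^{-1/2} \leq q^\theta$ by \eqref{eq:smallHecke} when $q \mid M$.

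Finally, to average over the spectrum, I would apply Cauchy--Schwarz as $\sum_{|t_j| \leq T}|Z_j| \leq \bigl(\sum |Z_{1,j}|^2\bigr)^{1/2}\bigl(\sum |Z_{2,j}|^2\bigr)^{1/2}$ and then invoke the spectral large sieve of Section \ref{subsec:spectralLargeSieve} on each factor, which (after contour shifts to place the $u_i$ at the relevant real parts and truncation of the $L$-function Dirichlet series) yields the $T^{2+\varepsilon}$ bound. The hard part will be the bookkeeping in the step extracting $L$-functions: one must track the oldform multiplicities $c_{\ell,j}$ uniformly in $\ell \mid L$, handle degenerate Hecke relations at primes $p \mid M$, and verify across all cases (depending on $q \mid \delta_4$, $q \mid M$, $q \mid c_0'$) that the $q$-dependence collapses precisely to $q^{\theta-1/2}$ rather than degrading through any of these case splits.
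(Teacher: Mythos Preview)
Your overall strategy matches the paper's: factor $Z_j$, pass to newforms via the Blomer--Mili\'cevi\'c basis, use Lemma \ref{lemma:FourierExpansionDifferentCusps} to reduce the Atkin--Lehner cusp to $\infty$, express each factor in terms of $L(f^*, u_i)$, and bound the spectral fourth moment by the large sieve. The paper organizes the $L$-function step through an auxiliary result (Lemma \ref{lemma:oldformDirichletSeriesBound}) bounding $\sum_{m,n} \nu_{f^*\vert_\ell}(dmn) m^{-s} n^{-u}$ uniformly in $d$ and $\ell$, which you may find a convenient device.

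There is, however, a misattribution of the arithmetic savings that would cause trouble if you tried to execute the argument as written. The congruence $\delta_4 c_0' \equiv 0 \pmod{qk_1 k_1^*}$ forces $c_0'$ into a progression of modulus $qk_1 k_1^*/(q,\delta_4)$ (recall $(k_1,\delta_4)=1$), so unfolding $c_0'$ yields the factor $(q,\delta_4)^{1/2} q^{-1/2} (k_1 k_1^*)^{-1/2}$ --- \emph{not} $\delta_4^{-1/2}$. Conversely, the normalization $|\nu_{f^*}(1)|^2 \asymp (\delta_2\delta_5)^{-1}$ contributes no power of $q$ in general, since $(\delta_2,q)=1$ by \eqref{eq:delta2Coprime} and $q\mid \delta_5$ only when $q\mid\delta_4$. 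So your accounting of $q^{-1/2}$ and $\delta_4^{-1/2}$ is essentially swapped.

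The genuine $\delta_4^{-1/2}$ saving (which the paper flags as ``the key feature'') comes from the fact that $\delta_4 \mid \delta_5$ divides the level $\delta_2\delta_5$, and is extracted by a prime-by-prime analysis over the factorization $AB = \delta_2\delta_5$ (newform level $B$, oldform part $A$). For each prime $p \mid \delta_4$ one checks separately the cases $p^A=0$, $p^B=0$, and $p^A,p^B>0$; in each case either the $d_B^{-1/2}$ factor from \eqref{eq:smallHecke}, or the $(d,\ell)^{1/2}/A$ factor from the oldform structure, supplies the local saving $p^{-\nu(1/2+\theta)}$. This is precisely the ``hard bookkeeping'' you anticipated, but the mechanism is not the congruence on $c_0'$.
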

The key feature is that this bound saves a factor $\delta_4^{1/2}$ which ultimately arises from \eqref{eq:smallHecke}.

Now we use Lemma \ref{lemma:spectralmomentbound} to estimate $\mathcal{T}_d^{\pm}$, and eventually $\mathcal{S}$.  
We do not require the factor $(k_1 k_1^*)^{-1/2}$ appearing in \eqref{eq:ZjuBound}, and in order to unify the treatment with the continuous spectrum, we shall only use a weaker bound with this factor omitted.

First consider the {\bf Non-oscillatory, Maass cases}.  Inserting the bound from Lemma \ref{lemma:spectralmomentbound} into \eqref{eq:TdExpressionSpectralMoment} (taking $\sigma = 1/2 + \varepsilon$), summing over $\delta_4$ (here is where the savings of $\delta_4^{1/2}$ is important), $f$, and $\delta_3$, and integrating over $s$ and ${\bf u}$, we obtain
\begin{equation}
\label{eq:TdBound}
 \mathcal{T}_d^{\pm} \ll q^{\varepsilon}
 \frac{g_0 k_1 c_2}{KC} \Big(\frac{\sqrt{aMN}}{C}\Big)^{\exponent} (M_2 N)^{1/2}    \frac{\sqrt{\delta_2}}{h}  
(Y^{-2\theta} + Y)
P^{1/2} 
   \Big(\frac{C}{  c_2}\Big)^{1/2}
  q^{\theta-\frac12} .
\end{equation}
Let us call $\mathcal{S}_d^{\pm}$ for the contribution to $\mathcal{S}$ from this part.  
Applying the additional summations that led from $\mathcal{S}$ to $\mathcal{S}'''$ (see \eqref{eq:S''intoS'''}, \eqref{eq:S''andS'}, \eqref{eq:SintoS'}), we obtain
\begin{multline*}
 \mathcal{S}_d^{\pm} \ll q^{\varepsilon}
 \sum_{a} \frac{1}{a^{3/2}} 
\sum_{c_2} \frac{1}{c_2^{3/2}} 
\sum_{d|c_2} d 
\sum_{\substack{k_1  } }  k_1^{1/2}
\sum_{m_1'} \frac{1}{\sqrt{m_1'}}
\sum_{r_1 r_2 r_3 = \delta_1} \sum_{e_1 | r_2 r_3} \sum_{e_2 | r_3} 
\sum_{\substack{g_0 | e_1 e_2 \delta_1 am_1'\\ g_0 \equiv 0 \shortmod{d}}}
\\
\times
 \frac{ g_0 k_1 c_2}{KC } \Big(\frac{\sqrt{aMN}}{C}\Big)^{\exponent} (M_2 N)^{1/2}    \frac{\sqrt{\delta_2}}{h}  
(Y^{-2\theta} + Y^{})
P^{1/2} 
   \Big(\frac{C}{  c_2}\Big)^{1/2}
  q^{\theta-\frac12}.
\end{multline*}

{\bf Convention}. Here and below, we have not written the truncation points for these outer summation variables.  In almost all cases, all that is necessary is to recall that all the variables may be bounded by some fixed power of $q$.  The only exception is that for some estimates we need to use that $m_1' \ll \frac{M_1}{k_1 c_2}$.

For convenience, we gather some of the previous definitions:
\begin{equation}
\label{eq:variablenames}
\begin{split}
h = e_1 e_2 r_1 r_2 r_3, \quad 
 \delta_1 &= r_1 r_2 r_3 = \frac{k_1 d}{(a,k_1d)}, \quad
\delta_2 = \frac{e_1 e_2 \delta_1 a m_1'}{g_0} = \frac{ham_1'}{g_0},
\\
N' h = N, 
\quad
m_1 &= k_1 c_2 m_1', \quad Y = \frac{g_0 k_1 \sqrt{CP}}{\sqrt{\delta_2 c_2} K}.
\end{split}
\end{equation}
With these substitutions, we obtain
\begin{equation*}
  \sqrt{\delta_2} (Y^{-2 \theta} + Y) 
 P^{1/2} 
   \Big(\frac{C}{  c_2}\Big)^{1/2} 
   = \frac{\delta_2 K}{g_0 k_1} (Y^{1-2\theta} + Y^2),
\end{equation*}
and hence
\begin{multline}
\label{eq:sweater}
 \mathcal{S}_d^{\pm} \ll q^{\varepsilon}
 \sum_{a} \frac{1}{a^{3/2}} 
\sum_{c_2} \frac{1}{c_2^{3/2}} 
\sum_{d|c_2} d 
\sum_{\substack{k_1  } }  k_1^{1/2}
\sum_{m_1'} \frac{1}{\sqrt{m_1'}}
\sum_{r_1 r_2 r_3 = \delta_1} \sum_{e_1 | r_2 r_3} \sum_{e_2 | r_3} 
\sum_{\substack{g_0 | e_1 e_2 \delta_1 am_1'\\g_0 \equiv 0 \shortmod{d} }}
\\
\times
  \Big(\frac{\sqrt{aMN}}{C}\Big)^{\exponent} M_2^{1/2} N^{1/2} \frac{  c_2}{C}   
 \frac{ a m_1' }{g_0 } (Y^{1-2\theta} + Y^2)
  q^{\theta-\frac12} .
\end{multline}

Now we note that in this non-oscillatory case, we have from \eqref{eq:Pbound} that
\begin{equation*}
  \frac{P (g_0 k_1)^2}{\delta_2 c_2} \ll q^{\varepsilon} \frac{K^3}{Na M_1},
\end{equation*}
which in particular means that $Y  \ll (\frac{CK}{NaM_1})^{1/2} q^{\varepsilon}$, which is independent of $g_0, k_1, c_2,$ etc.
Now it is evident that the sums over $g_0, e_1, e_2, r_1, r_2, r_3$ contribute at most $O(q^{\varepsilon})$, and the fact that $d|g_0$ will cancel the other visible factor of $d$ in \eqref{eq:sweater}.
With this observation, and performing minor simplifications, we have
\begin{multline*}
 \mathcal{S}_d^{\pm} \ll q^{\varepsilon}
 \sum_{a} \frac{1}{a^{1/2}} 
\sum_{c_2} \frac{1}{c_2^{1/2}} 
\sum_{d|c_2} 
\sum_{\substack{k_1  } } k_1^{1/2} 
\sum_{m_1'} \sqrt{m_1'}
\\
\times
 \Big(\frac{\sqrt{aMN}}{C}\Big)^{\exponent}  \frac{M_2^{1/2} N^{1/2}  }{C}   
q^{\theta-\frac12}    
     \Big(\Big(\frac{\sqrt{CK}}{\sqrt{NaM_1}}\Big)^{1-2\theta} + \Big(\frac{\sqrt{CK}}{\sqrt{NaM_1}}\Big)^2 \Big)
  .
\end{multline*}
 Trivially summing over $m_1'$ (recall $m_1' \ll \frac{M_1}{k_1 c_2}$), $k_1, d, c_2$, and finally $a$, we derive
\begin{equation*}
 \mathcal{S}_d^{\pm} \ll q^{\varepsilon}
\max_a 
 M_1^{3/2}
 \Big(\frac{\sqrt{aMN}}{C}\Big)^{\exponent}  \frac{M_2^{1/2} N^{1/2}  a^{1/2}}{C}   q^{\theta-\frac12}
    \Big(\Big(\frac{\sqrt{CK}}{\sqrt{NaM_1}}\Big)^{1-2\theta} + \Big(\frac{\sqrt{CK}}{\sqrt{NaM_1}}\Big)^2 \Big)
   .
\end{equation*}

Now we split it up into the cases from Lemmas \ref{lemma:BpostTransition} and \ref{lemma:CasePreNonOsc}.  In the {\bf case of Lemma \ref{lemma:BpostTransition}}, we have $\exponent = -1$, and
\begin{equation*}
 M_1^{3/2}
 \Big(\frac{\sqrt{aMN}}{C}\Big)^{\exponent}  \frac{M_2^{1/2} N^{1/2}  a^{1/2}}{C}   q^{\theta-\frac12} = \frac{M_1}{q^{\frac12-\theta}}.
\end{equation*}
Meanwhile, using $K \asymp M_2^{-1} \sqrt{aMN}$ (see \eqref{eq:Ksize}), we have
\begin{equation*}
 \frac{CK}{NaM_1} \asymp \frac{C}{\sqrt{aMN}} \ll q^{\varepsilon}.
\end{equation*}
via \eqref{eq:PostTransition}.
Therefore, in this case we have
\begin{equation}
\label{eq:SdFinalBound}
 \mathcal{S}_d^{\pm} \ll \frac{M_1}{q^{1/2}} q^{\theta+\varepsilon} \ll q^{\theta + \varepsilon}.
\end{equation}

In the {\bf case of Lemma \ref{lemma:CasePreNonOsc}}, we have $\frac{CK}{NaM_1} \gg q^{-\varepsilon}$, and $\exponent = \kappa - 1 \geq 1$, so with easy simplifications, we derive
\begin{equation*}
  \mathcal{S}_d^{\pm} \ll q^{\varepsilon}
\max_a 
 \frac{ M_2 K}{C}    
  \frac{M_1 q^{\theta}}{q^{\frac12} }.
\end{equation*}
Since $\frac{KM_2}{C} \ll q^{\varepsilon}$ in this case, we obtain the same bound as \eqref{eq:SdFinalBound}.

{\bf The Non-oscillatory, holomorphic cases} are nearly identical, so we omit the proofs.

Now consider the {\bf oscillatory, Maass case}, where we treat \eqref{eq:TdSpectralBoundOscillatoryCase}.  Following the same steps as the non-oscillatory cases, we obtain
\begin{equation*}
 \mathcal{T}_d^{+} \ll 
 \frac{g_0 k_1 }{K} \Big(\frac{\sqrt{aMN}}{C}\Big)^{\exponent} (M_2 N)^{1/2}    \frac{\sqrt{\delta_2} }{h}
 \Big(\frac{CK}{a M_1 N}\Big) P^{1/2}
   \Big(\frac{  c_2}{C}\Big)^{1/2} 
   \frac{q^{\theta}}{q^{1/2}}.
\end{equation*}
After some simplifications, we have
\begin{multline} \label{eq:SdOsc}
 \mathcal{S}_d^{+} \ll q^{\varepsilon}
 \sum_{a} \frac{1}{a^{3/2}} 
\sum_{c_2} \frac{1}{c_2^{3/2}} 
\sum_{d|c_2} d 
\sum_{\substack{k_1  } }  k_1^{1/2}
\sum_{m_1'} \frac{1}{\sqrt{m_1'}}
\sum_{r_1 r_2 r_3 = \delta_1} \sum_{e_1 | r_2 r_3} \sum_{e_2 | r_3} 
\sum_{\substack{g_0 | e_1 e_2 \delta_1 am_1'\\ g_0 \equiv 0 \shortmod{d}}}
\\
\frac{g_0 k_1}{K}
\Big(\frac{\sqrt{aMN}}{C}\Big)^{\exponent}
  (M_2 N)^{1/2}  \Big(\frac{  c_2}{C}\Big)^{1/2}   \frac{\sqrt{\delta_2} }{h} 
 \Big(\frac{CK}{a M_1 N}\Big) P^{1/2} 
   \frac{q^{\theta}}{q^{1/2}} .
\end{multline}
We need to remember  the origins of these variables.  We have
\begin{equation}
\label{eq:PsizeOscillatorySimplified}
 P = P_1 P_2 P_3 \asymp \frac{(NaM_1)^3 k_0'^3}{C^3 K^3 N'} \asymp \Big(\frac{N a  M_1  }{C K}\Big)^3 \frac{K^3}{N} \frac{h}{(g_0 k_1)^3}.
\end{equation}
Thus the bound becomes
\begin{multline*}
 \mathcal{S}_d^{+} \ll q^{\varepsilon}
 \sum_{a} \frac{1}{a^{3/2}} 
\sum_{c_2} \frac{1}{c_2^{3/2}} 
\sum_{d|c_2} d 
\sum_{\substack{k_1  } }  k_1^{1/2}
\sum_{m_1'} \frac{1}{\sqrt{m_1'}}
\sum_{r_1 r_2 r_3 = \delta_1} \sum_{e_1 | r_2 r_3} \sum_{e_2 | r_3} 
\sum_{\substack{g_0 | e_1 e_2 \delta_1 am_1'\\ g_0\equiv 0 \shortmod{d}}}
\\
\frac{g_0 k_1 }{K} \Big(\frac{\sqrt{aMN}}{C}\Big)^{\exponent} (M_2 N)^{1/2} \Big(\frac{  c_2}{C}\Big)^{1/2}   \sqrt{\frac{h am_1'}{g_0}}  
 \Big(\frac{N a  M_1  }{C K}\Big)^{1/2} 
 \Big( \frac{K^3}{N} \frac{h}{(g_0 k_1)^3} \Big)^{1/2} 
   \frac{q^{\theta}}{h q^{1/2}} .
\end{multline*}
We see that the sum over $g_0$ gives $O(d^{-1}q^{\varepsilon})$, and the $h$-dependence cancels out entirely, so that the $\delta_1$-dependence is also essentially gone.  Thus, we obtain
\begin{multline*}
 \mathcal{S}_d^{+} \ll q^{\varepsilon} \frac{q^{\theta}}{q^{1/2}}
 \sum_{a} \frac{1}{a^{ }} 
\sum_{c_2} \frac{1}{c_2^{ }} 
\sum_{d|c_2} 
\sum_{\substack{k_1  } }  (k_1 k_1^*)^{-1/2} 
\sum_{m_1'} 
\\
\frac{(M_2 N)^{1/2}}{K C^{1/2}} \Big(\frac{\sqrt{aMN}}{C}\Big)^{\exponent} 
 \Big(\frac{N a  M_1  }{C K}\Big)^{1/2} 
 \Big( \frac{K^3}{N}\Big)^{1/2} .
\end{multline*}
Now we sum over all the remaining variables, giving in all
\begin{equation*}
 \mathcal{S}_d^{+} \ll q^{\varepsilon} \frac{M_1 q^{\theta}}{q^{1/2}}
\frac{(M_2 N)^{1/2}}{K C^{1/2}} \Big(\frac{\sqrt{aMN}}{C}\Big)^{\exponent}  
 \Big(\frac{N a  M_1  }{C K}\Big)^{1/2} 
 \Big( \frac{K^3}{N}\Big)^{1/2} .
\end{equation*}
Simplifying (in particular, $\exponent = \kappa -1$ here), we obtain
\begin{equation*}
 \mathcal{S}_d^{+} \ll q^{\varepsilon} \frac{M_1 q^{\theta}}{q^{1/2}} 
\frac{ M a N}{C^2} 
  .
\end{equation*}
Since $\sqrt{MaN} \ll C q^{\varepsilon}$ (see \eqref{eq:PreTransition}), we obtain the same bound as 
\eqref{eq:SdFinalBound}. The {\bf oscillatory, holomorphic case} is similar, but even simpler.

In summary, this shows the desired bound for the Maass forms and holomorphic forms.

\subsection{Claiming bounds on  $Z_{\mathfrak{c},t}$, and estimating $\mathcal{T}_c$}
\label{section:ContinuousSpectrumBound}
Recall the definition \eqref{eq:ZctDef}.  Define $\flooroot$ to be the multiplicative function defined on prime powers by
\begin{equation}
\label{eq:floorootDefinition}
\flooroot(p^{\alpha}) = p^{\lfloor \alpha/2 \rfloor}.
\end{equation}

\begin{mylemma}
\label{lemma:ZtcBound}
 The function $Z_{\c, t}({\bf u})$ has a decomposition of the following form.  We have
 \begin{equation*}
  Z_{\c, t}(u_1, u_2, u_3, u_4) = (Z_1^0(u_2, u_3) + Z_1^*(u_2, u_3))  (Z_2^0(u_1, u_4) + Z_2^*(u_1, u_4)),
 \end{equation*}
where for $i=1,2$, $Z_i^*(\alpha,\beta)$ has analytic continuation to $\text{Re}(\alpha, \beta) \geq \sigma > 1/2$, and $Z_i^0(\alpha,\beta)$ is analytic for $\text{Re}(\alpha, \beta) \geq \sigma > 1$.  
For $\text{Re}({\bf u}) \geq \sigma > 1/2$, we have
\begin{equation}
\label{eq:Z1*Z2*bound}
\int_{|t| \leq T} \sum_{\c} |Z_1^*(u_2, u_3) Z_2^*(u_1, u_4)| dt \ll_{{\bf u}, \varepsilon} q^{\varepsilon} T^{2+\varepsilon} \Big(\frac{(\delta_4, q)}{ q  }\Big)^{1/2}
\frac{\flooroot(\delta_2) \flooroot(\delta_3)^{3/2}}{\sqrt{\delta_2 \delta_5 }}
.
\end{equation}

For $\text{Re}({\bf u}) \geq \sigma > 1$, we have
\begin{equation}
\label{eq:Z10Z20bound}
 \sum_{\c} |Z_1^0(u_2, u_3) Z_2^0(u_1, u_4)|  \ll_{{\bf u}, \varepsilon} 
 (q (1+ |t|))^{\varepsilon} \frac{(\delta_4, q)}{q \sqrt{k_1 k_1^*}} \frac{1 }{\delta_2 \delta_5 }
 .
\end{equation}

For $\text{Re}(u_1, u_4) \geq \sigma > 1$ and $\text{Re}(u_2, u_3) \geq \sigma' > 1/2$, we have
\begin{equation}
\label{eq:Z1*Z20bound}
\int_{|t| \leq T} \sum_{\c} |Z_1^*(u_2, u_3) Z_2^0(u_1, u_4)| dt 
\ll_{{\bf u}, \varepsilon} 
q^{\varepsilon} T^{1+\varepsilon}
\frac{(\delta_4, q)}{q \sqrt{k_1 k_1^*}} 
  \frac{\flooroot(\delta_2) \sqrt{\flooroot(\delta_3)}}{\delta_2 \sqrt{\delta_5}}
  .
\end{equation}

For $\text{Re}(u_2, u_3) \geq \sigma > 1$ and $\text{Re}(u_1, u_4) \geq \sigma' > 1/2$, we have
\begin{equation}
\label{eq:Z10Z2*bound}
\int_{|t| \leq T} \sum_{\c} |Z_1^0(u_2, u_3) Z_2^*(u_1, u_4)| dt 
\ll_{{\bf u}, \varepsilon}
q^{\varepsilon} T^{1+\varepsilon}
\Big(\frac{(\delta_4, q)}{q} \Big)^{1/2} 
  \frac{\flooroot(\delta_2) \flooroot(\delta_3) }{ \delta_2 \delta_5}
  .
\end{equation}
The implied dependence on ${\bf u}$ is at most polynomial, as in Lemma \ref{lemma:spectralmomentbound}.
\end{mylemma}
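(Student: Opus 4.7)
The argument begins with the explicit Fourier expansion of the Eisenstein series $E_{\c}(\sigma_{\a}z, \tfrac12+it)$ on $\Gamma_0(\delta_2\delta_5)$ at the Atkin-Lehner cusps $\a = \infty$ and $\a = 1/\delta_5$, as carried out in \cite{DoubleCosetPaper}. This expansion expresses $\nu_{\infty,\c}(n,\tfrac12+it)$ and $\nu_{\frac{1}{\delta_5},\c}(n,\tfrac12+it)$ (via \eqref{eq:rhodef}, \eqref{eq:phiDefinition}, \eqref{eq:nuDefinition}) as multiplicative functions of $n$, each of which is a divisor-type convolution $\sigma_{2it}^{\chi_1,\chi_2}(n) = \sum_{ab=n} \chi_1(a)\chi_2(b)(a/b)^{it}$ for a pair of Dirichlet characters $\chi_1,\chi_2$ determined by $\c$, normalized by a factor of the form $1/L(1+2it,\chi_1\overline{\chi_2})$. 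The scaling is such that individual coefficients are bounded by $(\delta_2\delta_5)^{-1/2+\varepsilon}$, consistent with \eqref{eq:nuNormalizationMaass}.

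Multiplicativity in $n$, applied separately to $n = p_1\delta_4 c_0'$ and $n = p_2'p_3'$, causes the double Dirichlet series $Z_{\c,t}({\bf u})$ to split as a product
\[
Z_{\c,t}(u_1,u_2,u_3,u_4) = Z_1(u_2,u_3) \cdot Z_2(u_1,u_4),
\]
where $Z_1$ records the $(p_2',p_3')$-sum and $Z_2$ records the $(p_1,c_0')$-sum subject to $(c_0',fg_0 m_1')=1$ and $\delta_4 c_0' \equiv 0 \pmod{qk_1k_1^*}$. Expanding the convolution $\sigma_{2it}^{\chi_1,\chi_2}$ and separating Euler factors, $Z_1(u_2,u_3)$ becomes a finite linear combination of products of Dirichlet $L$-functions of the shape $L(u_2\pm it,\chi)L(u_3\pm it,\chi')L(u_2+u_3,\chi'')$, each multiplied by finite Euler products at primes dividing the level $\delta_2\delta_5$. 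Similarly $Z_2(u_1,u_4)$ is a product $L(u_1+u_4\pm it,\cdot)L(u_1\pm it,\cdot)L(u_4\pm it,\cdot)$ with local factors at bad primes; the congruence at $q$ extracts a local Euler factor whose size is $(\delta_4,q)^{1/2} q^{-1/2}$ (the square-root reflecting a Gauss sum/ramified local computation when $q \mid \delta_4$). The decomposition $Z_i = Z_i^0 + Z_i^*$ is then defined by isolating those terms where \emph{all} participating characters are principal (forcing the relevant $L$-factor to be a shifted $\zeta$-function with a pole at the argument $=1$) versus terms with at least one non-principal character (hence entire in the full region $\text{Re}({\bf u}) \geq \tfrac12 + \varepsilon$).

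For \eqref{eq:Z1*Z2*bound}, I re-encode the Dirichlet polynomial $Z_1^* Z_2^*$ as a linear functional in the Fourier coefficients $\nu_{\a,\c}(m,\tfrac12+it)$, with test coefficients $a_m$ given by the inverted Dirichlet series, and apply the continuous spectral large sieve \eqref{eq:spectralLargeSieveContinuousPart} over $\c$ and $|t|\leq T$. The $T^{2+\varepsilon}$ factor comes from this large sieve, the $((\delta_4,q)/q)^{1/2}$ from the aforementioned local factor at $q$, and the $\flooroot$-factors arise because the Euler factors at primes $p \mid \delta_2\delta_5$ involve sums of $\sigma_{2it}(p^a)$ over $a$ with forced divisibilities, whose size is controlled by $p^{\lfloor a/2\rfloor}$ rather than $p^{a/2}$ (since multiplicativity fails at ramified primes). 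For \eqref{eq:Z10Z20bound}, I count the cusps $\c$ for which both $\chi$'s in the Eisenstein expansion are principal; the parametrization of cusps on $\Gamma_0(\delta_2\delta_5)$ with $(\delta_2,\delta_5)=1$ forces this count to be $q^\varepsilon$, giving the purely arithmetic bound directly. Bounds \eqref{eq:Z1*Z20bound} and \eqref{eq:Z10Z2*bound} interpolate: one side contributes a polar constant (cusp-count only, yielding $T^{1+\varepsilon}$ from the surviving $t$-integration of $Z_i^*$ via one-variable large sieve) while the other contributes only its residue.

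The main obstacle I anticipate is the fine bookkeeping at bad primes $p \mid \delta_2\delta_5$ and at the prime $q$: one must correctly identify which Euler factors are affected by each of the conditions $\delta_4 c_0' \equiv 0 \pmod{qk_1k_1^*}$, $(c_0',fg_0 m_1')=1$, and $(k_0'',\delta_2)=1$, and combine them with the local behavior of the Eisenstein coefficients at ramified places to recover the precise $\flooroot(\delta_2)$, $\flooroot(\delta_3)$, and $(k_1k_1^*)^{-1/2}$ factors asserted in the statement. Once these local calculations are performed cleanly, the archimedean and spectral bounds are standard.
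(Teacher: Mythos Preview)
Your overall architecture is correct: one factors $Z_{\c,t}=Z_1 Z_2$ using multiplicativity of the Eisenstein Fourier coefficients, and one defines $Z_i^0$ (resp.\ $Z_i^*$) as the contribution from principal (resp.\ non-principal) characters in the expansion coming from Proposition~\ref{prop:EisensteinFourierCoefficientFormulaDirichletCharacters}. This is exactly what the paper does.

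However, your proposed mechanism for \eqref{eq:Z1*Z2*bound} has a genuine gap. You want to ``re-encode $Z_1^* Z_2^*$ as a linear functional in the Fourier coefficients $\nu_{\a,\c}(m,\tfrac12+it)$'' and apply the spectral large sieve \eqref{eq:spectralLargeSieveContinuousPart}. But $Z_i^*$ is \emph{not} a linear form in the $\nu_{\a,\c}(\cdot)$: it is the piece of such a form obtained by discarding the principal-character terms in the explicit expansion. The large sieve applies to the full linear form $\sum_m a_m \nu_{\a,\c}(m)$ and does not see this decomposition, so it cannot bound $Z_1^* Z_2^*$ separately from the other three cross-terms. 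Relatedly, your explanation of the $\flooroot(\delta_2),\flooroot(\delta_3)$ factors as coming from ``sums of $\sigma_{2it}(p^a)$ with forced divisibilities'' is not where they actually arise.

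What the paper does instead: it bounds $Z_1^*$ and $Z_2^*$ \emph{individually} (for each fixed cusp $\c=v/f$) by opening Proposition~\ref{prop:EisensteinFourierCoefficientFormulaDirichletCharacters} and recognizing products of four shifted Dirichlet $L$-functions $L(\alpha\pm it,\chi\psi)L(\beta\pm it,\chi\psi)$, with explicit arithmetic prefactors depending on $f,N'',s,r$. The $T$-power then comes from the classical fourth moment of Dirichlet $L$-functions \eqref{eq:dirichletfourthmoment}--\eqref{eq:dirichletfourthmomenthybrid} applied via H\"older, not from the spectral large sieve. The $\flooroot$ factors appear only at the very end, when one \emph{sums over cusps} $f\mid \delta_2\delta_5$: the individual bounds \eqref{eq:Z1*bound}, \eqref{eq:Z2*bound} involve $(fN'')^{-1/2}$ and powers of $(f,N/f)$, and the resulting divisor sums are controlled by the elementary estimates \eqref{eq:dsumBound0}--\eqref{eq:dsumBound1/2} together with the simplification $\flooroot(\delta_5)=\flooroot(\delta_3)$ (since $\delta_4$ is squarefree). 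The factor $((\delta_4,q)/q)^{1/2}$ comes directly from writing $Z_2=((\delta,D)/D)^\alpha Y_2$ with $D=qk_1k_1^*$, $\delta=\delta_4$, not from a Gauss-sum computation. The $(k_1k_1^*)^{-1/2}$ in \eqref{eq:Z10Z20bound} and \eqref{eq:Z1*Z20bound} likewise comes from this explicit prefactor combined with $k_1\mid\delta_2$ in the cusp sum.
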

We remark on some important features of the above bounds.  In \eqref{eq:Z1*Z2*bound} and \eqref{eq:Z10Z2*bound} we require a factor $\delta_5^{-1/2}$ (or better) to secure convergence of the sum over $\delta_4$.  The overall power of $k_1$ is also important for securing convergence in each case.  In terms of the final power of $q$ that occurs in our bound on $\mathcal{S}_c$, the most important feature is the power of $\delta_2$.  This is because $\delta_2$ contains the $m_1'$ variable which can be as large as $q^{1/2+\varepsilon}$.
Note that although $\flooroot(n)$ may occasionally be as large as $\sqrt{n}$, it is small on average, indeed $\sum_{n \leq x} \flooroot(n) \ll x \log x$.

Using Lemma \ref{lemma:ZtcBound}, we bound $\mathcal{T}_c$.  
For the non-oscillatory cases, we return to \eqref{eq:TcExpressionSpectralMoment}.
Technically, we should return to \eqref{eq:TcFormulaSpectralPreAbsoluteValues}, decompose $\mathcal{T}_c^\pm$ according to $Z_{\c, t} = (Z_1^* + Z_1^0)(Z_2^* + Z_2^0)$ into four pieces, shift contours appropriately, and only then apply the absolute values.   
We found it slightly easier to bound $Z_1^0$ and $Z_2^0$ slightly to the right of the $1$-line instead of bounding the residues of $Z_1$ and $Z_2$, but this is more-or-less equivalent.

Then (note that the sum over $f$ converges absolutely, and the $t$-integral is easily estimated, so we may simplify a bit in these aspects) 
\begin{multline*}
 \mathcal{T}_c^{\pm} \ll q^{\varepsilon}
 \frac{g_0 k_1 c_2}{KC} \Big(\frac{\sqrt{aMN}}{C}\Big)^{\exponent} 
 (M_2 N)^{1/2} \frac{\sqrt{\delta_2}}{h}
\sum_{\delta_3 | \frac{c_2}{(g_0, c_2)}}
 \sum_{(\delta_4, \delta_2 g_0 m_1' ) = 1}   
 \flooroot(\delta_2) \flooroot(\delta_3)
\\
\Big( \frac{(\delta_4, q)^{1/2}}{q^{1/2}}   (1+Y) \frac{(P_1 P_2 P_3 C)^{1/2} \flooroot(\delta_3)^{1/2}}{\sqrt{\delta_2 \delta_4 \delta_5 c_2 }}
+
 \frac{(\delta_4, q)}{q} \frac{P_1 P_2 P_3C}{\delta_2 \delta_4 \delta_5 c_2 \sqrt{k_1 k_1^*}} 
\\
+
 \frac{P_1 (P_2 P_3)^{1/2} C}{\delta_2 \delta_4 c_2 \sqrt{\delta_5}} \frac{(\delta_4, q)}{q \sqrt{k_1 k_1^*}}
+
 \frac{ P_1^{1/2} P_2 P_3 C^{1/2}}{\delta_4^{1/2} c_2^{1/2} \delta_2 \delta_5}
 \frac{(\delta_4, q)^{1/2}}{q^{1/2}}
\Big).
\end{multline*}
Using $\delta_5 \geq \sqrt{\delta_3 \delta_4}$ (recall that $\delta_5 = [\delta_3,\delta_4]$), the sums over $\delta_3$ and $\delta_4$ are easily evaluated, and lead to a factor of size at most $O(q^{\varepsilon})$; the only slightly tricky case uses instead
\begin{equation}\label{eq:trickyDeltaSummation}
 \flooroot(\delta_3)^{1/2}  
 \sum_{\delta_4} \frac{(\delta_4, q)^{1/2}}{\sqrt{\delta_4 \delta_5}} = 
 \flooroot(\delta_3)^{1/2}
 \sum_{\delta_4} \frac{(\delta_4, \delta_3 q)^{1/2}}{\delta_4\sqrt{ \delta_3}} 
 \ll 
 \flooroot(c_2) \frac{(\delta_3 q)^{\varepsilon}}{\sqrt{\delta_3}}.
\end{equation}

It is helpful to observe the following nice simplification.  At this point we can see that the first term within the parentheses which occurred from $Z_1^* Z_2^*$ will lead to the same bound we obtained on $\mathcal{T}_d^{\pm}$, by comparison to \eqref{eq:TdBound}. The only difference is the benign factor of $\flooroot(c_2)$, which does not make the sum over $c_2$ appreciably larger, since $\sum_{c_2} c_2^{-1} \flooroot(c_2) \ll q^{\varepsilon}$.  Actually, apart from $\flooroot(c_2)$, the bound is better in two ways: firstly, the factor $q^{\theta}$ may be omitted, and secondly, instead of using $\frac{\flooroot(\delta_2)}{\sqrt{\delta_2}} \leq 1$, we could use that $\flooroot(n)$ is $O(n^{\varepsilon})$ on average, which could lead to a saving of the factor $M_1^{1/2}$.  Instead of carrying through the calculations, we will simply abbreviate this term by $(**)$ in the forthcoming calculations.

Next we wish to sum over the outer variables that make $\mathcal{S}_c$ from $\mathcal{T}_c$.  To this end, we need to write the $P_i$, $Y$, and $\delta_2$ variables in terms of these outer ones.  Let $P_i^* = \frac{K}{N_i}$, so that $P_i \ll q^{\varepsilon} P_i^* \frac{h_i}{g_0 k_1}$ where $h_1 = e_1  r_1$, $h_2 = e_2 r_2$, and $h_3 = r_3$ (so $h = h_1 h_2 h_3$). 
With this, we obtain
\begin{multline*}
 \mathcal{T}_c^{\pm} \ll q^{\varepsilon}
 \frac{g_0 k_1 c_2}{KC} \Big(\frac{\sqrt{aMN}}{C}\Big)^{\exponent} 
 (M_2 N)^{1/2} \frac{\sqrt{\delta_2}}{h}
 \flooroot(\delta_2) 
\\
\Big[   (**)
+
 \frac{h}{(g_0 k_1)^3} \frac{P_1^* P_2^* P_3^* C}{q\delta_2  c_2 \sqrt{k_1 k_1^*}} 
+
\frac{h_1 (h_2 h_3)^{1/2}}{(g_0 k_1)^2} \frac{P_1^* (P_2^* P_3^*)^{1/2} C}{q \delta_2  c_2  \sqrt{k_1 k_1^*}} 
+
\frac{h_1^{1/2} h_2 h_3}{(g_0 k_1)^{5/2}} \frac{ (P_1^*)^{1/2} P_2^* P_3^* C^{1/2}}{q^{1/2} c_2^{1/2} \delta_2 }
\Big].
\end{multline*}
Recall that
\begin{equation*}
 \mathcal{S}_c^{\pm} \ll q^{\varepsilon} \sum_a \frac{1}{a^{3/2}} \sum_{c_2} \frac{1}{c_2^{3/2}} \sum_{d | c_2} d \sum_{k_1} k_1^{1/2} \sum_{m_1'} \frac{1}{\sqrt{m_1'}} \sum_{r_1 r_2 r_3 = \delta_1} \sum_{e_1 | r_2 r_3} \sum_{e_2 | r_3} \sum_{\substack{g_0 | e_1 e_2 \delta_1 a m_1' \\ g_0 \equiv 0 \shortmod{d}}} \mathcal{T}_c^{\pm},
\end{equation*}
and that $\delta_2 = \frac{h a m_1'}{g_0}$, and $h = e_1 e_2 \delta_1 = e_1 e_2 r_1 r_2 r_3$.

Next we analyze the sum over $g_0$ in all four terms.  For the $\flooroot$ part, we use that $\flooroot(\delta_2) = \flooroot(\frac{e_1 e_2 \delta_1 am_1'}{g_0}) \leq \flooroot(e_1 e_2 \delta_1 am_1') = \flooroot(h a m_1')$, and otherwise we see that the overall power of $g_0$ is negative in all terms, and so the smallest value of $g_0$, namely $d$, leads to the dominant part.

Putting this together, and simplifying, we obtain
\begin{multline*}
 \mathcal{S}_c^{\pm} \ll 
 q^{\varepsilon} \sum_a \frac{1}{a^{3/2}} 
 \sum_{c_2} \frac{1}{c_2^{3/2}} \sum_{d|c_2} d \sum_{k_1} k_1^{3/2} \sum_{m_1'} \frac{1}{\sqrt{m_1'}} \sum_{r_1 r_2 r_3 = \delta_1} \sum_{e_1 | r_2 r_3} \sum_{e_2 | r_3} 
 \frac{ (M_2 N)^{1/2} }{CK} \Big(\frac{\sqrt{aMN}}{C}\Big)^{\exponent} 
 \\
 \frac{\flooroot(h a m_1')}{\sqrt{ha m_1'}}
\Big[ (**)
+
\frac{P^* C}{q d^{3/2} c_2 k_1^3 \sqrt{k_1 k_1^*}} 
+
\frac{P_1^* (P_2^* P_3^*)^{1/2} C}{q k_1^2 c_2  \sqrt{d h_2 h_3 k_1 k_1^*}  } 
+
\frac{(P_1^*)^{1/2} P_2^* P_3^* C^{1/2}}{d k_1^{5/2}  \sqrt{q h_1 c_2} } 
\Big].
\end{multline*}

Our next goal is to estimate the sum over $m_1'$.  Since $m_1'$ is independent of $\delta_1$ (and hence $e_1, e_2$), we may move the sum over $m_1'$ to the inside.  
We shall use the following estimate:
\begin{equation*}
\sum_{n \leq X} \frac{\flooroot(nN)}{n} \ll \flooroot(N) (XN)^{\varepsilon},
\end{equation*}
  which can
be proved by elementary methods.  Applying this to the sums over $m_1'$, we obtain with easy simplifications
\begin{multline}
\label{eq:ScExpression}
 \mathcal{S}_c^{\pm} \ll 
 q^{\varepsilon} \sum_a \frac{1}{a^{3/2}} 
 \sum_{c_2} \frac{1}{c_2^{3/2}} \sum_{d|c_2} d \sum_{k_1} k_1^{3/2}  \sum_{r_1 r_2 r_3 = \delta_1} \sum_{e_1 | r_2 r_3} \sum_{e_2 | r_3} 
 \frac{ (M_2 N)^{1/2} }{CK} \Big(\frac{\sqrt{aMN}}{C}\Big)^{\exponent} 
 \\
 \frac{\flooroot(h a )}{\sqrt{ha }}
\Big[ (**)
+
\frac{P^* C}{q d^{3/2} c_2 k_1^3 \sqrt{k_1 k_1^*}} 
+
\frac{P_1^* (P_2^* P_3^*)^{1/2} C}{q  c_2 k_1^2 \sqrt{d h_2 h_3 k_1 k_1^*}   } 
+
\frac{(P_1^*)^{1/2} P_2^* P_3^* C^{1/2}}{d k_1^{5/2}  \sqrt{q h_1 c_2} } 
\Big].
\end{multline}
Using $\flooroot(ha) \leq \sqrt{ha}$, we can easily see that the outer variables sum to give no significant contribution.  
Therefore,  we have
\begin{multline*}
 \mathcal{S}_c^{\pm} \ll 
 q^{\varepsilon}  \max_a
 \frac{ (M_2 N)^{1/2} }{CK} \Big(\frac{\sqrt{aMN}}{C}\Big)^{\exponent} 
\\
\Big[(**)
+
q^{-1} P^* C 
+
q^{-1} P_1^* (P_2^* P_3^*)^{1/2}
  C
+
q^{-1/2} (P_1^*)^{1/2} P_2^* P_3^*
 C^{1/2} \Big].
\end{multline*}
Substituting for $P_i^*$ and simplifying, we obtain
\begin{equation}
\label{eq:ScboundMiddleOfProof}
 \mathcal{S}_c^{\pm} \ll 
 q^{\varepsilon}  \max_a
 \frac{ (M_2 N)^{1/2} }{CK} \Big(\frac{\sqrt{aMN}}{C}\Big)^{\exponent} 
\Big[(**)
+
\frac{K^3 C}{Nq} 
+
\frac{K^2 C}{q N_1 \sqrt{N_2 N_3}}
+
\frac{K^{5/2}  C^{1/2}}{q^{1/2} N_1^{1/2} N_2 N_3}
 \Big].
\end{equation}

Now we split once more into the two types of non-oscillatory behavior.  
The {\bf post-transition} case from Lemma \ref{lemma:BpostTransition}
has $\frac{\sqrt{aMN}}{C} \gg q^{\varepsilon}$, which leads to $\exponent = -1$ and $K \asymp \frac{(aMN)^{1/2}}{M_2}$.  Therefore,
\begin{equation*}
 \mathcal{S}_c^{\pm} \ll 
 q^{\varepsilon}  \max_a
 (M_2 N)^{1/2}
\Big[(**)
+
\frac{a MN}{M_2^2 Nq} 
+
\frac{\sqrt{aMN}}{q M_2 N_1 \sqrt{N_2 N_3}}
+
\frac{\sqrt{aMN}}{q^{1/2} M_2^{3/2} N_1^{1/2} N_2 N_3}
 \Big].
\end{equation*}
This simplifies to give
\begin{equation}
\label{eq:ScBoundSomewhatSimplified}
 \mathcal{S}_c^{\pm} \ll 
 q^{\varepsilon} \Big[1 
+
\frac{\sqrt{M_1}}{\sqrt{M_2}} \frac{\sqrt{M_1 a^2 N}}{q}
+
\frac{M_1^{1/2} \sqrt{a N_2 N_3}}{q}
+
\frac{\sqrt{M_1}}{\sqrt{M_2}} \frac{\sqrt{aN_1}}{\sqrt{q}}
 \Big].
\end{equation}
Using $M_1 \ll M_2$, $M_1 \ll q^{1/2+\varepsilon}$, and $aN_i \ll q^{1/2+\varepsilon}$, we deduce that $\mathcal{S}_c^{\pm} \ll q^{\varepsilon}$.  One may observe that the part of $\mathcal{S}_c^{\pm}$ arising from $Z_1^* Z_2^0$ and $Z_1^0 Z_2^*$ contributes at most $O(q^{-1/4+\varepsilon})$.  With some additional work, one could show the contribution from $Z_1^* Z_2^*$ is also at most $O(q^{-1/4+\varepsilon})$. 

For the {\bf non-oscillatory, pre-transition} case from Lemma \ref{lemma:CasePreNonOsc} with $\exponent  = \kappa -1 \geq 2$ (here is the only place where the choice of $\kappa = 2$ does not work), we have $K \ll q^{\varepsilon} \frac{C}{M_2}$, and $C \gg q^{\varepsilon} \sqrt{aMN}$, so we obtain
\begin{equation}
\label{eq:ScBoundNonoscillatoryCase2}
 \mathcal{S}_c^{\pm} \ll 
 q^{\varepsilon}  \max_a
  (M_2 N)^{1/2} 
\Big[(**)
+
\frac{aMN  }{M_2^2 Nq} 
+
\frac{\sqrt{aMN}  }{q M_2 N_1 \sqrt{N_2 N_3}}
+
\frac{\sqrt{aMN} }{M_2^{3/2} q^{1/2} N_1^{1/2} N_2 N_3}
 \Big].
\end{equation}
This is precisely the same bound as in \eqref{eq:ScBoundSomewhatSimplified}, and so $\mathcal{S}_c^{\pm} \ll q^{\varepsilon}$.  Actually, we only need $\kappa -1 \geq 2$ for the term arising from $Z_1^0 Z_2^0$.

Finally, we consider the {\bf Oscillatory} case (where recall $\exponent = \kappa - 1$ and only the $+$ sign enters).  For this, we return to \eqref{eq:TcSpectralBoundOscillatoryCase}, that is,
\begin{multline*}
 \mathcal{T}_c^{+} \ll 
 \frac{g_0 k_1 c_2}{KC} \Big(\frac{\sqrt{aMN}}{C}\Big)^{\kappa - 1} (M_2 N)^{1/2}    \frac{\sqrt{\delta_2} }{h}
\sum_{\delta_3 | \frac{c_2}{(g_0, c_2)}}
 \sum_{(\delta_4, \delta_2 g_0 m_1' ) = 1}   
\sum_{\substack{(f,\delta_2 \delta_4 c_0') = 1 \\ \eqref{eq:delta3def} \text{ is true}}}  
\int_{\substack{ |t| \ll Y'q^{\varepsilon}}} 
\\
\Big(\frac{CK}{a M_1 N}\Big)^2 
 \int_{(1+\varepsilon)} 
 |\widetilde{w_T}(t,{\bf u }, \cdot)|
 \Big| P_1^{u_1}  
 \Big(\frac{P_2}{f }\Big)^{u_2}  
  \Big(\frac{P_3}{f }\Big)^{u_3}
   \Big(\frac{C}{\delta_4  c_2}\Big)^{u_4} \Big|
  \sum_{\mathfrak{c}} |Z_{\mathfrak{c},t}({\bf u})|
  d{\bf u} dt,
\end{multline*}
where again we should technically move the contours before applying the absolute values.
Then we obtain
\begin{multline*}
 \mathcal{T}_c^{+} \ll q^{\varepsilon}
 \frac{g_0 k_1 c_2}{KC} \Big(\frac{\sqrt{aMN}}{C}\Big)^{\kappa - 1} (M_2 N)^{1/2}    \frac{\sqrt{\delta_2} }{h}
\sum_{\delta_3 | \frac{c_2}{(g_0, c_2)}}
 \sum_{(\delta_4, \delta_2 g_0 m_1' ) = 1}   
\Big(\frac{CK}{a M_1 N}\Big)^2  
\\
\flooroot(\delta_2) \flooroot(\delta_3)
\Big[\frac{(\delta_4, q)^{1/2}}{q^{1/2}}   Y'^2 \frac{(P_1 P_2 P_3 C)^{1/2} \flooroot(\delta_3)^{1/2}  }{\sqrt{\delta_2 \delta_4 \delta_5 c_2 }}
+
 Y' \frac{(\delta_4, q)}{q} \frac{P_1 P_2 P_3C}{\delta_2 \delta_4 \delta_5 c_2 \sqrt{k_1 k_1^*}} 
\\
+
 Y' \frac{P_1 (P_2 P_3)^{1/2} C}{\delta_2 \delta_4 c_2 \sqrt{\delta_5}} \frac{(\delta_4, q)}{q \sqrt{k_1 k_1^*}}
+
 Y' \frac{ P_1^{1/2} P_2 P_3 C^{1/2}}{\delta_4^{1/2} c_2^{1/2} \delta_2 \delta_5}
 \frac{(\delta_4, q)^{1/2}}{q^{1/2}}
\Big].
\end{multline*}
Luckily, we may re-use some of the previous analysis in the non-oscillatory cases.  We wish to sum over all the outer variables.  Note that $Y'$ is independent of them, and we have $P_i \asymp \frac{N aM_1}{CK} \frac{k_0'}{N_i'}$; previously we had $P_i \ll \frac{k_0'}{N_i'}$, so the only difference here is the extra factor $\frac{N a M_1}{CK}$ (which happens to be $Y'^2$).  Therefore, the previous method of bounding the outer variables works identically as in this case.  
This time the term arising from $Z_1^*Z_2^*$ is identical to \eqref{eq:SdOsc}, save for the $\flooroot(c_2)$. We again denote it by $(**)$.
Therefore, we have by altering \eqref{eq:ScboundMiddleOfProof} with the appropriate factors of $Y'$ that
\begin{equation*}
 \mathcal{S}_c^{\pm} \ll 
 q^{\varepsilon}  \max_a
 \frac{ (M_2 N)^{1/2} }{CK Y'^4} \Big(\frac{\sqrt{aMN}}{C}\Big)^{\kappa-1}  
\Big[(**)
+
 \frac{Y'^7 K^3 C}{Nq} 
+
\frac{Y'^5 K^2 C}{q N_1 \sqrt{N_2 N_3}}
+
 \frac{Y'^{6} K^{5/2}  C^{1/2}}{q^{1/2} N_1^{1/2} N_2 N_3}
 \Big].
\end{equation*}

Substituting for $Y'$, simplifying, and using $K \ll q^{\varepsilon} \frac{C}{M_2}$, this becomes
\begin{multline*}
 \mathcal{S}_c^{\pm} \ll 
 q^{\varepsilon}  \max_a
 (M_2 N)^{1/2}  \Big(\frac{\sqrt{aMN}}{C}\Big)^{\kappa-1} 
 \\
\Big[(**)
+
 \frac{(M_1 a N)^{3/2}}{C M_2^{1/2} Nq}  
+
\frac{(M_1 a N)^{1/2}  }{q M_2^{1/2} N_1 \sqrt{N_2 N_3}}
+
 \frac{ M_1 a N}{q^{1/2} C M_2^{1/2} N_1^{1/2} N_2 N_3}
 \Big].
\end{multline*}
Using $C \gg q^{-\varepsilon} \sqrt{MaN}$ and $\kappa -1 \geq 1$, this gives
\begin{equation*}
 \mathcal{S}_c^{\pm} \ll 
 q^{\varepsilon}  \max_a
\Big[1
+
 \frac{M_1}{M_2} \frac{ (M_2 a^2 N)^{1/2} }{ q}  
+
\frac{(M_1 a N_2 N_3)^{1/2}  }{q}
+
 \frac{\sqrt{M_1}}{\sqrt{M_2}} \frac{\sqrt{a N_1}}{\sqrt{q}}
 \Big].
\end{equation*}
Again as in the non-oscillatory case, we see that
 $\mathcal{S}_c^{\pm} \ll q^{\varepsilon}$.

\section{Bounding the Dirichlet series}
\label{section:DirichletSeries}
\subsection{Discrete spectrum}
In this section we prove Lemma \ref{lemma:spectralmomentbound}.
Towards this, we develop some properties of an auxilliary Dirichlet series with the following
\begin{mylemma}\label{lemma:oldformDirichletSeriesBound}
Suppose $N = LM$, $f^*$ is a newform of level $M$, and $d,Q$ are nonzero integers. For $\ell | L$, let $f = f^*\vert_{\ell}$, write $d/\ell = d_1/\ell_1$ in lowest terms (so $d = (d,\ell) d_1$, $\ell = (d,\ell) \ell_1$), and let $d_1 = d_M d_0$ with $d_M |M^{\infty}$ and $(d_0, M) =1$. Define the Dirichlet series
\begin{equation*}
Z_{d,\ell, Q}(s,u) := \sum_{\substack{m,n \geq 1 \\ (n,Q) = 1}} \frac{\nu_{f^*\vert_{\ell}}(dmn)}{m^{s } n^{u}},
\end{equation*} 
initially for $\text{Re}(s), \text{Re}(u)$ large.
Then $Z_{d,\ell, Q}$ has analytic continuation to $\text{Re}(s), \text{Re}(u) \geq \sigma > 1/2$, wherein it
satisfies the bound
\begin{equation}
\label{eq:Zbound}
 |Z_{d,\ell,Q}(s,u)| \ll_{\sigma} 
 |\nu_{f^*}(1)|
 (d,\ell)^{1/2} d_M^{-1/2}  d_0^{\theta} (dNQ)^{\varepsilon} |L(f^*, s) L(f^*, u)|.
\end{equation}
\end{mylemma}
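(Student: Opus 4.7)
The plan is to express $Z_{d,\ell,Q}(s,u)$ as an Euler product and bound it prime-by-prime against $L(f^*,s)L(f^*,u)$. First, using \eqref{eq:fvertellFourierCoefficientFormula} together with the Hecke multiplicativity $\nu_{f^*}(n) = \nu_{f^*}(1)\lambda_{f^*}(n)$, along with the identities $dmn/\ell = d_1 mn/\ell_1$ and $\ell \mid dmn \iff \ell_1 \mid mn$ (which follow from $(d_1,\ell_1)=1$), I would rewrite
\[
Z_{d,\ell,Q}(s,u) = \nu_{f^*}(1)\,\ell^{1/2} \sum_{\substack{m,n\geq 1\\ (n,Q)=1\\ \ell_1\mid mn}} \frac{\lambda_{f^*}(d_1 mn/\ell_1)}{m^s n^u}.
\]
Multiplicativity of $\lambda_{f^*}$ and the fact that all summation constraints factor by primes then allow the remaining series to be written as $\prod_p Z_p(s,u)$, convergent for $\Re(s), \Re(u)$ sufficiently large.

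I would then analyze each local factor $Z_p$ according to the divisibility of $p$ by $M,Q,d_1,\ell_1$. At good primes $p \nmid MQd_1\ell_1$, the Hecke relation for primes coprime to the level gives the clean identity $Z_p(s,u) = L_p(f^*,s)L_p(f^*,u)/\zeta_p(s+u)$; multiplying over all such $p$ recovers $L(f^*,s)L(f^*,u)/\zeta(s+u)$, up to finitely many local corrections at the bad primes. The factor $1/\zeta(s+u)$ is holomorphic and bounded in our region since $\Re(s+u) \geq 2\sigma > 1$. At primes $p\mid Q$ with no other bad divisibility, the condition $v_p(n)=0$ collapses $Z_p$ to $L_p(f^*,s)$, and the ratio $Z_p/(L_pL_p) = 1/L_p(f^*,u)$ is bounded by a convergent product.

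For primes $p\mid d_0$ (so $p\nmid M$, $\alpha := v_p(d_1) > 0$), the Ramanujan-type estimate $|\lambda_{f^*}(p^k)| \ll (k+1) p^{k\theta}$ yields $|Z_p| \ll_\sigma p^{\alpha\theta}$, whose product contributes $d_0^{\theta+\varepsilon}$. For primes $p\mid d_M$ (so $p\mid M$, $\alpha>0$), the newform identity $\lambda_{f^*}(p^k) = \lambda_{f^*}(p)^k$ combined with \eqref{eq:smallHecke} gives $Z_p = \lambda_{f^*}(p)^\alpha L_p(f^*,s)L_p(f^*,u)$, producing the factor $d_M^{-1/2}$ in the ratio. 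For primes $p\mid\ell_1$ (so $\beta := v_p(\ell_1) > 0$, $\alpha=0$), a direct estimate of the local sum under the constraint $a+b \geq \beta$ gives $|Z_p| \ll_\sigma (\beta+1)p^{-\beta\sigma}$, and when combined with the prefactor $\ell^{1/2} = (d,\ell)^{1/2}\ell_1^{1/2}$, the local $\ell_1$-contribution becomes $\ell_1^{1/2-\sigma+\varepsilon} \ll 1$ for $\sigma \geq 1/2 + \varepsilon$, leaving the clean factor $(d,\ell)^{1/2}$ that appears in \eqref{eq:Zbound}.

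Multiplying all the local bounds together with $|\nu_{f^*}(1)|\,\ell^{1/2}$ yields \eqref{eq:Zbound}, and analyticity in $\Re(s),\Re(u)\geq\sigma>1/2$ follows since the only potentially non-holomorphic factors in our region are the $L(f^*,\cdot)$ (entire) and $1/\zeta(s+u)$ (holomorphic for $\Re(s+u) > 1$), while the finite bad-prime corrections are rational functions of $p^{-s},p^{-u}$ whose local poles lie outside the relevant strip. I expect the principal technical nuisance to be the bookkeeping at primes dividing several of $M,Q,d_1,\ell_1$ simultaneously---these cases are not fundamentally harder, but they require careful enumeration to ensure the bounds combine multiplicatively. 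The only nontrivial local estimate is the one at $p\mid\ell_1$, where the restriction $a+b\geq\beta$ prevents a closed-form expression; however, the elementary geometric-series bound above suffices once paired with the $\ell_1^{1/2}$ prefactor.
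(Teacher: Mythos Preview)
Your proof is correct and follows essentially the same route as the paper. The paper's argument first uses \eqref{eq:fvertellFourierCoefficientFormula} to reduce to $\ell^{1/2}\nu_{f^*}(1)\sum \lambda_{f^*}(d_1 mn/\ell_1)/(m^s n^u)$, then factors out $\lambda_{f^*}(d_M)$ via complete multiplicativity at primes dividing $M$ (yielding the $d_M^{-1/2}$), and finally declares the bound on the remaining sum $\sum \lambda_{f^*}(d_0 mn/\ell_1)/(m^s n^u) \ll_\sigma \ell_1^{-1/2} d_0^{\theta}(dNQ)^{\varepsilon}|L(f^*,s)L(f^*,u)|$ to be ``an exercise with the Hecke relations.'' Your Euler-product analysis is precisely that exercise carried out in detail; the local estimates you give at $p\mid d_0$, $p\mid d_M$, and $p\mid \ell_1$ are exactly what is needed, and your observation that $\ell^{1/2}=(d,\ell)^{1/2}\ell_1^{1/2}$ with the $\ell_1^{1/2}$ absorbed by the $p^{-\beta\sigma}$ saving matches the paper's $\ell_1^{-1/2}$ factor.
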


\begin{proof}
Firstly, from \eqref{eq:fvertellFourierCoefficientFormula}, we have
\begin{equation*}
Z_{d,\ell, Q}(s,u) =
\sum_{\substack{m,n \geq 1 \\ (n,Q) = 1}} \frac{ \ell^{1/2} \nu_{f^*}(dmn/\ell)}{m^{s } n^{u}}.
\end{equation*}    
We have $\nu_{f^*}(dmn/\ell) = \nu_{f^*}(d_1 mn/\ell_1) =  \lambda_{f^*}(d_M) \nu_{f^*}(d_0 mn/\ell_1)$, and so
\begin{equation*}
 Z_{d,\ell,Q}(s,u) = \ell^{1/2} \lambda_{f^*}(d_M) \nu_{f^*}(1) 
 \sum_{\substack{m,n \geq 1 \\ (n,Q) = 1}} \frac{\lambda_{f^*}(d_0 mn/\ell_1)}{m^{s } n^{u}}.
\end{equation*}
Using \eqref{eq:smallHecke}, and complete multiplicativity of Hecke eigenvalues for primes dividing $M$, we get that $|\lambda_{f^*}(d_M) | \leq d_M^{-1/2}.$

By an exercise with the Hecke relations (somewhat in the spirit of \eqref{eq:LsquaredWithHecke}), one may derive the analytic continuation and the bound
\begin{equation*}
 \sum_{\substack{m,n \geq 1 \\ (n,Q) = 1}} \frac{\lambda_{f^*}(d_0 mn/\ell_1)}{m^s n^u} \ll_{\sigma} \frac{(d N Q)^{\varepsilon}}{\ell_1^{1/2}} d_0^{\theta} |L(f^*,s) L(f^*, u)|,
\end{equation*}
where recall that $\Re(s), \Re(w) \geq \sigma > 1/2$.
\end{proof}

Now we proceed to prove Lemma \ref{lemma:spectralmomentbound}.
Recall the definition
\begin{equation}
\label{eq:ZjuFormulaReminder}
 Z_j({\bf u}) = \Big(\sum_{\substack{(c_0',fg_0 m_1') = 1\\ \delta_4 c_0'\equiv 0 \shortmod{q k_1 k_1^*}}} 
 \sum_{p_1 \geq 1} \frac{\nu_{\infty,j}(p_1 \delta_4 c_0')}{p_1^{u_1} c_0'^{u_4}} \Big)
\Big(\sum_{\substack{ p_2 , p_3   \geq 0 }}  \frac{  \nu_{\frac{1}{\delta_5},j}(p_2  p_3 ) }{ p_2^{u_2} p_3^{u_3} } \Big).
\end{equation}

We begin by decomposing into newforms.  
By the choice of basis from Section \ref{subsec:oldForms}, we have
\begin{multline}
\label{eq:ZjuFormulaReminder2}
 \sum_{|t_j| \leq T} |Z_j({\bf u})|
 \ll
 \sum_{AB = \delta_2 \delta_5} \sum_{\substack{f^* \text{ new, level $B$} \\ |t_{f^*}| \leq T}}
 \\
 \times
  \sum_{\substack{\ell | A \\ \ell' | A}}
  \Big|
  \Big(\sum_{\substack{(c_0',fg_0 m_1') = 1\\ \delta_4 c_0'\equiv 0 \shortmod{q k_1 k_1^*}}} 
 \sum_{p_1 \geq 1} \frac{\nu_{\infty,f^*\vert_{\ell}}(p_1 \delta_4 c_0')}{p_1^{u_1} c_0'^{u_4}} \Big)
\Big(\sum_{\substack{ p_2 , p_3   \geq 0 }}  \frac{  \nu_{\frac{1}{\delta_5},f^*\vert_{\ell'}}(p_2  p_3 ) }{ p_2^{u_2} p_3^{u_3} } \Big)
\Big|.
\end{multline}
By Lemmas \ref{lemma:FourierExpansionDifferentCusps} and \ref{lemma:oldformDirichletSeriesBound}, 
we see that the sum over $p_2,p_3$ has analytic continuation to the desired region, and satisfies
\begin{equation*}
 \sum_{p_2, p_3 \geq 1} \frac{\nu_{\frac{1}{\delta_5},f^* \vert_{\ell'}}(p_2 p_3)}{p_2^\alpha p_3^\beta} \ll |\nu_{f^*}(1)| |L(f^*, \alpha) L(f^*, \beta)|,
\end{equation*}
uniformly in $\ell'$ and $\delta_5$.

The first product in \eqref{eq:ZjuFormulaReminder2} is a bit trickier.  
Recall from \eqref{eq:qandk1areCoprime} that $(q,k_1) = 1$ and from \eqref{eq:k1dividesdelta2} that $k_1 | \delta_2$.  
We apply \eqref{eq:Zbound}, with 
$d = D  \frac{\delta_4}{(D,\delta_4)}$, and $D = q k_1 k_1^*$.
This gives
\begin{equation*}
 \sum_{\substack{(c_0',fg_0 m_1') = 1\\ \delta_4 c_0'\equiv 0 \shortmod{qk_1 k_1^*} }} 
 \sum_{p_1 \geq 1} \frac{\nu_{f^* \vert_{\ell}}(p_1 \delta_4 c_0')}{p_1^{u_1} c_0'^{u_4}} 
 \ll 
 q^{\varepsilon} 
|\nu_{f^*}(1)|
 \Big(\frac{(D, \delta_4)}{D}\Big)^{1/2}
 \frac{(d, \ell)^{1/2} d_0^{\theta}  }{d_B^{1/2}}
   |L(f^*, u_1) L(f^*, u_4)|,
\end{equation*}
where $\frac{d}{\ell} = \frac{d_1}{\ell_1}$ is in lowest terms, and then we factor $d_1 = d_B d_0$ where $d_B | B^{\infty}$, and $(d_0, B) = 1$.

Using $|\nu_{f^*}(1)|^2 = (AB)^{-1} q^{o(1)}$ via \eqref{eq:nuNormalizationMaass}, 
we then have
\begin{multline*}
 \sum_{\substack{t_j \text{ level $\delta_2 \delta_5$} \\ |t_j| \leq T} } |Z_j({\bf u})| \ll q^{\varepsilon} 
 \Big(\frac{(D, \delta_4)}{D}\Big)^{1/2} \sum_{AB = \delta_2 \delta_5} \sum_{\ell | A} 
 (d, \ell)^{1/2} d_B^{-1/2} d_0^{\theta}
 \\
 \times
 \frac{1}{AB} 
 \sum_{\substack{f^* \text{ new, level $B$} \\ |t_{f^*}| \leq T}}
 |L(f^*, u_1) L(f^*, u_2) L(f^*, u_3) L(f^*, u_4)|.
\end{multline*}
A standard argument with the spectral large sieve (e.g., see \cite[Theorem 3.4]{motohashi1997spectral} for the level $1$ case) implies
\begin{equation}
\label{eq:ZjboundIntermediateStep}
\sum_{\substack{t_j \text{ level $\delta_2 \delta_5$} \\ |t_j| \leq T} } |Z_j({\bf u})| \ll_{{\bf u}, \varepsilon} T^{2+\varepsilon} q^{\varepsilon} \Big(\frac{(D, \delta_4)}{D}\Big)^{1/2} 
\sum_{AB = \delta_2 \delta_5} \frac{1}{A} 
  \sum_{\ell | A} (d, \ell)^{1/2}
  d_B^{-1/2} d_0^{\theta}
,
\end{equation}
where here and throughout the implied dependence on ${\bf u}$ is at most polynomial.

At this point, the proof of Lemma \ref{lemma:oldformDirichletSeriesBound} has reduced to elementary estimates with arithmetic functions.
The factorization $d = (d,\ell) d_B d_0$ is dependent on $\ell$, so it takes some work to estimate the sum over $\ell$.   
To this end, we also factor $d$ in an alternative way, independently of $\ell$, by $d = d' f g h$ where $(d', AB) = 1$, $f|A^{\infty}$, $(f,B) = 1$, $g|B^{\infty}$, $(g,A) = 1$, and $h|A^{\infty}$ and $h|B^{\infty}$.  
 Note that $(f,g) = (f,h) = (g,h) = 1$.
 Then writing the old variables in terms of these, we have 
 \begin{equation*}
  d = 
  \underbrace{(f,\ell) (h,\ell)}_{(d, \ell)} 
  \underbrace{g \frac{h}{(h,\ell)}}_{d_B}
  \underbrace{d' \frac{f}{(f,\ell)}}_{d_{0}}.
 \end{equation*}
  Inserting this into \eqref{eq:ZjboundIntermediateStep}, 
  and summing over $\ell | L$, we obtain
\begin{equation*}
 \sum_{\substack{t_j \text{ level $\delta_2 \delta_5$} \\ |t_j| \leq T} } |Z_j({\bf u})| 
 \ll_{{\bf u}, \varepsilon}
 T^{2 + \vep} q^{\varepsilon} d'^{\theta} \Big(\frac{(D, \delta_4)}{D}\Big)^{1/2}
  \sum_{AB = \delta_2 \delta_5} \frac{1}{A} 
 \frac{(f,A)^{1/2-\theta} f^{\theta} (h,A) }{\sqrt{gh}}.
\end{equation*}
Writing $d'f = \frac{D}{(D,\delta_4)} \delta_4 \frac{1}{gh}$, we obtain
\begin{equation*}
\sum_{\substack{t_j \text{level $\delta_2 \delta_5$} \\ |t_j| \leq T} } |Z_j({\bf u})| \ll_{{\bf u}, \varepsilon} T^{2 + \vep} q^{\varepsilon}\Big(\frac{(D, \delta_4)}{D}\Big)^{1/2-\theta} 
 \delta_4^{\theta} 
 \sum_{AB = \delta_2 \delta_5} \frac{1}{A} 
 \frac{(f,A)^{1/2-\theta}  (h,A)  }{(gh)^{1/2+\theta}}.
\end{equation*}

Now let us pause to gauge our progress towards \eqref{eq:ZjuBound}.  The inner sum over $A$ easily gives $O(q^{\varepsilon})$, and so if we trivially bound this part, and use $D = q k_1 k_1^*$ (also recall $(k_1, \delta_4) = 1$ from \eqref{eq:delta4iscoprimetok1}), we get the bound
\begin{equation*}
  \sum_{t_j} |Z_j({\bf u})| \ll_{{\bf u}, \varepsilon} q^{\theta-\half} \frac{1}{(k_1 k_1^*)^{1/2}} (q, \delta_4)^{1/2-\theta} 
 (k_1 k_1^*)^{\theta} \delta_4^{\theta} T^2 q^{\varepsilon} 
 .
\end{equation*}
so we need to save 
$\delta_4^{\frac12 + \theta} (k_1 k_1^*)^{\theta}$, 
which will come from better-estimating the sum over $A$.

This inner sum over $A,B$ may be factored into prime powers.  For the primes $p \nmid k_1 \delta_4$, all we use is that the local factor is $\leq 1$ (leading to a $O(q^{\varepsilon})$ bound from these primes, by the observation in the previous paragraph).  
Recall $\delta_4 | \delta_2 \delta_5$ since $\delta_5 = [\delta_3,\delta_4]$ (see \eqref{eq:delta4anddelta5definitions}), and $\delta_4 | d$, so $\delta_4 | fgh$.
For $p | \delta_4$,  say $p^{\nu} || \delta_4$, 
$p^f || f$, and so on for $g$, $h$, $A$, and $B$, by an abuse of notation. Now the variables in the exponents are written additively.  Since $\delta_4 | fgh$, in additive notation we have $f + g + h \geq \nu$.  Also, $A+B \geq \nu$, since $\delta_4 | \delta_2 \delta_5$.

In the case $B=0$, then $g = h = 0$ and  the local factor is
\begin{equation*}
 \frac{1}{p^{A}} (p^f,  p^A)^{\half-\theta} \leq p^{\nu(-\half - \theta)},
\end{equation*}
which is the local factor of $\delta_4^{-\half - \theta}$.  In the case $A=0$, the local factor is  no larger than the local factor of $\delta_4^{-\half-\theta}$ as can be seen easily.  Finally, if $A, B > 0$, then $f=g = 0$, $h \geq \nu$, and so the local factor equals
\begin{equation*}
 \frac{1}{p^A} \min(p^h, p^A) p^{-h(\half + \theta)} \leq p^{-\nu (\half + \theta)}.
\end{equation*}

Now suppose that $p|k_1$.  Then since $k_1 | \delta_2$ and $k_1 | d$ (whence $k_1 | fgh$) essentially the same proof used for $\delta_4$ shows the local factors for primes dividing $k_1$ give $O(k_1^{-1/2})$.

In summary, this shows
\begin{equation*}
 \sum_{\substack{t_j \text{ level $\delta_2 \delta_5$} \\ |t_j| \leq T} } |Z_j({\bf u})| \ll_{{\bf u}, \varepsilon} T^2 q^{\varepsilon}\Big(\frac{(q, \delta_4)}{q k_1 k_1^*}\Big)^{1/2-\theta} (k_1 \delta_4)^{-1/2} 
 .
\end{equation*}
Using $(k_1 k_1^*)^{\theta} \leq k_1^{2 \theta} \leq  k_1^{1/2}$  gives \eqref{eq:ZjuBound}, as desired.


\subsection{Continuous spectrum}

In this section we prove Lemma \ref{lemma:ZtcBound}.

\subsubsection{Fourier coefficients of Eisenstein Series}
\label{section:EisensteinFourierExpansion}
Here we quote an explicit evaluation of the Fourier coefficients of $\phi_{\a \c}(n,u)$, where $\a = 1/r$ is an Atkin-Lehner cusp, and $\c$ is an arbitrary cusp of $\Gamma_0(N)$. The proof appears in \cite{DoubleCosetPaper}.
Let $\c = v/f$ where $f|N$, $(v,f) = 1$, and $v$ runs modulo $(f,N/f)$; by \cite[Proposition 2.6]{IwaniecClassicalBook}, every cusp $\c$ may be represented in this form.  
Let
\begin{equation}
\label{eq:fN''formula}
 N' = \frac{N}{f}, \qquad N'' = \frac{N'}{(f,N')},
\end{equation}
and write
\begin{equation*}
 f_r = (f,r), \quad f_s = (f,s), \quad
 r = f_r r', \quad s= f_s s'.
\end{equation*}
In addition, write
\begin{equation*}
 f_r = f_r' f_0, \quad \text{where} \quad (f_0, r') = 1, \quad \text{and} \quad f_r' | (r')^{\infty},
\end{equation*}
and similarly 
\begin{equation*}
 s' = s_f' s_0, \quad \text{where} \quad (s_0, f_s) = 1, \quad \text{and} \quad s_f' | f_s^{\infty}.
\end{equation*}

\begin{myprop}
\label{prop:EisensteinFourierCoefficientFormulaDirichletCharacters}
 Let notation be as above.  Then $\phi_{\a \c}(n,u) = 0$ unless
 \begin{equation*}
n=\frac{f_r'}{(f_r',r')} \frac{s_f'}{(s_f',f_s)} k,
\end{equation*}
for some integer $k$.  In this case, write $k = k_r k_s \ell$, where 
\begin{equation*}
 k_r = (k, (f_r', r')), \qquad k_s = (k, (s_f', f_s)).
\end{equation*}
 Then
 \begin{multline}
\label{eq:phicuspsEisensteinFormulaWithDirichletCharacters}
\phi_{\a \c}(n,u) = \frac{S(\ell,0;s_0 f_0)}{(N'' s f_r^2)^u} 
\frac{f_r'}{(f_r', r')} 
\frac{s_f'}{(s_f', f_s)} 
\sum_{\substack{d|k \\ (d, f_s r') = 1}} d^{1-2u}
\frac{1}{\varphi(\frac{(f_r', r')}{k_r})}  
\frac{1}{\varphi(\frac{(s_f', f_s)}{k_s})}
\\
\sum_{\chi \shortmod{\frac{(f_r', r')}{k_r}}} 
  \sum_{\psi \shortmod{\frac{(s_f', f_s)}{k_s}}} 
  \frac{(\chi \psi)(\ell)
  \tau(\overline{\chi}) \tau(\overline{\psi})}{L(2u, \overline{\chi^2 \psi^2} \chi_0)}
  (\chi \psi)(\overline{s_0 f_0  d^2} v) 
  \chi(-k_s\overline{(s_f',f_s)})
  \psi(k_r \overline{(f_r',r')})
,
\end{multline}
where $\chi_0$ is the principal character modulo $f_s r'$.
\end{myprop}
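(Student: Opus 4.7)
The plan is to expand $\phi_{\a \c}(n,u)$ directly from its definition in \eqref{eq:phiDefinition} by parametrizing the double coset space $\Gamma_\infty \backslash \sigma_\c^{-1} \Gamma \sigma_\a / \Gamma_\infty$ explicitly, and then converting the resulting arithmetic sum into the claimed expression via Dirichlet character orthogonality. First I would fix explicit scaling matrices: $\sigma_\a$ taken as the Atkin-Lehner operator $W_s$ (as in Section \ref{subsec:AtkinLehnerScaling}), and $\sigma_\c = \left(\begin{smallmatrix} v & * \\ f & * \end{smallmatrix}\right) \left(\begin{smallmatrix} \sqrt{N''} & 0 \\ 0 & 1/\sqrt{N''}\end{smallmatrix}\right)$ with $N''$ as in \eqref{eq:fN''formula}, chosen so that the coset $\Gamma_\infty \sigma_\c$ is unambiguous. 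Writing a general $\gamma_0 \in \Gamma_0(N)$ as $\left(\begin{smallmatrix} a & b \\ c & d \end{smallmatrix}\right)$ with $N | c$, the lower row of $\sigma_\c^{-1} \gamma_0 \sigma_\a$ can be computed in closed form, and one reads off which pairs $(\gamma, \delta)$ actually occur in $\mathcal{C}_{\c \a}$ after quotienting by $\Gamma_\infty$ on both sides.

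In the second step, I would analyze the divisibility conditions that emerge. The lower-left entry factors through $(f,r)$ and $(f,s)$, which forces the allowed moduli $\gamma$ to take the form $\gamma = s_0 f_0 \cdot d \cdot k_r k_s / ((f_r',r')(s_f',f_s))^{1/2}$-type expressions, and the factorization $r = f_r r'$, $s = f_s s'$, $f_r = f_r' f_0$, $s' = s_f' s_0$ is precisely the unique decomposition in which $f_0, s_0$ are coprime to the pieces that touch $f$ while $f_r', s_f'$ are supported entirely on those pieces. This is where the divisibility condition on $n$ (namely $n = \frac{f_r'}{(f_r',r')} \frac{s_f'}{(s_f',f_s)} k$) appears: the sum over $\delta$ with the congruence condition modulo $\gamma$ and the coprimality $(\delta,\gamma) = 1$ vanishes unless $n$ has the requisite divisibility, in which case $\delta$ sums over reduced residues in an arithmetic progression.

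Third, for fixed $\gamma$ I would evaluate the sum over $\delta$. The $\delta$-sum decomposes along the coprime factorization of $\gamma$: the part modulo $s_0 f_0$ collapses to the Kloosterman sum $S(\ell,0;s_0 f_0)$ appearing in the statement, while the parts modulo $(f_r',r')/k_r$ and $(s_f',f_s)/k_s$ are restricted Ramanujan-type sums with additional coprimality constraints relative to $v$. I would then use Dirichlet character orthogonality on each of these last two moduli to write each such restricted sum as an average over Dirichlet characters $\chi, \psi$ of Gauss sums $\tau(\overline{\chi}), \tau(\overline{\psi})$. Once the $\delta$-sum is so expressed, the sum over $d | k$ with $(d, f_s r') = 1$ can be performed as a Dirichlet series in $u$, and the resulting Euler product is recognized as $L(2u, \overline{\chi^2 \psi^2}\chi_0)^{-1}$ (up to finitely many local factors absorbed into the coprimality constraint), explaining the $L$-function denominator and the principal character $\chi_0$ modulo $f_s r'$.

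The main obstacle is a careful bookkeeping of the interlocking factorizations $f_r = f_r' f_0$ and $s' = s_f' s_0$ together with the coprimality conditions imposed at each stage: one must verify that after the Atkin-Lehner scaling $W_s$, the pieces of $f$ sharing prime support with $r$ versus $s$ truly separate in the manner claimed, that the various $(\cdot,\cdot)$ gcds interact consistently, and that the $k_r, k_s, \ell$ decomposition of $k$ exhausts the residue structure without double-counting. A secondary technical point is tracking the argument $\overline{s_0 f_0 d^2} v$ of the character at the end, which encodes how the unit group acts on the double coset representatives under the Atkin-Lehner involution; this is where the choice of $\sigma_\a = W_s$ (rather than a generic scaling matrix) pays off, since it makes the action on $v$ explicit. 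Once this combinatorial unpacking is done, the identity \eqref{eq:phicuspsEisensteinFormulaWithDirichletCharacters} assembles out of the pieces, as shown in detail in \cite{DoubleCosetPaper}.
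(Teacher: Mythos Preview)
The paper does not prove this proposition at all; the line immediately preceding the statement reads ``The proof appears in \cite{DoubleCosetPaper}.'' So there is no in-paper argument to compare your sketch against. Your outline (explicitly parametrize the double coset space $\Gamma_\infty \backslash \sigma_\c^{-1}\Gamma\sigma_\a / \Gamma_\infty$ using the Atkin--Lehner scaling matrix, separate the modulus along the coprime factorization induced by $f_r = f_r' f_0$ and $s' = s_f' s_0$, evaluate the resulting residue sums as Ramanujan sums and Gauss sums via character orthogonality, and then sum the geometric tail to produce $L(2u,\overline{\chi^2\psi^2}\chi_0)^{-1}$) is the natural route and is presumably what the companion paper does. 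Since you yourself close by citing \cite{DoubleCosetPaper} for the details, your proposal is in agreement with the paper's treatment.
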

For later calculations, it will be useful to notice that the condition $(\frac{k}{k_r}, \frac{(f_r', r')}{k_r}) = 1 $ (and similarly in the $s$-aspect) is automatic from the presence of $(\chi \psi)(\ell)$.
Moreover, we have $(f_r', r') = (f_r, \frac{r}{f_r})$, and similarly $(s_f', f_s) = (f_s, \frac{s}{f_s})$, and so $(f_r', r')(s_f', f_s) = (f, \frac{N}{f})$.  Note the condition $d|k$ together with $(d, f_s r') = 1$ implies $d | \ell$.

\subsubsection{Proof of Lemma \ref{lemma:ZtcBound}}
By \eqref{eq:rhodef} and \eqref{eq:nuDefinition},
we have
\begin{equation*}
\nu_{\a  \b}(n,u) = \alpha(u) \phi_{\a \b}(n,u) |n|^{u-\frac12},
\quad \text{where} \quad
	\alpha(u) = 
	\frac{2 \pi^{u+\frac12}}{\Gamma(u) (\cos(\pi(u-\frac12)))^{1/2}}.
\end{equation*}
Note that $|\alpha(1/2+it)| = 2 \sqrt{\pi}$ is independent of $t \in \mr$. 
Define
 \begin{equation*}
  Z_1 = Z_1(\alpha,\beta) = \sum_{m,n \geq 1} \frac{\nu_{1/r, \c}(mn,1/2+it)}{m^{\alpha} n^{\beta}},
 \end{equation*}
and
\begin{equation*}
Z_2 = Z_2(\alpha,\beta) = \sum_{\substack{m,n \geq 1 \\ \delta m \equiv 0 \shortmod{D} \\ (m,Q) = 1}} \frac{\nu_{\infty, \c}(\delta mn,1/2+it)}{m^{\alpha} n^{\beta}},
\end{equation*}
where we assume the level is $N$ as in Section \ref{section:EisensteinFourierExpansion}. This meaning of $N$ is valid only within the confines of this subsection,  and hence should not be confused with the dyadic variable $N$ in the rest of the article. For Lemma \ref{lemma:ZtcBound}, we shall need $N = \delta_2 \delta_5$, $Q = fg_0 m_1'$, $D = q k_1 k_1^*$, $\delta = \delta_4$, and $r=\delta_5$, but we do need make these specifications yet.
 With this notation, we have $Z_{\c, t}({\bf u}) = Z_1(u_2, u_3) Z_2(u_1, u_4)$.
 
The plan of the proof is to first derive bounds on $Z_1^0, Z_1^*, Z_2^0, Z_2^*$ individually, and follow this with estimates for the sums over $\c$.

Using Proposition \ref{prop:EisensteinFourierCoefficientFormulaDirichletCharacters}
, we have (with $u = \tfrac12 + it$)
\begin{multline*}
 Z_1 = 
 \frac{f_r'}{(f_r', r')} 
\frac{s_f'}{(s_f', f_s)} \frac{\alpha(u)}{(N'' s f_r^2)^u}
\sum_{k_r | (f_r', r')} \sum_{k_s | (s_f', f_s)}
\frac{1}{\varphi(\frac{(f_r', r')}{k_r})}  
\frac{1}{\varphi(\frac{(s_f', f_s)}{k_s})}
\sum_{\chi \shortmod{\frac{(f_r', r')}{k_r}}} 
  \sum_{\psi \shortmod{\frac{(s_f', f_s)}{k_s}}} 
\\
\frac{
  \tau(\overline{\chi}) \tau(\overline{\psi})}{L(2u, \overline{\chi^2 \psi^2} \chi_0)}
  (\chi \psi)(\overline{s_0 f_0 } w') 
  \chi(-k_s\overline{(s_f',f_s)})
  \psi(k_r \overline{(f_r',r')})
\\
\sum_{(d,f_s r') =1} (\overline{\chi \psi})(d^2)  d^{1-2u}
 \sum_{\substack{m , n \geq 1 \\ (*)}}  \frac{(\chi \psi)(\ell) S(\ell,0;s_0 f_0)}{m^{\alpha} n^{\beta}} (mn)^{u-\frac12}.
\end{multline*}
Here $(*)$ stands for the following conditions: We have $mn = \frac{f_r'}{(f_r',r')} \frac{s_f'}{(s_f',f_s)} k_r k_s \ell$, and also $\ell \equiv 0 \pmod{d}$.
Write $Z_1 = Z_1^{0} + Z_1^*$ where $Z_1^{0}$ corresponds to the part with both $\chi$ and $\psi$ principal.

By a trivial bound, we have for $\text{Re}(\alpha), \Re(\beta) \geq  \sigma > 1$,
\begin{equation}
\label{eq:Z10individualBound}
 |Z_1^{0}| \ll_\sigma
  \frac{N^{\varepsilon}}{f_r \sqrt{s N''} (f_r',r') (s_f',f_s)} 
\frac{1}{|\zeta(1+2it)|} \ll_\sigma \frac{(N(1+|t|))^{\varepsilon} }{(f, \frac{N}{f}) f_r \sqrt{s N''}} .
\end{equation}
Meanwhile, we have 
\begin{multline*}
 |Z_1^*| \ll 
 \frac{f_r'}{(f_r', r')} 
\frac{s_f'}{(s_f', f_s)} \frac{N^{\varepsilon}}{f_r \sqrt{sN''}}
\sum_{k_r | (f_r', r')} \sum_{k_s | (s_f', f_s)}
\frac{1}{\varphi(\frac{(f_r', r')}{k_r})}  
\frac{1}{\varphi(\frac{(s_f', f_s)}{k_s})}
\\
\sumprime_{\chi \shortmod{\frac{(f_r', r')}{k_r}}} 
  \sumprime_{\psi \shortmod{\frac{(s_f', f_s)}{k_s}}} 
\frac{
  |\tau(\overline{\chi}) \tau(\overline{\psi})|}{|L(1+2it, \overline{\chi^2 \psi^2} \chi_0)|} |Y_1|,
\end{multline*}
where the notation $\sum{}'$ means the principal character is omitted, and with
\begin{equation*}
 V = \frac{f_r'}{(f_r',r')} \frac{s_f'}{(s_f',f_s)} k_r k_s,
\end{equation*}
we let
\begin{equation*}
 Y_1 = \sum_{(d,f_s r') =1} (\overline{\chi \psi})(d^2)  d^{1-2u}
 \sum_{\substack{mn \equiv 0 \shortmod{dV}}}  \frac{(\chi \psi)(\frac{mn}{V}) S(\frac{mn}{V},0;s_0 f_0)}{m^{\alpha} n^{\beta}} (mn)^{u-\frac12}.
\end{equation*}
Moreover, the analytic continuation of $Z_1^*$ will be inherited from that of $Y_1$.

Similarly to \eqref{eq:tau3sum} and \eqref{eq:tau3sumLater}, 
one can show the formal identity
\begin{equation}
\label{eq:divisorCongruenceSumCombinatorialDecomposition}
 \sum_{mn \equiv 0 \shortmod{D}} f(m,n) = \sum_{CAB = D} \mu(C) \sum_{m,n} f(CAm, CBn).
\end{equation}
Applying this to $Y_1$ with $D = dV$, we obtain
\begin{equation*}
 Y_1 = V^{u-\frac12}
 \sum_{(d,f_s r') =1} (\overline{\chi \psi})(d)  d^{\frac12-u}
 \sum_{CAB = dV} \frac{\mu(C) (\chi \psi)(C) C^{u-\frac12}}{A^{\alpha} B^{\beta} C^{\alpha+\beta}}
 \sum_{\substack{m,n}}  \frac{(\chi \psi)(mn) S(Cdmn,0;s_0 f_0)}{m^{\alpha} n^{\beta} (mn)^{\frac12-u}}.
\end{equation*}
One can readily observe that $(d,V) = 1$, and $(V, s_0 f_0) = 1$.  In the factorization $CAB = dV$, one may split each of $C,A,B$ uniquely into its part dividing $d$ and dividing $V$ separately, and thereby factor the sum.  In this way, we obtain (with $\text{Re}(u) = \frac12$)
\begin{equation*}
 |Y_1| \ll \frac{N^{\varepsilon}}{V^{\sigma}} \Big|
 \sum_{(CAB,f_s r') =1}  
  \frac{\mu(C) (\overline{\chi \psi})(AB) }{A^{\alpha} B^{\beta} C^{\alpha+\beta} (AB)^{u-\frac12}}
 \sum_{\substack{m,n}}  \frac{(\chi \psi)(mn) S(C^2 ABmn,0;s_0 f_0)}{m^{\alpha} n^{\beta} (mn)^{\frac12-u}} \Big|.
\end{equation*}
Next we open the Ramanujan sum as a divisor sum, giving
\begin{equation*}
 |Y_1| \ll \frac{N^{\varepsilon}}{V^{\sigma}}
 \sum_{g| s_0 f_0} g
 \Big|
 \sum_{(CAB,f_s r') =1}  \sum_{\substack{m,n \\ C^2 AB mn \equiv 0 \shortmod{g}}}
  \frac{\mu(C) (\overline{\chi \psi})(AB) }{A^{\alpha} B^{\beta} C^{\alpha+\beta} (AB)^{u-\frac12}}
   \frac{(\chi \psi)(mn)}{m^{\alpha} n^{\beta} (mn)^{\frac12-u}} \Big|.
\end{equation*}
Now it is not difficult to see the analytic continuation of $Y_1$ to 
$\text{Re}(\alpha, \beta) \geq \sigma = 1/2 + \varepsilon$, and therein we obtain the bound 
\begin{equation*}
 |Y_1| \ll N^{\varepsilon}
 \Big(\frac{(f_r',r')}{f_r'} \frac{(s_f',f_s)}{s_f'}\Big)^{1/2} \frac{(s_0 f_0)^{1/2}}{(k_r k_s)^{1/2}}
  \\
  |L(\alpha-it, \chi \psi) L(\alpha+it, \overline{\chi \psi})
  L(\beta-it, \chi \psi) L(\beta+it, \overline{\chi \psi})|.
\end{equation*}

%
%
We recall the well-known bound on the fourth moment of Dirichlet $L$-functions, namely
\begin{equation}
\label{eq:dirichletfourthmoment}
 \sum_{\chi \shortmod{N}} |L(\sigma + it, \chi)|^4 \ll_{\sigma,\varepsilon} (1+|t|)^{1+\varepsilon} N^{1+\varepsilon},
\end{equation}
for $\sigma \geq 1/2$.  Moreover, we have the hybrid version
\begin{equation}
\label{eq:dirichletfourthmomenthybrid}
\int_{|t| \leq T} \sum_{\chi \shortmod{N}} |L(\sigma + it, \chi)|^4 \ll_{\sigma,\varepsilon} (1+T)^{1+\varepsilon} N^{1+\varepsilon}.
\end{equation}
For references, consult \cite[Chapter 10]{Montgomery} or \cite[Theorem 2]{Gallagher}; the statements in these sources do not precisely claim these bounds, but the methods can be easily modified.
In addition, we have
\begin{equation*}
\frac{1}{|L(1+2it, \chi)|} \ll (1 + |t|)^{\varepsilon} N^{\varepsilon},
\end{equation*}
for which see \cite[Theorem 11.4]{MontgomeryVaughan}.
Thus applying H\"older's inequality, and the bound \eqref{eq:dirichletfourthmoment} we obtain
\begin{equation}
\label{eq:Z1*bound}
 |Z_1^*| \ll_{\alpha,\beta, \varepsilon}
  \frac{N^{\varepsilon} (1+|t|)^{1+\varepsilon}}{f_r \sqrt{sN''}}
(f_r
s')^{1/2} 
  = \frac{N^{\varepsilon}(1+|t|)^{1+\varepsilon}}{\sqrt{f N''}}
  .
\end{equation}
Using \eqref{eq:dirichletfourthmomenthybrid}, 
we alternatively have
\begin{equation}
\label{eq:Z1*boundIntegrated}
\int_{|t| \leq T} |Z_1^*| \ll_{\alpha,\beta,\varepsilon} \frac{q^{\varepsilon} T^{1+\varepsilon}}{\sqrt{f N''}} 
.
\end{equation}

Next we study $Z_2$, which is more difficult than $Z_1$.  We have $\a = \infty \sim 1/N$, so $r=N$ and $s=1$, and also $f_r = f$, $r' = N'$.  First we perform a minor simplification by writing the congruence $\delta m \equiv 0 \pmod{D}$ as $m \equiv 0 \pmod{\frac{D}{(\delta, D)}}$ (so necessarily $(Q, \frac{D}{(\delta, D)}) = 1$ otherwise the sum is empty).  Then we have
\begin{equation*}
Z_2 = \Big(\frac{(D,\delta)}{D}\Big)^{\alpha} 
Y_2, \quad \text{where} \quad Y_2 :=
\sum_{\substack{m, n \geq 1 \\ (m, Q) = 1}} \frac{\nu_{\infty, \c}(a mn , \frac12 + it) }{m^{\alpha} n^{\beta}},
\end{equation*}
and where
\begin{equation*}
a = \frac{\delta D}{(\delta, D)} = [\delta, D].  
\end{equation*}

Applying Proposition \ref{prop:EisensteinFourierCoefficientFormulaDirichletCharacters}, we obtain
\begin{multline*}
 Y_2 = 
 \frac{f_N'}{(f_N', N')} 
 \frac{\alpha(u)}{(N'' f^2)^u}
\sum_{k_N | (f_N', N')} 
\frac{1}{\varphi(\frac{(f_N', N')}{k_N})}  
\sum_{\chi \shortmod{\frac{(f_N', N')}{k_N}}} 
\frac{
  \tau(\overline{\chi}) \chi (-\overline{ f_0 } w') 
  }{L(2u, \overline{\chi^2 } \chi_0)}
\\
\sum_{(d,N') =1} d^{1-2u} \overline{\chi}(d^2)
 \sum_{\substack{ (*)}}  \frac{\chi(\ell) S(\ell,0; f_0)}{m^{\alpha} n^{\beta}} (a m n)^{u-\frac12}.
\end{multline*}
Now the symbol $(*)$ stands for the following conditions.  We have $amn = \frac{f_N'}{(f_N', N')} k_N \ell$,  $amn \equiv 0 \pmod{d}$, and also the condition $(m,Q) = 1$.  From the condition $(d, N') = 1$, we equivalently obtain that $\ell \equiv 0 \pmod{d}$.
Define
\begin{equation}
\label{eq:bdef}
b = \frac{f_N'}{(f_N', N')} k_N = \frac{f_N'}{\frac{(f_N', N')}{k_N}},
\end{equation}
and write 
\begin{equation}
\label{eq:abdefs}
a = (a,b) a', \qquad \text{and} \qquad b = (a,b) b',
\end{equation}
so that the condition $b | amn$ is equivalent to $b' | mn$.  Then $\ell = a' \frac{mn}{b'}$, and we have
\begin{equation*}
 Y_2 = 
 \frac{f_N'}{(f_N', N')} 
 \frac{\alpha(u) a^{u-\frac12}}{(N'' f^2)^u}
\sum_{k_N | (f_N', N')} 
\frac{1}{\varphi(\frac{(f_N', N')}{k_N})}  
\sum_{\chi \shortmod{\frac{(f_N', N')}{k_N}}} 
\frac{
  \tau(\overline{\chi}) \chi (\overline{ f_0  } w' a') 
  }{L(2u, \overline{\chi^2 } \chi_0)} X_2,
\end{equation*}
where
\begin{equation*}
 X_2 := \sum_{(d,N') =1} d^{1-2u} \overline{\chi}(d^2)
 \sum_{\substack{mn \equiv 0 \shortmod{b'} \\ amn \equiv 0 \shortmod{d} }}  \frac{\chi(\frac{mn}{b'}) S(a' \frac{mn}{b'},0; f_0)}{m^{\alpha} n^{\beta}} (m n)^{u-\frac12}.
\end{equation*}
Here $(b', f_0 d) = 1$, since $b | (N')^{\infty}$.  Opening the Ramanujan sum, we have
\begin{equation*}
 X_2 = 
 \sum_{e | f_0} e \mu(f_0/e)
 \sum_{(d,N') =1} d^{1-2u} \overline{\chi}(d^2)
 \sum_{\substack{mn \equiv 0 \shortmod{b'} \\ amn \equiv 0 \shortmod{d} \\ a' mn \equiv 0 \shortmod{e} }}  \frac{\chi(\frac{mn}{b'}) }{m^{\alpha} n^{\beta}} (m n)^{u-\frac12}.
\end{equation*}
Let $g = (a,d)$, so that
\begin{equation*}
 X_2 = 
 \sum_{\substack{g | a \\ (g,N') = 1}} g^{1-2u} \overline{\chi}(g^2)
 \sum_{e | f_0} e \mu(f_0/e)
 \sum_{(d,N'\frac{a}{g}) =1} d^{1-2u} \overline{\chi}(d^2)
 \sum_{\substack{mn \equiv 0 \shortmod{b'} \\ mn \equiv 0 \shortmod{d} \\ a' mn \equiv 0 \shortmod{e} }}  \frac{\chi(\frac{mn}{b'}) }{m^{\alpha} n^{\beta}} (m n)^{u-\frac12}.
\end{equation*}

Applying \eqref{eq:divisorCongruenceSumCombinatorialDecomposition} to $X_2$ with the modulus $d$, we obtain
\begin{multline*}
 X_2 = 
 \sum_{\substack{g | a \\ (g,N') = 1}} g^{1-2u} \overline{\chi}(g^2)
 \sum_{e | f_0} e \mu(f_0/e)
 \sum_{(d,N'\frac{a}{g}) =1} d^{1-2u} \overline{\chi}(d^2)
 \sum_{CAB = d} \mu(C)
 \\
 \sum_{\substack{C d mn \equiv 0 \shortmod{b'} \\  a' C d mn \equiv 0 \shortmod{e} }}  \frac{\chi(\frac{C dmn}{b'}) }{(CAm)^{\alpha} (CBn)^{\beta}} (Cdmn)^{u-\frac12}.
\end{multline*}
Since $C | d$, $(d,N') = 1$, and $b' | (N')^{\infty}$, the congruence $Cdmn \equiv 0 \pmod{b'}$ is equivalent to $mn \equiv 0 \pmod{b'}$.  We can then write $X_2$ as
\begin{multline*}
 X_2 = 
 \sum_{\substack{g | a \\ (g,N') = 1}} g^{1-2u} \overline{\chi}(g^2)
 \sum_{e | f_0} e \mu(f_0/e)
 \sum_{(CAB,N'\frac{a}{g}) =1} 
 \frac{\mu(C) \overline{\chi}( AB) ( AB)^{\frac12 - u}}{C^{\alpha+\beta} A^{\alpha} B^{\beta}}
 \\
 \sum_{\substack{mn \equiv 0 \shortmod{b'} \\  a' C^2 AB mn \equiv 0 \shortmod{e} }}  \frac{\chi(\frac{mn}{b'}) }{m^{\alpha} n^{\beta}} (  mn)^{u-\frac12}.
\end{multline*}
Next we use \eqref{eq:divisorCongruenceSumCombinatorialDecomposition} again, this time on the congruence modulo $b'$, giving
\begin{multline*}
 X_2 = (b')^{u-\frac12}
 \sum_{\substack{g | a \\ (g,N') = 1}} g^{1-2u} \overline{\chi}(g^2)
 \sum_{e | f_0} e \mu(f_0/e)
 \sum_{xyz = b'} \frac{\mu(x) \chi(x) x^{u-\frac12}}{x^{\alpha+\beta} y^{\alpha} z^{\beta}}
 \\
 \sum_{(CAB,N'\frac{a}{g}) =1} 
 \frac{\mu(C) \overline{\chi}(AB) (AB)^{\frac12 -u }}{C^{\alpha+\beta} A^{\alpha} B^{\beta}}
 \sum_{\substack{ a' C^2 AB x b' mn \equiv 0 \shortmod{e} }}  \frac{\chi(mn) }{m^{\alpha} n^{\beta}} (mn)^{u-\frac12}.
\end{multline*}

Similarly to the $Z_1$ case, one can see the meromorphic continuation with a pole only in the case $\chi$ is principal.  In addition, we have the bound (with $u = \frac12 + it$)
\begin{equation*}
|X_2| \ll_{\sigma}  \frac{N^{\varepsilon}}{(b')^{\sigma}}  |L(\alpha - it, \chi) L(\alpha + it, \overline{\chi})
L(\beta -it, \chi) L(\beta + it, \overline{\chi})|
 \sum_{e|f_0} e \Big(\frac{(a',e)}{e}\Big)^{\sigma}.
\end{equation*}
Note
\begin{equation*}
 \sum_{e|f_0} e \Big(\frac{(a',e)}{e}\Big)^{\sigma} \ll N^{\varepsilon} (a',f_0)^{\sigma} (1 + f_0)^{1-\sigma}.
\end{equation*}


Now write $Z_2 = Z_2^0 + Z_2^*$ where $Z_2^0$ corresponds to the principal characters, and similarly write $Y_2 = Y_2^0 + Y_2^*$.
For $\text{Re}(\alpha, \beta) \geq \sigma  = 1+ \varepsilon$, we have trivially
\begin{equation*}
 X_2 \ll N^{\varepsilon} \frac{(a', f_0)}{b'} 
 = N^{\varepsilon} \frac{(a,f_0 b)}{b},
\end{equation*}
recalling \eqref{eq:abdefs}.
Thus
\begin{equation*}
Y_2^0 \ll \frac{f_N'}{(f_N', N')} \frac{N^{\varepsilon}}{\sqrt{N'' f^2}} \sum_{k_N | (f_N', N')} \frac{k_N}{(f_N', N')} \frac{(f_N', N')(a,f_0 b)}{f_N' k_N}.
\end{equation*}
Here $b$ is a function of $k_N$ (cf. \eqref{eq:bdef}), and is maximal when $k_N= (f_N', N')$ in which case $b = f_N'$.
Recalling $f_0 f_N' = f$, which implies $(a,f_0b) \leq (a, f)$, and using $(f_N', N') = (f, \frac{N}{f})$,
in all we obtain
\begin{equation*}
Y_2^0 \ll  \frac{N^{\varepsilon} (a,f)}{(f, \frac{N}{f}) \sqrt{N'' f^2}} \frac{1}{|\zeta(1+2it)|}.
\end{equation*}
Finally, we obtain
\begin{equation}
\label{eq:Z20bound}
Z_2^0 \ll \frac{N^{\varepsilon} (1+|t|)^{\varepsilon} }{(f, \frac{N}{f}) f \sqrt{N''}} ([\delta, D],f) \frac{(\delta,D)}{D}.
\end{equation}
Using \eqref{eq:dirichletfourthmoment} and H\"older's inequality, we have with $\sigma = 1/2 + \varepsilon$
\begin{equation*}
 |Y_2^*| \ll_{\alpha,\beta,\varepsilon}
 \frac{f_N'}{(f_N', N')} 
 \frac{N^{\varepsilon} (1+|t|)^{1+\varepsilon} }{(N'' f^2)^{1/2}}
\sum_{k_N | (f_N', N')} 
\Big(\frac{(f_N', N')}{k_N} \Big)^{1/2}
\frac{ (a',f_0)^{1/2} f_0^{1/2}}{b'^{1/2}}.
\end{equation*}
Note the simplification
\begin{equation*}
 \frac{f_N'}{(f_N', N')} \Big(\frac{(f_N', N')}{k_N} \Big)^{1/2}
 \frac{ f_0^{1/2}}{b'^{1/2}} = f^{1/2} \frac{(a,b)^{1/2}}{k_N}.
\end{equation*}
We have $(a', f_0) = (a, f_0)$ since $(b,f_0) = 1$, and $\frac{(a,b)^{1/2}}{k_N} \leq (a, f_N')^{1/2}$.
Thus
\begin{equation*}
 |Y_2^*| \ll_{\alpha,\beta,\varepsilon} 
 \frac{N^{\varepsilon} (1+|t|)^{1+\varepsilon} }{(N'' f)^{1/2}} 
 (a,f)^{1/2}.
\end{equation*}
Hence
\begin{equation}
\label{eq:Z2*bound}
 |Z_2^*| \ll_{\alpha,\beta,\varepsilon} \Big(\frac{(\delta, D)}{ D}\Big)^{1/2} 
  \frac{N^{\varepsilon} (1+|t|)^{1+\varepsilon} }{(N'' f)^{1/2}} 
 \Big(\frac{\delta D}{(\delta, D)},f \Big)^{1/2}.
\end{equation}
We also have in a similar way to the $Z_1^*$ case
\begin{equation}
\label{eq:Z2*boundIntegrated}
\int_{|t| \leq T} |Z_2^*| dt \ll_{\alpha,\beta,\varepsilon} \Big(\frac{(\delta, D)}{ D}\Big)^{1/2} 
  \frac{N^{\varepsilon} T^{1+\varepsilon} }{(N'' f)^{1/2}} 
 \Big(\frac{\delta D}{(\delta, D)},f \Big)^{1/2}.
\end{equation}

Now we proceed to prove the desired bounds in Lemma \ref{lemma:ZtcBound}.  The cusps may be parameterized by $\frac{v}{f}$ with $f|N$ and $v \pmod{(f, \frac{N}{f})}$ (with $v$ coprime to $f$).  
During the course of the proof, it will be helpful to refer to the following divisor-sum bounds:
\begin{equation}
\label{eq:dsumBound0}
\sum_{d|N} \frac{(d, \frac{N}{d})^2}{N} \ll N^{\varepsilon} \frac{\flooroot(N)}{\sqrt{N}}, \quad
\text{and}
\quad
\sum_{d | N} \frac{(d,\frac{N}{d})^2 d^{1/2}}{N} \ll N^{\varepsilon} \frac{\flooroot(N)^{3/2}}{\sqrt{N}}, 
\end{equation}
which in turn can be checked prime-by-prime by multiplicativity. If desired, the former inequality could be bounded by $N^{\varepsilon} \frac{\flooroot(N)^2}{N}$.
Along the same lines, we note 
\begin{equation}
\label{eq:dsumBound3/4}
\sum_{d | N} \frac{(d,\frac{N}{d})^2 }{d^{1/2} N}  \ll N^{\varepsilon} \frac{\flooroot(N)^{3/2}}{N},
\end{equation}
as well as
\begin{equation}
 \label{eq:dsumBound1/2}
 \sum_{d|N} \frac{(d, \frac{N}{d}) d^{1/2}}{N} \ll N^{\varepsilon} \frac{\sqrt{\flooroot(N)}}{\sqrt{N}},
 \quad
 \text{and}
 \quad
 \sum_{d|N} \frac{(d, \frac{N}{d})}{N} \ll N^{\varepsilon} \frac{\flooroot(N)}{N}.
\end{equation}

Combining \eqref{eq:Z1*boundIntegrated} and \eqref{eq:Z2*bound}, we obtain
\begin{equation*}
\int_{|t| \leq T} \sum_{\c} |Z_1^* Z_2^*| dt \ll_{{\bf u}, \varepsilon} 
N^{\varepsilon} T^{2+\varepsilon} \Big(\frac{(\delta, D)}{ D}\Big)^{1/2} \sum_{f|N} \frac{(f,\frac{N}{f})^2}{N} \Big(\frac{\delta D}{(\delta, D)},f\Big)^{1/2}.
\end{equation*}

We have $N = \delta_2 \delta_5$, $D = q k_1 k_1^*$, $\delta = \delta_4|\delta_5$.  Then with these substitutions, and recalling $(\delta_4, k_1) = 1$ we obtain
\begin{equation*}
\int_{|t| \leq T} \sum_{\c} |Z_1^* Z_2^*| dt \ll_{{\bf u}, \varepsilon}  q^{\varepsilon} T^{2+\varepsilon} \Big(\frac{(\delta_4, q)}{ q k_1 k_1^*}\Big)^{1/2}
\sum_{f| \delta_2 \delta_5}  \frac{(f,\frac{\delta_2 \delta_5}{f})^2}{\delta_2 \delta_5} \Big(k_1 k_1^* \frac{\delta_4 q}{(\delta_4, q )},f\Big)^{1/2}.
\end{equation*}
By multiplicativity, and using $(\delta_2, \delta_5) = 1 = (\delta_2, q)$, and $k_1 | \delta_2$, the inner sum over $f$ factors and simplifies as
\begin{equation*}
 \sum_{f| \delta_2 \delta_5}  \frac{(f,\frac{\delta_2 \delta_5}{f})^2}{\delta_2 \delta_5} \Big(k_1 k_1^* \frac{\delta_4 q}{(\delta_4, q )},f\Big)^{1/2} 
 = 
 \Big(\sum_{g| \delta_2 }  \frac{(g,\frac{\delta_2 }{g})^2 (g, k_1 k_1^*)^{1/2}}{\delta_2 }  \Big) \Big( \sum_{h | \delta_5} \frac{(h,\frac{\delta_5}{h})^2}{\delta_5} \Big(\frac{\delta_4 q}{(\delta_4, q )},h \Big)^{1/2} \Big).
\end{equation*}
Using $(g, k_1 k_1^*)^{1/2} \leq \sqrt{k_1 k_1^*}$, $(\frac{\delta_4 q}{(\delta_4, q)}, h)^{1/2} \leq h^{1/2}$, and \eqref{eq:dsumBound0},  we obtain
\begin{equation}
\label{eq:Z1*Z2*nearlyendofproof}
 \int_{|t| \leq T} \sum_{\c} |Z_1^* Z_2^*| dt \ll_{{\bf u}, \varepsilon}  
N^{\varepsilon} T^{2+\varepsilon} \Big(\frac{(\delta_4, q)}{ q k_1 k_1^*}\Big)^{1/2} \frac{\flooroot(\delta_2) \flooroot(\delta_5)^{3/2}}{\sqrt{\delta_2 \delta_5}} \sqrt{k_1 k_1^*}.
\end{equation}
Recall that $\delta_5 = [\delta_3, \delta_4]$, and that $\delta_4$ is square-free.  Therefore, $[\delta_3, \delta_4] = \delta_3 \frac{\delta_4}{(\delta_3, \delta_4)}$ where $(\delta_3, \frac{\delta_4}{(\delta_3, \delta_4)}) = 1$.  Since $\flooroot$ is multiplicative, and trivial on square-free numbers, this implies
\begin{equation}
\label{eq:floorootdelta5equalsdelta3}
 \flooroot(\delta_5) = \flooroot(\delta_3) \flooroot\Big(\frac{\delta_4}{(\delta_3, \delta_4)}\Big) = \flooroot(\delta_3),
\end{equation}
a simplification we will make repeatedly below.  Applying \eqref{eq:floorootdelta5equalsdelta3} to \eqref{eq:Z1*Z2*nearlyendofproof} gives \eqref{eq:Z1*Z2*bound}.

Combining \eqref{eq:Z10individualBound} and \eqref{eq:Z20bound} and specializing the variables, we obtain
\begin{equation*}
 \sum_{\mathfrak{c}} |Z_1^0 Z_2^0| \ll (q(1+|t|))^{\varepsilon}
 \Big(\frac{(\delta_4, q)}{qk_1 k_1^*}\Big)
 \sum_{\mathfrak{c}} \frac{(f,[\delta_4,qk_1 k_1^*])}{(f, \frac{N}{f})^2 N'' s^{1/2} f_r f}.
\end{equation*}
Using \eqref{eq:fN''formula} and summing over $u \pmod{(f, \frac{N}{f})}$, we obtain
\begin{equation*}
 \sum_{\mathfrak{c}} \frac{(f,[\delta_4,qk_1 k_1^*])}{(f, \frac{N}{f})^2 N'' s^{1/2} f_r f} 
 \leq 
 \frac{1}{N} \sum_{f|N} \frac{ (f, [\delta_4, qk_1 k_1^*])}{s^{1/2} f_r}.
\end{equation*}
Using the coprimality conditions, we have $[\delta_4, qk_1 k_1^*] = k_1 k_1^*[\delta_4, q]$, which in turn divides $k_1 k_1^*[\delta_5, q]$.  Now $k_1$ is in the $s$-part of the level (since $s=\delta_2$ and $k_1 | \delta_2$), while we also have $(q, \delta_2) = 1$ by \eqref{eq:delta2Coprime}  so that $[\delta_4, q]$ is coprime to $s$, and hence $f_s$.  Now we may see that the sum above factors as
\begin{equation*}
 \frac{1}{N} \sum_{f|N} \frac{ (f, [\delta_4, qk_1 k_1^*])}{s^{1/2} f_r} \leq  
 \Big(\sum_{f_r|r} \frac{  (f_r, [\delta_4, q])}{r f_r}
 \Big)
\Big( 
  \sum_{f_s | s} \frac{(f_s, k_1 k_1^*) }{s^{3/2}}
\Big)  
  .
\end{equation*}
Using $(f_r, [\delta_4, q]) \leq f_r$, $(f_s, k_1 k_1^*) \leq f_s^{1/2} \sqrt{k_1 k_1^*} \leq \sqrt{s k_1 k_1^*}$
 leads immediately to \eqref{eq:Z10Z20bound}.

Finally, we examine the two cross terms.
From \eqref{eq:Z1*boundIntegrated} and \eqref{eq:Z20bound}, and using simplifications as in the above cases,
we have
\begin{equation*}
 \int_{|t| \leq T} \sum_{\c} |Z_1^* Z_2^0| dt \ll_{{\bf u}, \varepsilon}
 \Big(\frac{(\delta_4, q)}{qk_1 k_1^*} \Big) 
 q^{\varepsilon} T^{1+\varepsilon}
\sum_{f|\delta_2 \delta_5} \frac{(f,\frac{\delta_2 \delta_5}{f})}{\delta_2 \delta_5 \sqrt{f}} (f, k_1 k_1^*[\delta_4, q]).
\end{equation*}
The inner sum factors as
\begin{equation*}
 \Big(\sum_{g| \delta_2} \frac{(g,\frac{\delta_2 }{g})}{\delta_2  \sqrt{g}} (g, k_1 k_1^*) \Big)
 \Big(\sum_{h | \delta_5} \frac{(h,\frac{\delta_5}{h})}{\delta_5 \sqrt{h}} (h, [\delta_4, q]) \Big). 
\end{equation*}
Using $(g, k_1 k_1^*) \leq \sqrt{g k_1 k_1^*}$, $(h, [\delta_4, q]) \leq h$, and \eqref{eq:dsumBound1/2},
then
\begin{equation*}
 \sum_{f|\delta_2 \delta_5} \frac{(f,\frac{\delta_2 \delta_5}{f})}{\delta_2 \delta_5 \sqrt{f}} (f, k_1 k_1^*[\delta_4, q])
 \ll q^{\varepsilon} \frac{\sqrt{\flooroot(\delta_5)} \flooroot(\delta_2)}{\delta_2 \sqrt{\delta_5}} \sqrt{k_1 k_1^*}.
\end{equation*}
Using \eqref{eq:floorootdelta5equalsdelta3}, 
\eqref{eq:Z1*Z20bound} follows.

Similarly, combining \eqref{eq:Z10individualBound} and \eqref{eq:Z2*bound},  we have
\begin{equation*}
 \int_{|t| \leq T} \sum_{\c} |Z_1^0 Z_2^*| dt \ll_{{\bf u}, \varepsilon} 
 \Big(\frac{(\delta_4, q)}{qk_1 k_1^*} \Big)^{1/2} 
 q^{\varepsilon} T^{1+\varepsilon}
\sum_{f|N}
\frac{f (f, \frac{N}{f})(k_1 k_1^* \frac{\delta_4 q}{(\delta_4, q)}, f)^{1/2}}{f_r N \sqrt{sf}}.
\end{equation*}
Following the discussion of the $Z_1^0 Z_2^0$ case (recall $r = \delta_5$ and $s= \delta_2$), the inner sum over $f$ factors as
\begin{equation*}
\Big( 
\sum_{g|\delta_2}
\frac{ (g, \frac{\delta_2}{g})(k_1 k_1^*, g)^{1/2} g^{1/2}}{ \delta_2^{3/2}}
\Big)
\Big(
\sum_{h|\delta_5}
\frac{ (h, \frac{\delta_5}{h})(\frac{\delta_4 q}{(\delta_4, q)}, h)^{1/2}}{  \delta_5 \sqrt{h}}
\Big) 
\end{equation*}
Using \eqref{eq:dsumBound1/2}  and \eqref{eq:floorootdelta5equalsdelta3}, we have that this is
\begin{equation*}
\ll  q^{\varepsilon} \sqrt{k_1 k_1^*} \frac{\sqrt{\flooroot(\delta_2)} \flooroot(\delta_3)}{\delta_2 \delta_5}.
\end{equation*}
Hence we obtain \eqref{eq:Z10Z2*bound}, (in fact, with a slightly better power of $\flooroot(\delta_2)$).

  \section{Zero terms}
  \label{section:ZeroTerms}
  \subsection{Overview}
 In this section, we analyze the contribution to $\mathcal{S}$ from the terms with some $p_i = 0$.  Recall the original expression for $\mathcal{S}'''$ from \eqref{eq:S'''prePoisson}, and Proposition \ref{thm:TriplePoisson}.
 
 Let us write 
\begin{equation*} 
 \mathcal{S}''' = (\sum_{P} \mathcal{T}_{P} ) + \mathcal{S}_{0,0,0}''' + \mathcal{S}_{0,0}''' + \mathcal{S}_{0}''',
\end{equation*} 
where $\sum_{P} \mathcal{T}_{P}$ corresponds to the terms with all $p_i \neq 0$, $\mathcal{S}_{0,0,0}'''$ corresponds to the terms with all three $p_i = 0$, $\mathcal{S}_{0,0}'''$ corresponds to the terms with exactly two $p_i=0$, and finally $\mathcal{S}_{0}'''$ has the terms with exactly one $p_i = 0$.  Recall that the sum over $P$ is the sum over the dyadic partitions of unity.  The partition is mainly beneficial for estimating $\mathcal{T}_P$, and we usually wish to remove the partition as much as possible when estimating the zero terms.

Applying the additional summations that led from $\mathcal{S}$ to $\mathcal{S}'''$ (see \eqref{eq:S''intoS'''}, \eqref{eq:S''andS'}, \eqref{eq:SintoS'} or alternatively \eqref{eq:Sprimes} and \eqref{eq:S0Definition} below), we likewise define 
$\mathcal{S}_{0,0,0}$, $\mathcal{S}_{0,0}$, and $\mathcal{S}_0$.  
Implicit in the definition of these quantities is that prior to the definition of $\mathcal{S}'''$, we applied a partition of unity.  When it is necessary to emphasize this, we may write $\mathcal{S}_{0,0,0}^{(T)}$ where $T$ stands for the tuple $(M_1, M_2, C, N_1, N_2, N_3, K)$, and likewise for $\mathcal{S}_{0,0}$ and $\mathcal{S}_{0}$.  Then $\sum_T \mathcal{S}_{0,0,0}^{(T)}$ represents the quantity after re-assembling the partition.  For ease of notation we may on occasion drop the superscript $T$.

 Our primary goal is to show
 \begin{mytheo}
 \label{thm:S000bound}
  With an appropriate choice of $G_i(s)$ in the approximate functional equations, we have
  \begin{equation*}
  \sum_T \mathcal{S}_{0,0,0}^{(T)} \ll q^{\varepsilon}.
  \end{equation*}
 \end{mytheo}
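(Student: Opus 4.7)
The plan is to express $\mathcal{S}_{0,0,0}$ as an explicit multi-dimensional Mellin--Barnes contour integral of Riemann zeta factors and gamma quotients, and then shift contours so that the apparent poles producing ``main terms'' of size $q^{5/4}$, $q$, $q^{3/4}$, etc.\ are killed by the prescribed vanishing $G_i(1/2)=0$. A key preliminary step, emphasized in Section \ref{section:sketch}, is that the dyadic partition of unity must be re-assembled before the contour shifts, since with a compactly supported bump factor one cannot exploit the analytic continuation of the weight functions.

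First I would compute the arithmetic factor at the zero frequency: $A(0,0,0;\delta_2\overline{c_0};k_0') = k_0'\sum_{d|k_0'}d\,\varphi(k_0'/d)$, which is independent of $\delta_2$ and $c_0$, and the integral $B(0,0,0;k_0')$, which by \eqref{eq:Bdef} and \eqref{eq:Jdef} reduces to a triple integral with no oscillation from the $p_i$. Running all the combinatorial decompositions backward (the sums over $g_0$, $\delta_4$, $r_i$, $e_j$, $f$, $d$, $c_2$, $k_1$, $a$ introduced in Sections \ref{section:reciprocity}--\ref{section:ContinuousSpectrumBound}) and re-summing, I would express $\sum_T \mathcal{S}_{0,0,0}^{(T)}$ in terms of the original variables $m_1,m_2,n_1,n_2,n_3,c$ of \eqref{eq:Sdef}, at which point the partition of unity disappears and the intact weight functions $V(m_1m_2/q)$ and $F_a(n_i/\sqrt{q})$ reappear. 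The resulting expression is a contour integral of the schematic form
\[
\frac{1}{(2\pi i)^{4}}\int\!\!\!\int \mathcal{Z}(s,u_1,u_2,u_3)\,q^{E(s,u_1,u_2,u_3)}\,\frac{G_2(s)\,G_1(u_1)G_1(u_2)G_1(u_3)}{s\,u_1u_2u_3}\,\Gamma\text{-factors}\,\,ds\,d\mathbf{u},
\]
where $\mathcal{Z}$ is a product of Riemann zeta functions at shifted arguments (including some $\zeta_q(1+\cdot)$ factors coming from the $c_0,k_0'$ sums) and from the Bessel transform via Proposition \ref{prop:MellinOfKuznetsovBessel}.

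Next I would move the $s$- and $u_i$-contours to the right of the origin. The poles encountered along the way come from (i) the $\zeta$-factors when some shifted argument hits $1$, and (ii) a pole at $s=1/2$ (or $u_i=1/2$) that, bounded trivially, would produce the ``fake'' main term of size up to $q^{5/4}$, exactly as in the toy model $S=G(1/2)q^{1/4}+O(q^\varepsilon)$ from the sketch. The residue at each such pole carries a factor $G_1(1/2)$ or $G_2(1/2)$ and therefore vanishes. On the new contour, with $\Re(s),\Re(u_i)>0$ chosen appropriately, the integrand is estimated by convexity bounds on $\zeta$, Stirling for the gamma quotients, and the rapid vertical decay of $G_i$; this yields $O(q^\varepsilon)$. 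The $m_1$-variable plays the role of the variable whose square-root cancellation comes from the absence of a pole at $1/2$ rather than from Poisson, exactly as foreshadowed in Section \ref{section:sketch}.

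The main obstacle will be the bookkeeping in step one: identifying every factor of the final Mellin integrand with a source (arithmetic sum, Bessel transform, or approximate-functional-equation weight) so that all apparent poles between the initial and final contours are accounted for, and each is seen to carry a factor of $G_1(1/2)$ or $G_2(1/2)$. This requires tracking the coprimality conditions $(c_0,k_0')=(\delta_4,\delta_2 g_0 m_1')=1$, etc., through the Euler products, and checking that the $\zeta_q$-factors combine with the local factors at $q$ to produce the correct gauge. Once this matching is completed, the bound $\sum_T\mathcal{S}_{0,0,0}^{(T)}\ll q^\varepsilon$ follows.
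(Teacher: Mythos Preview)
Your high-level strategy is right: re-assemble the dyadic partition, express the zero-frequency term as a Mellin--Barnes integral of zeta and gamma factors, and shift contours so that the dangerous poles are killed by $G_i(1/2)=0$. But several of the structural details in your plan do not match what actually happens, and at least two of them would cause real trouble.

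First, you propose to ``run the combinatorial decompositions backward'' and return to the original variables $m_1,m_2,n_1,n_2,n_3,c$. The $n_i$-variables are gone: the $p_i=0$ term is the constant Fourier mode of the Poisson summation, i.e.\ the \emph{integral} of $J$ over $(t_1,t_2,t_3)\in(\mathbb{R}^+)^3$, not a sum. The paper does not undo the combinatorics; it works directly with $\mathcal{S}'''_{0,0,0}$ in the variables $c_0,k_0',g_0,k_1,c_2,\dots$, evaluates the $c_0$- and $k_0'$-sums as a product of shifted zeta functions (see \eqref{eq:Zevaluation}), and only then shifts contours. Your proposed 4-variable integral in $(s,u_1,u_2,u_3)$ is not the right shape: because $J_*$ depends on $t_1,t_2,t_3$ only through the product $y=t_1t_2t_3$, the triple integral of $F_a(t_1/\sqrt q,t_2/\sqrt q,t_3/\sqrt q)$ against $J_*$ collapses to a \emph{single} Mellin variable $s$, producing the factor $G(s)^3\zeta_q(1+2s)^3 q^{3s/2}a^{-3s}$ (Proposition~\ref{prop:BTripleMellin}). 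This cubic power of $G$ is essential: after picking up the residue at $s=1-w-u$ (from $\zeta(s+w+u)$), the subsequent shift in $w$ must pass an apparent \emph{double} pole of $\zeta(2-2w-2u)^2$ at $w+u=1/2$, and it is the \emph{triple} zero of $G(1-w-u)^3$ that cancels it. With three separate $u_i$ you would only get first-order vanishing from each $G_1(u_i)$ and the cancellation mechanism you describe does not obviously work.

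Second, ``the partition of unity disappears'' is too quick. The $N_i$-partition is supported on $N_i\ge 2^{-1/2}$, so $\sum_{N_i}\omega(t_i/N_i)=W(t_1,t_2,t_3)$ equals $1$ only for $t_i\ge 1$; near the origin $1-W\neq 0$. The paper handles this by bounding the difference $\mathcal{S}_\Delta$ (Lemma~\ref{lemma:BDeltabound}) separately, using the trivial bound $J_{\kappa-1}(x)\ll x$; this is exactly the step where the $q^{1/4}$ saving you need from ``one $t_i\le 1$'' is made rigorous. You should include this boundary estimate explicitly.

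Finally, the poles are not at $s=1/2$ or $u_i=1/2$ as you state. The actual contour moves are: $w\to 4\varepsilon$, then $s\to 3\varepsilon$ crossing $s=1-w-u$ (pole of $\zeta(s+w+u)$); in the resulting residue one then moves $w\to 1-\varepsilon$, and the would-be pole at $w+u=1/2$ is cancelled by $G(1-w-u)^3$. The remaining integral and the residue are each $O(q^\varepsilon)$ after summing the outer variables $a,c_2,d,k_1,m_1',r_i,e_j,g_0$ trivially. Getting the closed form for the Bessel/Hankel transform $\mathcal{H}(s,w,u,\kappa)$ (via \eqref{eq:H2integralwithJ}) is a necessary intermediate computation you should not skip.
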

We will show the same bounds for $\mathcal{S}_{0,0}$ and $\mathcal{S}_{0}$. 
We make extensive use of the assumption 
        \begin{equation} \label{GhalfIsZero}
                G_i(1/2) = 0.
        \end{equation}
 
Next we specialize Lemma \ref{lem:Pnonzero} to degenerate $p_i$.
        \begin{mylemma}\label{lem:Pzero}
        Let $(\alpha,k) = 1$. If some $p_i$ is zero, then $A(p_1,p_2,p_3;\alpha;k)$ does not depend on $\alpha$. Furthermore, we have
        \begin{equation} \label{eq:AsumZeros}
               \frac{1}{k} A(0,0,0;\alpha;k) =  (\operatorname{Id} * \phi) (k),
        \end{equation}              
        where $*$ indicates Dirichlet convolution, $\operatorname{Id}(n) = n$, and $\phi$ is Euler's $\phi$-function. 

\end{mylemma}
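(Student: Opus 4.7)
The plan is straightforward: exploit the trilinear structure of the exponent in \eqref{eq:Adef} by summing out one variable whenever an associated $p_i$ vanishes.

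First I would tackle the $\alpha$-independence claim. By the symmetry of $A$ in the three pairs $(x_i, p_i)$, it suffices to consider $p_3 = 0$. Performing the sum over $x_3 \pmod{k}$ inside \eqref{eq:Adef} gives
\begin{equation*}
\sum_{x_3 \bmod k} e\!\left(\frac{x_1 x_2 x_3 \alpha}{k}\right) = k \cdot \delta_{k \mid x_1 x_2 \alpha}.
\end{equation*}
Since $(\alpha, k) = 1$, the congruence $k \mid x_1 x_2 \alpha$ collapses to $k \mid x_1 x_2$, which no longer involves $\alpha$. Hence
\begin{equation*}
A(p_1, p_2, 0; \alpha; k) = k \sum_{\substack{x_1, x_2 \bmod k \\ k \mid x_1 x_2}} e\!\left(\frac{-x_1 p_1 - x_2 p_2}{k}\right),
\end{equation*}
which is manifestly independent of $\alpha$. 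The cases $p_1 = 0$ or $p_2 = 0$ are handled identically.

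For the evaluation \eqref{eq:AsumZeros}, I would specialize to $p_1 = p_2 = 0$ in the formula above, so that
\begin{equation*}
\frac{1}{k} A(0,0,0;\alpha;k) = \#\{(x_1, x_2) \bmod k : k \mid x_1 x_2 \}.
\end{equation*}
I would then stratify this count by $d := (x_1, k)$. Writing $x_1 = d y$ with $y$ running through the $\varphi(k/d)$ reduced residues modulo $k/d$, the divisibility $k \mid x_1 x_2$ becomes $(k/d) \mid x_2$, which has exactly $d$ solutions modulo $k$. Summing,
\begin{equation*}
\#\{(x_1, x_2) \bmod k : k \mid x_1 x_2 \} = \sum_{d \mid k} \varphi(k/d) \cdot d = (\operatorname{Id} * \phi)(k),
\end{equation*}
as claimed.

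There is really no obstacle here; the only point to take care of is to invoke the hypothesis $(\alpha, k) = 1$ at exactly the right moment to strip off the $\alpha$ from the congruence, and to organize the final count by the gcd of $x_1$ with $k$ rather than attempting a direct multiplicative evaluation.
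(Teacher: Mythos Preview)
Your proof is correct and follows exactly the natural route the paper has in mind; indeed, the paper omits the proof entirely, remarking only that ``this is a short calculation.'' Your argument---summing out the variable paired with the vanishing $p_i$, using $(\alpha,k)=1$ to drop $\alpha$ from the resulting congruence, and then stratifying the count $\#\{(x_1,x_2)\bmod k:k\mid x_1x_2\}$ by $d=(x_1,k)$---is precisely that short calculation.
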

This is a short calculation, so we omit the proof.

 
        \subsection{The case with all $p_i = 0$. }
                The case with $p_1 = p_2 = p_3 = 0$ is surprisingly delicate.
        It turns out that trivially bounding these terms leads only to  $\mathcal{S}_{0,0,0} \ll q^{\qtr+\varepsilon}$.
                Therefore, we have to make use of some further cancellation.  

For notational simplicity, let us write $A(0,0,0;*;k_0') = A(k_0')$ (it is independent of $*$)  and $B(0,0,0;k_0') = B(k_0')$.
We then have
        \begin{equation} \label{eq:STriplePrimeDefinition}
                \mathcal{S}'''_{0,0,0} = 
                \sum_{\substack{(c_0,g_0m_1') = 1\\ c_0\equiv 0 \shortmod{q k_1 k_1^*}} }
                \frac{1}{c_0} 
                \sum_{\substack{(k_0',\delta_2 c_0)=1 \\ k_0' \equiv 0 \shortmod{\frac{c_2}{(g_0,c_2)}}  }}
                \frac{1}{k_0'^3} A(k_0') B(k_0').  
        \end{equation}
The function $B$ depends on a choice of a partition of unity in the $n_1,n_2,n_3$ variables (as well as $c$, $k$, $m_1, m_2$, but here the focus is on the $n_i$). 
Our next goal is to recombine the partitions of 
        unity in the dyadic numbers $N_1,N_2,N_3$.

        \subsection{Recombining partitions of unity}
        \label{section:RecombiningPartitionsOfUnity}
                We write the weight function explicitly. Say 
                \[
                	J(n_1,n_2,n_3) = J_*(n_1n_2n_3,a,m_1',c_0, g_0k_0', c_2,k_1) 
                              F_a\left(\frac{n_1}{\sqrt{q}} , \frac{n_2}{\sqrt{q}}, \frac{n_3}{\sqrt{q}}\right) \frac{ \omega\left(\frac{n_1}{N_1},\frac{n_2}{N_2} , \frac{n_3}{N_3}\right)}{\sqrt{n_1n_2n_3}} 
                ,
                \] 
                where $\omega(t_1, t_2, t_3) = \omega(t_1) \omega(t_2) \omega(t_3)$ (recall $\omega$ gave rise to the dyadic partition of unity), and
                \[
                	J_*(n, \cdot) = e\left(-\frac{nam_1}{ck}\right) \int_{0}^\infty
	e\Big(\frac{-k t}{c}\Big) J_{\kappa-1}\left(\frac{4\pi \sqrt{m_1na t}}{c}\right)  w_{M_2}(t, \cdot ) \frac{\d t}{\sqrt{t}}.
                \]
               Here the weight function $w_{M_2}$ is a piece of a dyadic partition of unity in the $m_1$, $m_2$, $c$, and $k$ variables times $V(m_1 m_2/q)$. The function $J_*$ has $n = n_1 n_2n_3$ appearing as a block.

		By \eqref{eq:Bdef}, we have (introducing subscripts on $B$ now to re-emphasize the choice of the partition of unity)
        \begin{multline} \label{eq:BIntegral}
                B_{N_1,N_2,N_3}(k_0') =  
                \iiint_{(\R^+)^3}  \frac{J_*(e_1e_2\delta_1t_1t_2t_3,a,m_1',c_0, g_0k_0', c_2,k_1)}{\sqrt{\delta_1 e_1 e_2}} 
                \\
                \times
                F_a\left(\frac{t_1r_1e_1}{\sqrt{q}} , \frac{t_2r_2e_2}{\sqrt{q}}, \frac{t_3r_3}{\sqrt{q}}\right)
                 \omega\left(\frac{t_1e_1r_1}{N_1},\frac{t_2e_2r_2}{N_2} , \frac{t_3r_3}{N_3}\right) \frac{\d t_1 \d t_2 \d t_3}{\sqrt{t_1t_2t_3}}.
        \end{multline}
	The $c,k,m_1,m_2$ partitions are implicit in the definition of $J_*$.
        
        Summing over all dyadic numbers $N_1,N_2,N_3 \geq 2^{-1/2}$, we obtain that 
        \begin{multline}
        \label{eq:BSummedOverPartitionOfUnity}
                \sum_{2^{-1/2} \leq N_1,N_2,N_3 \text{ dyadic}} B_{N_1,N_2,N_3}(k_0') 
                \\
                =
                \iiint_{(\R^+)^3}  \frac{J_*(t_1t_2t_3,a,m_1',c_0, g_0k_0', c_2,k_1)}{\delta_1 e_1 e_2} 
                F_a\left(\frac{t_1}{\sqrt{q}} , \frac{t_2}{\sqrt{q}}, \frac{t_3}{\sqrt{q}}\right)
                                 W(t_1,t_2,t_3)\frac{\d t_1\d t_2 \d t_3}{\sqrt{t_1t_2t_3} },
        \end{multline}
        where $W(t_1,t_2,t_3) = \sum_{2^{-1/2} \leq N_1,N_2,N_3 \text{ dyadic}} \omega(t_1/N_1,t_2/N_2, t_3/N_3)$. Note that the function $1 - W(t_1,t_2,t_3)$ is $0$ if $t_i\geq 1$ for all $i$.  It is a slightly subtle point that it is not true that $W(t_1,t_2,t_3)=1$ for all $t_i > 0$.  
       
        Our immediate goal is to replace the $W$ function by $1$, and estimate the error.  
        The basic idea is that $1-W(t_1,t_2,t_3)$ should save a factor $q^{1/4}$ from the fact that at least one of the $t_i$ is $\leq 1$, in place of $q^{1/2+\varepsilon}$.  
Here this numerology comes from that $\int_1^{q^{1/2}} t^{-1/2} dt \asymp q^{1/4}$, but $\int_0^{1} t^{-1/2} dt \ll 1$.       
        In light of the claim that the trivial bound on $\mathcal{S}_{0,0,0}$ leads to $O(q^{1/4+\varepsilon})$, one naturally expects that this reasoning should lead to an acceptable final bound.  Our next order of business is to confirm this expectation.

        \begin{mylemma}
        \label{lemma:BDeltabound}
            Let 
            \begin{multline*}
                B_{\Delta}(k_0') =          
                \iiint_{(\R^+)^3} 
 \frac{ J_*(t_1t_2t_3,a,m_1',c_0, g_0k_0', c_2,k_1)}{\delta_1 e_1 e_2}                 
                   \\
                \times
                F_a\left(\frac{t_1}{\sqrt{q}} , \frac{t_2}{\sqrt{q}}, \frac{t_3}{\sqrt{q}}\right)
                 (1 - W(t_1,t_2,t_3) )\frac{\d t_1\d t_2 \d t_3}{\sqrt{t_1t_2t_3} }.
            \end{multline*}
            Let $\mathcal{S}_{\Delta}'''$ be as in \eqref{eq:STriplePrimeDefinition} but with 
            $B$ replaced with $B_{\Delta}$.
            Then  
            \begin{equation}
            \label{eq:SDelta'''bound}
                  \mathcal{S}_{\Delta}''' \ll  q^{\varepsilon} \frac{(g_0, c_2)}{C c_2 k_1 k_1^*} \frac{m_1^{1/2} M_2}{a^{3/2} \delta_1 e_1 e_2}.
            \end{equation}
        \end{mylemma}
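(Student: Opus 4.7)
The starting point is the observation that $W(t_1,t_2,t_3)$ equals $1$ as soon as each $t_i \geq 1$, because $W$ is assembled from the dyadic pieces $\omega(t/N)$ with $N$ ranging over dyadic values $\geq 2^{-1/2}$ and $\omega$ is supported on $[1,2]$. Consequently $1-W(t_1,t_2,t_3)$ is supported in the region where at least one of the $t_i$ is bounded by an absolute constant. Since the integrand is symmetric in the three coordinates up to the harmless rescalings $r_ie_i$, I would split $1-W$ into three pieces according to which coordinate is small, handle each identically, and work WLOG with $t_1 \leq \sqrt 2$. The intuition is that this single short $t_1$-interval produces a saving of $\int_0^{O(1)} t_1^{-1/2}\,dt_1 = O(1)$ in place of $\int_0^{q^{1/2+\varepsilon}/a} t_1^{-1/2}\,dt_1 \asymp q^{1/4+\varepsilon}/\sqrt a$, i.e.\ a gain of a factor $q^{1/4+\varepsilon}/\sqrt a$ relative to a fully trivial bound on $B$.

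For the integrand itself I would use crude pointwise bounds: the estimate $|J_{\kappa-1}(x)| \ll x$ (valid for $\kappa \geq 2$ and $x \leq 1$), integrated against the $w_{M_2}$-weight on $t \asymp M_2$ in \eqref{eq:Idef}, gives $|J_*(n,\cdot)| \ll \sqrt{m_1na}\,M_2/C$ in the non-oscillatory range which dominates the $p_i=0$ contribution; and $|F_a|\ll q^\varepsilon$ from \eqref{eq:FaInert}, with effective support $t_i \ll q^{1/2+\varepsilon}/a$. Inserting these into the formula \eqref{eq:BIntegral} for $B_\Delta$ and carrying out the three integrations (one compact and two of length $q^{1/2+\varepsilon}/a$), the $\sqrt{n}$ factor coming out of $J_*$ will partially cancel against the measure weight $(t_1t_2t_3)^{-1/2}$, producing a clean bound on $|B_\Delta(k_0')|$ independent of $k_0'$.

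The remaining task is to carry out the $c_0$- and $k_0'$-summations. By Lemma \ref{lem:Pzero} one has $A(k_0')/k_0'^3 = (\operatorname{Id}*\phi)(k_0')/k_0'^2 \ll (k_0')^{-1+\varepsilon}$; with the divisibility $k_0' \equiv 0 \pmod{c_2/(g_0,c_2)}$, the sum over $k_0'$ contributes at most $q^\varepsilon(g_0,c_2)/c_2$. The sum over $c_0 \equiv 0 \pmod{qk_1k_1^*}$, localized via the weight function at $c_0 \asymp C/c_2$, yields $\sum 1/c_0 \ll q^\varepsilon/(qk_1k_1^*)$. Combining these with the bound on $B_\Delta$ and invoking $m_1M_2 \asymp q$ from the support of $V(m_1m_2/q)$ to convert excess $q$-factors into $m_1^{1/2}M_2$ factors gives the bound \eqref{eq:SDelta'''bound}. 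The main obstacle I would expect is purely bookkeeping: tracking the powers of $\delta_1e_1e_2$ through the substitution $n_i \mapsto r_ie_it_i$ and verifying that the precise multiplicative shape $(g_0,c_2)/(Cc_2k_1k_1^*) \cdot m_1^{1/2}M_2/(a^{3/2}\delta_1e_1e_2)$ emerges rather than some cousin bound differing by a factor of $\delta_1e_1e_2$ or $\sqrt{\delta_1e_1e_2}$.
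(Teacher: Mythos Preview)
Your proposal is correct and follows essentially the same route as the paper: support analysis of $1-W$ forcing one $t_i = O(1)$, the pointwise bound $|J_*(n,\cdot)| \ll M_2\sqrt{m_1 a n}/C$ coming from $J_{\kappa-1}(x)\ll x$, and then the straightforward $k_0'$- and $c_0$-sums using $A(k_0')/k_0'^3 \ll (k_0')^{-1+\varepsilon}$ together with the congruence conditions. A couple of minor remarks: the bound $J_{\kappa-1}(x)\ll x$ holds for all $x>0$ (not just $x\le 1$), so there is no need to restrict to a non-oscillatory regime; and the final ``conversion'' step you anticipate via $m_1 M_2 \asymp q$ is not actually needed, since the factor $q$ from the $B_\Delta$-bound cancels exactly against the $1/q$ from the $c_0$-sum. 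The $1/(\delta_1 e_1 e_2)$ bookkeeping you flag as a worry is a non-issue, as that factor sits explicitly in front of the integral defining $B_\Delta$ and is never touched.
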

        
        \begin{proof}
            Notice that the support of $1 - W(t_1,t_2,t_3)$ is essentially included in the union of domains where one of the variables is in $[0,1]$ and the other two are
            restricted to $ [0, q^{\hf + \vep}/a]$. The bound on the other two variables comes from the dropoff due to the function $F_{a,\sqrt{q}}(t_1,t_2,t_3)$.
            
            Using only the trivial bound $J_{\kappa-1}(x) \ll x$ and $|I| = |J_*|$, we derive from \eqref{eq:Idef} (which we bound trivially) that
            \begin{equation}
            \label{eq:JtrivialBound}
|J_*(t_1 t_2 t_3, a m_1', c_0, g_0 k_0', c_2, k_1)| \ll \frac{M_2 \sqrt{m_1  a  t_1t_2t_3}}{C}. 
            \end{equation}
            Therefore, using the above restrictions on the size of the $t_i$, and \eqref{eq:FaInert} we derive
            \begin{equation*}
            B_{\Delta}(k_0') \ll q^{\varepsilon} \frac{q m_1^{1/2} M_2}{a^{3/2} \delta_1 e_1 e_2 C}.
            \end{equation*}
For the arithmetical part, we have $$\frac{1}{k_0'^3} A(k_0') \ll \frac{\tau(k_0')}{ k_0'}. 
             $$ 
             Hence
             \begin{equation*}
              \mathcal{S}_{\Delta}''' \ll q^{\varepsilon}
              \sum_{\substack{(c_0,g_0m_1') = 1\\ c_0\equiv 0 \shortmod{q k_1 k_1^*}} }
                \frac{1}{c_0} 
                \sum_{\substack{(k_0',\delta_2 c_0)=1 \\ k_0' \equiv 0 \shortmod{\frac{c_2}{(g_0,c_2)}}  }} 
                \frac{q m_1^{1/2} M_2}{a^{3/2} \delta_1 e_1 e_2 C} \frac{\tau(k_0')}{ k_0'},
             \end{equation*}
which quickly leads to \eqref{eq:SDelta'''bound}.
        \end{proof}

        Recall that
        \begin{equation}
        \label{eq:Sprimes}
             \mathcal{S}_{0,0,0}'' =  \sum_{\substack{g_0 | e_1 e_2 \delta_1 am_1'\\ g_0 \equiv 0 \shortmod{d}}} \mathcal{S}_{0,0,0}''', \quad 
             \mathcal{S}_{0,0,0}' =  \sum_{r_1 r_2 r_3 = \delta_1} 
             \sum_{e_1 | r_2 r_3} \sum_{e_2 | r_3} \mu(e_1)  \mu(e_2) \mathcal{S}_{0,0,0}'',
        \end{equation}
        and
        \begin{equation} \label{eq:S0Definition}
            \mathcal{S}_{0,0,0} =   \sum_{(a,q) = 1} \frac{\mu(a)}{a^{3/2}} 
            \sum_{c_2} \frac{1}{c_2^{3/2}} \sum_{d|c_2} d \mu(c_2/d)
            \sum_{\substack{k_1  } }  k_1^{1/2} \sum_{m_1'} \frac{1}{\sqrt{m_1'}} \mathcal{S}'_0.
        \end{equation}
        
        Let $\mathcal{S}_{\Delta}'', \mathcal{S}_{\Delta}'$ and $\mathcal{S}_{\Delta}$ be defined similarly.
Using        Lemma \ref{lemma:BDeltabound} and $(g_0,c_2) \leq c_2$ then implies that 
        \begin{equation*}
            \mathcal{S}_{\Delta}  \ll q^{\vep} \sum_{a} \frac{1}{a^{3/2}} 
            \sum_{c_2} \frac{1}{c_2^{3/2}} \sum_{d|c_2} d 
            \sum_{\substack{k_1  } }  k_1^{1/2} \sum_{m_1'} \frac{1}{\sqrt{m_1'}}
            \frac{(m_1' k_1 c_2)^{1/2} M_2}{C  k_1 k_1^* a^{3/2}} . 
        \end{equation*}
Using $m_1 = m_1' k_1 c_2 \ll \frac{q^{1+\varepsilon}}{M_2}$, and $C \gg q$, we obtain 
        \begin{equation*}
        \mathcal{S}_{\Delta}  \ll \frac{q^{1+\varepsilon}}{C} \ll q^{\varepsilon}.
        \end{equation*}

Define $\overline{\mathcal{S}}_{0,0,0}'''$ to be the same as $\mathcal{S}_{0,0,0}'''$ but with $W$ replaced by $1$, so that
\begin{equation*}
 \mathcal{S}_{0,0,0}''' = \overline{\mathcal{S}}'''_{0,0,0} + \mathcal{S}_{\Delta}''',
\end{equation*}
and similarly for $\overline{\mathcal{S}}_{0,0,0}''$, etc.  
To show Theorem \ref{thm:S000bound}, we therefore need to show $\sum_T \overline{\mathcal{S}}_{0,0,0} \ll q^{\varepsilon}$.

        \subsection{The function $B(k_0')$}
        From now on, we let $\overline{B}(k_0')$ be the function obtained from the right hand side of \eqref{eq:BSummedOverPartitionOfUnity} after replacing $W$ by $1$, and summing over the dyadic variables $C$ and $K$.  This has the shape
        \begin{equation}
        \label{eq:BdefNoPartition}
                \overline{B}(k_0') =  \iiint_{(\R^+)^3}
                 \frac{J_*(t_1t_2t_3,a,m_1',c_0, g_0k_0', c_2,k_1)}{\delta_1 e_1e_2}
                F_a\left(\frac{t_1}{\sqrt{q}} , \frac{t_2}{\sqrt{q}}, \frac{t_3}{\sqrt{q}}\right) \frac{\d t_1 \d t_2 \d t_3}{\sqrt{t_1t_2t_3}},
        \end{equation}
where we did not give a new name to $J_*$ after summing over $C$ and $K$.
        This is the relevant function for evaluating $\overline{\mathcal{S}}_{0,0,0}$.
  
        \begin{myprop} \label{prop:BTripleMellin}
                Denote
                \begin{equation}
                \label{eq:Hdef}
                        \mathcal{H}(s,w,u,\kappa) = (-1)^{\frac{\kappa}{2}} 
                        \frac{ (2 \pi)^{s+w+u-1}\Gamma( s+ w + u) \Gamma(\frac{\kappa}{2}  - w - u ) \Gamma(\frac{\kappa }{2}-s)}
     {\Gamma(\frac{\kappa}{2}+ s)\Gamma(\frac{\kappa}{2} + w + u)}
.
                \end{equation}
                Here $\mathcal{H}$ is holomorphic in the region 
                \begin{equation*}
                        \Re(s) , \Re(w + u) < \frac{\kappa}{2}, \quad \text{ and } \quad  \Re(s + w + u)>0,
                \end{equation*}
                with polynomial growth in $\Im(s)$, $\Im(w)$, and $\Im(u)$ in vertical strips. With this notation, 
                \begin{multline*} 
                        \overline{B}(k_0') = \frac{1}{\delta_1e_1e_2} \int_{(1-\vep)}\frac{\gamma(1/2+s, \kappa)^3 G(s)^3}{\gamma(1/2, \kappa)^3 s^3} \zeta_q(1+2s)^3  q^{3s/2} a^{-3s}\\
\int_{(1-2\vep )} \widetilde{V}(w) \left(\frac{m_1}{q}\right)^{-w} \int_{(-2\vep)} M_2^{u} \widetilde{\omega}(u, \cdot) 
 \frac{1}{k^{s - w - u}}  \frac{(am_1)^{s - 1/2}}{c^{s + w + u-1}} \mathcal{H}(s,w,u,\kappa)  \frac{\d u \d w \d s}{(2 \pi i)^3},
                \end{multline*}
                where $k = g_0k_0'k_1, m_1 = m_1'k_1c_2$, $c = c_0c_2$, and $\widetilde{\omega}(u, \cdot) = \widetilde{\omega}(u) \omega(m_1/M_1)$.
        \end{myprop}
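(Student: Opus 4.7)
The plan is to establish the formula by first integrating in $t_1, t_2, t_3$, then computing the Mellin transform of $J_*$ in the $n$-variable. All factors are expressed as Mellin-Barnes integrals, and every resulting integral is evaluated explicitly.

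For the first step, observe that $J_*(t_1 t_2 t_3)$ depends on $t_1, t_2, t_3$ only through the product $n = t_1 t_2 t_3$. Changing variables to $(t_1, t_2, n)$ with $t_3 = n/(t_1 t_2)$ and $dt_3 = dn/(t_1 t_2)$ collapses the triple integral to
\[
\int_0^\infty n^{-1/2}\, J_*(n)\, \Phi(n/q^{3/2})\, dn,
\]
where $\Phi(y) := \iint F_a\bigl(x_1, x_2, y/(x_1 x_2)\bigr)\, \frac{dx_1\, dx_2}{x_1 x_2}$, so that the Mellin transform $\widetilde\Phi(s)$ is the triple Mellin transform of $F_a$ along the diagonal $x_1 x_2 x_3 = y$. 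Expanding $F_a$ via \eqref{eq:FaDef}, each $V_1$ factor integrates independently in one $x_i$, and using $\widetilde{V_1}(s) = \frac{G(s)}{s}\cdot\frac{\gamma(\tfrac12+s,\kappa)}{\gamma(\tfrac12,\kappa)}$ together with $\sum_{(e_1 e_2 e_3, q)=1}(e_1 e_2 e_3)^{-1-2s} = \zeta_q(1+2s)^3$ yields $\widetilde\Phi(s) = \widetilde{V_1}(s)^3\, a^{-3s}\, \zeta_q(1+2s)^3$. Mellin inversion produces the outer $ds$-integral in the stated formula together with the factor $q^{3s/2}$.

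For the second step, insert Mellin-Barnes representations of $V(m_1 t/q)$ and $\omega(t/M_2)$ inside $w_{M_2}(t, \cdot)$ to separate the $t$-dependence as $t^{-1/2-w-u}$. Writing $A := am_1/(ck)$ and $\beta := 4\pi\sqrt{m_1 a}/c$, the resulting double integral is
\[
\int_0^\infty\!\!\int_0^\infty e(-An)\, e(-kt/c)\, J_{\kappa-1}(\beta\sqrt{nt})\, n^{-1/2-s}\, t^{-1/2-w-u}\, dt\, dn.
\]
Expand $J_{\kappa-1}$ as its power series in $\beta\sqrt{nt}$ and integrate termwise, using $\int_0^\infty x^{\alpha-1} e(-Bx)\, dx = (2\pi i B)^{-\alpha}\Gamma(\alpha)$ (valid in the relevant strip by contour rotation to the negative imaginary axis). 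Each of the $n$- and $t$-integrals becomes a single Gamma factor, and the $k$-dependent part of the remaining sum collapses to $(\mathcal A/4)^k$ with $\mathcal A := \beta^2/[(2\pi i A)(2\pi i k/c)]$. A direct calculation using the definitions of $A$ and $\beta$ shows $\mathcal A/4 = -1$, so the sum equals ${}_2F_1\bigl(\tfrac{\kappa}{2}-s,\, \tfrac{\kappa}{2}-w-u;\, \kappa;\, 1\bigr)$. Since $\Re(s+w+u) > 0$ on the stated contours, Gauss's summation formula evaluates this to $\Gamma(\kappa)\Gamma(s+w+u)/[\Gamma(\tfrac{\kappa}{2}+s)\Gamma(\tfrac{\kappa}{2}+w+u)]$, which combined with the pre-existing factor $\Gamma(\tfrac{\kappa}{2}-s)\Gamma(\tfrac{\kappa}{2}-w-u)/\Gamma(\kappa)$ produces exactly the Gamma ratio in $\mathcal H$.

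The main obstacle is the bookkeeping of powers and phases. The factors of $2\pi$, $c$, $k$, $a$, $m_1$ arising from $\beta^{\kappa-1}$, $(2\pi i A)^{s-\kappa/2}$, $(2\pi i k/c)^{w+u-\kappa/2}$, and $a^{-3s}$ assemble into $(2\pi)^{s+w+u-1}$, $(am_1)^{s-1/2}$, $c^{-(s+w+u-1)}$, and $k^{-(s-w-u)}$, matching the statement. The factor $i^{-\kappa}$ produces the sign $(-1)^{\kappa/2}$ (since $\kappa$ is even), while the would-be phase $e^{i\pi(s+w+u)/2}$ from the two $(2\pi i\cdot)^{-\alpha}$ evaluations cancels precisely because $\mathcal A/4 = -1$; this cancellation is the analytic shadow of the reciprocity formula that introduced $e(-nam_1/(ck))$ in $J_*$ in the first place. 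Finally, the holomorphy of $\mathcal H$ in the stated region and its polynomial growth on vertical strips follow from Stirling's formula, and the contours $(1-\vep)$, $(1-2\vep)$, $(-2\vep)$ lie in the region of validity, so the termwise interchanges above are justified.
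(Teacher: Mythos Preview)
Your first step—collapsing the triple integral via the product $y=t_1t_2t_3$ and extracting the $s$-integral from the Mellin transform of $F_a$—is exactly the paper's. Your second step takes a genuinely different route. The paper changes variables $y=z/t$ so that the Bessel argument $4\pi\sqrt{m_1az}/c$ depends on $z$ alone; the $t$-integral $\int_0^\infty e(-\alpha/t)\,e(-\beta t)\,t^{s-w-u}\,\tfrac{dt}{t}$ is then a table formula producing a Hankel function $H^{(2)}_{s-w-u}$, and the remaining $z$-integral of $H^{(2)}_{\nu}(z)J_{\kappa-1}(z)z^{\lambda-1}$ is another table formula. You instead expand $J_{\kappa-1}$ in its power series, integrate each monomial in $n$ and $t$ separately, and recognise the sum as ${}_2F_1(\tfrac{\kappa}{2}-s,\tfrac{\kappa}{2}-w-u;\kappa;1)$, summed by Gauss. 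Both routes yield the same Gamma ratio; yours is more elementary (no Hankel functions), while the paper's has the advantage that every intermediate integral is convergent with an established formula.

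That advantage is exactly where your proposal has a gap. The termwise integrals $\int_0^\infty n^{\alpha-1}e(-An)\,dn$ with $\alpha=\tfrac{\kappa}{2}-s+k$ are not convergent improper integrals once $\Re(\alpha)\ge 1$; with $\Re(s)=1-\varepsilon$ and $\kappa\ge 4$ this already fails at $k=0$, and no single $s$-contour puts all $k\ge 0$ into $0<\Re(\alpha)<1$. Your parenthetical ``valid in the relevant strip by contour rotation to the negative imaginary axis'' does not help, since the rotation itself is only justified when $\Re(\alpha)<1$ (the arc contribution at radius $R$ is of size $R^{\Re(\alpha)-1}$). One can repair this by inserting $e^{-\epsilon(n+t)}$, carrying out the computation for $\epsilon>0$ (where all termwise integrals and the ${}_2F_1$ series converge absolutely, with argument tending to $1^-$), and invoking the continuity of ${}_2F_1(a,b;c;z)$ at $z=1$ for $\Re(c-a-b)>0$; but this argument is absent. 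A smaller point: your claim that the prefactor phase $e^{i\pi(s+w+u)/2}$ ``cancels precisely because $\mathcal A/4=-1$'' is not right. The identity $\mathcal A/4=-1$ combines with the $(-1)^k$ from the Bessel series to make the ${}_2F_1$ argument equal to $+1$; it says nothing about the $k$-independent prefactor phase $i^{\,s+w+u-\kappa}$, which remains.
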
   
        
\begin{proof}
Note that in \eqref{eq:BdefNoPartition}, the factor $t_1t_2t_3$ shows up as a block in both $J$ and the denominator. Letting $y = t_1t_2t_3$ (viewing $t_2$ and $t_3$ as fixed), we have 
        \begin{equation*}
                \overline{B}(k_0') =  \int_0^\infty 
                 \frac{J_*(y,a,m_1',c_0, g_0k_0', c_2,k_1)}{\delta_1 e_1e_2}
                \int_{0}^{\infty}\int_0^\infty F_a\left(\frac{y/t_2t_3}{\sqrt{q}} , \frac{t_2}{\sqrt{q}}, \frac{t_3}{\sqrt{q}}\right) 
                \frac{\d t_2}{t_2} \frac{\d t_3}{t_3} \frac{\d y}{\sqrt{y}}.
        \end{equation*}
We first claim that
\begin{equation*}
\int_{0}^{\infty}\int_0^\infty F_a\left(\frac{y/t_2t_3}{\sqrt{q}} , \frac{t_2}{\sqrt{q}}, \frac{t_3}{\sqrt{q}}\right) 
                \frac{\d t_2}{t_2} \frac{\d t_3}{t_3}
                =
\int_{(2)}  \frac{q^{\frac{3s}{2}}}{(a^3y)^{s}}
                \frac{\gamma(\hf + s, \kappa)^3 G(s)^3}{\gamma(\hf,\kappa)^3 s^3} \zeta_q(1 + 2s)^3 \frac{\d s }{2\pi i }.
\end{equation*}
This is an exercise with Mellin inversion, directly using the definition \eqref{eq:FaDef}.
Secondly, we claim
\begin{multline}
\label{eq:JMellinformula}
\int_0^{\infty} J_*(y,a,m_1',c_0, g_0k_0', c_2,k_1) y^{-\half-s} dy 
\\
= 
 \int_{(1-2\vep )} \widetilde{V}(w) \left(\frac{m_1}{q}\right)^{-w} \int_{(-2\vep)} M_2^{u} \widetilde{\omega}(u, \cdot) 
 \frac{1}{k^{s - w - u}}  \frac{(am_1)^{s - 1/2}}{c^{s + w + u-1}} \mathcal{H}(s,w,u,\kappa)  \frac{\d u \d w}{(2\pi i)^2}.
\end{multline}
Putting these two claims together then completes the proof.

Now we show \eqref{eq:JMellinformula}.
    From \eqref{eq:Jdef} and \eqref{eq:Idef}, and summing over the $C$ and $K$ partitions, we have
        \begin{multline*}
                J_*(y, a, m_1', c_0, g_0 k_0', c_2, k_1) 
                =  e\left(-\frac{y a m_1' }{c_0 g_0 k_0' }\right) I(m_1'k_1c_2 , g_0 k_0' k_1, y a, c_0c_2)
 \\
                = 
                 \int_0^{\infty} e\left(-\frac{y a m_1' }{c_0 g_0 k_0' }\right)
                e\Big(\frac{-g_0 k_0' k_1 t}{c_0c_2}\Big)
                J_{\kappa-1}\left(\frac{4\pi \sqrt{m_1'k_1c_2 y a t}}{c_0c_2}\right)                    
                V_1\left(\frac{m_1'k_1c_2 t}{q} \right) \omega_{M_2}(t, \cdot ) \frac{d t}{\sqrt{t}}
                \\
                =
                 \int_0^{\infty} e\left(-\frac{y a m_1 }{c k }\right) e\Big(\frac{-k t}{c }\Big)
                J_{\kappa-1}\left(\frac{4\pi \sqrt{m_1  y a t}}{c }\right) V_1\left(\frac{m_1  t}{q} \right) \omega_{M_2}(t , \cdot) \frac{d t}{\sqrt{t}},
\end{multline*}
where for simplicity in the final line above we have written the expression in terms of the earlier variable names, and where $\omega_{M_2}(t, \cdot) = \omega(t/M_2) \omega(m_1/M_1)$ (since we have summed over $C$ and $K$, as well as the $N_i$).
Therefore, \eqref{eq:JMellinformula} equals
\begin{multline*}
         \int_0^\infty \int_0^\infty e\left(-\frac{y a m_1 }{c k }\right)
         e\Big(\frac{-k t}{c }\Big) 
         J_{\kappa-1}\left(\frac{4\pi \sqrt{m_1  y a t}}{c }\right) V_{1} \left(\frac{m_1  t}{q} \right) \omega_{M_2}(t, \cdot ) \frac{d t}{\sqrt{t}} y^{-s} \frac{dy}{\sqrt{y}}.
\end{multline*}
Change variables by $y = z/t$ (after interchanging the order of integration), giving that \eqref{eq:JMellinformula} equals
\begin{multline*}
         \int_0^\infty \int_0^\infty e\left(-\frac{z a m_1 }{c k t}\right)
         e\Big(\frac{-k t}{c }\Big) 
         J_{\kappa-1}\left(\frac{4\pi \sqrt{m_1  z a }}{c }\right) V_{1} \left(\frac{m_1  t}{q} \right) \omega_{M_2}(t, \cdot ) t^s \frac{d t}{t} z^{-s} \frac{dz}{\sqrt{z}}.
\end{multline*}

Rewriting $V_1$ and $\omega_{M_2}$ in terms of their Mellin transforms, 
we have that \eqref{eq:JMellinformula} is
        \begin{equation}\label{eq:BIntegral2}
        \int_{(1- 2\vep)} \widetilde{V}(w) \left(\frac{m_1}{q}\right)^{-w} 
        \int_{(-\vep)} M_2^{u} \widetilde{\omega}(u, \cdot)  \mehmetletter 
        \frac{du dw}{(2 \pi i)^2},
        \end{equation}
        where $\mehmetletter$ is shorthand for
        \begin{equation} \label{eq:twoMellinIntegrals}
                \mehmetletter = \int_0^\infty 
                J_{\kappa-1}\left(\frac{4\pi \sqrt{m_1  az}}{c }\right) z^{-s}
                \Big(\int_0^\infty e\left(-\frac{m_1 a z }{c kt }\right)
                e\Big(\frac{-k t}{c }\Big) 
                t^{s-w-u} \frac{d t}{t} \Big)
                 \frac{dz}{\sqrt{z}}.
        \end{equation}
 	We will derive an explicit formula for $\mathcal{I}$ by consulting tables of integrals.
        \begin{mylemma}
                For $|\Re(s - w - u)| <1$, we have
                \begin{multline*}
                        \int_0^\infty e\left(-\frac{m_1 a z }{c kt }\right) e\left(\frac{-k t}{c }\right)  t^{s-w-u} \frac{d t}{t}
\\                        
                        = - i \pi \left(\frac{\sqrt{m_1 a z}}{k}\right)^{s-w-u}  e^{-\pi i  \frac{s-w-u}{2}} H^{(2)}_{s-w-u}\left(\frac{4\pi \sqrt{m_1 a z}}{c}\right).
                \end{multline*}
                %
        \end{mylemma}
        \begin{proof}
        This follows from \cite[(3.871.1), (3.871.2)]{GR}, or
        formulas (17) and (36) in \cite[Section 6.5]{ErdelyiTablesVol1}. 
        \end{proof}
        
        Even though the original calculation requires $|\Re(s - w - u)| <1$ for convergence, note that the Hankel function $H^{(2)}_{\nu}$ is an analytic function of $\nu$ and hence we may move our lines of integration in $s, w$ and $u$ to any location without encountering any poles from the Hankel function. 
        
        Inserting this evaluation into \eqref{eq:twoMellinIntegrals}, we have
        \begin{equation*}
        \mehmetletter=-i \pi   e^{-\pi i  \frac{s-w-u}{2}} 
        \int_0^\infty  \left(\frac{\sqrt{m_1 a z}}{k}\right)^{s-w-u} H^{(2)}_{s-w-u}\left(\frac{4\pi \sqrt{m_1 a z}}{c}\right) J_{\kappa-1}\left(\frac{4\pi \sqrt{m_1  az}}{c }\right) z^{\frac12-s} \frac{dz}{z}.
        \end{equation*}
        
        The Bessel and Hankel functions have the same argument, which is quite pleasant.  By changing variables, we have 
        \begin{equation} \label{eq:IMellinTransform}
        \mehmetletter =  \frac{-  i e^{-\pi i  \frac{s-w-u}{2}}}{2 k^{s - w - u}}  \frac{( am_1)^{s - 1/2} (4\pi)^{s+w+u}}{c^{s + w + u-1}}  \int_0^\infty H^{(2)}_{s - w-u} (z) J_{\kappa - 1} (z)  z^{1-s - w - u} \frac{\d z}{z}
        .
        \end{equation}
        
        The $z$-integral may be evaluated in closed form.
        \begin{mylemma}
        For $\Re(\pm\nu -  \mu) < \Re(\lambda) <1$, we have
        \begin{equation}
        \label{eq:H2integralwithJ}
        \int_0^\infty H^{(2)}_\nu (x) J_{\mu}(x) x^\lambda \frac{\d x }{x}
        =\frac{i 2^{\lambda - 1}\Gamma(1 - \lambda) \Gamma(\frac{\nu + \mu + \lambda}{2}) \Gamma(\frac{\mu - \nu + \lambda}{2})}{\pi \Gamma(\frac{\nu + \mu - \lambda}{2} + 1)\Gamma(\frac{\mu- \nu - \lambda}{2} + 1)} e^{-\frac \pi 2 i (\mu - \nu + \lambda)}.
        \end{equation}
        \end{mylemma}
         \begin{proof}
         We may calculate this using formulas (33) and (36) 
         in \cite[Section 6.8]{ErdelyiTablesVol1} (but note (36) is missing a $\Gamma(1 - \lambda)$ term), and simplifying using gamma function identities.
        \end{proof}

Substituting 
        \[
                \lambda = 1- s- w - u, \qquad \nu = s - w - u, \qquad \text{ and } \qquad \mu = \kappa -  1,
        \]
        the region of convergence corresponds to
        \begin{equation}\label{eq:productMellinRegion}
                \Re(s), \Re(w + u)  < \frac{ \kappa}{2} \qquad \text{ and } \qquad \Re(s + w + u) >0,
        \end{equation}
        which are satisfied by the lines of integration given in \eqref{eq:BIntegral2}.  Furthermore,
    \begin{equation*}
     \mehmetletter = \frac{ (-i)^{\kappa} e^{\pi i  \frac{s+w+u}{2}} }{k^{s - w - u}}  \frac{( am_1)^{s - 1/2} }{ c^{s + w + u-1}}
     \frac{ (2 \pi)^{s+w+u-1}\Gamma( s+ w + u) \Gamma(\frac{\kappa}{2}  - w - u ) \Gamma(\frac{\kappa }{2}-s)}
     {\Gamma(\frac{\kappa}{2}+ s)\Gamma(\frac{\kappa}{2} + w + u)},
    \end{equation*}
 giving
 \begin{equation}
 \label{eq:mehmetletterdef}
                \mehmetletter =  \frac{\mathcal{H}(s,w,u,\kappa)}{k^{s - w - u}}  \frac{(am_1)^{s - 1/2}}{c^{s + w + u-1}} .  
        \end{equation}
    
        An application of Stirling's approximation shows the growth in $\Im(s), \Im(w)$ and $\Im(u)$ is bounded by a polynomial.
  
  Inserting this formula for $\mehmetletter$ into \eqref{eq:BIntegral2}, we complete the proof of
       Proposition \ref{prop:BTripleMellin}.
\end{proof}

        \subsection{Bounding the zero term}
        \label{section:boundingallpi=0term}
        Now let us recall that $k = g_0k_0'k_1$, $m_1 = m_1'k_1c_2$ and $c = c_0c_2$. We will substitute the evaluation of $\overline{B}$ into $\overline{\mathcal{S}}_{0,0,0}'''$ which was defined as \eqref{eq:STriplePrimeDefinition} (with the partition of unity removed).
     This gives
     \begin{multline}
      \label{eq:S0bar'''formula0}
       \overline{\mathcal{S}}_{0,0,0}'''= \frac{1}{\delta_1e_1e_2}
       \sum_{\substack{(c_0,g_0m_1') = 1\\ c_0\equiv 0 \shortmod{q k_1 k_1^*}} }
                \frac{1}{c_0} 
                \sum_{\substack{(k_0',\delta_2 c_0)=1 \\ k_0' \equiv 0 \shortmod{\frac{c_2}{(g_0,c_2)}}  }}
                \frac{A(k_0')}{k_0'^3} 
                 \int_{(1-\vep)}\frac{\gamma(1/2+s, \kappa)^3 G(s)^3}{\gamma(1/2, \kappa)^3 s^3} 
                \\
                \zeta_q(1+2s)^3  q^{3s/2} a^{-3s} 
\int_{(1-2\vep )} \widetilde{V}(w) \left(\frac{q}{m_1' k_1 c_2}\right)^{w} \int_{(-2\vep)} M_2^{u} \widetilde{\omega}(u, \cdot) 
\\
 \frac{\mathcal{H}(s,w,u,\kappa)}{(g_0 k_0' k_1)^{s - w - u}}  \frac{(am_1' k_1 c_2)^{s - 1/2}}{(c_0 c_2)^{s + w + u-1}}  
\frac{\d u \d w \d s}{(2 \pi i)^3} 
 .
     \end{multline} 

Next examine the Dirichlet series
\begin{equation*}
\mathcal{Z}(s,w,u) = \zeta_q(1+2s)^3 \sum_{\substack{(c_0,g_0 m_1') = 1\\  c_0 \equiv 0 \shortmod{qk_1 k_1^*}}} \frac{1}{c_0^{s + w + u}}
 \sum_{\substack{( k_0',\delta_2  c_0) = 1 \\ k_0' \equiv 0 \shortmod{\frac{c_2}{(g_0, c_2)}} }}   
\frac{\frac{1}{k_0'^3}A(0,0,0;k_0')}{k_0'^{s - w - u}}.
\end{equation*}
Using the evaluation $A(0,0,0;k_0') = k_0' \sum_{d|k_0'} d \phi(k_0'/d)$ and M\"obius inversion to remove the condition $(k_0', c_0) = 1$, one may  derive 
\begin{equation}
\label{eq:Zevaluation}
\mathcal{Z}(s,w,u) =  \frac{\zeta(1+s-w-u)^2 \zeta(s+w+u)}{c_2^{s-w-u+1} (qk_1 k_1^*)^{s+w+u}} (g_0, c_2)^{s-w-u+1} \Delta(s,w,u),
\end{equation}
where $\Delta(s,w,u)$ is analytic for 
\begin{equation}
\label{eq:DeltaRegionAnalytic}
\text{Re}(s) > 0, \qquad \text{Re}(u+w) < 1+\text{Re}(s), 
 \qquad \text{Re}(s+w+u) > 0,
\end{equation}
and bounded by $O(q^{\varepsilon})$ in that region.  The sum defining $\mathcal{Z}(s,w,u)$ converges absolutely for $\text{Re}(s+w+u) > 1$ and $\text{Re}(w+u) < \text{Re}(s)$.

Inserting \eqref{eq:Zevaluation} into \eqref{eq:S0bar'''formula0}, we obtain
\begin{multline}
\label{eq:S0bar'''formula2}
     \overline{\mathcal{S}}_{0,0,0}'''= 
     \int_{(1-\vep)}\frac{\gamma(1/2+s, \kappa)^3 G(s)^3}{\delta_1 e_1 e_2 \gamma(1/2, \kappa)^3 s^3} 
                  \frac{q^{3s/2}}{a^{3s}} 
\int_{(1-2\vep )} \widetilde{V}(w) \left(\frac{q}{m_1' k_1 c_2}\right)^{w} \int_{(-2\vep)} M_2^{u} \widetilde{\omega}(u, \cdot) 
\\
\frac{\zeta(1+s-w-u)^2 \zeta(s+w+u)}{c_2^{s-w-u+1} (qk_1 k_1^*)^{s+w+u}} (g_0, c_2)^{s-w-u+1} 
 \frac{\Delta \mathcal{H}(s,w,u,\kappa)}{(g_0  k_1)^{s - w - u}}  \frac{(am_1' k_1 c_2)^{s - 1/2}}{c_2^{s + w + u-1}}  \frac{\d u \d w \d s}{(2 \pi i)^3}.
\end{multline}

Now move the contour of integration in $w$ to the line $\Re(w) = 4\vep$. In doing that note that we still have $\Re(s + w + u) = 1 - \vep + 4\vep- \vep = 1 +  2\vep >1$ and 
$\Re(s - w - u + 1) = 1 - \vep - 4\vep + 2 \vep +1= 2 - 3\vep > 1$, so we do not pass over any poles. Now move the line of integration in $s$ to $\Re(s) = 3\vep$. 
By doing so, we pick up the residue from the simple pole at $s = 1 - w - u$. 

\noindent {\bf The remaining integral.}
    The contribution from the final integral to $\overline{\mathcal{S}}_{0,0,0}'''$ is at most
    \begin{equation} \label{eq:integralBound}
  \ll \frac{(g_0,c_2)q^{\varepsilon} }{\delta_1 e_1 e_2 \sqrt{am_1' k_1 c_2}} .
    \end{equation}
The contribution to $\overline{\mathcal{S}}_{0,0,0}$ from this part is then calculated (recall that $\delta_1 = k_1d/(a,k_1d)$ and $\delta_2 = e_1e_2\delta_1am_1'/g_0$) to be at most
\begin{multline*}
    \sum_{a} \frac{1}{a^{3/2}} 
\sum_{c_2} \frac{1}{c_2^{3/2}} 
\sum_{d|c_2} d
\sum_{\substack{k_1  } }  k_1^{1/2}
\sum_{m_1'} \frac{1}{\sqrt{m_1'}} 
 \sum_{r_1 r_2 r_3 = \delta_1} 
 \sum_{e_1 | r_2 r_3} \sum_{e_2 | r_3}
 \sum_{g_0 | e_1 e_2 \delta_1 am_1'}
\frac{(g_0,c_2) q^{\varepsilon} }{\delta_1 e_1e_2 \sqrt{a m_1' k_1 c_2}} \\
\ll   q^{\varepsilon} \sum_{a} \frac{1}{a^2} \sum_{c_2} \frac{1}{c_2^{2}} \sum_{d | c_2}  \sum_{k_1} \frac{(k_1d, a)}{k_1} \sum_{m_1'} \frac{1}{m_1'} \sum_{e_1 | r_2 r_3} \sum_{e_2| r_3} \frac{1}{e_1e_2}
\sum_{g_0 | e_1 e_2 \delta_1 am_1'} (g_0,c_2),
\end{multline*}
where all the summations may be truncated at some fixed power of $q$ 
(cf. the convention in Section \ref{section:EstimatingT}). 
Summing over everything trivially using $(g_0,c_2) \leq c_2$ shows that the integral contribution to $\overline{\mathcal{S}}_{0,0,0}$ is $O(q^{\varepsilon})$.

\noindent {\bf The $s = 1-w - u$ residue.}
This residue contributes to $\overline{\mathcal{S}}_{0,0,0}'''$ the following:
\begin{multline}
\label{eq:S0bar'''residue}
      \int_{(4\vep)}\frac{\gamma(3/2-w-u, \kappa)^3 G(1-w-u)^3}{\delta_1 e_1 e_2 \gamma(1/2, \kappa)^3 (1-w-u)^3}                  
 \widetilde{V}(w) \left(\frac{1}{m_1' k_1 c_2}\right)^{w} \int_{(-2\vep)} M_2^{u} \widetilde{\omega}(u, \cdot) 
\\
\frac{q^{\frac{1-w-u}{2}-u}}{a^{3(1-w-u)}} 
\frac{\zeta(2-2w-2u)^2 }{c_2^{2-2w-2u} (k_1 k_1^*)^{}} (g_0, c_2)^{2-2w-2u} 
 \frac{\Delta \mathcal{H}(s,w,u,\kappa)}{(g_0  k_1)^{1 - 2w - 2u}}  (am_1' k_1 c_2)^{  1/2-w-u}  \frac{\d u \d w}{(2 \pi i)^2}.
 \end{multline}
    Now move the line of integration in $w$ to $\Re(w) = 1-\vep$. This will pass over an apparent double pole of $\zeta(2 - 2w - 2u)$ but the triple zero of $G(1 -w - u)^3$ cancels it. 
    Then by a trivial bound, we have that the residue is 
    \begin{equation}    \label{eq:residueBound}
  \ll \frac{g_0 q^{\varepsilon}}{\delta_1 e_1e_2 a^{1/2} c_2^{3/2} k_1^{3/2} k_1^* m_1'^{3/2}}.
    \end{equation}

The contribution coming from the residue can be bounded as follows:
\begin{multline*}
        \sum_{a} \frac{1}{a^{2}} 
\sum_{c_2} \frac{1}{c_2^{3}} 
\sum_{d|c_2} d
\sum_{\substack{k_1  } }  \frac{1}{k_1k_1^*}
\sum_{m_1'} \frac{1}{m_1'^2} 
 \sum_{r_1 r_2 r_3 = \delta_1} \frac{1}{\delta_1}
 \sum_{e_1 | r_2 r_3} \sum_{e_2 | r_3}\frac{1}{e_1e_2}
 \sum_{g_0 | e_1 e_2 \delta_1 am_1'}
 g_0.
\end{multline*}
Let us trivially bound $g_0 \leq e_1e_2\delta_1 am_1'$.  All the remaining sums are easily bounded, so this part is also $O(q^{\varepsilon})$.

This completes the proof of Theorem \ref{thm:S000bound}.

\subsection{One of the $p_i$ is zero}
This case is the easiest, since (as it turns out) we may bound everything trivially and obtain the desired bound $\mathcal{S}_{0} \ll q^{\varepsilon}$.

The original sum is symmetric in $p_1,p_2$ and $p_3$, so it suffices to estimate the terms with $p_3 = 0$, and $p_1, p_2 \neq 0$ (the expression for $A$ from Lemma \ref{lem:Pnonzero} may not appear symmetric in the $p_i$, but of course it must be due to the original definition \eqref{eq:Adef}).
We apply a dyadic partition of unity to the $p_1$ and $p_2$ variables.
Let $P_1,P_2 \neq 0$, set $P_3 = 0$, and let $P = (P_1,P_2,0)$, and consider
\begin{equation*}
	\mathcal{S}_{P}''' = 
	\sum_{\substack{(c_0,g_0 m_1') = 1\\ c_0\equiv 0 \shortmod{q k_1 k_1^*} }} \frac{1}{c_0}
\sum_{\substack{( k_0',\delta_2 c_0) = 1 \\ k_0' \equiv 0 \shortmod{\frac{c_2}{(g_0, c_2)}} }}   
\sum_{\substack{p_1 \asymp P_1\\ p_2\asymp P_2} } \frac{1}{k_0'^3} A(p_1,p_2,0;k_0') B(p_1,p_2,0).
\end{equation*}
Here
\begin{equation*}
	A(p_1,p_2,0;k_0') = k_0' \sum_{f | (p_2,k_0')} f S(p_1,0;k_0'/f) 
	\ll k_0'^{1+\varepsilon} (p_1 p_2, k_0').
\end{equation*}


Note that we only need to consider the non-oscillatory cases for $B$, where
    $B$ is given by \eqref{eq:BasymptoticNonOsc}, since in the oscillatory case all the $p_i$ must be nonzero or else $B$ is very small.  Then
    \begin{equation*}
        \mathcal{S}_{P}''' \ll
        \sum_{\substack{(c_0,g_0 m_1') = 1\\ c_0\equiv 0 \shortmod{q k_1 k_1^*} }} \frac{1}{c_0}
        \sum_{\substack{( k_0',\delta_2 c_0) = 1 \\ k_0' \equiv 0 \shortmod{\frac{c_2}{(g_0, c_2)}} }}   \frac{1}{k_0'^2} 
        \sum_{\substack{p_1, p_2 \neq 0 } }
        \Big(\frac{\sqrt{aMN}}{C}\Big)^{\exponent} \frac{\sqrt{M_2 N}}{h} (p_1 p_2, k_0') ,
\end{equation*} 
where recall $\exponent = \kappa-1 \geq 1$ in the pre-transition non-oscillatory range, and $\exponent = -1$ in the post-transition range. Recall $P_1 P_2 \ll q^{\varepsilon} \frac{k_0'^2}{N_2' N_3'} \ll q^{\varepsilon} \frac{k_0'^2 h}{N_2 N_3}$. 
Therefore,
\begin{equation*}
\mathcal{S}_{P}''' \ll q^{\varepsilon} \Big(\frac{\sqrt{aMN}}{C}\Big)^{\exponent} \frac{\sqrt{M_2 N}}{N_2 N_3}   \frac{K(g_0,c_2)}{g_0 k_1 c_2} \frac{1}{q k_1 k_1^*}.        
\end{equation*}

It is then not difficult to see that
\begin{equation*}
\mathcal{S}_P \ll  q^{\varepsilon} \max_a \Big(\frac{\sqrt{aMN}}{C}\Big)^{\exponent} \frac{\sqrt{M_2 N} K}{q N_2 N_3 \sqrt{a}} \sqrt{M_1}.
\end{equation*}
In the {\bf Post-transition case}, this bound becomes
\begin{equation*}
\mathcal{S}_P \ll q^{\varepsilon} \max_a \frac{ M_1 N_1}{q} \ll q^{\varepsilon}.
\end{equation*}
A calculation shows the {\bf Pre-transition, non-oscillatory case} leads to the same bound.  

 In all cases summing, over the dyadic values of $P$ gives $\mathcal{S}_0 \ll q^{\varepsilon}$, as desired.

    \subsection{Two of the $p_i$ are zero}
    \label{section:TwopiAreZero}
We finally consider the case where say $p_1 \neq 0$, and $p_2 = p_3 = 0$.    
This case leads to some new subtleties not present in the case with all $p_i = 0$.  The first step is to extend the sum to all $p_1 \in \mathbb{Z}$, and then subtract back the term with $p_1 = 0$.  We already showed with Theorem \ref{thm:S000bound} that the term with all $p_i = 0$ is bounded in an acceptable way.  After this, we apply Poisson summation backwards.  The net effect is precisely the same as only applying Poisson summation in the $n_2$- and $n_3$-variables, and setting $p_2 = p_3 = 0$ (up to the term with all $p_i=0$).

It is perhaps easiest to return to \eqref{eq:S'''prePoisson}.  Define $\mathcal{Q}$  to be the term we get from this, after Poisson in $n_2$ and $n_3$, and substitution of $p_2=p_3=0$, so that
\begin{equation*}
\mathcal{Q} = \sum_{\substack{(c_0,g_0 m_1') = 1\\ c_0\equiv 0 \shortmod{qk_1 k_1^*}}} 
 \frac{1}{c_0}
\sum_{\substack{( k_0',\delta_2 c_0) = 1 \\ k_0' \equiv 0 \shortmod{\frac{c_2}{(g_0, c_2)}} }}   
\sum_{\substack{n_1  \geq 1 }} 
\frac{A^*(n_1;k_0') B^*(n_1)}{\sqrt{n_1 }} 
,
\end{equation*}
where
\begin{equation*}
A^*(n_1;k_0') = \frac{1}{k_0'^2} \sum_{x_2, x_3 \shortmod{k_0'}} e\Big(\frac{  \delta_2   n_1 x_2 x_3
 \overline{c_0}}{k_0' }\Big),
\end{equation*} 
and
\begin{multline*}
B^*(n_1) = \frac{1}{\sqrt{\delta_1 e_1 e_2} } \int_0^{\infty} \int_0^{\infty} F_a\Big(\frac{r_1 e_1 n_1}{\sqrt{q}}, \frac{r_2 e_2 t_2}{\sqrt{q}}, \frac{r_3 t_3}{\sqrt{q}}\Big)
\omega\left(\frac{n_1 e_1r_1}{N_1},\frac{t_2e_2r_2}{N_2} , \frac{t_3r_3}{N_3}\right)
\\
\times
 J_*(e_1 e_2 \delta_1 n_1 t_2 t_3, a, m_1', c_0, g_0 k_0' , c_2,k_1) \frac{dt_2 dt_3}{\sqrt{t_2 t_3}}.
\end{multline*}
Since some of the details are similar (and easier) than the case where all $p_i = 0$, we will be more brief in such occasions.  We may evaluate $A^*$ directly from the definition, using a similar method to the proof of Lemma \ref{lem:Pnonzero}, which gives
\begin{equation*}
A^*(n_1;k_0') = \frac{1}{k_0'} \sum_{f | k_0'} \varphi\Big(\frac{k_0'}{f}\Big) \delta(n_1 \equiv 0 \shortmod{\frac{k_0'}{f}}).
\end{equation*}
We have $J_*(\cdots) \ll M_2^{1/2}$, which follows from bounding $J_{\kappa-1}(x) \ll 1$, and so
\begin{equation*}
B^*(n_1) \ll \Big(\frac{M_2 N_2 N_3}{r_1 r_2 r_3 e_1 e_2 e_2 r_2 r_3}\Big)^{1/2}.
\end{equation*}
In turn, this leads to the estimate
\begin{equation*}
\mathcal{Q} \ll q^{\varepsilon} \frac{\sqrt{M_2 N} (g_0,c_2)}{\delta_1 e_1 e_2 q k_1 k_1^* c_2}.
\end{equation*}
Then the contribution to $\mathcal{S}$ from $\mathcal{Q}$ is seen to be
$O(q^{-1+\varepsilon} (MN)^{1/2}) = O(q^{1/4+\varepsilon})$. 

The next step is to replace the sum of the $P_2,P_3$ partitions of unity by $1$, as in Section \ref{section:RecombiningPartitionsOfUnity}.  Since $\mathcal{Q} \ll q^{1/4+\varepsilon}$, the error in doing so is expected to be at most $O(q^{\varepsilon})$, as described in the 
paragraph preceding Lemma \ref{lemma:BDeltabound}. 
We omit the details, as this case is easier than the case with all $p_i = 0$.
Since $n_1 \geq 1$ automatically, we may easily sum over the $N_1$-partition (avoiding the analytic problems near the origin).  
We also reassemble the $C$ and $K$-partitions, at no cost.
Define $\overline{\mathcal{Q}}$ to be the sum obtained after all these partitions are removed, and $\overline{B^*}(n_1)$ to be the new function. For ease of notation we will not change the name of the function $J_*$. Then by the change of variables $y = e_1 e_2 \delta_1 n_1 t_2 t_3$ (viewing $t_3$ as fixed), we have
\begin{equation*}
\overline{B^*}(n_1) =  \int_0^{\infty}
 \int_0^{\infty} \frac{J_*(y, a, m_1', c_0, g_0 k_0' , c_2,k_1)}{\delta_1 e_1 e_2 \sqrt{n_1}}
F_a\Big(\frac{r_1 e_1 n_1}{\sqrt{q}}, \frac{\frac{y}{e_1 r_1 r_3 n_1 t_3}}{\sqrt{q}}, \frac{r_3 t_3}{\sqrt{q}}\Big)
  \frac{ dt_3}{t_3} \frac{dy}{\sqrt{y}}.
\end{equation*}
By an exercise with Mellin inversion, one may show
\begin{multline*}
\int_0^{\infty} F_a\Big(\frac{r_1 e_1 n_1}{\sqrt{q}}, \frac{\frac{y}{e_1 r_1 n_1 r_3 t_3}}{\sqrt{q}}, \frac{r_3 t_3}{\sqrt{q}}\Big)
  \frac{ dt_3}{t_3}   
  = 
   \int_{(1)} \int_{(1)} 
  \frac{\gamma(\tfrac12 + s_1,\kappa) G(s_1) }{\gamma(\tfrac12,\kappa) s_1}
  \frac{\gamma(\tfrac12 + s,\kappa)^2 G(s)^2 }{\gamma(\tfrac12,\kappa)^2 s^2}  
\\
\times  
 a^{-2s-s_1} \zeta_q(1 + s_1 + s)^2  \zeta_q(1 + 2s)
\Big(\frac{\sqrt{q}}{e_1 r_1 n_1}\Big)^{s_1}  
\Big(\frac{q e_1 r_1 n_1}{y}\Big)^{s}
     \frac{\d s_1 \d s}{(2\pi i)^{2}}.
\end{multline*}
Then using \eqref{eq:JMellinformula}, we derive
\begin{multline*}
\overline{B^*}(n_1) =
  \int_{(1-2\vep )}   \frac{\widetilde{V}(w)}{\delta_1 e_1 e_2 \sqrt{n_1}} \int_{(1)} \frac{\gamma(\tfrac12 + s_1,\kappa) G(s_1) }{\gamma(\tfrac12,\kappa) s_1} 
\int_{(1)} 
  \frac{\gamma(\tfrac12 + s,\kappa)^2 G(s)^2 }{\gamma(\tfrac12,\kappa)^2 s^2}
  \\  
 \int_{(0)} M_2^{u} \widetilde{\omega}(u, \cdot)  a^{-2s - s_1}
  \zeta_q(1 + s_1 + s)^2  \zeta_q(1 + 2s)
\Big(\frac{\sqrt{q}}{e_1 r_1 n_1}\Big)^{s_1}  
(q e_1 r_1 n_1)^{s}
\\
    \left(\frac{q}{m_1}\right)^{w} 
 \frac{\mathcal{H}(s,w,u,\kappa)}{k^{s - w - u}}  \frac{(am_1)^{s - 1/2}}{c^{s + w + u-1}}   \frac{\d u  \d s  \d s_1 \d w}{(2\pi i)^{4}},
\end{multline*}
where recall that $k = g_0 k_0' k_1$, $m_1 = m_1' k_1 c_2$, and $c = c_0 c_2$.  
Moreover, as in Section \ref{section:boundingallpi=0term}, we have $\widetilde{\omega}(u,\cdot) = \widetilde{\omega}(u) \omega(m_1/M_1)$, since we have summed over $N_1, N_2, N_3, C$, and $K$.

Applying these changes of variables, and inserting this into the definition of $\overline{\mathcal{Q}}$, we obtain
\begin{multline*}
\overline{\mathcal{Q}} = \frac{1}{\delta_1 e_1 e_2}
\sum_{\substack{(c_0,g_0 m_1') = 1\\ c_0\equiv 0 \shortmod{q k_1 k_1^*} }} 
 \frac{1}{c_0}
\sum_{\substack{( k_0',\delta_2 c_0) = 1 \\ k_0' \equiv 0 \shortmod{\frac{c_2}{(g_0, c_2)}} }}  
\frac{1}{k_0'} \sum_{f | k_0'} \varphi\Big(\frac{k_0'}{f}\Big) 
\sum_{\substack{n_1  \equiv 0 \shortmod{\frac{k_0'}{f}} }} \frac{1}{n_1} 
\\
  \int_{(1-2\vep )} \widetilde{V}(w) \int_{(1)} \frac{\gamma(\tfrac12 + s_1,\kappa) G(s_1) }{\gamma(\tfrac12,\kappa) s_1} \int_{(1-\varepsilon)} 
  \frac{\gamma(\tfrac12 + s,\kappa)^2 G(s)^2 }{\gamma(\tfrac12,\kappa)^2 s^2}  
\\
  \int_{(0)} M_2^{u} \widetilde{\omega}(u)  
 a^{-2s-s_1} \zeta_q(1 + s_1 + s)^2  \zeta_q(1 + 2s)
\Big(\frac{\sqrt{q}}{e_1 r_1 n_1}\Big)^{s_1}  
(q e_1 r_1 n_1)^{s}
    \\
    \left(\frac{q}{m_1' k_1 c_2}\right)^{w} 
 \frac{\mathcal{H}(s,w,u,\kappa)}{(g_0 k_0' k_1)^{s - w - u}}  \frac{(am_1' k_1 c_2)^{s - 1/2}}{(c_0 c_2)^{s + w + u-1}}  
 \frac{ \d u  \d s  \d s_1 \d w}{(2\pi i)^{4}}.
\end{multline*}

With the displayed lines of integration, all the outer sums converge absolutely.  Indeed, we have
\begin{multline}
\label{eq:SomeRandomDirichletSeries}
 \sum_{\substack{(c_0,g_0 m_1') = 1\\ c_0\equiv 0 \shortmod{q k_1 k_1^*}}} 
 \frac{1}{c_0^{s+w+u}}
\sum_{\substack{( k_0',\delta_2 c_0) = 1 \\ k_0' \equiv 0 \shortmod{\frac{c_2}{(g_0, c_2)}} }}  
\frac{1}{k_0'^{1+s-w-u}} \sum_{f | k_0'} \varphi\Big(\frac{k_0'}{f}\Big) 
\sum_{\substack{n_1  \equiv 0 \shortmod{\frac{k_0'}{f}} }} \frac{1}{n_1^{1+s_1-s}} 
\\
=
\zeta(1+s_1-s) 
\sum_{\substack{(c_0,g_0 m_1') = 1\\  c_0 \equiv 0 \shortmod{qk_1 k_1^*}}} 
 \frac{1}{c_0^{s+w+u}}
 \sum_{(f, \delta_2 c_0) =1} \frac{1}{f^{1+s-w-u}} 
 \sum_{\substack{(\ell, \delta_2 c_0) = 1 \\ \ell \equiv 0 \shortmod{\frac{\frac{c_2}{(g_0,c_2)}}{(f, \frac{c_2}{(g_0,c_2)}}}}} \frac{\varphi(\ell)}{\ell^{2+s_1-w-u}}.
 \end{multline}
As long as we assume that
\begin{equation*}
 \text{Re}(1+s_1 - w - u) > 0, \quad \text{Re}(1+s-w-u) > 0, \quad \text{Re}(s+w+u) > 0,
\end{equation*}
then the coprimality conditions are benign. 
Then we have that the Dirichlet series in \eqref{eq:SomeRandomDirichletSeries} is of the form
\begin{equation*}
 \zeta(1+s_1-s) \frac{\zeta(s+w+u)}{(q k_1 k_1^*)^{s+w+u}} \zeta(1+s-w-u) \zeta(1+s_1-w-u) \Big(\frac{(g_0, c_2)}{c_2}\Big)^{1+\min(s,s_1) - w- u} \Delta,
\end{equation*}
where $\Delta$ is holomorphic and bounded by $q^{\varepsilon}$, and $\min(s,s_1)$ means the variable with the smaller real part.  The factors $\zeta_q(1+s_1+s)^2 \zeta_q(1+s)$ may be absorbed into the definition of $\Delta$ provided that $\text{Re}(s), \text{Re}(s_1) > 0$.

Moving the summations to the inside, we derive
\begin{multline}
\label{eq:QbarIntegralFormula}
\overline{\mathcal{Q}} =
  \int_{(1-2\vep )} \frac{\widetilde{V}(w)}{\delta_1 e_1 e_2} \int_{(1)} \frac{\gamma(\tfrac12 + s_1,\kappa) G(s_1) }{\gamma(\tfrac12,\kappa) s_1} \int_{(1-\varepsilon)} 
  \frac{\gamma(\tfrac12 + s,\kappa)^2 G(s)^2 }{\gamma(\tfrac12,\kappa)^2 s^2}  
 \int_{(0)} M_2^{u} \widetilde{\omega}(u)  
\\
  \frac{\zeta(1+s_1-s)}{a^{2s+s_1}} 
  \frac{\zeta(s+w+u)}{(q k_1 k_1^*)^{s+w+u}} \zeta(1+s-w-u) \zeta(1+s_1-w-u) 
   (q e_1 r_1)^{s}
   \Big(\frac{\sqrt{q}}{e_1 r_1}\Big)^{s_1} 
  \\ 
\Big(\frac{(g_0, c_2)}{c_2}\Big)^{1+\min(s,s_1) - w- u}
    \left(\frac{q}{m_1' k_1 c_2}\right)^{w} 
 \frac{\Delta \mathcal{H}(s,w,u,\kappa)}{(g_0  k_1)^{s - w - u}}  \frac{(am_1' k_1 c_2)^{s - 1/2}}{ c_2^{s + w + u-1}}  
\frac{\d u  \d s  \d s_1 \d w}{(2\pi i)^{4}} 
 .
\end{multline}

For ease of reference, we list all the constraints on the variables (using $\kappa \geq 2$):
\begin{equation}
\label{eq:integralconstraints}
\begin{split}
0 &< \text{Re}(s+w+u), \quad \text{Re}(s) < 1, 
\\
\text{Re}(w+u) &< 1+ \min(\text{Re}(s), \text{Re}(s_1)), \quad
\text{Re}(s), \text{Re}(s_1) > 0.
\end{split}
\end{equation}

Now we move the contours as follows.  First, move $w$ from $1-2\varepsilon$ to $4\varepsilon$, which does not involve crossing any poles.  Following this, move $s_1$ to $5 \varepsilon$, which crosses a pole at $s_1 = s$ only.  Next we move $s$ to $6 \varepsilon$, which crosses a pole at $s+w+u=1$ only.  We will deal with this pole momentarily.  

{\bf The pole at $s_1 = s$.}  This contributes to $\overline{Q}$
\begin{multline*}
\frac{1}{\delta_1 e_1 e_2}
  \int_{(4\vep )} \widetilde{V}(w)  \int_{(1-\varepsilon)} 
  \frac{\gamma(\tfrac12 + s,\kappa)^3 G(s)^3 }{\gamma(\tfrac12,\kappa)^3 s^3}  
\int_{(0)} M_2^{u} \widetilde{\omega}(u)  
   \frac{\zeta(s+w+u)}{(q k_1 k_1^*)^{s+w+u}}  q^{\frac{3s}{2}}
\\   
   \frac{\zeta(1+s-w-u)^2}{a^{3s}}
   \Big(\frac{(g_0, c_2)}{c_2}\Big)^{1+s - w- u} 
    \left(\frac{q}{m_1' k_1 c_2}\right)^{w} 
 \frac{\Delta \mathcal{H}(s,w,u,\kappa)}{(g_0  k_1)^{s - w - u}}  \frac{(am_1' k_1 c_2)^{s - 1/2}}{ c_2^{s + w + u-1}} 
\frac{\d u  \d s  \d w}{(2\pi i)^{3}} 
  .
\end{multline*}
A careful scrutiny of this formula shows that is is essentially identical to \eqref{eq:S0bar'''formula2} (we did not check that the $\Delta$ function is literally equal in the two cases, but this would not be surprising). 
Here we need that we can move $w$ to $4 \varepsilon$ and then $u$ to $-2\varepsilon$ without crossing any poles;  this move in $w$ was our first step following \eqref{eq:S0bar'''formula2}, so this is easily checked.
 Therefore, by the work in the case with all $p_i = 0$, the contribution to $\mathcal{S}_{0,0}$  from this pole is $O(q^{\varepsilon})$.

{\bf The new contour.}  On the new line, with all the variables at multiples of $\varepsilon$, we have that the contribution to $\overline{Q}$ is
\begin{equation*}
\ll q^{\varepsilon} \frac{1}{\delta_1 e_1 e_2 } \frac{c_2^{1/2}}{(am_1' k_1)^{1/2}} \frac{(g_0, c_2)}{c_2}.
\end{equation*}
Recalling that $\delta_1 = \frac{k_1 d}{(a, k_1 d)}$, it is not hard to see that inserting this bound into \eqref{eq:S''intoS'''}, \eqref{eq:S''andS'}, \eqref{eq:SintoS'}, gives a final contribution to $\mathcal{S}_{0,0}$ of size $O(q^{\varepsilon})$.  

{\bf The pole at $s=1-u-w$.}  Call this contribution to $\overline{\mathcal{Q}}$ by $\overline{\mathcal{Q}}_{\text{Res}}$.  Then
\begin{multline*}
 \overline{\mathcal{Q}}_{\text{Res}} =  
  \int_{(4\vep )}  \frac{\widetilde{V}(w)}{\delta_1 e_1 e_2} \int_{(5 \varepsilon)} \frac{\gamma(\tfrac12 + s_1,\kappa) G(s_1) }{\gamma(\tfrac12,\kappa) s_1} 
  \frac{\gamma(\tfrac32-u-w,\kappa)^2 G(1-u-w)^2 }{\gamma(\tfrac12,\kappa)^2 (1-u-w)^2}  
\\
 \int_{(0)} M_2^{u} \widetilde{\omega}(u)  
   \frac{\zeta(s_1+u+w)}{(q k_1 k_1^*)^{}} \zeta(2-2w-2u) \zeta(1+s_1-w-u) 
  \\
  \Big(\frac{(g_0, c_2)}{c_2}\Big)^{1+\min(1-u-w,s_1) - w- u} 
\Big(\frac{\sqrt{q}}{e_1 r_1}\Big)^{s_1}  
\frac{(q e_1 r_1)^{1-u-w}}{a^{s_1 + 2(1-u-w)}}
\\
    \left(\frac{q}{m_1' k_1 c_2}\right)^{w} 
 \frac{\Delta \mathcal{H}(1-u-w,w,u,\kappa)}{(g_0  k_1)^{1 - 2w - 2u}}  (am_1' k_1 c_2)^{1/2-u-w}  \frac{\d u    \d s_1 \d w}{(2\pi i)^{3}}.
\end{multline*}
The constraints $\text{Re}(w+u) < 1 + \text{Re}(s)$ and $0 < \text{Re}(s) < 1$ with $s=1-u-w$ simply become
$0 < \text{Re}(u+w) < 1$.
  
Finally, we move $w$ to $1-10\varepsilon$, crossing a pole at $w=s_1 - u$ only.  On the new lines of integration, the contribution  to $\overline{\mathcal{Q}}$ is
\begin{equation*}
 \ll q^{\varepsilon} \frac{g_0 k_1}{\delta_1 e_1 e_2} \frac{1}{ k_1 k_1^*} \frac{1}{m_1' k_1 c_2}   \frac{1}{\sqrt{a m_1' k_1 c_2}}
\ll \frac{a^{1/2} q^{\varepsilon}}{c_2^{3/2} \sqrt{m_1'} k_1^* k_1^{3/2}}
 ,
\end{equation*}
using only the weak bound $g_0 \leq \delta_1 e_1 e_2 m_1'a$. 
It is easy to see that the final contribution to $\mathcal{S}_{0,0}$ from this is $O(q^{\varepsilon})$.

{\bf The pole at $w = s_1-u$}.  
This contributes
\begin{multline*}
 \overline{\mathcal{Q}}_{\text{Res}'} := \frac{1}{\delta_1 e_1 e_2}
  \int_{(5\vep )} \widetilde{V}(s_1-u)  \frac{\gamma(\tfrac12 + s_1,\kappa) G(s_1) }{\gamma(\tfrac12,\kappa) s_1} 
  \frac{\gamma(\tfrac12 + 1 - s_1,\kappa)^2 G(1-s_1)^2 }{\gamma(\tfrac12,\kappa)^2 (1-s_1)^2}  
\\
 \int_{(0)} M_2^{u} \widetilde{\omega}(u)  
   \frac{\zeta(2s_1)\zeta(2-2s_1)}{(q k_1 k_1^*)^{}}  
  \Big(\frac{(g_0, c_2)}{c_2}\Big)^{1+\min(1-s_1,s_1) - s_1} 
\Big(\frac{\sqrt{q}}{e_1 r_1}\Big)^{s_1}  
(q e_1 r_1)^{1-s_1}
\\
   \left(\frac{q}{m_1' k_1 c_2}\right)^{s_1-u} 
 \frac{\Delta \mathcal{H}(1-s_1,s_1-u,u,\kappa)}{(g_0  k_1)^{1 - 2s_1} a^{s_1 + 2(1-s_1)}}  (am_1' k_1 c_2)^{1/2-s_1}  \frac{\d u \d s_1}{(2\pi i)^2}.
\end{multline*}
In terms of $q$, this part is $O(q^{\varepsilon})$, but the problem now is that the sum over $m_1'$ will not be absolutely convergent.  The way around this roadblock is to move the contour to a location where the $m_1'$-sum converges absolutely, and shift the contour back.  Having $G(1/2) = 0$ once again is crucial. To this end, it is important to sum over the partition of unity in the $M_1$- and $M_2$-variables.

One may check that $\mathcal{H}(1-s_1, s_1-u, u, \kappa)$ is actually independent of $u$.  Therefore, it is easy to sum $\overline{\mathcal{Q}}_{\text{Res}'}$ over $M_2$:
  It is not hard to show that if $D(u)$ is a Dirichlet series absolutely convergent on the line $\text{Re}(u) = 0$, then
\begin{equation*}
 \sum_{M_2 \text{ dyadic}} \frac{1}{2 \pi i} \int_{(0)} M_2^u \widetilde{\omega}(u) D(u) du = D(0).
\end{equation*}

Now we move the $s_1$-contour to $3/4$ (crossing no poles since $G(1/2)=0$), and sum over $m_1'$ and $M_1$, giving  
\begin{multline*}
\sum_{M_1} \sum_{m_1'} \frac{\omega_{M_1}(m_1')}{\sqrt{m_1'}}   \sum_{M_2} \overline{\mathcal{Q}}_{\text{Res}'} = \frac{1}{\delta_1 e_1 e_2}
  \int_{(3/4 )} \widetilde{V}(s_1-u)  \frac{\gamma(\tfrac12 + s_1,\kappa) G(s_1) }{\gamma(\tfrac12,\kappa) s_1} 
\\  
  \frac{\gamma(\tfrac32  - s_1,\kappa)^2 G(1-s_1)^2 }{\gamma(\tfrac12,\kappa)^2 (1-s_1)^2}    
   \frac{\zeta(2s_1)^2 \zeta(2-2s_1) }{(q k_1 k_1^*)^{}} 
  \Big(\frac{(g_0, c_2)}{c_2}\Big)^{1+\min(1-s_1,s_1) - s_1} 
  \\
\Big(\frac{\sqrt{q}}{e_1 r_1}\Big)^{s_1}  
\frac{(q e_1 r_1)^{1-s_1}}{a^{s_1 + 2(1-s_1)} }
   \left(\frac{q}{k_1 c_2}\right)^{s_1} 
 \frac{\Delta \mathcal{H}(1-s_1,s_1,0,\kappa)}{(g_0  k_1)^{1 - 2s_1}}  (a k_1 c_2)^{1/2-s_1}    \frac{\d s_1}{2\pi i}.
\end{multline*}
Now we move the $s_1$-contour back to $\varepsilon$, which shows that this term is bounded by
\begin{equation*}
\frac{q^{\varepsilon}}{\delta_1 e_1 e_2} \frac{1}{ k_1 k_1^*} \frac{(g_0, c_2)}{c_2} \frac{e_1 r_1}{g_0 k_1} \frac{1}{a^2} (a k_1 c_2)^{1/2}.
\end{equation*}
Using the crude bounds $\frac{(g_0, c_2)}{g_0} \leq 1$, $\frac{e_1 r_1}{\delta_1 e_1 e_2} \leq 1$, and summing trivially over $k_1, d, c_2$, and $a$ shows that this part contributes $O(q^{\varepsilon})$ to $\mathcal{S}_{0,0}$.

\bibliographystyle{amsalpha}
\bibliography{MehmetBib3}

\end{document}